\def\nzz{\zeta_0}
\def\rr{{\mathbb R}}
\def\nn{{\mathbb N}}
\def\zz{{\mathbb Z}}
\def\cc{{\mathbb C}}
\def\cw{{\mathcal W}}
\def\az{\alpha}
\def\dz{\delta}
\def\kz{\kappa}
\def\lz{\lambda}
\def\Lz{\Lambda}
\def\Oz{\Omega}
\def\tz{\theta}
\def\vtz{\vartheta}
\def\atz{|\tz|}
\def\gz{\gamma}
\def\C{\mathbb C}
\def\G{\mathbb G}
\def\H{\mathbb H}
\def\He{\mathrm{Hess}}
\def\I{{\mathbb{I}}}
\def\N{\mathbb N}
\def\OC{\Omega_{-, 4}}
\def\pp{\mathcal{Q}}
\def\R{\mathbb R}
\def\RS{{\mathbb R}^2_{<, +}}
\def\T{{\mathrm{T}}}
\def\u{\mathbf{U}}
\def\w{\mathrm{w}}
\def\X{{\mathrm{X}}}
\def\Z{\mathbb Z}
\def\ep{\epsilon}
\def\BX{{\mathbb{X}}}
\def\ggg{\widetilde{\mathrm{g}}}
\def\ss{\mathbf{\overline s}}
\def\ww{\mathbf{\overline w}}
\def\cc{\mathbf{C}}
\def\pl{\mathrm{p}}
\def\rF{\mathbf{F}}
\def\Ft{\mathbf{F}_2}
\def\rP{\mathbf{P}}
\def\cP{\mathcal{P}}
\def\rD{\mathbf{D}}
\def\cD{\mathcal{D}}
\def\cDp{\mathcal{D}_\pl}
\def\K{\mathrm{K}}
\def\cZ{\mathcal{Z}}
\def\m{\mathfrak{m}}
\def\mz{\mathbf{H}(1)}
\def\du{d(g_u)}
\def\L{\widetilde{\mathbf{U}}}
\def\bA{\mathbf{A}}
\def\bT{\widetilde{\mathbf{A}}}
\def\bH{\mathbf{H}}
\def\p{\mathfrak{p}}
\def\q{\mathfrak{q}}
\def\chri{\mathfrak{L}_1}
\def\chrii{\mathfrak{L}_2}
\def\lsim{\lesssim}
\def\gsim{\gtrsim}
\def\J{\mathbf{J}}
\def\var{\vartheta_1}
\def\JA{\mathrm{J}_\Lambda}
\def\cV{\mathbf{V}}
\def\Ga{\widetilde{\Gamma}}
\def\vv{\mathcal{V}}
\def\bY{\mathbf{Y}}
\def\bS{\mathbf{S}}
\def\bW{\mathbf{W}}
\def\rQ{\mathbf{Q}}
\def\cI{\mathcal{I}}
\def\cK{\mathbf{K}}
\def\wu{\widetilde{u}}
\def\cnA{\mathbb{A}}
\newtheorem{theo}{Theorem}[section]
\newtheorem{lem}[theo]{Lemma}
\newtheorem{cor}[theo]{Corollary}
\newtheorem{prop}[theo]{Proposition}
\newtheorem{remark}[theo]{Remark}
\newcommand\addctmk[1]{\phantomsection\addcontentsline{toc}{section}{#1}{}}
\numberwithin{equation}{section}
\title{Heat kernel asymptotics on the free step-two Carnot group with $3$ generators}
\author{Hong-Quan Li, Sheng-Chen Mao, Ye Zhang}
\date{}
\begin{document}

	\renewcommand{\theequation}{\thesection.\arabic{equation}}
	\setcounter{equation}{0} \maketitle
	
	\vspace{-1.0cm}
	
	\bigskip
	
	{\bf Abstract.} In this work, we establish the uniform heat kernel asymptotics as well as sharp bounds for its derivatives on the free step-two Carnot group with $3$ generators.  As a by-product, on this highly non-trivial toy model, we completely solve the Gaveau--Brockett problem, in other words,  we obtain the expression of the squared Carnot--Carath\'eodory distance,  as explicitly as one can possibly hope for. Furthermore, the precise estimates of the heat kernel, and its small-time asymptotic behaviors are deduced.
	
	\medskip
	
	{\bf Mathematics Subject Classification (2020):} {\bf 58J37; 35B40; 35H10; 35B45; 35K08; 43A80; 58J35; 43A85}
	
	\medskip
	
	{\bf Key words and phrases: Asymptotic behavior; Carnot--Carath\'eodory distance; Free step-two Carnot group; Heat kernel; Precise estimates}
	
	\medskip
	
	\addctmk{Contents}\tableofcontents
	
	\medskip
	
	\section{Introduction}
	\setcounter{equation}{0}
	
		The heat semi-group and its kernel play a crucial role in various areas of Mathematics. See for example \cite{LY86, BGV92, SC92, VSC92, AJ99, CD99, SC02, ACDH04, G09, BGL14} and huge references therein.
		
		We restrict our attention to the heat kernel in the setting of Carnot groups. Let $\ell = 2, 3, \ldots$. According to \cite[p.~44]{VSC92} or \cite[Definition~2.2.3]{BLU07}, a connected and simply connected Lie group $\G$ is said to be step-$\ell$ Carnot group (or stratified group) if its left-invariant Lie algebra $\mathfrak{g}$ admits a direct sum decomposition
		\begin{align*}
			\mathfrak{g} = \mathfrak{g}_1 \oplus \ldots \oplus \mathfrak{g}_{\ell}, \quad
			[\mathfrak{g}_1, \mathfrak{g}_{j - 1}] = \mathfrak{g}_j, \ 2 \le j \le \ell, \quad
			[\mathfrak{g}_1, \mathfrak{g}_{\ell}] = \{0\},
		\end{align*}
		where $[\cdot,\cdot]$ denotes the Lie bracket on $\mathfrak{g}$. We say that $\mathfrak{g}_1$ is the first slice in the stratification above. We identify $\G$  with
		$\mathfrak{g}$ via the exponential map, and fix a (bi-invariant) Haar measure $dg$ on $\G$, namely the lift of Lebesgue measure on $\frak g$ via $\exp$. Fix a basis $\BX = \{\X_1, \cdots, \X_n\}$ for $\mathfrak{g}_1$ and consider the sub-Laplacian $\Delta = \sum_{j=1 }^n \X_j^2 $. Let $p_h$ ($h > 0$) denote the associated heat kernel, i.e. the fundamental solution of $\frac{\partial}{\partial h} - \Delta$. It is well-known that $0 < p_h \in C^{\infty}(\R^+ \times \G)$, and (see for example \cite{VSC92})
		\[
		e^{h \, \Delta} f(g) = f * p_h(g) = \int_{\G} f(g_*) \, p_h(g_*^{-1} \, g) \, dg_*.
		\]
		We use $d :=d_{\BX}$ to denote the Carnot--Carath\'eodory distance defined by $\BX$, which inherits the left-invariant property  (see e.g. \cite[III. 4]{VSC92}), i.e.
		\[
		d(g \, g_1, g \, g_2) = d(g_1, g_2), \qquad \forall \, g, g_1, g_2 \in \G.
		\]
		We shall therefore put $d(g) := d(o, g)$, where $o$ is the identity of $\G$.
		Let $Q$ denote the homogeneous dimension of $\G$, namely, $Q = \sum_{j = 1}^{\ell} j \, {\rm dim} \, \mathfrak{g}_j$.
		
		There is a natural family of dilations on $\mathfrak g$ defined for $r > 0$ as follows:
		\begin{align*}
			\delta_r \left( \sum_{i = 1}^\ell v_i \right) := \sum_{i = 1}^\ell r^i v_i, \quad \mbox{with $v_i \in \mathfrak{g}_i$}.
		\end{align*}
		This induces the definition of dilation on $\G$, which we still denote by $\delta_r$. The following scaling properties are well-known
		\begin{align} \label{scap}
			d(\delta_h(g)) = h \, d(g), \quad p_h(g) = h^{-\frac{Q}{2}} p_1(\delta_{\frac{1}{\sqrt{h}}}(g)), \quad \, h > 0, \ g \in \G.
		\end{align}
		
		Let $\nabla = (\X_1, \ldots, \X_n)$ denote the horizontal gradient. The classical Li--Yau estimates for the heat kernel and their improvements (as well as their wide application)
		can be found in some much more general situations than Carnot groups. See for instance \cite{DP89,SC90, V90, VSC92,C93,S96,S04, CS08, BCS15} and the references therein. In particular, it holds for any $h > 0$ and all $g \in \G$ that:
		\begin{gather}
			p_h(g) \le C \, h^{-\frac{Q}{2}} \, \left( 1 + \frac{d(g)}{\sqrt{h}} \right)^{Q - 1} \, e^{-\frac{d(g)^2}{4 h}}, \label{GSB} \\
			p_h(g) \ge C(\varpi) \, h^{-\frac{Q}{2}} \,  e^{-\frac{d(g)^2}{4 \, (1 - \varpi) h}}, \qquad 0 < \varpi < 1, \label{GLB} \\
			|\nabla p_h(g)| \le C \, h^{- \frac{Q + 1}{2}} \,  \left( 1 + \frac{d(g)}{\sqrt{h}} \right)^{3 Q + 1} \, e^{-\frac{d(g)^2}{4 h}}, \label{GSE}
		\end{gather}
		where the absolute constant $C > 0$ is independent of $(h, g)$, and $C(\varpi)$ dependent only on $\varpi$. Moreover, we have for all $h > 0$ and $g \in \G$ that
		\begin{align} \label{GDB}
			\left| \frac{\partial^j}{\partial h^j} \nabla^k p_h(g) \right| \le C(k, j, \varpi) \, h^{-j - \frac{k}{2} - \frac{Q}{2}} \, e^{-\frac{d(g)^2}{4 \, (1 + \varpi) h}}, \qquad k, j = 1, 2, \ldots, \ 0 < \varpi \le 1.
		\end{align}
		By the fact that $\frac{\partial}{\partial h} p_h = \Delta p_h$, the last claim is equivalent to
		\begin{align} \label{GDB'}
			\left| \nabla^l p_h(g) \right| \le C(l, \varpi) \, h^{- \frac{l}{2} - \frac{Q}{2}} \, e^{-\frac{d(g)^2}{4 \, (1 + \varpi) h}}, \qquad l = 1, 2, \ldots, \ 0 < \varpi \le 1.
		\end{align}
		
		A direct consequence of \eqref{GSB} and \eqref{GLB} is the following well-known Varadhan's formula, which is actually valid in a very general circumstance than Carnot groups (cf. eg. \cite{L87},  \cite{AH05} and the references therein):
		\begin{align} \label{VF}
			\lim_{h \to 0^+} 4 \, h \ln{p_h(g)} = - d(g)^2, \qquad \forall \, g \in \G.
		\end{align}

		In this work, we investigate  much more complicated problems:  uniform asymptotic behaviours at infinity for $p_h$ in the sense of
		\begin{align} \label{AF}
			p_h(g) = h^{- \frac{Q}{2}} \, \Theta(g, h) \, e^{-\frac{d(g)^2}{4 h}} \, \left( 1 + o(1) \right), \qquad \mbox{as} \quad \frac{d(g)}{\sqrt{h}} \to +\infty,
		\end{align}
		as well as sharp estimates for its derivatives
		\begin{gather} \label{SGE}
			|\nabla \ln{ p_h(g) }| \le C \, \frac{d(g)}{h}, \qquad h > 0, \ g \in \G, \\
			\left| \nabla^l p_h(g) \right| \le C(l) \, h^{- \frac{l}{2}} \, \left( 1 + \frac{d(g)}{\sqrt{h}}  \right)^l \, p_h(g), \qquad l = 2, 3, \ldots, \ h > 0, \ g \in \G. \label{SDE}
		\end{gather}
		where $\Theta(g, h)$ has at most polynomial growth w.r.t. $d(g)/\sqrt{h}$. Here and in the sequel, we use the notation $f = o(w)$ if $\lim \frac{f}{w} = 0$. Notice that by the classical Li--Yau estimates, a direct consequence of \eqref{AF} is the precise bounds for the heat kernel. Also recall that these stronger estimates play a crucial role in \cite{HM89, Li06,BBBC08, HZ10, BCH20, LS22} for instance.
		
		Another by-product of \eqref{AF} is the complete description of short-time behaviour,
		\begin{align} \label{STA}
			p_h(g) = \Theta_h(g) \, e^{-\frac{d(g)^2}{4 h}} \left( 1 + o_g(1) \right), \qquad \mbox{as $h \to 0^+$,}  \  g \neq o,
		\end{align}
		where $\Theta_h(g) > 0$ is frequently of type $C(g) \, h^{-\sigma(g)}$. Sometimes, it allows us to determine the cut locus for the identity $\mathrm{Cut}_o$ on the underlying group, namely the set of points where the squared distance is not smooth.

We point out that the satisfactory explicit formula of the heat kernel for Carnot groups with step $\ell \ge 3$ is still missing. There are some attempts to study the short-time behaviour of $p_h$  on Carnot groups,  by means of the generalized Fourier transform and the Trotter product formula. However, as far as the authors know, it is unclear whether we can recover correctly  the well-known Varadhan's formulas \eqref{VF} along the way, even in the simplest Carnot group (namely the Heisenberg group of dimension 3) without using the explicit formula of the heat kernel.  For example, C. S\'eguin and A. Mansouri have conducted such investigation in \cite{SM12}, but the ``remainder terms''  in their main results (cf. e.g. (2), (3) as well as the estimate following (41) in \cite{SM12}) are not real remainders since they are of polynomial decay, which is much larger than the exponential decay of the heat kernel by Varadhan's formula \eqref{VF}. Also notice that the deduction from (38) to (39) in \cite{SM12} is incorrect due to the same reason.
			
		 From now on,	 we focus on step-two Carnot groups, in other words, $\ell = 2$. In such case, $p_h$ can be written as the partial Fourier transform w.r.t. the center of the underlying group. See for example \cite{C79} or \cite{BGG96}.  In particular, there exist two key functions, $s\cot s$ in the phase and $\frac{s}{\sin s}$ in the amplitude.
		
Recall that determining the cut locus and the distance between two points on the underlying space are two fundamental problems in (Riemannian) geometry and optimal control. It seems that the second one is much more difficult than the first one. As pointed out in \cite[\S~6.5.4]{B03}, the distance and the cut locus of a Riemannian manifold cannot in general be explicitly computed. To our best knowledge, few has been known even in the setting of step-two Carnot groups endowed with a left invariant Riemannian metric. As for the corresponding sub-Riemannian case, there are masses of works, but only limited to nonisotropic Heisenberg groups and H-type groups,  cf. e.g.  \cite{G77,BGG00, TY04, R05}. Their method is quite standard, namely finding all the possible candidates (which is usually to solve some differential equations with some boundary conditions) and picking out the shortest one.

Recently, the first author introduces an original, new and very powerful method in \cite{Li20} to attack these problems, say the operator convexity (of the key function $-s \cot{s}$). More precisely, by combining it with the method of stationary phase and Varadhan's formulas, the distance and the cut locus are characterized in an enormous kind of step-two Carnot groups, say GM-groups  (cf. \cite[Corollary 2.3 and Theorem 2.7]{Li20}, also \cite[Theorem 4, Corollary 9]{LZ21} for various equivalent characterizations of GM-groups via basic geometric properties).  Furthermore, for a given step-two Carnot group $\G$, by using the known sub-Riemannian exponential mapping in addition, up to a set of measure zero, all normal geodesics from the origin to any given point $g\in \G$ are characterized, by \cite[Theorem 2.4]{Li20}, which implies more qualitative results of $d(g)^2$.
		
	It follows from \cite[\S~11]{Li20} and \cite[Corollary 13]{LZ21} that the simplest step-two non-GM group is the free step-two Carnot group with $3$ generators, $N_{3,2}$ (see below for a definition). In such case,  the long-standing open problem of Gaveau--Brockett is completely solved in \cite[\S~11]{Li20} and \cite[\S~7]{LZ21} (cf. also Theorem \ref{RLT1} and Corollary \ref{RLT2} below) by means of two quite different methods, which rely heavily on the Hamilton--Jacobi theory.
			Also notice that the results of \cite{Li20, LZ21} are very dependent on the properties of $s \cot{s}$, but the other key function $\frac{s}{\sin{s}}$ is not used at all.
			
			Unlike in the setting of GM-groups, the expression of $d^2$ on $N_{3, 2}$ cannot be well explained directly by the integral expression for the heat kernel. Motivated by this new phenomenon, the first and third authors naturally turned to searching for a more essential explanation by studying the small-time behavior for the heat kernel in such case. In particular, we discovered the expression \eqref{ehk4} below where the properties of Bessel functions have been used, since $(\frac{s}{\sin{s}})^{-1} = \sqrt{\frac{\pi}{2}} \, s^{-\frac{1}{2}} J_{\frac{1}{2}}(s)$. See Section \ref{s4} below for more details. Furthermore, by using the recursion formulas for the Bessel functions and the Fourier transform of the Gaussian functions, the initial expression in the form of oscillatory integral for the heat kernel has been reformulated into a much more suitable one in the form of Laplace integral with positive integrand (cf. \cite[(15), (16) and Proposition 4]{LZ212}). Following  main ideas in \cite{Li20} (with some new methods introduced), the new expression of the heat kernel allows us to understand some basic geometric problems well in the setting of step-two Carnot groups. For instance, at least in principle, the extremely difficult problem of determining the distance from any given point to the origin is reduced to an elementary calculus, see \cite[Algorithm 1]{LZ212} for more details.  We point out that the methods in \cite{LZ212} can be adapted to study the corresponding Riemannian geometric problems on any step-two Carnot group.
				
				One goal of this article is to re-obtain the expression of $d^2$ on $N_{3, 2}$ via the heat kernel, without using the Hamilton--Jacobi theory. Indeed it can be considered as an illuminative model as well as application of \cite{LZ212}.  Notice that in this article, we do not care about any other geometric problems except the expression for $d^2$. Hence some reductions such as \eqref{orthod} will be used. Moreover, for this article to be self-contained, Theorem \ref{mapL} and  Corollary \ref{RLT2}, as well as their proofs, namely main parts in Sections \ref{sec3} and \ref{ssd} (up to a slight modification), are extracted from \cite{Li20, LZ21}.
				
				On the other hand, by adapting the method introduced in \cite{Li20}, refining and extending the argument in the proof of \cite[Theorem 1.3]{Li12} (also of \cite[Theorem 3]{LZ19}, the first and third authors study \eqref{AF}-\eqref{SDE} in \cite{LZ22} on a large class of step-two Carnot groups which are simultaneously GM and M\'etivier groups.
				
				Notice that $N_{3, 2}$ is neither a GM-group nor a M\'etivier group. The main goal of the work is to consider $N_{3, 2}$ as a toy model in order to study \eqref{AF}-\eqref{SDE}. We hope it provides a routine for (at least concrete) step-two Carnot groups. Although some new original and powerful methods are provided in \cite{Li20, LZ212, LZ22}, we will find that they are not enough and it is still very challenging to complete our mission on $N_{3, 2}$. More explanation about our major obstacles can be found in Subsection \ref{ideaS}.
	
    As an application of results obtained in this work, we will establish the gradient estimate for the heat semi-group on $N_{3,2}$ in a forthcoming paper.  Moreover, the method presented here will be adapted to
	study the heat kernel associated to the full Laplacian (which corresponds to the canonical left-invariant Riemannian structure) on $N_{3,2}$ as well. Precise estimates of the heat kernel for the Riemannian case will be provided in a future work.
	
	\subsection{Notation}
	
	We denote the sets of integers and nonnegative integers by  $\zz$ and $\nn$ respectively, and put $\zz^*:=\zz\setminus\{0\}$, $\nn^*:=\nn\setminus\{0\}$. Given a multi-index $\alpha=(\az_1,\az_2,\az_3) \in \N^3$ and $x = (x_1, x_2, x_3)$, $\lambda = (\lambda_1, \lambda_2, \lambda_3) \in \R^3$, we write $\partial^\alpha_x := \partial^{\alpha_1}_{x_1} \partial^{\alpha_2}_{x_2} \partial^{\alpha_3}_{x_3}$, $\lambda^{\alpha} := \lambda_1^{\alpha_1} \lambda_2^{\alpha_2} \lambda_3^{\alpha_3}$, and $|\alpha| := \alpha_1 + \alpha_2 + \alpha_3$ in the usual way; we also use the notation that $\lz = (\lz_1, \lz') \in \rr \times \rr^2$. The real part and imaginary part of a complex number $z$ are denoted by $\Re(z)$ and $\Im(z)$ respectively. Supposing that $q\in\nn^*$, we let $\mathbb{I}_q$ represent the $q \times q$ identity matrix, and ${\rm O}_q$ the orthogonal group of order $q$.
	
	The symbols $C$ and $c$ are used throughout to denote implicit positive constants which may vary from one line to the next. And when necessary we will specify with a subscript which parameters the values of $C$ and $c$ depend on.
	
	The usual asymptotic notation is employed. Let $w$ be a non-negative real-valued function. By $f = O(w)$ (resp. $f \lesssim w$ if $f$ is also real-valued) we mean $|f| \leq C w$ (resp. $f \leq C w$).  Correspondingly $f \gtrsim w$  if $f \ge C\, w$. Moreover, we will use the counterparts of such notation for Hermitian (in particular, real symmetric) operators or matrices. If the implicit constant depends on parameter $\az_0$, we will write $\az_0$ in the subscript of $O,o,\lesssim, \sim$, etc.
	
 The statement $f\ll w$, $f=o(w)$ or $w \gg f$ is shorthand for $f(x)/w(x) \to 0$ as $x$ tends to some $x_0$ (could be $\infty$); it will be clear that which variable is taken to the limit. Especially, if $f\equiv1$ we also write $w\to+\infty$. The statement $f=w(1+o(1))$ means $f-w = o(w)$.	

		For example, we use ``if $A \lesssim 1$ and $B \to +\infty$ then $E \sim 1$" to represent that ``for every $\zeta_0 > 0$, there exists a constant $C(\zeta_0)$ large enough and a constant $C'(\zeta_0) \ge 1$ such that when $A \le \zeta_0$ and $B \ge C(\zeta_0)$, we have $C'(\zeta_0)^{-1} \le E \le C'(\zeta_0)$".
	
	Finally, all the vectors appearing in this work are regarded as column vectors unless otherwise stated. However, we may write a column vector $t$ in $\R^q$ with scalar coordinates $t_1, \ldots, t_q$, simply as $(t_1, \ldots, t_q)$. Moreover, for a function $F := F(\nu, \nu^\prime)$ depending on $(\nu, \nu^\prime) \in \R^{q_1} \times \R^{q_2}$, we will use the following notation to denote the $\nu$-gradient (resp. $\nu$-Hessian matrix) at the point $(\nu_0, \nu^\prime_0)$:
	\[
	\nabla_{\nu_0}  F(\nu_0, \nu^\prime_0),  \quad (\mbox{resp.} \ \He_{\nu_0}  F(\nu_0, \nu^\prime_0)).
	\]

\section{Description of the setting and statements of results} \label{sDe}
\setcounter{equation}{0}

Recall the free step-two Carnot group with 3 generators $N_{3,2}$ is given by $\R^3 \times \R^3$ with the group structure
\[
(x,t) \cdot (x',t') = \left(x + x' , t + t' - \frac{1}{2} \, x \times x' \right),
\]
where ``$\times$'' denotes the cross product on $\R^3$, i.e.,
\[
x \times x' = (x_2 x_3' - x_3 x_2', x_3 x_1' - x_1 x_3', x_1 x_2' - x_2 x_1').
\]
Here we choose the same definition of $N_{3,2}$ as the one in \cite[\S~11]{Li20} rather than the one in \cite{LZ21}. In fact, they differ in a negative sign before the term $x \times x'$  and it will not affect the expression of the heat kernel. The corresponding left-invariant vector fields and  sub-Laplacian are given by
\begin{equation}\label{LVF}
	\begin{gathered}
		\X_1 := \frac{\partial}{\partial x_1} - \frac12 x_3 \frac{\partial}{\partial t_2} +\frac12 x_2 \frac{\partial}{\partial t_3}, \quad
		\X_2 := \frac{\partial}{\partial x_2} + \frac12 x_3 \frac{\partial}{\partial t_1} -\frac12 x_1 \frac{\partial}{\partial t_3}, \\
		\X_3 := \frac{\partial}{\partial x_3} -\frac12 x_2 \frac{\partial}{\partial t_1} +\frac12 x_1 \frac{\partial}{\partial t_2},
		\qquad \Delta := \sum_{j = 1}^3 \X_j^2.
	\end{gathered}
\end{equation}

It is well-known that the associated heat kernel $p_h$ ($h > 0$), i.e. the fundamental solution of $\frac{\partial}{\partial h} - \Delta$, has the following integral formulas (cf. e.g. \cite{G77, C79} or \cite{LP03})
\begin{align}\label{ehk}
	p_h(x,t) = \frac{\mathbf{C}}{h^\frac{9}{2}} \, p\left(\frac{x}{\sqrt{h}},\frac{t}{h}\right), \quad \forall \, h > 0,\,(x,t) \in N_{3,2},
\end{align}
with some positive constant $\mathbf{C}$ and
\begin{align}\label{ehk2}
	p(x,t) =  \, \int_{\R^3}\cV(\lambda) \,  e^{-\frac{1}{4} \widetilde{\phi}((x,t); \lambda)}  \, d\lambda,
\end{align}
where
\begin{align}\label{defV}
	\cV(\lambda) := \frac{|\lambda|}{\sinh{|\lambda|}} = \prod_{j = 1}^{+\infty} \left( 1 + \frac{ \lambda \cdot \lambda}{j^2 \pi^2}\right)^{- 1}
\end{align}
by \cite[1.431.2]{GR15} and
\begin{align}\label{MFF}
	\widetilde{\phi}((x,t);\lambda)  :=
	|x|^2 +  \frac{|\lambda| \coth{|\lambda|} - 1}{|\lambda|^2} \, (|\lambda|^2 |x|^2 - (\lambda \cdot x)^2)  - 4it \cdot \lambda .
\end{align}

The  corresponding reference function  in our setting is defined by (cf. \cite{Li20} and \cite{LZ22})
\begin{align} \label{defref}
	\phi((x,t);\tau) := \widetilde{\phi}((x,t); i \tau) = |x|^2 -  \frac{1 - |\tau| \cot{|\tau|}}{|\tau|^2} \, (|\tau|^2 |x|^2 - (\tau \cdot x)^2)  + 4t \cdot \tau.
\end{align}

Finally, the following equivalence between the Carnot--Carath\'eodory distance and a homogeneous norm (see for example \cite[III.4]{VSC92}) will be used again and again:
	\begin{align} \label{ehd}
		d(x,t)^2 \sim |x|^2 + |t|, \qquad \forall \, (x, t) \in N_{3,2}.
	\end{align}

\subsection{Reductions}

Our main target of this work is to establish the uniform heat kernel asymptotics at infinity on $N_{3,2}$ and the following simple observation will simplify our asymptotic problem to a large extent. In fact, using the orthogonal invariant property and the symmetry w.r.t. the origin of the heat kernel, we reduce the original problem (which is $6$-dimensional) to a $3$-dimensional one. Moreover, when deriving the explicit expression of the Carnot--Carath\'eodory distance,  we can even reduce to a $2$-dimensional problem; see the beginning of Subsection \ref{nss23n} for more details.

To be more precise, through the change of variables $\lambda \mapsto O \lambda$ and $\lambda \mapsto - \lambda$, \eqref{ehk2}-\eqref{MFF} imply that
\begin{equation}\label{prpty_p}
	p(O \, x, O \, t) = p(x,t), \quad \forall \, O \in \mathrm{O}_3, \qquad p(x,t) = p(x, -t).
\end{equation}
From this, together with the smoothness of $p$ and the benefit that our heat kernel asymtotics will be uniform enough, without loss of generality, we may assume that
\begin{align}\label{ass}
	x = |x| \, e_1 = |x|(1,0,0) \ne 0 \quad \mbox{ and } \quad t = (t_1,t_2,0), \mbox{ with } t_1 , t_2 > 0.
\end{align}
Here and in the sequel, $e_1$ will simultaneously denote the vectors in Euclidean spaces (possibly with different dimensions) with 1 in the first coordinate and zeros elsewhere. Further reduction will be provided in Assumption (A) below.

\subsection{Two key analytic diffeomorphisms}

Set in the sequel
\begin{gather}
	\R^2_{>, +} := \left\{(u_1, u_2) \in \R^2; \, u_2 > \frac{2}{\sqrt{\pi}} \sqrt{u_1} > 0\right\}, \\[1mm]
	\R_{<,+}^2 := \left\{ (u_1, u_2); \, u_1 > 0, 0 < u_2 < \frac{2}{\sqrt{\pi}} \sqrt{u_1}\right\},
 \end{gather} \\[-12mm]
 \begin{gather}
	\Omega_{+,1} := \{v := (v_1,v_2) \in \R^2; \, v_1, v_2 > 0, v_1^2 + v_2^2 < \pi^2 \}, \label{O+}  \\[2mm]
	\psi(\rho) := \frac{1 - \rho \, \cot{\rho}}{\rho^2}, \qquad \rho \not\in \left\{ k \, \pi; \, k \in \zz^* \right\}, \label{EFso}
 \end{gather} \\[-12mm]
 \begin{gather}
	\mathrm{K}_3(v_1, v_2) := 2 \psi(r) + \frac{\psi'(r)}{r} v_2^2 \ \mbox{ with } \ r := |v| = \sqrt{v_1^2 + v_2^2}, \label{nK3N} \\[1mm]
	\sqrt{2} \pi < 4.4933 < \var < 4.4935 < \frac{3}{2} \pi \ \mbox{ such that } \tan{\var} = \var, \label{cN32} \\[1mm]
	\Omega_{-,4} := \left\{(v_1,v_2); \, v_2 < 0, \pi < v_1 < r  < \vartheta_1, \mathrm{K}_3(v_1, v_2) < 0 \right\},   \label{O-4}
\end{gather}
and
\begin{align} \label{32N96}
	\Lambda(v_1, v_2) := v_2 \left[ \frac{\psi'(r)}{r} \, v_2 \, v + 2 \, \psi(r) \, e_2 \right], \qquad 0 < r = |v| \neq \pi < \var.
\end{align}
Notice that if $(u_1, u_2) = \Lambda(v_1, v_2)$, then $\Lambda(\pm v_1, \pm v_2) = (\pm u_1, \pm u_2)$. This analytic function actually comes from the equation of critical point: $\nabla_\theta \phi((e_1, 4^{-1} (u_1, u_2, 0)); \theta) = 0$. The interested readers can find the background as well as the geometrical meaning of $\theta$ in \cite[\S~2 and \S~11]{Li20}.

The starting point of this article is the following diffeomorphisms (see Figure \ref{fig1} for the sketch map):

\begin{figure}[htp]
	\centering
	\begin{overpic}[width = 17.5cm, height=7.5cm]{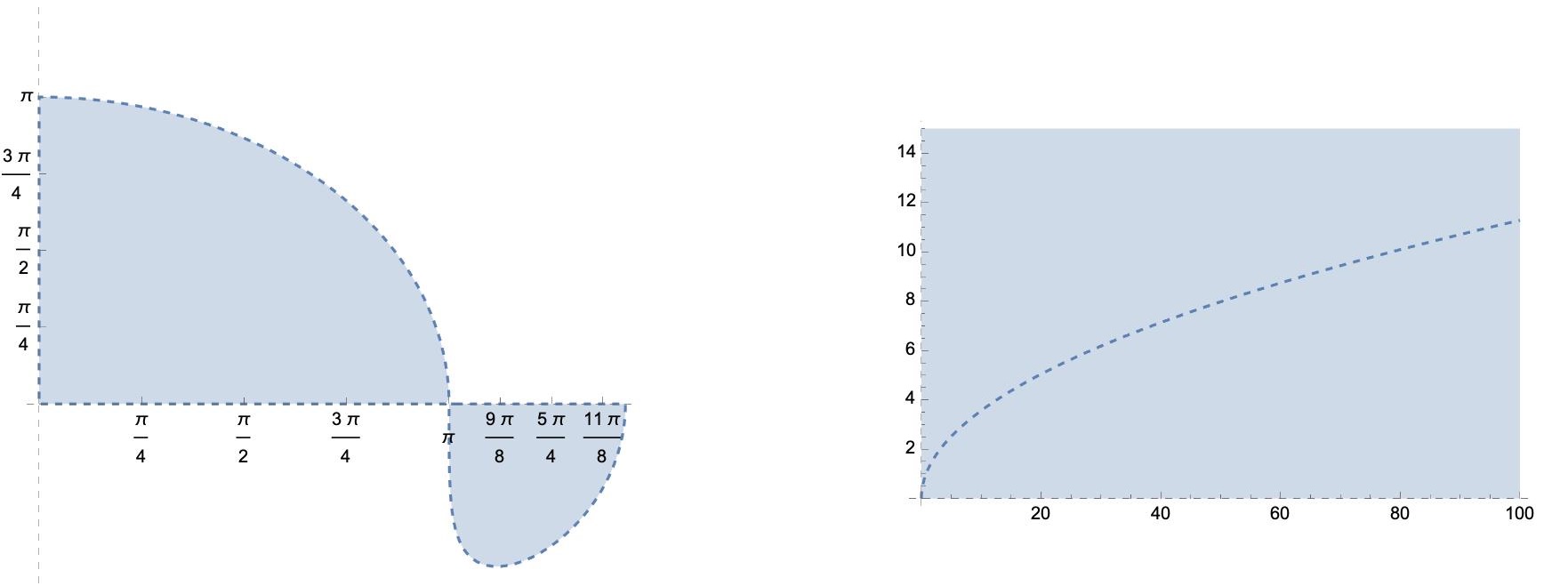}
		\put(32,6){$\Omega_{- , 4}$}
		\put(12,22){$\Omega_{+ , 1}$}
		\put(68,25){$\R^2_{>,+}$}
		\put(80,13){$\R^2_{<,+}$}
		\put(47,19.5){$\Lambda$}
		\put(46,18){$\to$}
		\put(41,13){$v_1$}
		\put(1.5,37){$v_2$}
		\put(99,6){$u_1$}
		\put(57,35){$u_2$}
		\put(90,27){$\searrow$}
		\put(80,30){$u_2 = \frac{2}{\sqrt{\pi}} \sqrt{u_1}$}
		\put(-0.5,6){$\K_3 = \frac{\psi'(r)}{r} v_2^2 + 2 \psi(r) = 0  \to$}
		\put(39.5,14){$\downarrow$}
		\put(39.5,16){$\vartheta_1$}
	\end{overpic}
	\caption[image]{Sketch map of the analytic-diffeomorphism $\Lambda$}\label{fig1}
\end{figure}

\begin{theo}\label{mapL}
	The map $\Lambda$ is an analytic-diffeomorphism from: \\
	(1) $\Omega_{+ , 1}$ onto $\R^2_{>,+}$. Moreover, let \[
	u = (\wu, 0) \mbox{\ with \ } \wu \in \R^2_{>,+}, \quad g_u = (e_1, 4^{-1} u) \in N_{3, 2}, \ \mbox{and } \theta = (\Lambda^{-1}(\wu),0).
	\]
	Then the Hessian matrix of $-\phi(g_u;  \cdot)$ at $\theta$ is positive definite. \\
	(2) $\Omega_{-, 4}$ onto $\R^2_{<, +}$. Furthermore, using the above notation with $\wu$ belonging to $\R^2_{<, +}$ instead of $\R^2_{>,+}$, the Hessian matrix of $\phi(g_u;  \cdot)$ at $\theta$ is nonsingular, and has
	exactly two positive eigenvalues.
\end{theo}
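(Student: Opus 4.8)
The plan rests on one structural observation: $\Lambda$ is a gradient. A direct computation from \eqref{32N96} gives $\Lambda=\nabla G$ with $G(v):=\psi(|v|)\,v_2^2$, and, setting $g_u=(e_1,4^{-1}(\wu,0))$, formula \eqref{defref} gives $\phi(g_u;\tau)=1-\psi(|\tau|)(\tau_2^2+\tau_3^2)+\wu\cdot(\tau_1,\tau_2)$; hence $\nabla_\tau\phi(g_u;(v,0))=0$ is exactly $\Lambda(v)=\wu$, which is the source of the map $\Lambda^{-1}$. Since $\phi(g_u;\cdot)$ is even in $\tau_3$, its Hessian at $\theta=(v,0)$ is block diagonal, with $(3,3)$-entry $-\K_3(v)$ and upper-left $2\times2$ block $-\He G(v)=-D\Lambda(v)$. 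I would first record this reduction, so that the two Hessian assertions become: \emph{on $\Omega_{+,1}$, $D\Lambda>0$ and $\K_3>0$}; and \emph{on $\Omega_{-,4}$, $D\Lambda$ is invertible with $\det D\Lambda<0$} (so $-D\Lambda$ has signature $(1,1)$), which together with $\K_3<0$ there supplies the second positive eigenvalue.

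For part (1) I would exploit convexity. On $(0,\pi)$ the function $m(\rho):=1-\rho\cot\rho=\rho^2\psi(\rho)$ satisfies $m,m',m''>0$ (the GM-type behaviour of $\psi$ from \cite{Li20}), whence $\psi,\psi'>0$ and $G$ is strictly convex on the convex quarter-disk $\Omega_{+,1}$. This yields at once $D\Lambda=\He G>0$, $\K_3=2\psi(r)+\tfrac{\psi'(r)}{r}v_2^2>0$, injectivity of $\Lambda=\nabla G$ on $\Omega_{+,1}$, and openness of $\Lambda$ there. From $u_1=\tfrac{\psi'(r)}{r}v_1v_2^2$ and $u_2=v_2\K_3(v)$ one gets $u_1,u_2>0$; and $u_1<\tfrac{\pi}{4}u_2^2$ is equivalent (dividing by $v_2^2$) to $h(v_2^2)>0$, where $h(w):=\tfrac{\pi}{4}\bigl(2\psi+\tfrac{\psi'}{r}w\bigr)^2-\tfrac{\psi'}{r}\sqrt{r^2-w}$ is convex in $w$ with $h'(0)=\tfrac{\psi'}{r}\bigl(\pi\psi+\tfrac{1}{2r}\bigr)>0$, so $h(w)\ge h(0)=\pi\psi^2-\psi'\ge0$ using the elementary inequality $\pi\psi(\rho)^2\ge\psi'(\rho)$ on $(0,\pi)$. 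Thus $\Lambda(\Omega_{+,1})\subset\R^2_{>,+}$. Finally $\Lambda|_{\Omega_{+,1}}$ is proper into $\R^2_{>,+}$: along $\partial\Omega_{+,1}$, $\Lambda\equiv0$ on $\{v_2=0\}$, $u_1=0$ on $\{v_1=0\}$, and $|\Lambda|\to\infty$ as $r\to\pi^-$, all outside $\R^2_{>,+}$. Its image being open and closed in the connected $\R^2_{>,+}$, it equals $\R^2_{>,+}$.

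For part (2), convexity fails, so I would argue topologically. The relevant interval is $(\pi,\var)$, where $\psi<0$, $\psi'>0$, and $\eta(\rho):=2\psi(\rho)+\tfrac{\psi'(\rho)}{\rho}(\rho^2-\pi^2)>0$ (one checks $\eta(\pi^+)=\tfrac{3}{\pi^2}>0$ and $\eta(\var)=\tfrac{\psi'(\var)}{\var}(\var^2-\pi^2)>0$, using $m(\var)=0$ from $\tan\var=\var$ in \eqref{cN32}). The inequality $\eta>0$ is precisely what forces the curve $\{\K_3=0\}$ to remain in $\{v_1>\pi\}$, so $\Omega_{-,4}$ is the bounded, connected, lens-shaped region enclosed by $\{v_2=0,\ \pi\le v_1\le\var\}$ and that curve, with $\overline{\Omega_{-,4}}$ meeting $\{v_1=\pi\}$ only at the corner $(\pi,0)$. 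One then shows $\det D\Lambda<0$ on $\Omega_{-,4}$ — the two constraints $\K_3<0$ and $r<\var$ cutting out the region are exactly what pin the sign — so $\Lambda|_{\Omega_{-,4}}$ is an open local diffeomorphism. The inclusion $\Lambda(\Omega_{-,4})\subset\R^2_{<,+}$ follows from the same device: now $h(w)<0$ is required for $w=v_2^2\in(0,\beta(r))$, where $\beta(r)>0$ solves $2\psi(r)+\tfrac{\psi'(r)}{r}\beta(r)=0$; since $h$ is convex with $h(0)=\pi\psi^2-\psi'\le0$ (the reverse inequality $\pi\psi^2\le\psi'$ on $(\pi,\var)$) and $h(\beta(r))=-\tfrac{\psi'}{r}\sqrt{r^2-\beta(r)}<0$, convexity gives $h<0$ on $(0,\beta(r))$. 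Properness into $\R^2_{<,+}$ is boundary analysis: $\Lambda\equiv0$ on $\{v_2=0\}$, $u_2=0$ on $\{\K_3=0\}$, and as $v\to(\pi,0)$ the image $\Lambda(v)$ approaches the parabola $\bigl\{u_2=\tfrac{2}{\sqrt{\pi}}\sqrt{u_1}\bigr\}$ or escapes to infinity — all disjoint from $\R^2_{<,+}$. A proper local homeomorphism from the connected $\Omega_{-,4}$ onto the connected, simply connected $\R^2_{<,+}$ is a one-sheeted covering map, hence an analytic diffeomorphism onto $\R^2_{<,+}$.

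The main obstacle is the sign dichotomy for $\det D\Lambda$: proving $\He G>0$ throughout the quarter-disk $\Omega_{+,1}$ and $\det D\Lambda<0$ throughout $\Omega_{-,4}$. Positive-definiteness needs both $\det D\Lambda>0$ and a positive diagonal entry, and after expanding $D\Lambda=\He G$ in terms of $m,m',m''$ — most economically in polar coordinates, where $G=m(r)\sin^2\alpha$ and the Hessian takes a compact form — the sign of $\det D\Lambda$ is not manifest and genuinely rests on the fine behaviour of $1-\rho\cot\rho$ on each of $(0,\pi)$ and $(\pi,\var)$; on $(\pi,\var)$ it additionally uses the precise shape of $\Omega_{-,4}$. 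Assembling these, together with the inequalities comparing $\pi\psi^2$ and $\psi'$ on the two intervals and $\eta>0$ on $(\pi,\var)$, and handling the degenerate corner $(\pi,0)$ (where $\Lambda$ blows up and the cut-locus parabola emerges in the limit), is where the bulk of the work lies; all these analytic inputs on $\psi$ are available from \cite{Li20,LZ21,LZ212}.
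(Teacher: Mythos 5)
Your overall architecture is the same as the paper's: identify $\Lambda=\nabla_v[\psi(|v|)v_2^2]$ with the critical-point equation for $\phi(g_u;\cdot)$, note the block-diagonal Hessian \eqref{Hes_phi} with $(3,3)$-entry $-\K_3$, prove the inclusions $\Lambda(\Omega_{+,1})\subset\R^2_{>,+}$, $\Lambda(\Omega_{-,4})\subset\R^2_{<,+}$ via inequalities between $\psi$ and $\psi'$, and finish with properness plus a Hadamard/covering argument. But the two statements on which everything hinges are not actually proved in your write-up, and one of them is asserted by a non sequitur. For part (1) you claim that $m,m',m''>0$ on $(0,\pi)$ (with $m(\rho)=1-\rho\cot\rho$) already gives strict convexity of $G(v)=\psi(|v|)v_2^2$; it does not. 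Writing out $\He G$ (this is the paper's \eqref{nJAn}/\eqref{stare2}), positive definiteness requires in particular $\psi\psi''>2\psi'^2$, i.e.\ concavity of $\Upsilon=1/\psi$, which is exactly the content of Lemma \ref{n32l} (\eqref{Ii3}, proved by H\"older on the Stieltjes representation \eqref{IS}) and is not a formal consequence of the positivity of $m$ and its derivatives. (A correct shortcut does exist: $G(v)=2\sum_j v_2^2/((j\pi)^2-|v|^2)$ is a sum of quadratic-over-concave compositions, hence convex on the disk; but you would still need to argue strictness/nondegeneracy, and you did not give this.) For part (2) you openly defer the sign $\det D\Lambda<0$ on $\Omega_{-,4}$ to ``analytic inputs available from the references''; in the paper this is precisely the decomposition \eqref{32nK0} together with the two inequalities in \eqref{32ei5} ($2\psi\K_2-4\K_1^2<0$ and $5\K_1+\K_2 r^2<0$ on $(\pi,\var)$), whose proofs via the series \eqref{N32ei1} and the identity \eqref{psi_sum} constitute the heart of Step 1. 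Since both Hessian assertions of the theorem and the applicability of Hadamard's theorem rest on these sign facts, leaving them unproved is a genuine gap, not a routine verification.

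Two secondary points. First, your properness claim for part (1), ``$|\Lambda|\to\infty$ as $r\to\pi^-$'', is false near the corner $(\pi,0)$: along sequences with $v_2/(\pi-|v|)$ bounded, $\Lambda(v^{(j)})$ converges to a finite point, which the paper shows lies on the parabola $u_2=\tfrac{2}{\sqrt\pi}\sqrt{u_1}$ (case (iv) of Step 2, using \eqref{nnp}); the conclusion you need (no accumulation in a compact subset of $\R^2_{>,+}$) survives, but the stated reason does not, and this boundary case must be argued as in the paper. Second, your claim $\eta(\rho)=2\psi+\tfrac{\psi'}{\rho}(\rho^2-\pi^2)>0$ on $(\pi,\var)$ is only checked at the endpoints; it is true (and is essentially the paper's characterization \eqref{ChaO-4}, proved by the same series trick), but endpoint evaluation is not a proof. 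On the positive side, your inclusion arguments via the auxiliary convex function $h(w)$ reduce to $\pi\psi^2\gtrless\psi'$ on the two intervals, which do follow from \eqref{N32ei1} just as the paper's \eqref{iN38} does, and your use of strict convexity to get injectivity of $\nabla G$ on $\Omega_{+,1}$ is a legitimate (mild) simplification of Hadamard's theorem for part (1) — once the convexity itself is actually established.
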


Its proof is based on the operator convexity and Hadamard's Theorem. See Section \ref{sec3} below for the details. We point out that the two diffeomorphisms first appeared in \cite{Li20}. In fact, the first claim is a direct consequence of  \cite[Propositions 11.1 and 11.2]{Li20},  while the second one is exactly \cite[Theorem 11.2]{Li20}.

In light of Theorem \ref{mapL}, we will make the following assumption repeatedly, unless otherwise specified:
\begin{align} \label{n78n}
	{\text{\bf Assumption (A):  }}
	\begin{cases}
		\theta := (\theta_1, \theta_2, 0) \  \mbox{with} \  (\theta_1, \theta_2) \in \Omega_{+ , 1} \cup \Omega_{-,4}, \,\ep:=\vtz_1-|\tz|;  \\[1mm]
		u := (u_1, u_2, 0) :=  (\Lambda(\theta_1, \theta_2), 0) \\  \qquad \mbox{(so $u_1, u_2 > 0$ and $\pi \, u_2^2 \neq 4 \, u_1$)}; \\[1mm]
		u_* := 4^{-1} u, \ g_u := (e_1, u_*) \in N_{3, 2}; \\[1mm]
		g = (x, t) := \delta_{|x|}(g_u) = (|x| e_1, \ |x|^2 u_*) =(|x|e_1, t_1, t_2, 0) \\  \qquad  \mbox{with $|x| > 0$}  ;\\[1mm]
		\ss = (\ss_1, \ss_2) :=(-1,0)\in\rr^2, \, \ww:=\tz_2 \, \psi(|\tz|)>0.
	\end{cases}
\end{align}

With this assumption, \eqref{ehk2} and  the reference function \eqref{defref} become
\begin{align} \label{ehk2'}
	p(g) = \int_{\R^3} \frac{|\lambda|}{\sinh{|\lambda|}} \, \exp\left\{ -\frac{|x|^2}{4} \left[ 1 + \frac{|\lambda| \coth{|\lambda|} - 1}{|\lambda|^2} (\lambda_2^2 + \lambda_3^2) - i \, u \cdot \lambda \right]    \right\} \, d\lambda,
\end{align}
and
\begin{align} \label{defref2}
	\phi(g; \tau) = |x|^2 \, \left[ 1 - \psi(|\tau|) \, (\tau_2^2 + \tau_3^2) + u \cdot \tau \right], \quad \tau = (\tau_1, \tau_2, \tau_3) \in \R^3,
\end{align}
respectively.

At this moment we will not formulate our main result, namely the uniform heat kernel asymptotics at infinity in the sense of \eqref{AF}, since there are too many notations to be introduced which may lead to confusion. Instead, we shall postpone the precise statement until Section \ref{Mtheo} and present some byproducts in this section. The first one is the following:

\subsection{Explicit expression for the squared Carnot--Carath\'eodory distance} \label{nss23n}

Using the scaling property of the heat kernel (cf.  \eqref{ehk}) and \eqref{VF}, \eqref{prpty_p} implies its counterpart for $d$:
\begin{align}
	\label{orthod}
	d(Ox,Ot)^2 = d(x,t)^2, \quad \forall \,  O \in \mathrm{O}_3, &\quad d(x,t)^2 = d(x,-t)^2.
\end{align}

 Combining with the scaling property of $d$, it suffices to determine $d(g)$ for special $g$, such as $(0, e_1)$ and $(e_1, (t_1, t_2, 0))$ with $t_1, t_2 \ge 0$.

In the sequel, we introduce some positive functions on $(0, \ +\infty)$,
\begin{gather}
	h(r) :=  r^2 + r \sin{r} \cos{r} - 2 \sin^2{r} \ (= \psi'(r) \, r^3 \sin^2r, \,  r \notin \{k \pi; \,  k \in \N^* \}), \label{nABn3}  \\
	\varphi_1(r) := \frac{r^2 - \sin^2{r}}{r - \sin{r} \cos{r}}, \quad
	\varphi_2(r) := \frac{r \, (r^2 - \sin^2{r})}{h(r)}, \quad \varphi_3 := \sqrt{\varphi_1 \varphi_2}. \label{defs}
\end{gather}
Indeed, the positivity of $h$ follows from Corollary \ref{nCn1} below, and the others are clear.

\begin{theo} \label{RLT1}
	Let $\theta, u, g_u, g$ be given as in the Assumption (A) (cf. \eqref{n78n}). Then
	\begin{eqnarray}\label{dEn}
		\begin{aligned}
			d(g_u)^2 &=  \frac{\theta_1^2}{|\theta|^2}  +  \frac{\theta_2^2}{\sin^2{|\theta|}}    =  - \theta_2^2 \, \psi(|\theta|)  +  u \cdot \theta + 1 = \, \varphi_1(|\theta|) \left( u_1 \, \frac{\theta_1}{|\theta|} + u_2 \, \frac{\theta_2}{|\theta|} \right)+ 1 \\
			&= \varphi_2(|\theta|) \, u_1 \, \frac{|\theta|}{\theta_1} + 1 = \varphi_3(|\theta|) \sqrt{u_1 \, \left( u_1 + u_2 \, \frac{\theta_2}{\theta_1} \right)} + 1.
		\end{aligned}
	\end{eqnarray}
	In particular, we have
	\begin{align} \label{nDFn}
		d(g_u)^2 = \phi(g_u; \theta), \qquad  d(g)^2 = \phi(g; \theta).
	\end{align}
\end{theo}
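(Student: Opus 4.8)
The plan is to establish \eqref{dEn} by exploiting the critical point structure encoded in the diffeomorphism $\Lambda$, and then to deduce \eqref{nDFn} by combining \eqref{dEn} with the scaling property of $d$. More precisely, recall from the remark following \eqref{32N96} that the relation $u = \Lambda(\theta)$ is exactly the equation $\nabla_\tau \phi(g_u; \tau)|_{\tau = \theta} = 0$ (restricted to the first two coordinates, the third vanishing by symmetry). Thus under Assumption (A), $\theta$ is a critical point of $\tau \mapsto \phi(g_u; \tau)$, and by Theorem \ref{mapL} the Hessian there is nonsingular (positive definite in case (1), with two positive eigenvalues in case (2)). The strategy is then: (i) show that this critical value $\phi(g_u;\theta)$ actually equals $d(g_u)^2$; (ii) verify the chain of algebraic identities in \eqref{dEn} purely by substitution, using the explicit form \eqref{defref2} of $\phi$, the definition \eqref{32N96} of $\Lambda$, and the formulas \eqref{nABn3}--\eqref{defs} for $h, \varphi_1, \varphi_2, \varphi_3$.

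For step (i) I would invoke the general principle from \cite{Li20} (built on Varadhan's formula \eqref{VF} and the method of stationary phase applied to the integral \eqref{ehk2'}): since $\phi(g_u;\cdot)$ is the reference function, $d(g_u)^2$ is obtained as the appropriate critical value of $\phi(g_u;\cdot)$ — in the GM situation it is the minimum, but here, on $N_{3,2}$, one must be more careful and this is precisely why Theorem \ref{mapL}'s eigenvalue information matters. The cleanest self-contained route, which the paper explicitly says it will follow in Sections \ref{sec3} and \ref{ssd}, is to show that the right-hand side of \eqref{dEn} agrees with the known expression for $d^2$ on $N_{3,2}$ from \cite[\S~11]{Li20} / \cite[\S~7]{LZ21}; alternatively one shows directly that $\phi(g_u;\theta)$ is a local extremum whose value, via Hamilton--Jacobi / the stationary-phase asymptotics of \eqref{ehk2'} together with \eqref{VF}, must be $d(g_u)^2$. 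I would present this as: the distance is realized by a normal geodesic whose initial covector corresponds (through the exponential map description recalled in the introduction) to the parameter $\theta$, and the value of the action functional along it is exactly $\phi(g_u;\theta)$.

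For step (ii), the identities are a finite computation. Writing $r = |\theta|$, $\psi = \psi(r)$, and using $\theta_3 = 0$, $u_3 = 0$, formula \eqref{defref2} gives $\phi(g_u;\theta) = 1 - \psi\,\theta_2^2 + u_1\theta_1 + u_2\theta_2$; this is the second expression. The first expression, $\theta_1^2/r^2 + \theta_2^2/\sin^2 r$, follows by plugging $u = \Lambda(\theta)$ from \eqref{32N96} into $u_1\theta_1 + u_2\theta_2$ and simplifying with $\psi(r) = (1 - r\cot r)/r^2$ and $\psi'(r)$; the key algebraic fact is $2\psi(r) + \psi'(r) r = 1/\sin^2 r - $ (something proportional to $\theta_1^2/r^2$), which one checks by differentiating $\psi$. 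The expression with $\varphi_1$ comes from factoring $u_1\theta_1/r + u_2\theta_2/r$ out of $-\psi\theta_2^2 + u\cdot\theta$; the expression with $\varphi_2$ uses in addition the component relation between $u_1, u_2$ coming from the two scalar equations in $\Lambda$ (eliminating $u_2$ in favor of $u_1$ via the ratio $\theta_2/\theta_1$); and $\varphi_3 = \sqrt{\varphi_1\varphi_2}$ together with $\sqrt{u_1(u_1 + u_2\theta_2/\theta_1)}$ is then just the geometric mean of the previous two. Finally, \eqref{nDFn}: the first half $d(g_u)^2 = \phi(g_u;\theta)$ is what step (i) delivers; the second half $d(g)^2 = \phi(g;\theta)$ follows since $g = \delta_{|x|}(g_u)$, the scaling $d(\delta_r h) = r\, d(h)$ from \eqref{scap} gives $d(g)^2 = |x|^2 d(g_u)^2$, and \eqref{defref2} shows $\phi(g;\theta) = |x|^2 \phi(g_u;\theta)$ directly (the bracket in \eqref{defref2} is exactly $\phi(g_u;\theta)$).

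The main obstacle is step (i): rigorously pinning down that the relevant critical value of the reference function is the squared distance, in a regime ($N_{3,2}$, which is neither GM nor Métivier) where $\theta$ need not be a global minimizer — in case (2) the Hessian of $\phi(g_u;\cdot)$ (not $-\phi$) has two positive eigenvalues, signalling the subtle saddle-type behavior near the abnormal/cut structure. Handling this requires either importing the Hamilton--Jacobi analysis of \cite{Li20, LZ21} wholesale, or re-deriving it; the paper's stated choice is to extract those arguments (Sections \ref{sec3}, \ref{ssd}). Step (ii), by contrast, is routine but error-prone trigonometric bookkeeping, best organized by first recording clean formulas for $\psi'(r)$, $\psi'(r)/r$, and the combination $\K_3(v_1,v_2)$ from \eqref{nK3N}, and then substituting once.
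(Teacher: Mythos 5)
Your step (ii) (the algebraic chain in \eqref{dEn}) is essentially fine and in fact coincides with computations the paper has already packaged into Theorem \ref{tmm}: the identities \eqref{dEn2} for $\cD(u;\ss)=-\theta_2^2\psi(|\theta|)+u\cdot\theta$ are exactly your substitutions, and adding $1$ gives \eqref{dEn}. Likewise your derivation of the second half of \eqref{nDFn} from the scaling $d(\delta_{|x|}g_u)=|x|\,d(g_u)$ and the homogeneity of \eqref{defref2} is correct. The problem is step (i), which you yourself flag as ``the main obstacle'' but do not resolve. Your self-contained option — stationary phase applied to \eqref{ehk2'} plus Varadhan — is precisely what breaks down in case (2) of Theorem \ref{mapL} ($\wu\in\R^2_{<,+}$, so $|\theta|>\pi$): deforming the contour to $\R^3+i\theta$ runs into the singularities of $\cV$ on $\{\lambda+i\tau;\,|\lambda|^2+2i\lambda\cdot\tau-|\tau|^2=-k^2\pi^2\}$, and since ${\He}_\theta\,\phi(g_u;\theta)$ has two positive eigenvalues, $\theta$ is a saddle of the phase, so there is no analogue of Lemma \ref{rephi} guaranteeing decay along the shifted contour. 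The paper states explicitly that these two issues (choice of integration path and the ``residue problem'') are the reason the method of \cite{Li20} does not extend to this case, and they are exactly why the alternative expression \eqref{ehk4} was introduced. Your other option — invoking the Hamilton--Jacobi/normal-geodesic description from \cite{Li20,LZ21} — does establish the truth of the statement, but it is a citation rather than a proof, and it runs against the stated purpose of this section (re-obtaining $d^2$ via the heat kernel alone); note also that what the paper ``extracts'' from \cite{Li20,LZ21} in Section \ref{ssd} is only the proof of Corollary \ref{RLT2}, not of Theorem \ref{RLT1}.

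The paper's actual argument is much more elementary and uses only ingredients already proved in the paper: from \eqref{ehk} and \eqref{ehk4} one writes $p_h(g_u)$ as a Laplace-type integral of $\rP$ with positive integrand; Proposition \ref{lP}(ii) and the scaling of $\rD$ give the two-sided bound \eqref{smhh}; Theorem \ref{tmm} (minimality of $\cD(u;\cdot)$ at $\ss$) together with $\cD(u;s)\gtrsim \ww^2|s|^2+u_1$ yields an upper bound of the form $C\,h^{-4}u_1^{-1/2}e^{-(\cD(u;\ss)+1)/(4h)}$, while continuity of $\cD(u;\cdot)$ near $\ss$ yields a matching lower bound with $\cD(u;\ss)+\varsigma_0$ for arbitrary $\varsigma_0>0$; Varadhan's formula \eqref{VF} then sandwiches $d(g_u)^2$ between $\cD(u;\ss)+1$ and $\cD(u;\ss)+1+\varsigma_0$, giving $d(g_u)^2=1+\cD(u;\ss)$, after which \eqref{dEn} is \eqref{dEn2}. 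If you want a correct self-contained write-up, replace your step (i) by this sandwich argument; as it stands, your proposal either has an unjustified stationary-phase step in the non-GM regime or reduces to quoting the prior works.
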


For the other cases, an argument of limit implies that:

\begin{cor} \label{RLT2}
	It holds that:
	{\em\begin{compactenum}[(i)]
			\item For $g_* = (0,e_1)$, we have $d(g_*)^2 = 4 \pi$.
			
			\medskip
			\item If $\alpha > 0$ and $g_{\alpha} := (e_1, 4^{-1} (\frac{\alpha^2}{\pi}, \frac{2}{\pi} \alpha, 0))$ such that $(\frac{\alpha^2}{\pi}, \frac{2}{\pi} \alpha) \in \partial \R^2_{>, +}$.  Then $d(g_\alpha)^2 = 1 + \alpha^2$.
			
			\medskip
			\item Let $\beta > 0$ and $g(\beta) := (e_1, 4^{-1} (\beta, 0, 0))$. Then
			\begin{align*}
				d(g(\beta))^2 =  \varphi_3(r) \, \beta + 1,
			\end{align*}
			where $r$ is the unique solution of the following equation
			\begin{align}\label{DCUTP}
				- 2 \, \psi(r) \, \sqrt{r^2 + 2 \, r \, \frac{\psi(r)}{\psi'(r)}} = \beta, \quad \pi < r < \var.
			\end{align}
			
			\item Set $\mu(\rho) := \frac{2 \rho - \sin(2 \rho)}{2 \sin^2 \rho}$, $-\pi < \rho < \pi$. For $g(\gamma)^* := (e_1, 4^{-1} (0, \gamma, 0))$ with $\gamma > 0$, we have
			\[
			d(g(\gamma)^*)^2 = \left( \frac{r}{\sin{r}}\right)^2, \quad \mbox{where $r$ is the unique solution of $\mu(r) = \gamma$.}
			\]
	\end{compactenum}}
	
\end{cor}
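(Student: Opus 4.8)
The plan is to deduce Corollary~\ref{RLT2} from Theorem~\ref{RLT1} by a limiting argument, using only the continuity of $d^{2}$ on $N_{3,2}$, the scaling relation $d(\delta_{r}g)^{2}=r^{2}d(g)^{2}$, and the continuity of $u\mapsto g_{u}$. In each case the target point is of the form $g_{u}$ (or $\delta_{r}(g_{u})$) with $u$ on the boundary of $\R^{2}_{>,+}$ or $\R^{2}_{<,+}$, hence outside the range of Assumption~(A); I would pick a sequence $u^{(n)}$ inside $\R^{2}_{>,+}$ or $\R^{2}_{<,+}$ with $g_{u^{(n)}}\to g$, apply an appropriate one of the five expressions for $d(g_{u^{(n)}})^{2}$ in \eqref{dEn}, and pass to the limit; continuity of $d^{2}$ then identifies the limit with $d(g)^{2}$. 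The only substantial point is to locate $\lim_{n}\theta^{(n)}$, where $\theta^{(n)}:=(\Lambda^{-1}(u^{(n)}),0)$.

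\emph{Boundary behaviour of $\Lambda^{-1}$.} Since $\psi,\psi'$ are real-analytic off $\{k\pi:k\in\Z^{*}\}$, the map $\Lambda$ of \eqref{32N96} extends real-analytically to a neighbourhood of $\overline{\Omega_{+,1}}\cup\overline{\Omega_{-,4}}$ with the arc $\{|v|=\pi\}$ removed; I would record its boundary values: $\Lambda(0,v_{2})=(0,\mu(v_{2}))$ on $\{v_{1}=0,\ 0<v_{2}<\pi\}\subset\partial\Omega_{+,1}$ (a short computation, $\mu$ as in~(iv)), $\Lambda\equiv 0$ on $\{v_{2}=0\}$, and, on the curve $\{\K_{3}=0\}\subset\partial\Omega_{-,4}$ parametrised by $r=|v|\in(\pi,\var)$ via $v_{1}=\sqrt{r^{2}+2r\psi(r)/\psi'(r)}$, $v_{2}=-\sqrt{-2r\psi(r)/\psi'(r)}$, $\Lambda(v)=(-2\psi(r)v_{1},0)$, the first coordinate being the left side of \eqref{DCUTP}. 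Thus $\Lambda\big(\overline{\Omega_{-,4}}\setminus\{(\pi,0)\}\big)=\R^{2}_{<,+}\cup\{(u_{1},0):u_{1}\ge0\}$ and $\Lambda\big(\overline{\Omega_{+,1}}\setminus\{|v|=\pi\}\big)=\R^{2}_{>,+}\cup\{(0,u_{2}):u_{2}\ge0\}$, neither of which contains a finite point of the parabola $\{u_{2}=2\sqrt{u_{1}/\pi}>0\}$. Combined with the compactness of $\overline{\Omega_{+,1}},\overline{\Omega_{-,4}}$ and the analyticity of $\Lambda$ off $\{|v|=\pi\}$, this forces: if $u^{(n)}\to\bar u$ with $\bar u$ a finite point of the parabola, or if $u^{(n)}_{1}\to+\infty$, then $\theta^{(n)}\to(\pi,0,0)$; if $u^{(n)}\to(\beta,0)$ with $\beta>0$, then $|\theta^{(n)}|\to r$, the solution of \eqref{DCUTP}; if $u^{(n)}\to(0,\gamma)$ with $\gamma>0$, then $\theta^{(n)}\to(0,v^{*}_{2},0)$ with $\mu(v^{*}_{2})=\gamma$. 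Uniqueness of these limits rests on the monotonicity of $\mu$ on $(0,\pi)$ and of the left side of \eqref{DCUTP} on $(\pi,\var)$.

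\emph{The four cases.} For (iv), take $u^{(n)}=(1/n,\gamma)\in\R^{2}_{>,+}$: then $g_{u^{(n)}}\to g(\gamma)^{*}$, $\theta^{(n)}\to(0,v^{*}_{2},0)$, and the first identity in \eqref{dEn} gives $d(g_{u^{(n)}})^{2}=\theta_{1}^{2}/|\theta|^{2}+\theta_{2}^{2}/\sin^{2}|\theta|\to(v^{*}_{2}/\sin v^{*}_{2})^{2}$. For (iii), take $u^{(n)}=(\beta,1/n)\in\R^{2}_{<,+}$: then $|\theta^{(n)}|\to r$ as in \eqref{DCUTP}, $\theta^{(n)}_{1}\to v_{1}>\pi$ so $\theta^{(n)}_{2}/\theta^{(n)}_{1}$ stays bounded, and the last identity in \eqref{dEn} gives $d(g_{u^{(n)}})^{2}=\varphi_{3}(|\theta|)\sqrt{u_{1}(u_{1}+u_{2}\theta_{2}/\theta_{1})}+1\to\varphi_{3}(r)\beta+1$ since $u_{2}\to0$. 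For (ii), with $\bar u=(\alpha^{2}/\pi,2\alpha/\pi)$ on the parabola, take $u^{(n)}\to\bar u$ inside $\R^{2}_{<,+}$: then $\theta^{(n)}\to(\pi,0,0)$, $\varphi_{3}(\pi)=\pi$ (read from \eqref{defs} using $\sin\pi=0$, $h(\pi)=\pi^{2}$), and the same last identity yields $d(g_{u^{(n)}})^{2}\to\pi\sqrt{(\alpha^{2}/\pi)(\alpha^{2}/\pi+0)}+1=1+\alpha^{2}$. Finally for (i), set $u^{(n)}=(4n^{2},1)\in\R^{2}_{<,+}$ and $g^{(n)}:=\delta_{1/n}(g_{u^{(n)}})=((1/n)e_{1},(1,1/(4n^{2}),0))\to(0,e_{1})$; since $u^{(n)}_{1}\to+\infty$ we get $\theta^{(n)}\to(\pi,0,0)$, and by scaling and the last identity in \eqref{dEn},
\[
d(g^{(n)})^{2}=\varphi_{3}(|\theta^{(n)}|)\sqrt{(u_{1}/n^{2})\big(u_{1}/n^{2}+(u_{2}/n^{2})\,\theta_{2}/\theta_{1}\big)}+1/n^{2}\ \longrightarrow\ \pi\sqrt{4\cdot4}=4\pi .
\]
In each case continuity of $d^{2}$ identifies the limit with $d$ at the target point, which is the assertion.

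\emph{Main obstacle.} The limit computations above are routine; the real content is the paragraph on $\Lambda^{-1}$, i.e.\ showing that $\theta^{(n)}$ converges and identifying its limit. This splits into (a) the description of $\partial\Omega_{+,1},\partial\Omega_{-,4}$ and of $\Lambda$ on them---in particular that $(\pi,0)$ is the unique point of $\overline{\Omega_{-,4}}$ at which $\Lambda$ is singular, equivalently where it fails to be proper, so that a finite parabola limit or $u_{1}\to+\infty$ drags $\theta^{(n)}$ there---and (b) the two monotonicity facts. Part (b) for $\mu$ is one line, $\mu'(\rho)=2(\sin\rho-\rho\cos\rho)/\sin^{3}\rho>0$ on $(0,\pi)$; part (b) for \eqref{DCUTP}, and all of part (a), are outputs of the operator-convexity/Hadamard analysis underlying Theorem~\ref{mapL} (cf.\ \cite{Li20,LZ21}), which is exactly why this corollary is proved alongside that theorem, and I would quote those inputs rather than re-derive them.
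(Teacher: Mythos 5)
Your proposal is correct, and its backbone is the same as the paper's: pass to the limit in the identities of \eqref{dEn} along a sequence $u^{(n)}$ in $\R^2_{>,+}$ or $\R^2_{<,+}$, using continuity of $d^2$, the scaling \eqref{scap}, and the boundary behaviour of $\Lambda$ coming from the properness analysis in the proof of Theorem \ref{mapL}. The genuine differences are local. For (ii) you approach the parabola from the $\R^2_{<,+}$ side and use the fifth identity with $\varphi_3(\pi)=\pi$, whereas the paper approaches from $\Omega_{+,1}$, choosing $v^{(j)}\to(\pi,0)$ with $v_2^{(j)}/(\pi-|v^{(j)}|)\to\alpha$ so that $u^{(j)}\to(\alpha^2/\pi,2\alpha/\pi)$ is an output rather than an input; your variant instead needs the forward statement ``finite parabola limit forces $\theta^{(n)}\to(\pi,0,0)$'', which your image computation $\Lambda\big(\overline{\Omega_{-,4}}\setminus\{(\pi,0)\}\big)=\R^2_{<,+}\cup\{(u_1,0):u_1\ge0\}$ plus compactness indeed delivers. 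For (i) you avoid the paper's route (reduction to (iv) via scaling together with the asymptotics of $\mu(\rho)$ and $(\rho/\sin\rho)^2$ as $\rho\to\pi^-$) by taking the scaled sequence $\delta_{1/n}(g_{u^{(n)}})$ with $u^{(n)}=(4n^2,1)$; this is a legitimate and arguably cleaner alternative, at the price of invoking $u_1^{(n)}\to+\infty\Rightarrow\theta^{(n)}\to(\pi,0,0)$, again a consequence of properness. The one place where your stated dichotomy does not literally cover what you use is (iii) (and similarly (iv)): to conclude $|\theta^{(n)}|\to r$ you must also rule out the subsequential limit $(\pi,0)$ (resp.\ limits on the arc $|v|=\pi$), i.e.\ you need that $\Lambda$-limits along sequences tending to $(\pi,0)$ accumulate only on the closed parabola or at infinity, never at $(\beta,0)$ with $\beta>0$; the paper supplies exactly this by its explicit contradiction (if $v_2(0)=0$ then $\psi'(|v|)/|v|\to\infty$, so $|v|\to\pi^+$ and $\kappa_j\to\frac{2}{\sqrt{\pi}}\sqrt{\beta}\neq0$), and it is also extractable from Step 2(iv) of the proof of Theorem \ref{mapL}, so quoting that analysis, as you propose, closes the point — but it should be cited for this specific exclusion, not only for the parabola/infinity cases you list.
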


We give two remarks on these results as follows:

\begin{remark}\label{Rem21}
	 (1) For (i), the distance between $o$ and $g = (0, t)$ has been computed in \cite{B84}
		in the setting of free step-two Carnot groups with $k \geq 3$ generators. (iii) have been obtained in \cite{MM17}. (ii) (resp. (iv)) can be found in (resp. deduced from) \cite[Corollary 11.1]{Li20} (resp. \cite[Theorem 11.1]{Li20}). As for our main results, i.e. \eqref{nDFn}, the case where $g_u \in \R^2_{>, +}$ is given by \cite[Theorem 11.1]{Li20}. For the opposite case $g_u \in \R^2_{<, +}$, there exist two different proofs (as mentioned in Introduction), one is based on \cite[Corollary 2.4]{Li20} (cf. \cite[\S 11.2]{Li20}), the other is based on \cite[Theorem 2.4]{Li20} (cf. \cite[\S 7.4]{LZ212}).
	
	(2) The LHS of \eqref{DCUTP} is exactly the function $\frac{4}{P(r)}$ with  $P$ defined as in \cite[(3.3)]{MM17}. Then from
	\cite[Lemma 3.5]{MM17} it follows that $P$ is a strictly increasing diffeomorphism between $(\pi , \ \vartheta_1)$ and $(0, \ +\infty)$, which justifies the uniqueness of the solution $r$ in $(\pi, \  \vartheta_1)$.
	
	(3) For (iv), the function $\mu$ is actually a differomorphism from $(-\pi,\pi)$ to $\R$. See \cite[Lemme 3, p. 112]{G77} for more details. In this case, the expression of the square distance is the same with the point $(e_1,4^{-1} \gamma)$ on the Heisenberg group  of real dimension $3$.
\end{remark}

\begin{remark}
In our setting $N_{3, 2}$, ${\rm Cut}_o = \{(x, t); \, \mbox{$x$ and $t$ are linearly dependent} \}$. See for example  \cite[\S~7.5]{LZ21} for an elementary explanation. We will not use this fact in the proof, but it helps us  understand better some of our difficulties encountered in the uniform heat kernel asymptotics.
\end{remark}

\subsection{Precise estimates for the heat kernel} \label{sec24}

Another main byproduct of our uniform heat kernel asymptotics at infinity is the following sharp upper and lower bounds:

\begin{theo}\label{pbnd}
	Under the Assumption (A) (cf. \eqref{n78n}), we have
	\begin{equation}\label{pbound}
		\begin{aligned}
			p(x,t) \sim (1+d(g))^{-2}\frac{1+\ep  \, d(g)}{1+\ep \,  d(g) + \ep  \, t_2^{\frac12} \, |x|^{\frac12} \,  (d(g)^2-|x|^2)^{\frac14}} \, e^{-\frac{d(g)^2}{4}}.
		\end{aligned}
	\end{equation}
\end{theo}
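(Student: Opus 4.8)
The plan is to obtain \eqref{pbound} as a consequence of the uniform heat kernel asymptotics of the form \eqref{AF}, whose precise version is the main result of Section \ref{Mtheo}: once we know $p(g) = \Theta(g)\,e^{-d(g)^2/4}\,(1+o(1))$ as $d(g)\to+\infty$ with an explicit amplitude $\Theta(g)$, the estimate \eqref{pbound} reduces to the algebraic task of checking that $\Theta(g)$ is comparable to the right-hand side of \eqref{pbound}. The complementary region, where $d(g)$ stays bounded, is disposed of at once by the Li--Yau bounds \eqref{GSB}--\eqref{GLB} together with the positivity and continuity of $p$, since there both sides of \eqref{pbound} are $\sim e^{-d(g)^2/4}$. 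So the real work is (a) to produce $\Theta(g)$, and (b) to recast it in the stated closed form, uniformly up to the degenerate locus.

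For (a), under Assumption (A), after the reductions \eqref{prpty_p} and the scaling \eqref{ehk}, I start from the integral representation \eqref{ehk2'}. Analytically continuing the phase, its critical point lies at $\lambda = i\theta$ with $\theta = (\theta_1,\theta_2,0)$ the point singled out in \eqref{n78n}, and by Theorem \ref{RLT1} the critical value equals $-\tfrac14\phi(g;\theta) = -\tfrac14 d(g)^2$. One then shifts the contour of integration in \eqref{ehk2'} so as to pass through $i\theta$ and applies a parametric Laplace/stationary-phase analysis (parametric since both $|x|$ and $u$ may grow); here the reformulation of \eqref{ehk2'} as a Laplace integral with positive integrand, following \cite{LZ212}, is essential, both to legitimise the shift and, crucially, to control the direction in which the Hessian of $\phi$ at $\theta$ is negative in the $\Omega_{-,4}$ case and to reconcile the two regions into a single positive expression. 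The leading amplitude is then governed by the value of the smooth factor, $\cV(i\theta) = \tfrac{|\theta|}{\sin|\theta|}$ (so the second key function $\tfrac{s}{\sin s}$ enters at last), and by $\det\He_\theta\phi(g;\cdot)$, which by Theorem \ref{mapL} is positive definite on $\Omega_{+,1}$ and nonsingular with exactly two positive eigenvalues on $\Omega_{-,4}$.

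For (b), I make the amplitude explicit. Since $\phi(g;\tau)$ depends on $\tau_3$ only through $\tau_3^2$ and $|\tau|$ and $\theta_3=0$, the $3\times3$ Hessian decouples into the scalar $\partial^2_{\tau_3}\phi|_\theta = -2|x|^2\psi(|\theta|)$ and a $2\times2$ block in $(\theta_1,\theta_2)$; the latter is, up to elementary factors, the Jacobian of $\Lambda$, since $\Lambda$ was defined precisely by the critical point equation $\nabla_\theta\phi = 0$. Feeding in the identities of Theorem \ref{RLT1} — notably $d(g)^2 - |x|^2 = |x|^2\big(u\cdot\theta - \theta_2^2\psi(|\theta|)\big) = |x|^2\varphi_1(|\theta|)\,\tfrac{u\cdot\theta}{|\theta|}$, together with $t_2 = |x|^2 u_2/4$ and $\ww = \theta_2\psi(|\theta|)$ — I rewrite $\cV(i\theta)\,|\det\He_\theta\phi(g;\cdot)|^{-1/2}$, which up to constants is $\Theta(g)$, in terms of $d(g)$, $|x|$, $t_2$ and the degeneracy parameter $\ep = \vartheta_1 - |\theta|$. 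The Hessian determinant is a product of three factors, one of which degenerates as $|\theta|\uparrow\vartheta_1$ (that is, $\ep\downarrow0$): tracing this vanishing factor through the identities above produces the quotient $\frac{1+\ep\,d(g)}{1+\ep\,d(g)+\ep\,t_2^{1/2}|x|^{1/2}(d(g)^2-|x|^2)^{1/4}}$, while $(1+d(g))^{-2}$ collects the remaining, uniformly non-degenerate, contribution. Of course the naive Laplace estimate must be upgraded to a uniform (fold/Airy-type) stationary-phase estimate near $\{|\theta|=\vartheta_1\}$ precisely so that this quotient comes out correctly as $\ep\to0$.

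The main obstacle is exactly this uniformity up to the degenerate set. Away from $\{|\theta|=\vartheta_1\}$ and $\{|\theta|=\pi\}$ the parameter $\tilde u$ stays bounded, $d(g)\sim|x|$, the Hessian is uniformly non-degenerate, and the non-degenerate Laplace method gives \eqref{pbound} with $\ep\sim1$ and the quotient $\sim1$. To get a single bound valid for all $g$, however, one has to (i) carry out a fold-type uniform stationary-phase analysis near $\{|\theta|=\vartheta_1\}$, carefully tracking the transition between the two regimes, and handle the separate and milder degeneracy as $|\theta|\to\pi^{\pm}$, where $\cV(i\theta)\to\infty$ but simultaneously $d(g)\to\infty$ and the explicit $d(g),|x|,t_2$ dependence absorbs the blow-up; and (ii) verify that all error terms are genuinely $o(1)$ relative to the main term \emph{uniformly} in $g$, which forces precise control of the elementary functions $\psi,h,\varphi_1,\varphi_2,\varphi_3$ and of the Jacobian of $\Lambda$ as $|\theta|\to\pi^+$ and $|\theta|\to\vartheta_1^-$. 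Propagating uniform remainder estimates through the stationary-phase expansion all the way to the boundary of each region is where the bulk of the effort lies.
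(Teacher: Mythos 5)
There is a genuine gap, and it lies in step (a): you assume that a single contour shift of \eqref{ehk2'} through $i\theta$ plus a parametric stationary-phase analysis (with a fold/Airy-type correction near $|\theta|=\vartheta_1$) yields a uniform asymptotic $p(g)=\Theta(g)\,e^{-d(g)^2/4}(1+o(1))$ with amplitude $\Theta(g)\approx \cV(i\theta)\,\det(-\He_\theta\phi(g;\theta))^{-1/2}$ valid in all regimes, after which \eqref{pbound} is ``algebra''. This is precisely what fails, and the paper says so explicitly: for $\theta\in\Omega_{-,4}$ (i.e.\ $|\theta|>\pi$) the deformed contour meets the singularities of $\cV$ at $\{\lambda+i\tau;\ |\lambda|^2+2i\lambda\cdot\tau-|\tau|^2=-k^2\pi^2\}$, the Hessian has two positive eigenvalues (a complex saddle, not a Laplace point), and $\sin|\theta|<0$, so the naive leading term is not even positive; invoking the positive-integrand reformulation \eqref{ehk4} ``to legitimise the shift'' does not repair this, because that reformulation replaces the oscillatory integral by a genuinely different Laplace-type integral over $\R^2$ whose phase $\cD(u;\cdot)$ must then be analysed on its own (it is not $C^1$ at $s=0$, its Hessian eigenvalue $\chrii$ may stay bounded, etc.). Moreover, the uniform leading terms in the hard regimes are simply not of the Hessian form you posit: in Theorem \ref{bigL1} the amplitude contains the factor $e^{-\chrii}I_0(\chrii)$ (which a stationary-phase expansion cannot produce when $\chrii\lesssim 1$), in Theorem \ref{sL1} the leading term is only an integral (the paper argues no closed form should be expected), and near the abnormal set ($\theta_2|x|\lesssim1$, equivalently $\m\lesssim1$, Theorem \ref{mlsim1}) the critical point is degenerate in a non-fold way and the amplitude is the two-variable function $\rF$. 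Your proposed ``fold-type uniformity near $\vartheta_1$'' addresses none of these degeneracies, which are exactly where the two-sided bound \eqref{pbound} has to be checked.

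Consequently step (b) also cannot go through as stated: the quotient $\frac{1+\ep d(g)}{1+\ep d(g)+\ep t_2^{1/2}|x|^{1/2}(d(g)^2-|x|^2)^{1/4}}$ does not arise from tracing one vanishing factor of $\det\He_\theta\phi$; in the paper it encodes the interpolation $(1+\chri)^{1/2}(1+\chri+\ep\,\m\,\chrii)^{-1/2}$, which is assembled from four different results (Theorems \ref{asyab}, \ref{bigL1}, \ref{sL1*} and Corollary \ref{nABnc1}) via the parameter relations $\chri\sim\ep^2 d(g)^2$, $\chrii\sim\ep\,u_2|x|^3\m^{-1/2}$ and $\chri\sim\chrii+\ep\,\m$ of Lemma \ref{asyin}(ii). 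Your treatment of the bounded-$d(g)$ region is fine, and the overall architecture (asymptotics first, then comparison) matches the paper, but the analytic core — producing asymptotics, or at least two-sided bounds, uniformly through the $\Omega_{-,4}$, bounded-$\chrii$, $\chri\lesssim1$ and near-abnormal regimes — is not achievable by the method you describe, so the proof as proposed does not close.
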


Here we use the formula \eqref{pbound} in order to match the precise estimates for the heat kernel in our setting with the known results, cf. eg. \cite{Li12, LZ19, LZ22}. However, a much more explicit expression for $d(g) \gg 1$ can be found in Corollary \ref{simHUD} below.

	\begin{remark}
		 (1) From the above explicit expression of $d^2$ on $N_{3, 2}$, it is easy to get that $d(g)^2 \ge |x|^2$ for any $g = (x, t) \in N_{3, 2}$, with the equality holding if and only if $g = (x, 0)$. Indeed, the result is still valid on any step-two Carnot group (cf. \cite[\S 2]{Li20}). 	
			
			(2) Let $\G$ be a M\'etivier group with ${\rm dim} \, \mathfrak{g}_2 = m$, the standard Laplace's method implies that (cf. also \cite[\S 4.2]{Li20}) its heat kernel with time $1$ at $(x, 0)$ is $\sim |x|^{-m} e^{-|x|^2/4}$ as $|x| \to +\infty$. Remark that our result \eqref{pbound} with $g = (|x| \, e_1, 0)$ and $|x| \to +\infty$ is totally different from the classical one. Indeed in our case, the Laplace's method is no longer applicable, since it follows from \eqref{MFF} that the set of minimal points of $\widetilde{\phi}((|x| \, e_1, 0); \cdot)$ equals $\{(\lambda_1, 0, 0); \, \lambda_1 \in \R \}$. From a geometric point of view, it says that there exists on $N_{3, 2}$ a non-trivial abnormal set (or geodesics), which is exactly $\{(x, 0); x \in \R^3\} \setminus \{ o \}$.
			
			(3) Compare this result with the one in the setting of Heisenberg groups or even generalized H-type groups (cf. \cite{Li07, Li10, LZ19, LZ22}), \eqref{pbound} is much more complicated since the group law is more complex. Naturally, we believe that there will be some more complicated terms for precise bounds of the heat kernel on concrete step-two groups.

	\end{remark}

\subsection{Idea of the proof} \label{ideaS}

Let's start by explaining how to prove $d(g_u)^2 = \phi(g_u; \theta)$, namely the first equality in \eqref{nDFn}.  From \eqref{ehk2'} and Varandhan's formula, it suffices to investigate the asymptotic behavior of
	\begin{align} \label{nIFn}
		\int_{\R^3} \frac{|\lambda|}{\sinh{|\lambda|}} \, \exp\left\{ -\frac{1}{4 h} \left[ 1 + \frac{|\lambda| \coth{|\lambda|} - 1}{|\lambda|^2} (\lambda_2^2 + \lambda_3^2) - i \, u \cdot \lambda \right]    \right\} \, d\lambda, \  h \rightarrow 0^+.
	\end{align}
	It is a typical oscillatory integral. The usual processing method is to use  the method of stationary phase, and it allows us to guess the correct answer (cf. Corollary \ref{smhk})
	\begin{align}\label{rteigen}
		\frac{|\theta|}{\sin{|\theta|}} e^{-\frac{\phi(g_u; \theta)}{4 h}} \, (8 \pi h)^{\frac{3}{2}} \, \prod_{j = 1}^3 \mathrm{r}_j^{-\frac{1}{2}} \, (1 + o(1)),
	\end{align}
	where $\mathrm{r}_1, \mathrm{r}_2, \mathrm{r}_3$ are the eigenvalues of $-{\He}_{\theta} \, \phi(g_u ; \theta)$, and we adopt the convention $(-r)^{-1/2} = -i \,  r^{-1/2}$ for $r > 0$.
	Indeed, it is a special case of \cite[\S 4]{Li20} provided $g_u \in \R^2_{>, +}$. More precisely, it suffices to deform the contour from $\R^3$ to $\R^3 + i \theta$, then apply the method of stationary phase at the nondegenerate critical point $i \theta$, since all assumptions of the method can be verified to be satisfied. However, for the opposite case $g_u \in \R^2_{<, +}$, the situation becomes very tricky, and the two main obstacles we encountered were as follows. The first one is how to choose a suitable integration path passing through the point $i \theta$ so that all assumptions hold for the method of stationary phase. The second one is how to treat the ``residue problem'', since $|\theta| > \pi$ in such case and the integrand in \eqref{nIFn} has singularities as $\{\lambda + i \tau; \, |\lambda|^2 + 2i \lambda \cdot \tau - |\tau|^2 = -k^2 \pi^2, k \in \N^* \}$.
    It is actually the main difference between GM-groups and non-GM groups.

This motivates us to find a more appropriate expression. For that,  we drop $h$, and set in the sequel
	\begin{align*}
		\rP(X,T) := \int_{\R^3}  \, \vv(\lambda) \, e^{-\frac{1}{4} \Ga((X,T);\lambda)} \, d\lambda, \qquad X \in \R^2, \, T \in \R^3,
	\end{align*}
	with
	\begin{align*}
		\vv(\lambda) := \frac{|\lambda|^3}{|\lambda| \cosh{|\lambda|} - \sinh{|\lambda|}}, \quad
		\Ga((X,T);\lambda) := \frac{|\lambda|^2}{|\lambda| \coth{|\lambda|} - 1} |X|^2 - 4 i \, T \cdot \lambda.
	\end{align*}
 Remark that $\Ga$ admits the following scaling property: $\Ga((h X, h^2 T); \lambda) = h^2 \Ga((X,T);\lambda)$ for all $h > 0$ and $(X, T)$. By means of properties of Bessel functions, we can establish the following much more useful expression (compared with \cite[(15)]{LZ212} which is valid for any step-two Carnot groups):
	
	\begin{prop} \label{nPn1}
		Let $\theta, u, g$ and $\ww$ be as in Assumption (A) (cf. \eqref{n78n}). Then we have
		\begin{equation}\label{ehk4}
			p(g) =\frac{1}{4\pi} \, |x|^2 \, \ww^2\, e^{-\frac{|x|^2}{4}} \int_{\R^2} \cP(s) \, ds,
		\end{equation}
		where
		\begin{align} \label{def_cP}
			\cP(s) = \cP(x,u;s) := \rP\left( s\,|x|\,\ww,\frac14|x|^2(u + 2\ww \, s_1 \, e_2 + 2\ww \, s_2 \,  e_3)\right).
		\end{align}
	\end{prop}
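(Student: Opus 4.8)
\textbf{Proof proposal for Proposition \ref{nPn1}.}

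The plan is to start from the oscillatory-integral formula \eqref{ehk2'} and transform the $\lambda_2,\lambda_3$-dependence by exploiting the identity $\mathcal{V}(\lambda)^{-1}=(\frac{|\lambda|}{\sinh|\lambda|})^{-1}=\sqrt{\frac{\pi}{2}}\,|\lambda|^{-1/2}I_{1/2}(|\lambda|)$ (the hyperbolic Bessel normalization; equivalently $(\frac{s}{\sin s})^{-1}=\sqrt{\frac{\pi}{2}}\,s^{-1/2}J_{1/2}(s)$ as recalled in the introduction). First I would isolate the quadratic form $|\lambda|^2|x|^2-(\lambda\cdot x)^2$ in \eqref{MFF}: under Assumption (A) we have $x=|x|e_1$, so $|\lambda|^2|x|^2-(\lambda\cdot x)^2=|x|^2(\lambda_2^2+\lambda_3^2)$, and the phase \eqref{ehk2'} depends on $(\lambda_2,\lambda_3)$ only through $\rho^2:=\lambda_2^2+\lambda_3^2$ together with the linear term $-iu\cdot\lambda$. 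The idea is to write $\mathcal{V}(\lambda)=\mathcal{V}(\lambda_1,\rho)$ (abusing notation, since $\mathcal{V}$ depends on $\lambda$ only through $|\lambda|^2=\lambda_1^2+\rho^2$) and to recognize the amplitude as a function that, via the Bessel recursion formulas, can be rewritten so that the Gaussian-type weight $e^{-\frac{|x|^2}{4}\psi\text{-term}}$ in the $\rho$-variables becomes the image of a genuine Gaussian under partial Fourier transform. Concretely, one introduces new integration variables $s=(s_1,s_2)\in\R^2$ dual to $(\lambda_2,\lambda_3)$ (up to the scaling by $|x|\ww$ dictated by \eqref{def_cP}) and uses $\widehat{e^{-a|\cdot|^2}}(s)\propto a^{-1}e^{-|s|^2/(4a)}$ to trade the factor coming from $\frac{|\lambda|\coth|\lambda|-1}{|\lambda|^2}(\lambda_2^2+\lambda_3^2)$ in the exponent for an outer integral over $s$.

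The core computation I would organize as follows. Step 1: In \eqref{ehk2'} pass to ``polar-type'' coordinates in $(\lambda_2,\lambda_3)$ or simply keep them and note the phase is $-\frac{|x|^2}{4}[1+\psi$-coefficient$\cdot(\lambda_2^2+\lambda_3^2)]+\frac{i|x|^2}{4}u\cdot\lambda$ after factoring out $|x|^2/4$ (here the $\psi$-coefficient is $\frac{|\lambda|\coth|\lambda|-1}{|\lambda|^2}$, which is $-\psi(i|\lambda|)$ in the notation of \eqref{EFso}; but more usefully it is $\mathcal{V}$-related). Step 2: Apply the Bessel recursion $\frac{d}{dz}(z^{-\nu}J_\nu(z))=-z^{-\nu}J_{\nu+1}(z)$ (and its hyperbolic analogue) to convert $\mathcal{V}(\lambda)=\frac{|\lambda|}{\sinh|\lambda|}$ together with the coefficient $\frac{|\lambda|\coth|\lambda|-1}{|\lambda|^2}$ into an expression of the form $\vv(\lambda)^{-1}$ times an extra power of $|\lambda|$; the point of the definition $\vv(\lambda)=\frac{|\lambda|^3}{|\lambda|\cosh|\lambda|-\sinh|\lambda|}$ is precisely that $|\lambda|\cosh|\lambda|-\sinh|\lambda|=\frac{d}{d|\lambda|}(\text{something})$ matches what drops out of differentiating $\frac{\sinh|\lambda|}{|\lambda|}$. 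Step 3: Introduce the outer $s$-integral by the Fourier–Gaussian identity, so that the $\lambda$-integral reassembles into $\rP$ of the shifted/scaled argument in \eqref{def_cP} — this is where the shift $u+2\ww s_1 e_2+2\ww s_2 e_3$ and the scale $s|x|\ww$ in the first slot of $\rP$ come from, the $2\ww$ being $2\theta_2\psi(|\theta|)$ tracked through the change of variables. Step 4: Collect the Jacobian and the remaining scalar prefactors to obtain $\frac{1}{4\pi}|x|^2\ww^2 e^{-|x|^2/4}$; here one uses $\ww=\theta_2\psi(|\theta|)>0$ from Assumption (A) so all quantities are legitimately positive and the manipulations (in particular interchanging the order of the $\lambda$- and $s$-integrations by Fubini/Tonelli) are justified by the absolute integrability guaranteed by the $\frac{|\lambda|}{\sinh|\lambda|}$ decay.

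I expect the main obstacle to be Step 2–Step 3: carefully matching the Bessel recursion so that the amplitude $\mathcal{V}(\lambda)$ and the $\psi$-coefficient in the phase combine into exactly $\vv(\lambda)$ (and not $\vv$ times some leftover rational function of $|\lambda|$), and simultaneously ensuring that the Fourier–Gaussian substitution produces precisely the argument $\frac14|x|^2(u+2\ww s_1 e_2+2\ww s_2 e_3)$ with the correct normalization $\frac{1}{4\pi}\ww^2$. The bookkeeping of powers of $|x|$ and of $\ww$ through the two simultaneous changes of variables (the $s$-substitution and the implicit rescaling hidden in $\delta_{|x|}$) is where sign and constant errors are most likely; I would double-check it by testing the scaling property $\Ga((hX,h^2T);\lambda)=h^2\Ga((X,T);\lambda)$ noted just before the statement, which forces the homogeneity of both sides of \eqref{ehk4} to agree. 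A secondary, more technical point is justifying that one may legitimately use the Bessel-function representation under the integral sign and differentiate/recurse inside it; this is routine given uniform convergence of the product formula \eqref{defV} on compact sets and the exponential decay of $\mathcal{V}$, so I would dispatch it briefly rather than dwell on it.
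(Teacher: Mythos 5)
Your plan is essentially the paper's proof: the paper applies the Gaussian identity \eqref{Hor90} with $q=2$, $A=\tfrac12\,\tfrac{|\lambda|^2}{|\lambda|\coth|\lambda|-1}\,|x|^2\,\ww^2\,\I_2$ and $Y=\tfrac{|x|^2\ww}{2}(\lambda_2,\lambda_3)$ directly to \eqref{ehk2'}, which is exactly your Fourier--Gaussian substitution in $(\lambda_2,\lambda_3)$, and the resulting determinant factor $\tfrac{|\lambda|^2}{|\lambda|\coth|\lambda|-1}$ converts $\cV(\lambda)$ into $\vv(\lambda)$ by pure algebra (no Bessel recursion is needed at that point; the Bessel identities only supply the product/series expansions \eqref{J32}--\eqref{J33} for $\vv$ and $\widetilde{\Upsilon}$ used afterwards). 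Your Steps 3--4, the Fubini justification and the scaling check then coincide with the paper's bookkeeping, up to the harmless slip that the coefficient is $+\psi(i|\lambda|)$, not $-\psi(i|\lambda|)$, and that $\cV$ times the reciprocal coefficient equals $\vv$, not $\vv^{-1}$.
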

	
	Its proof is provided in Subsection \ref{sec51}.   We emphasize that we have used here the new coordinates $|x|\, \ww \, s$ in view of the scaling property of $\Ga$. The reason for our choice of $\ww$ herein can be found in Theorem \ref{tmm} below. The key point is that $\rP$ can be considered formally as the heat kernel at time $1$ on an imagined but non-existent {\it Heisenberg-type group $\H(2, 3)$}, with dimension $3$ in the center and dimension $2$ on the first slice in the stratification.  To see this more explicitly, as in \cite[Theorem~2.2 and Corollary~2.3]{Li20} (cf. also \cite[Theorem~4.2]{LZ22}), we introduce the squared ``intrinsic distance'' associated to $\rP$:
	\begin{align*}
		\rD(X,T)^2 := \sup_{\tau \in \R^3, \ |\tau| < \vartheta_1} \Ga((X,T); i \tau), \qquad (X,T) \in \R^2 \times \R^3.
	\end{align*}
	See Proposition \ref{lD} below for more properties of $\rD(X, T)^2$, especially the scaling property.
	
	\medskip
	
	Let $I_0$ denote the modified Bessel function of order $0$ (cf. \cite[\S~8.431.3]{GR15}), i.e.,
	\begin{align} \label{defI0}
		I_0(r) := \frac1{\pi} \int_0^\pi e^{r\cos\gz} \, d\gz = \frac1{2\pi} \int_{-\pi}^\pi e^{r\cos\gz} \, d\gz,
	\end{align}
	and set in the sequel the even functions
	\begin{align} \label{qdef}
		\Upsilon(r) := \frac{r^2}{1 - r \, \cot{r}}, \qquad \q(r):=\frac{r^2 \, \Upsilon(r)}{-\sin r \, \Upsilon'(r) \, \sqrt{-\Upsilon''(r)}}, \qquad r\in(-\vtz_1,\vtz_1).
	\end{align}
	
	The following proposition says that $\rP$ satisfies the uniform bounds \cite[(1.5)]{Li10} as well as the uniforms asymptotics behaviors \cite[Th\'eor\`emes 1.4-1.5]{Li10} with $n = 1$ and $m = 3$ (of course, with $(x, t; |x|, d(x, t))$ therein replaced by our $(X, T; |X|, \rD(X, T))$, and some slight, natural modifications):
	
	\begin{prop}\label{lP}
		$\rP$ admits the following properties:	
		{\em\begin{compactenum}[(i)]
				\item The positivity, namely $\rP(X,T) > 0$.
				\item $P(1; 3; \cdot, \cdot)$-type uniform bounds in the sense of \cite[Th\'eor\`eme~1.1]{Li10}, namely,
				\begin{equation} \label{ulbdd}
					\rP(X,T) \sim \frac{1}{(1 + \rD(X,T))^2 \,  (1 + |X| \, \rD(X,T))^{\frac{1}{2}}} \, e^{- \frac{\rD(X,T)^2}{4}}, \quad \forall (X, T) \in \R^2 \times \R^3.
				\end{equation}
				
				\item It holds uniformly for any $X \ne 0$ and all $\rD(X,T)$ large enough that
				\begin{equation*}
					\rP(X,T)=\sqrt{2\vtz_1}16\pi^2 \, \q(|\tau^*|) \, e^{-\frac{\rD(X,T)^2}{4}}e^{- \frac{\vartheta_1 |X|^2}{2(\vartheta_1 - |\tau^*|)}}
					\,I_0 \left( \frac{\vartheta_1 |X|^2}{2(\vartheta_1 - |\tau^*|)} \right) \,
					\frac{ (\vartheta_1 - |\tau^*|)^{-\frac12}}{ |X|^2  }\,(1 + o(1)),
				\end{equation*}
				where $\tau^*=\tau^*(X,T)$ is the unique critical point of $\widetilde{\Gamma}((X,T),i\,(\cdot))$ in $B_{\rr^3}(0,\var)$.
		\end{compactenum}}
	\end{prop}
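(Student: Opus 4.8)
The plan is to recognize $\rP(X,T)$ as a genuine heat-kernel integral on a Heisenberg-type group $\H(2,3)$ and then invoke, essentially verbatim, the existing theory of H-type heat kernels developed in \cite{Li07,Li10}. Concretely, $\H(2,3)$ is the group $\R^2\times\R^3$ whose sub-Laplacian has heat kernel at time $1$ of the form $c\int_{\R^3}\frac{|\lambda|^2}{\sinh|\lambda| \cdots} e^{-\frac14\Gamma((X,T);\lambda)}\,d\lambda$; comparing with the definitions of $\vv$ and $\Ga$ in the excerpt one checks that $\vv(\lambda)=\prod_{j\ge1}(1+|\lambda|^2/(j^2\pi^2))^{-3/2}\cdot(\text{const})$ and that the phase $\Gamma$ is exactly the one produced by the partial Fourier transform in the center for an H-type structure with $m=3$ center dimensions and $2n=2$ horizontal dimensions. (The only anomaly — that $n=1/2$ is not an integer — is immaterial: the formula for the kernel as an oscillatory integral makes sense for every real parameter, and all the estimates in \cite{Li07,Li10} are proven by direct analysis of that integral rather than by probabilistic or representation-theoretic arguments that would require an actual group.) Once this identification is in place, part (i) is immediate: deform the $\lambda$-contour from $\R^3$ to $\R^3+i\tau_0$ for a suitable real $\tau_0$ with $|\tau_0|<\vtz_1$, where the integrand becomes real, positive and integrable — this is the standard trick, and the constant $\vtz_1$ (the first positive zero of $r\mapsto r-\tan r$, equivalently the radius beyond which $\Upsilon$ ceases to be well-behaved) is precisely the analogue of the constant governing convergence in the H-type case.

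For part (ii), I would reproduce the proof of \cite[Th\'eor\`eme~1.1]{Li10}. The upper and lower bounds split along the usual dichotomy: the region $\rD(X,T)\lesssim1$ (near-diagonal, handled by elementary bounds on the oscillatory integral and its real-contour shift), and the region $\rD(X,T)\gg1$, where one shifts the contour to pass through the critical point $i\tau^*$ of $\widetilde\Gamma((X,T);i(\cdot))$ and applies a Laplace/stationary-phase analysis. The two competing regimes inside $\rD\gg1$ are $|X|\,\rD(X,T)\lesssim1$ versus $|X|\,\rD(X,T)\gg1$; in the first the critical point sits in the interior and one gets the full $(8\pi)^{3/2}(\det)^{-1/2}$ factor, while in the second the Hessian degenerates in the directions transverse to the abnormal set and one of the $d\lambda$-integrations contributes an extra $|X|^{-1}$ rather than $|X|^{-1/2}$; balancing these is exactly what produces the sharp exponent $(1+|X|\rD(X,T))^{1/2}$ in \eqref{ulbdd}. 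The scaling property $\Gamma((hX,h^2T);\lambda)=h^2\Gamma((X,T);\lambda)$ — hence $\rD(hX,h^2T)=h\,\rD(X,T)$, to be recorded in Proposition \ref{lD} — reduces everything to, say, $|X|=1$ or to the sphere $\rD=1$, which is what makes the uniformity manageable.

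For part (iii), I would run the refined stationary-phase computation from \cite[Th\'eor\`emes~1.4--1.5]{Li10} in the regime $X\ne0$, $\rD(X,T)\to+\infty$. One shifts the contour so that it passes through $i\tau^*$, writes $\lambda=\tau^*\,i + \mu$, and integrates out the two ``horizontal'' variables of $\mu$ first; because of the H-type structure these two directions enter the phase through a rotationally symmetric quadratic form, and the resulting Gaussian integral over $\R^2$ collapses — via the integral representation \eqref{defI0} of $I_0$ — to the factor $e^{-\frac{\vtz_1|X|^2}{2(\vtz_1-|\tau^*|)}}I_0\!\big(\frac{\vtz_1|X|^2}{2(\vtz_1-|\tau^*|)}\big)$. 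The remaining one-dimensional integral along the direction of $\tau^*$ is then treated by genuine stationary phase; its second-derivative factor is $\sqrt{-\Upsilon''(|\tau^*|)}$, the amplitude contributes $\Upsilon(|\tau^*|)$ and the Jacobian of the map $T\mapsto\tau^*$ contributes the remaining pieces, and collecting constants gives exactly the function $\q$ of \eqref{qdef} together with the $\sqrt{2\vtz_1}\,16\pi^2$ prefactor and the powers $(\vtz_1-|\tau^*|)^{-1/2}|X|^{-2}$. The uniqueness of $\tau^*$ in $B_{\R^3}(0,\vtz_1)$, needed to make the statement meaningful, follows from strict convexity of $-\widetilde\Gamma((X,T);i(\cdot))$ on that ball — the operator-convexity input, which here reduces to the scalar convexity of $r\mapsto -\Upsilon(r)$ on $(-\vtz_1,\vtz_1)$.

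The main obstacle is making the ``imagined but non-existent $\H(2,3)$'' rigorous: none of the cited results in \cite{Li07,Li10} is literally quotable because they are stated for integer-dimensional groups, so one must verify that every step of their proofs — the contour deformations, the convergence at $|\tau|\to\vtz_1$, the stationary-phase expansions, and in particular the uniform control of the remainder across all three regimes — goes through for the non-integer parameter $n=1/2$. I expect this to be routine but tedious bookkeeping rather than a source of genuinely new difficulty, since the constant $\vtz_1$ and the functions $\psi,\Upsilon,\q$ already encode all the structural features that the H-type arguments rely on; the real content has been front-loaded into Proposition \ref{nPn1} and the definition of $\rD$.
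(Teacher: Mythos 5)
There is a genuine gap, on three counts. First, your identification of $\rP$ with an H-type kernel of fractional parameter is factually wrong: $\vv(\lambda)=\frac{|\lambda|^3}{|\lambda|\cosh|\lambda|-\sinh|\lambda|}=3\prod_{k\ge1}\bigl(1+|\lambda|^2/\vartheta_k^2\bigr)^{-1}$ (cf.\ \eqref{defcV}), not $\prod_{j}\bigl(1+|\lambda|^2/(j^2\pi^2)\bigr)^{-3/2}=(|\lambda|/\sinh|\lambda|)^{3/2}$; its singularities on the imaginary axis are simple poles at the $\vartheta_k$ (first one at $\vartheta_1\approx4.49$), not fractional-power singularities at $k\pi$, and likewise the phase involves $\Upsilon(r)=r^2/(1-r\cot r)$ rather than $r\coth r$. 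So the plan of quoting \cite{Li07,Li10} ``essentially verbatim, modulo a non-integer parameter'' is not available; the paper's appendix instead adapts the arguments of \cite{LZ212,LZ22} using the explicit product over the $\vartheta_k$. Second, your proof of (i) does not work: shifting the contour to $\R^3+i\tau_0$ does not make the integrand real and positive (the factor $e^{iT\cdot\lambda}$ and the analytically continued amplitude remain complex), and positivity of such an oscillatory integral is genuinely non-trivial. The paper proves it by writing each factor $(1+|\lambda|^2/\vartheta_l^2)^{-1}e^{\cdots}$ as a two-dimensional Gaussian integral via \eqref{Hor90}, so that finite partial products $Q(j_0,l_0;X,T)$ become manifestly positive after the $\lambda$-integration, and then passing to the infinite product by a convolution argument.

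Third, in (iii) your mechanism for the Bessel factor is incorrect: integrating the two directions of $\mu$ transverse to $\tau^*$ against a rotationally symmetric quadratic form is a plain Gaussian integral and yields $\pi/c$, not $e^{-r}I_0(r)$; moreover the Hessian there does not degenerate (it blows up like $(\vartheta_1-|\tau^*|)^{-2}$ and $(\vartheta_1-|\tau^*|)^{-3}$). The real difficulty, which your sketch never confronts, is that as $|\tau^*|\to\vartheta_1^-$ the critical point collides with the simple pole of $\vv(i\tau)$ at $|\tau|=\vartheta_1$, so uniform stationary phase fails. The paper's proof of case (II) peels off the $k=1$ factor of the product, represents it as a Gaussian integral over an auxiliary variable $\eta\in\R^2$ (formula \eqref{expP2}), deforms the remaining (now regular) $\lambda$-integral through $i\tau^*$, and obtains the factor $e^{-|\bY|^2}I_0(|\bY|^2)$, with $|\bY|^2\approx\frac{\vartheta_1|X|^2}{2(\vartheta_1-|\tau^*|)}$, from the angular integration over the annulus $\blacklozenge_3$ in the $\xi$-plane (the lemma on $\widetilde{\cI}_\delta$), after controlling the other four regions by the upper bound of Proposition \ref{uppP2}; this is what produces the uniform interpolation that your two-step ``transverse Gaussian $+$ 1D stationary phase'' cannot give (it is only valid in the regime $\vartheta_1-|\tau^*|\gtrsim1$, which is the paper's easier case (I)). Finally, note that the paper obtains (ii) as a consequence of (iii) together with positivity, continuity and \eqref{eD1}, rather than by redoing the global two-regime analysis you outline.
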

	To show Proposition \ref{lP}, it is enough to adapt the method in \cite{LZ212} for (i), and the method in \cite{LZ22} for (ii) and (iii). Hence its proof is postponed to Appendix \ref{secA}.

	 Return to $p(\delta_{h^{-1/2}}(g_u))$ ($h \to 0^+$), namely \eqref{nIFn}. Using the scaling property of $\rD$ (cf. Proposition \ref{lD} (iii) below), Propositions \ref{nPn1} and \ref{lP} say that it is now a typical Laplace-type integral with positive integrand. Then inspired by the main idea of the standard Laplace's method, it is natural to study the minimum of $\rD\!\left( \ww \, s,\frac14(u + 2\ww \, s_1 e_2 + 2\ww \, s_2  e_3)\right)^2$. The following theorem will play an important role.
	
	\begin{theo}\label{tmm}
		Let $\theta, u$ and $\ww$  be as in  Assumption (A) (cf. \eqref{n78n}). Set
		\begin{align}
			\cD(u;s) :=  \rD\!\left( \ww s,\frac14(u + 2\ww s_1 e_2 + 2\ww\,  s_2  e_3)\right)^2, \quad s \in \R^2. \label{decD}
		\end{align}
		Then $\ss =(-1,0)$ is the unique minimum point of $\cD(u;\cdot)$.
		Moreover,  with $\varphi_1, \varphi_2$ and $\varphi_3$ defined by \eqref{defs}, we have:
		\begin{equation}\label{dEn2}
			\begin{aligned}
				\cD(u;\ss)
				&=   -\theta_2^2 \, \psi(|\theta|) +   u \cdot \theta
				=  \frac{\theta_2^2 }{ |\theta|^2} \left[ \left( \frac{|\theta|}{\sin{|\theta|}}\right)^2 -1 \right] = \frac{\varphi_1(|\theta|)}{|\tz|} u\cdot\tz \\[1mm]
				&= \varphi_2(|\theta|) \, u_1 \, \frac{|\theta|}{\theta_1} = \varphi_3(|\theta|) \sqrt{u_1 \! \left( u_1 + u_2 \, \frac{\theta_2}{\theta_1} \right)}.
			\end{aligned}
		\end{equation}
	\end{theo}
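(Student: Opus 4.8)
The plan is to exploit the variational characterization of $\rD^2$ as a supremum, namely $\rD(X,T)^2 = \sup_{|\tau| < \vartheta_1} \widetilde{\Gamma}((X,T); i\tau)$, together with the diffeomorphism $\Lambda$ from Theorem \ref{mapL} and the algebraic identities for $d(g_u)^2$ in Theorem \ref{RLT1}. First I would write out $\cD(u;s)$ explicitly using the definition of $\widetilde{\Gamma}$: for $s = (s_1, s_2)$, the ``spatial'' argument is $X = \ww s$ and the ``central'' argument is $T = \tfrac14(u + 2\ww s_1 e_2 + 2\ww s_2 e_3)$, so that $\widetilde{\Gamma}((X,T); i\tau) = \Upsilon(|\tau|)\,\ww^2 |s|^2 \big/ |\tau|^2 \cdot (\text{something}) \; + \; T\cdot\tau$ after inserting $\Upsilon(r) = r^2/(1 - r\cot r)$ — more precisely $\widetilde{\Gamma}((X,T); i\tau) = \frac{|\tau|^2}{|\tau|\coth|\tau| - 1}|X|^2$ becomes, under $\tau \mapsto i\tau$, the expression $\psi(|\tau|)$-weighted as in \eqref{defref}, plus the linear term $T\cdot\tau$. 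The point $\ss = (-1,0)$ should be a critical point of $\cD(u;\cdot)$ because, by construction (see the remark following \eqref{32N96}), $\theta$ solves the critical-point equation $\nabla_\tau \phi((e_1, 4^{-1}(u_1,u_2,0)); \tau) = 0$, and $u = \Lambda(\theta_1,\theta_2)$ is precisely built so that this holds.

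The main steps, in order: (1) Show $\ss = (-1,0)$ is a critical point of $s \mapsto \cD(u;s)$. Since $\rD^2$ is itself a supremum attained at an interior maximizer $\tau^*(X,T)$, the envelope theorem gives $\nabla_s \cD(u;s) = \nabla_s \widetilde{\Gamma}\big|_{\tau = \tau^*}$ evaluated at the optimal $\tau$; one then checks that at $s = \ss$ the maximizer is $\tau^* = \theta$ (using the sign conventions $\ww = \theta_2\psi(|\theta|) > 0$ and $\ss = (-1,0)$), and that the linear dependence of $\widetilde{\Gamma}$ on $T$, hence on $s$, makes this gradient vanish exactly when Assumption (A)'s critical-point relation $u = \Lambda(\theta)$ holds. (2) Show this critical point is the \emph{unique} minimum. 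Here I would use convexity: $\rD(X,T)^2$ is a sup of functions that are (jointly in $(X,T)$, after the right reparametrization) convex — this is the operator-convexity input of $-s\cot s$ underlying all of \cite{Li20} — so $\cD(u;\cdot)$, being a composition of $\rD^2$ with an affine map in $s$, is convex in $s$; a convex function has at most one critical point that is automatically the global minimum, provided one rules out escape to infinity, which follows from the growth $\rD(X,T)^2 \gtrsim |X|^2$ forcing $\cD(u;s) \to \infty$ as $|s| \to \infty$. (3) Evaluate $\cD(u;\ss)$. By step (1) the maximizer in the definition of $\rD^2$ at $s = \ss$ is $\tau = \theta$, so $\cD(u;\ss) = \widetilde{\Gamma}((\ww\ss, \tfrac14(u + 2\ww\ss_1 e_2))\,; i\theta) = \phi(g_u; \theta) - 1 = -\theta_2^2\psi(|\theta|) + u\cdot\theta$, where the last identity is just \eqref{defref2} with $|x| = 1$ minus the constant term $1$; the remaining equalities in \eqref{dEn2} are then copied verbatim from \eqref{dEn} in Theorem \ref{RLT1} after subtracting $1$, invoking the definitions \eqref{nABn3}–\eqref{defs} of $\varphi_1, \varphi_2, \varphi_3$ and the elementary identity $(|\theta|/\sin|\theta|)^2 - 1 = -|\theta|^2\psi(|\theta|) \cdot (\text{rearrangement})$.

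The hard part will be step (2), the uniqueness and global-minimum claim, since $\rD^2$ is only the distance of a \emph{fictitious} group $\H(2,3)$ and one cannot simply cite an existing H-type result verbatim. I expect to need the precise convexity/concavity properties of $\Upsilon$ and $\psi$ on $(-\vartheta_1, \vartheta_1)$ — in particular that $-\Upsilon''> 0$ there, which is implicit in the definition \eqref{qdef} of $\q$ — to guarantee that the maximizer $\tau^*(X,T)$ in the sup defining $\rD^2$ is unique and depends nicely on $(X,T)$, and hence that $\cD(u;\cdot)$ is genuinely strictly convex (not merely convex) along the relevant directions, or at least strictly convex transverse to any flat directions that the abnormal set $\{(x,0)\}$ might introduce. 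A secondary subtlety is bookkeeping the sign conventions: $\Omega_{+,1}$ versus $\Omega_{-,4}$ give $|\theta| < \pi$ versus $|\theta| > \pi$, and in the latter regime $\psi(|\theta|) < 0$ and the Hessian of $\phi$ has a negative eigenvalue (Theorem \ref{mapL}(2)), so one must verify that the \emph{outer} supremum over $|\tau| < \vartheta_1$ in $\rD^2$ still lands on $\theta$ and still produces a minimum (not a saddle) of $\cD(u;\cdot)$ — this is exactly where the restriction $|\tau| < \vartheta_1$ (rather than all of $\R^3$) and the specific constant $\vartheta_1$ with $\tan\vartheta_1 = \vartheta_1$ do their work.
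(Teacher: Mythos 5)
Your steps (1) and (3) are sound as far as they go: verifying the critical-point relation \eqref{crieq} at $\tau=\theta$ and using the concavity of $\Gamma((X,T);\cdot)$ (Proposition \ref{lD}(i)) to identify $\theta$ as the inner maximizer at $s=\ss$ is exactly how the paper obtains $\cD(u;\ss)=-\theta_2^2\psi(|\theta|)+u\cdot\theta$. But step (2), which you yourself identify as the hard part, rests on a claim that is false: $\cD(u;\cdot)$ is \emph{not} convex, and $\rD^2$ is \emph{not} a sup of convex functions. For $\pi<|\tau|<\var$ one has $\Upsilon(|\tau|)<0$ (cf. \eqref{deD}), so $\Gamma((X,T);\tau)=\Upsilon(|\tau|)|X|^2+4T\cdot\tau$ is concave, not convex, in $X$; and one can check directly that convexity of $\cD(u;\cdot)$ fails at $s=0$. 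Indeed, by \eqref{eD3} and \eqref{aPsi}, along the $s_2$-axis (where $\partial_{s_2}|T(s)|$ vanishes at $0$) one gets $\cD(u;(0,s_2))=\var|u|-2\sqrt{\var|u|}\,\ww\,|s_2|+O(s_2^2)$, i.e.\ a strict local \emph{maximum} at $s_2=0$ along that line — the concave kink reflecting the failure of $C^1$ regularity of $\rD^2$ at $X=0$ recorded in Remark \ref{smoD}. A convex function cannot have a strict interior local maximum on a line, so the whole ``convex $\Rightarrow$ unique critical point $=$ global minimum'' mechanism is unavailable, and no amount of strict-convexity refinement of $\Upsilon$ will restore it. A secondary but real problem is circularity: you propose to import the remaining equalities in \eqref{dEn2} from \eqref{dEn}, but within this paper Theorem \ref{RLT1} is \emph{deduced from} Theorem \ref{tmm} (Section \ref{ssd}); those identities must instead be derived directly, which the paper does from elementary relations such as $\psi(r)+r\psi'(r)=\sin^{-2}r-r^{-2}$ and $\varphi_3=\sqrt{\varphi_1\varphi_2}$.

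The paper's uniqueness argument is quite different and shows what actually replaces convexity. A minimizer $s^*$ exists by the coercivity and continuity of $\rD^2$ (Proposition \ref{lD}(iii)--(iv)). Step 1 rules out $s^*=0$ not by smooth calculus (impossible there) but by the exact value $\cD(u;0)=\var|u|=\varphi_1(\var)|u|$ together with the strict monotonicity of $\varphi_1$ (Lemma \ref{0zl1}), which gives $\cD(u;0)>\cD(u;\ss)$. Step 2 rules out $|\tau(s^*)|=\pi$ using $\Upsilon(\pi)=0$, $\Upsilon'(\pi)=-\pi$ and the hypothesis $\pi u_2^2\neq4u_1$ from \eqref{n78n}. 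Step 3 is the crux: the first-order equations alone only give $\Lambda(\tau_{*1},\tau_{*2})=(u_1,u_2)$ for the inner maximizer $\tau_*$, which does not yet pin down $\tau_*$; the paper additionally exploits the \emph{second-order} necessary condition $\partial_{s_2}^2\cD(u;s^*)\ge0$, which translates into $\psi(|\tau_*|)\K_3(\tau_{*1},\tau_{*2})>0$, and then the characterization \eqref{ChaO-4} of $\Omega_{-,4}$ puts $(\tau_{*1},\tau_{*2})\in\Omega_{+,1}\cup\Omega_{-,4}$, so the global injectivity of $\Lambda$ on these domains (Theorem \ref{mapL}, proved via Hadamard's theorem) forces $\tau_*=\theta$ and $s^*=\ss$. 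Any repair of your proposal needs a substitute for convexity that handles both the concave kink at $s=0$ and the saddle-type behaviour when $|\tau_*|>\pi$; the second-order condition combined with the diffeomorphism $\Lambda$ is precisely that substitute.
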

	
	Its proof will be postponed to Section \ref{s4}.  Combining this with Proposition \ref{lP} (ii), it is easy to show that $d(g_u)^2 = 1 + \cD(u; \ss)$, namely Theorem \ref{RLT1}.
		
		However, the Laplace's method is far from sufficient for the main goal of this article. Roughly speaking, the phase function in our Laplace-type integral (cf. Propositions \ref{nPn1} and \ref{lP}), namely $|x|^2 \cD(u; s)/4$, is not $C^1$ when $s = 0$ (see Remark \ref{smoD} below), which makes things difficult when $\ww$ small. In fact, as far as the authors know, there is no suitable method to deal with this situation. Furthermore, let $\m$ denote its minimum (i.e. $\m = |x|^2 \cD(u; \ss)/4$), $\chri, \chrii > 0$  two eigenvalues of  $\mathrm{Hess}_{\ss} \,\frac{|x|^2}{4}\cD(u;\ss)$ with $\chri \gtrsim \chrii$ (cf. Subsection \ref{pfN} below).  It is worthwhile to point out that for $d(g) \to +\infty$, the Laplace's method is sufficient only for the case where both $\m, \chri$ and $\chrii$ are large enough.
		
		For instance, in the case $\m \to +\infty$, we first use Proposition \ref{lP} (iii) to simplify the integrand in \eqref{ehk4}. The most delicate situation is that $|\theta| \ge 1$ with $\chri \gg 1$, and notice that $\chrii$  can be bounded. To obtain the uniform asymptotic behavior of $p(g)$ in such case, we will choose suitable coordinates (in fact the modified polar coordinates up to a scaling), by which the phase function can be divided into the angular part and the radical part in the main region. We will use Laplace's method to deal with  the radical part and the method in \cite{Li07} to cope with the angular part. In the opposite case where $\m$ is bounded, we'll make use of a completely different approach.

		Furthermore, to establish the sharp estimates for the derivatives of the heat kernel, unlike in the setting of GM-M\'etivier groups, new techniques need to be introduced.
	
 The remainder of this article is organized as follows. Theorem \ref{mapL} is proved in Section \ref{sec3}. The proofs of Proposition \ref{nPn1} and Theorem \ref{tmm} are given in Section \ref{s4}. We obtain the explicit expression of the squared distance as a consequence of Laplace's method in Section \ref{ssd}.
 The uniform asymptotic behaviour at infinity for the heat kernel at time 1 are divided into cases. We establish the first asymptotic in Section \ref{s44} for the case where $|\theta| \le 3$ and $\theta_2 |x| \to +\infty$.  After some preparations in Section \ref{s5}, we will attack the most difficult and complicated situation in Section \ref{bigchr}. Section \ref{schr} is devoted to the case where $\m \to +\infty$ with $\chri \lsim 1$, while Section \ref{s32} is for the remaining case. The summaries of our main results as well as the proof of Theorem \ref{pbnd} are collected in Section \ref{sec11}. Finally in Section \ref{sdbpd} we establish the sharp bounds for derivatives of the heat kernel. In Appendix \ref{secA} we give the proof for our Proposition \ref{lP}.

\section{Proof of Theorem \ref{mapL}}\label{sec3}
\setcounter{equation}{0}

\medskip

In this section, we establish the two key analytic-diffeomorphisms provided in Theorem \ref{mapL}. Recall that this part (up to some modification) is extracted from \cite{Li20}.

As we have seen in Theorems \ref{mapL}-\ref{RLT1} and Corollary \ref{RLT2}, the
function $\psi$ plays an important role. Let us begin with

\subsection{Fine properties of the function $\psi$ and its derivatives}

Notice that:
\begin{align}\label{EFs}
	\psi(r) := \frac{1 - r \cot{r}}{r^2} =  \sum_{j = 1}^{+\infty} (j \pi)^{-2} \left(  \frac{1}{1 -  \frac{r}{j \, \pi}} +  \frac{1}{1 +  \frac{r}{j \, \pi}}  \right) = 2 \sum_{j = 1}^{+\infty} \frac{1}{(j \, \pi)^2 - r^2}.
\end{align}
In the second and last ``$=$'' we have used the series expansion of the function $r \cot r$; see for example \cite[\S 1.421.3, p.\,44]{GR15}  with slight modification. That is,
\begin{align} \label{IS}
	-r \cot{r} = -1 + r^2  \sum_{j = 1}^{+\infty} (j \pi)^{-2} \left(  \frac{1}{1 -  \frac{r}{j \, \pi}} +  \frac{1}{1 +  \frac{r}{j \, \pi}}  \right) = -1 + \int_{-\frac{1}{\pi}}^{\frac{1}{\pi}} \frac{r^2}{1 - \rho r} \, d\nu(\rho),
\end{align}
where the positive finite measure $\nu$ on $[-\frac{1}{\pi}, \, \frac{1}{\pi}]$
is given by
\begin{align*}
	\nu = \sum_{j = 1}^{+\infty} \left( \frac{1}{j \, \pi} \right)^2 \left( \delta_{\frac{1}{j \, \pi}} + \delta_{-\frac{1}{j \, \pi}} \right) \quad \mbox{with $\delta_a$ the usual Dirac measure at point $a$.}
\end{align*}

Now we collect some properties for $\psi$ on $[0,\pi)$.

\begin{lem} \label{n32l}
	We have
	\begin{gather}
		\psi''(r) > \frac{\psi'(r)}{r} \geq \lim_{r \to 0} \frac{\psi'(r)}{r} > 0, \quad 0 < r < \pi,     \label{Ii2} \\
		\psi(r) \, \psi''(r) > 2 \psi'(r)^2, \quad 0 \leq r < \pi.    \label{Ii3}
	\end{gather}
\end{lem}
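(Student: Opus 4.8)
The plan is to work entirely from the partial-fraction expansion \eqref{EFs}, i.e. $\psi(r) = 2\sum_{j=1}^{+\infty} a_j^{-1}$ with $a_j = a_j(r) := (j\pi)^2 - r^2 > 0$ for $0 \le r < \pi$. On each compact subinterval of $(-\pi,\pi)$ this series, as well as every series obtained from it by termwise differentiation, converges uniformly (the $j$-th term being $O(j^{-2k})$), so I would differentiate under the summation sign to get
\[
\psi'(r) = 4r \sum_{j=1}^{+\infty} a_j^{-2}, \qquad \psi''(r) = 4\sum_{j=1}^{+\infty} a_j^{-2} + 16 r^2 \sum_{j=1}^{+\infty} a_j^{-3}, \qquad \frac{\psi'(r)}{r} = 4\sum_{j=1}^{+\infty} a_j^{-2}.
\]

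To get \eqref{Ii2}: since each $a_j$ is positive and strictly decreasing on $[0,\pi)$, each $a_j^{-2}$ is positive and strictly increasing there, hence so is $\psi'(r)/r$; in particular $\psi'(r)/r \ge \lim_{r \to 0^+}\psi'(r)/r = 4\sum_{j=1}^{+\infty}(j\pi)^{-4} > 0$, which is the last two inequalities. The first one is then immediate from the identity $\psi''(r) - \psi'(r)/r = 16 r^2 \sum_{j=1}^{+\infty} a_j^{-3}$, whose right-hand side is strictly positive for $0 < r < \pi$.

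To get \eqref{Ii3}: writing $S_k := \sum_{j=1}^{+\infty} a_j^{-k}$ and substituting the three formulas above, a short computation gives
\[
\psi(r)\,\psi''(r) - 2\,\psi'(r)^2 = 8\, S_1 S_2 + 32\, r^2 \bigl( S_1 S_3 - S_2^{2} \bigr).
\]
The Cauchy--Schwarz inequality applied to $(a_j^{-1/2})_{j\ge 1}$ and $(a_j^{-3/2})_{j\ge 1}$ gives $S_2^{2} \le S_1 S_3$, so the second term is $\ge 0$; since $8\, S_1 S_2 > 0$, the whole expression is strictly positive on $[0,\pi)$, which is \eqref{Ii3}.

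The lemma is therefore essentially bookkeeping; the only steps that need a little care are the justification of termwise differentiation on compacta of $(-\pi,\pi)$ and the correct pairing in the Cauchy--Schwarz estimate. I expect no real obstacle here — the genuine difficulties lie in the later sections that exploit these monotonicity- and convexity-type properties of $\psi$ (and in the operator-convexity statement underlying Theorem \ref{mapL}).
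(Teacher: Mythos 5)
Your proof is correct: termwise differentiation of \eqref{EFs} on compact subsets of $(-\pi,\pi)$ is legitimate, your formulas for $\psi'$ and $\psi''$ agree with \eqref{N32ei1}, and both the identity $\psi''(r)-\psi'(r)/r=16r^2\sum_j a_j^{-3}$ (with your $a_j=(j\pi)^2-r^2$) and the Cauchy--Schwarz step $S_2^2\le S_1S_3$ are sound. The route is the same in spirit as the paper's — a series representation of $\psi$ combined with a Cauchy--Schwarz/H\"older estimate — but the details differ in a way that slightly streamlines the argument. For \eqref{Ii2} the paper argues from the Taylor expansion $\psi(r)=\sum_j b_jr^{2j-2}$ with $b_j>0$ rather than from your explicit partial-fraction identity; both give the monotonicity of $\psi'(r)/r$ and the positivity of $\psi''-\psi'/r$ from positivity of coefficients. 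For \eqref{Ii3} the paper represents $\psi,\psi',\psi''$ as integrals against the symmetric atomic measure $\nu$ from \eqref{IS} and must use a \emph{strict} H\"older inequality together with the strict bound $|\psi'(r)|<\int|\rho|(1-r\rho)^{-2}\,d\nu(\rho)$ to obtain the strict inequality; in your version strictness is free, because after grouping the $\pm$ atoms you get $\psi\psi''-2(\psi')^2=8S_1S_2+32r^2\bigl(S_1S_3-S_2^2\bigr)$ with the first term strictly positive, so the non-strict Cauchy--Schwarz bound $S_2^2\le S_1S_3$ suffices, and the case $r=0$ required by \eqref{Ii3} is covered automatically. Either argument is complete; yours has the minor advantage of using one representation uniformly for both parts and avoiding any discussion of when equality holds.
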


\begin{proof}
	A simple and direct proof  for \eqref{Ii2} is to use the fact that (see the second equality in \eqref{EFs}, or \cite[\S 1.411.7, p.\,42]{GR15} with slight modification of notation)
	\begin{align*}
		\psi(r) = \sum_{j = 1}^{+\infty} b_j r^{2 j - 2}, \quad  \mbox{with } \, b_j > 0, \, \forall \, j \in \N^*.
	\end{align*}
	
	Aiming at \eqref{Ii3}, it follows from \eqref{EFs} that:
	\begin{align*}
		\psi(r) = \int_{-\frac{1}{\pi}}^{\frac{1}{\pi}} \frac{d\nu(\rho)}{1 - r \rho}, \quad \psi'(r) = \int_{-\frac{1}{\pi}}^{\frac{1}{\pi}} \frac{\rho}{(1 - r \rho)^2} \, d\nu(\rho), \quad \psi''(r) = 2 \int_{-\frac{1}{\pi}}^{\frac{1}{\pi}} \frac{\rho^2}{(1 - r \rho)^3} \, d\nu(\rho),
	\end{align*}
	then H\"older's inequality shows that
	\begin{align*}
		2 \psi'(r)^2 < 2 \left( \int_{-\frac{1}{\pi}}^{\frac{1}{\pi}} \frac{|\rho|}{(1 - r \rho)^2} \, d\nu(\rho) \right)^2 < \psi(r) \, \psi''(r).
	\end{align*}
	This concludes the proof of \eqref{Ii3} and hence the lemma.
\end{proof}

The following two lemmas will play an important part in the proof for the first analytic-diffeomorphism of Theorem \ref{mapL}. Lemma \ref{Hetau1} is also vital to the heat kernel asymptotics in Section \ref{s44}.

\begin{lem}
	It holds that
	\begin{align} \label{iN38}
		\psi(r) > \sqrt{\frac{\psi'(r)}{r}}, \qquad \forall \, 0 \, \leq r < \pi.
	\end{align}
\end{lem}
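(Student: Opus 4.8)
The plan is to reduce \eqref{iN38} to a one-line fact about series with positive terms, exploiting the partial-fraction expansion of $\psi$ recorded in \eqref{EFs}.

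First I would write out the two relevant series. By \eqref{EFs} we have $\psi(r) = 2\sum_{j=1}^{+\infty} ((j\pi)^2 - r^2)^{-1}$, and on any compact subinterval of $[0,\pi)$ this series, as well as the series obtained by differentiating term by term, converges uniformly (the $j$-th term being $O(j^{-2})$, resp. $O(j^{-4})$). Hence
\[
\frac{\psi'(r)}{r} = 4\sum_{j=1}^{+\infty} \frac{1}{((j\pi)^2 - r^2)^2}, \qquad 0 \le r < \pi,
\]
where the value at $r=0$ is the (finite) limit $4\sum_j (j\pi)^{-4}$. In particular $\psi(r)>0$ and $\psi'(r)/r>0$ throughout $[0,\pi)$, so \eqref{iN38} is equivalent to the squared inequality $\psi(r)^2 > \psi'(r)/r$.

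Next, set $a_j := ((j\pi)^2 - r^2)^{-1}$, so that $a_j > 0$ for every $j \ge 1$ whenever $0 \le r < \pi$. With this notation the desired inequality becomes simply $\big(\sum_{j\ge 1} a_j\big)^2 > \sum_{j\ge 1} a_j^2$, which is immediate from expanding the square and keeping only the first cross term:
\[
\Big(\sum_{j\ge 1} a_j\Big)^2 = \sum_{j\ge 1} a_j^2 + 2\sum_{1\le j<k} a_j a_k \;\ge\; \sum_{j\ge 1} a_j^2 + 2\,a_1 a_2 \;>\; \sum_{j\ge 1} a_j^2 .
\]
Taking square roots yields $\psi(r) > \sqrt{\psi'(r)/r}$, as claimed.

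There is essentially no obstacle in this argument; the only points requiring a word of care are the justification of term-by-term differentiation of \eqref{EFs} (standard, by local uniform convergence on $[0,\pi)$) and the strictness of the inequality, including at the endpoint $r=0$, which is guaranteed by the strictly positive cross term $2a_1a_2$. (One could equally phrase the same computation through the representing measure $\nu$ of \eqref{IS}, using that $\nu$ is purely atomic, but the series form is the most transparent.)
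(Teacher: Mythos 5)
Your argument is correct and is essentially the paper's own proof: both use the series expansion \eqref{EFs} to write $\psi(r)=2\sum_j((j\pi)^2-r^2)^{-1}$ and $\psi'(r)/r=4\sum_j((j\pi)^2-r^2)^{-2}$, so that the claim reduces to $(\sum_j a_j)^2>\sum_j a_j^2$ for positive terms $a_j$. The only difference is that you spell out the cross-term step and the term-by-term differentiation, which the paper leaves implicit.
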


\begin{proof}
	By the second equality in \eqref{EFs}, we get
	\begin{align} \label{N32ei1}
		\psi(r) = 2 \sum_{j = 1}^{+\infty} \frac{1}{(j \, \pi)^2 - r^2}, \qquad \frac{\psi'(r)}{r} = 4 \sum_{j = 1}^{+\infty} \left[(j \, \pi)^2 - r^2 \right]^{-2},
	\end{align}
	which implies immediately the required estimate.
\end{proof}

\begin{lem}\label{Hetau1}
	Let $0 < \zeta_0 \le 1$.  Then
	\begin{align*}
		\He_\tau  \left( \tau_2^2 \, \psi(|\tau|) \right) \sim_{\zeta_0}
		\begin{pmatrix}
			\tau_2^2 & \quad \\
			\quad & 1
		\end{pmatrix} \ge 0, \quad \mbox{for all} \ \tau  = (\tau_1, \tau_2)  \in \R^2  \  \mbox{with $|\tau| \le \pi - \zeta_0$. }
	\end{align*}
\end{lem}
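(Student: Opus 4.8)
The plan is to linearise the problem by means of the partial fraction expansion of $\psi$. By the second identity in \eqref{EFs} (equivalently \eqref{N32ei1}) one has, on $\{|\tau|<\pi\}$,
\[
\tau_2^2\,\psi(|\tau|)=2\sum_{j=1}^{+\infty}\frac{\tau_2^2}{(j\pi)^2-|\tau|^2},
\]
and this series, together with the series of its first and second partial derivatives, converges locally uniformly on $\{|\tau|<\pi\}$ (on any disc $\{|\tau|\le R<\pi\}$ the $j$-th summand and its derivatives are $O(j^{-2})$). Hence $\He_\tau\big(\tau_2^2\psi(|\tau|)\big)=2\sum_{j\ge1}\He_\tau g_j$, where $g_j(\tau):=\tau_2^2/D_j$ and $D_j:=(j\pi)^2-|\tau|^2>0$.

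\textbf{Step 1: the building block.} A direct differentiation gives, writing $D=D_j$,
\[
\He_\tau g_j=\frac{1}{D^3}\begin{pmatrix} 2\tau_2^2\,(D+4\tau_1^2) & 4\tau_1\tau_2\,(D+2\tau_2^2) \\ 4\tau_1\tau_2\,(D+2\tau_2^2) & 2\,(D+\tau_2^2)(D+4\tau_2^2) \end{pmatrix},
\]
and a short simplification yields the clean formula $\det\He_\tau g_j=4\tau_2^2\big(D^2+5\tau_2^2D+4\tau_2^2|\tau|^2\big)\big/D^5$. In particular both diagonal entries of $\He_\tau g_j$ are $\ge0$ and $\det\He_\tau g_j\ge0$, so $\He_\tau g_j\ge0$ for every $j$; consequently $\He_\tau\big(\tau_2^2\psi(|\tau|)\big)\ge0$, and moreover $\He_\tau\big(\tau_2^2\psi(|\tau|)\big)\ge 2\,\He_\tau g_1$.

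\textbf{Step 2: the two-sided comparison.} Write $M:=\He_\tau\big(\tau_2^2\psi(|\tau|)\big)$. For a $2\times2$ positive semidefinite matrix, the relations $M_{11}\sim_{\zeta_0}\tau_2^2$, $M_{22}\sim_{\zeta_0}1$ and $\det M\sim_{\zeta_0}\tau_2^2$ are together equivalent to $M\sim_{\zeta_0}\operatorname{diag}(\tau_2^2,1)$ (the upper bound from $M\le\big(M_{11}/\tau_2^2+M_{22}\big)\operatorname{diag}(\tau_2^2,1)$, the lower bound by checking that $\det\big(M-c\operatorname{diag}(\tau_2^2,1)\big)\ge0$ for $c=c(\zeta_0)$ small, using monotonicity of $\det$ on PSD matrices). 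On $\{|\tau|\le\pi-\zeta_0\}$ we have $\zeta_0\pi\le\zeta_0(2\pi-\zeta_0)\le D_1\le\pi^2$ and $(j^2-1)\pi^2\le D_j\le j^2\pi^2$ for $j\ge2$, while $|\tau_1|,|\tau_2|\le\pi$. From the explicit formula for $\He_\tau g_1$ this gives $(\He_\tau g_1)_{11}\ge2\tau_2^2/D_1^2\ge2\tau_2^2/\pi^4$ and $(\He_\tau g_1)_{22}\ge4/D_1\ge4/\pi^2$, so $M\ge2\He_\tau g_1$ already forces $M_{11}\gtrsim\tau_2^2$, $M_{22}\gtrsim1$, and (again by $\det$-monotonicity) $\det M\ge4\det\He_\tau g_1\ge16\tau_2^2/\pi^6$ — these lower bounds are in fact $\zeta_0$-free. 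For the matching upper bounds, the $j=1$ term is handled via $D_1\ge\zeta_0\pi$ (this is where $\zeta_0$ enters): $(\He_\tau g_1)_{11}\lesssim_{\zeta_0}\tau_2^2$, $(\He_\tau g_1)_{22}\lesssim_{\zeta_0}1$; and for $j\ge2$ the entries of $\He_\tau g_j$ are $\lesssim j^{-4}\tau_2^2$, $\lesssim j^{-2}$, $\lesssim j^{-4}|\tau_2|$ respectively, so $2\sum_{j\ge2}\He_\tau g_j\lesssim\operatorname{diag}(\tau_2^2,1)$ uniformly. Summing, $M_{11}\sim_{\zeta_0}\tau_2^2$, $M_{22}\sim_{\zeta_0}1$, whence $\det M\le M_{11}M_{22}\sim_{\zeta_0}\tau_2^2$, so together with the lower bound $\det M\sim_{\zeta_0}\tau_2^2$. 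This finishes the proof.

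\textbf{Main obstacle.} The only genuine subtlety is the lower bound $M\gtrsim_{\zeta_0}\operatorname{diag}(\tau_2^2,1)$, equivalently $\det M\gtrsim_{\zeta_0}\tau_2^2$: since $M_{11}M_{22}$ and $M_{12}^2$ are a priori both of order $\tau_2^2$, one cannot estimate these two quantities separately. The device that resolves this is precisely the partial fraction expansion, which exhibits $M$ as a convergent sum of positive semidefinite matrices; then $M\ge2\He_\tau g_1$ and it is enough that the single term $\He_\tau g_1$ already has determinant of the exact order $\tau_2^2$, which the closed-form computation of Step 1 makes transparent. Everything else is routine manipulation of rational functions of $D_j,\tau_1,\tau_2$.
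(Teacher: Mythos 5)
Your argument is correct, and it takes a genuinely different route from the paper's. The paper computes $\He_\tau(\tau_2^2\psi(|\tau|))$ in closed form via the chain rule, tests it against a unit vector $\xi$, and reduces the two-sided bound to showing that the $2\times2$ matrix acting on $(\tau_2,\xi_2)$ (with entries built from $\psi,\psi',\psi''$ and $r_0=\xi\cdot\tau/|\tau|$) has diagonal entries and determinant $\sim_{\zeta_0}1$; the determinant bound rests on the inequalities $\psi''>\psi'/r>0$ and $\psi\,\psi''>2(\psi')^2$ of Lemma \ref{n32l}, the latter proved by H\"older's inequality applied to the measure representation \eqref{IS}. You instead feed the partial-fraction expansion \eqref{EFs} directly into the Hessian, exhibiting it as $2\sum_j\He_\tau g_j$ with each block $g_j=\tau_2^2/D_j$ explicitly computed and positive semidefinite (your entry and determinant formulas check out: $\det\He_\tau g_j=4\tau_2^2(D_j^2+5\tau_2^2D_j+4\tau_2^2|\tau|^2)/D_j^5$), so the sharp lower bound $\det M\gtrsim\tau_2^2$ comes from the single $j=1$ block via determinant monotonicity, the upper bounds come termwise (with $D_1\ge\zeta_0\pi$ being the only place $\zeta_0$ enters), and the conversion of the three scalar relations $M_{11}\sim\tau_2^2$, $M_{22}\sim1$, $\det M\sim\tau_2^2$ into the matrix comparison is the elementary $2\times2$ PSD fact you state (for the lower bound one expands $\det(M-c\,\operatorname{diag}(\tau_2^2,1))\ge\det M-c(M_{11}+\tau_2^2M_{22})\ge0$ for small $c(\zeta_0)$, which is exactly the routine check you indicate). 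In effect, the positivity the paper extracts through H\"older is encoded in your termwise PSD-ness, so your proof is self-contained and bypasses Lemma \ref{n32l} entirely, at the price of the explicit block computations; the paper's route is structurally lighter and its auxiliary inequalities get reused elsewhere (e.g. in the $\K$-computations of Lemma \ref{K_form}). Two harmless slips: $(\He_\tau g_1)_{22}\ge 2/D_1$, not $4/D_1$ (irrelevant, you only need $\gtrsim1$), and the quotient $M_{11}/\tau_2^2$ in your upper-bound device degenerates at $\tau_2=0$, where the bound $v^\T Mv\le2(M_{11}v_1^2+M_{22}v_2^2)$, valid for any PSD $M$, covers the case.
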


\begin{proof}
	In fact, writing $r:=|\tau|$, then a direct computation gives
	\begin{align*}
		\He_\tau  \left( \tau_2^2 \, \psi(|\tau|) \right) =  & \, \tau_2^2 \,  \left[ \frac{\psi'(r)}{r} \mathbb{I}_2 + \left( \psi''(r) - \frac{\psi'(r)}{r} \right) \frac{\tau}{r} \left( \frac{\tau}{r} \right)^\T \right] + 2 \psi'(r) \frac{\tau}{r} (0,\tau_2) \\
		&+ 2 \psi'(r) \left(
		\begin{array}{c}
			0 \\
			\tau_2 \\
		\end{array}
		\right)
		\left( \frac{\tau}{r} \right)^\T + 2 \psi(r) \left(
		\begin{array}{cc}
			0 & 0 \\
			0 & 1 \\
		\end{array}
		\right).
	\end{align*}
	
	Next, it is easy to prove that for $\xi  = (\xi_1, \xi_2) \in \R^2$ satisfying $|\xi| = 1$,
	\[
	\frac{1}{1 + \pi^2} \big( \tau_2^2 + \xi_2^2 \big) \le \tau_2^2 \xi_1^2+  \xi_2^2 \le \tau_2^2 + \xi_2^2, \quad \tau_2^2 \le \pi.
	\]
	Hence it remains to show that for  $\pi - |\tau| \ge  \zeta_0$,  we have
	\begin{equation}\label{stare}
		\begin{aligned}
			\tau_2^2 \xi_1^2+  \xi_2^2 &\sim \tau_2^2 +\xi_2^2 \sim_{\zeta_0} \langle \He_\tau  \left( \tau_2^2 \, \psi(|\tau|) \right) \, \xi, \xi \rangle \\[2mm]
			&= \left[ \frac{\psi'(r)}{r} + \left( \psi''(r) - \frac{\psi'(r)}{r} \right) r_0^2 \right] \tau_2^2 + 4 \psi'(r) \, r_0 \,  \xi_2 \, \tau_2 + 2 \psi(r) \,  \xi_2^2,
		\end{aligned}
	\end{equation}
	where $r_0 =  \xi \cdot \frac{\tau}{|\tau|}  \in [-1, 1]$. To prove this, we write RHS of \eqref{stare} as
	\begin{align}\label{stare2}
		\begin{pmatrix}
			\tau_2 &  \xi_2
		\end{pmatrix}
		\begin{pmatrix}
			\frac{\psi'(r)}{r} + \left( \psi''(r) - \frac{\psi'(r)}{r} \right) r_0^2 & 2 \psi'(r) \, r_0 \\
			2 \psi'(r) \, r_0 & 2 \psi(r)
		\end{pmatrix}
		\begin{pmatrix}
			\tau_2 \\
			\xi_2
		\end{pmatrix}.
	\end{align}
	Note that we have uniformly for all $0 \le r \le \pi - \zeta_0$ that  the diagonal entries of the last matrix
	\begin{align}\label{stare3}
		\frac{\psi'(r)}{r} + \left( \psi''(r) - \frac{\psi'(r)}{r} \right) r_0^2  \sim_{\zeta_0}  1, \quad
		\psi(r)  \sim_{\zeta_0} 1
	\end{align}
	by \eqref{N32ei1}, \eqref{Ii2} respectively, and the determinant
	\begin{align}\label{stare4}
		2 \left\{ \psi(r) \frac{\psi'(r)}{r} (1 - r_0^2) + \left( \psi(r) \, \psi''(r) - 2 \psi'(r)^2 \right) r_0^2 \right\}  \sim_{\zeta_0}  1
	\end{align}
	by Lemma \ref{n32l}, which implies its eigenvalues $ \sim_{\zeta_0}  1$, thereby proving \eqref{stare}.
\end{proof}

Recall that $\var$ denotes the unique solution of $\tan{r} = r$ in the interval $(\pi, \, 1.5 \, \pi)$. We will use repeatedly the following simple observation:

\begin{lem}
	We have for any $ 0 < r \neq \pi < \vartheta_1$ that:
	\begin{gather}
		\Upsilon(r) = \frac{1}{\psi(r)}, \quad \Upsilon'(r) = - \frac{\psi'(r)}{\psi(r)^2}, \quad\Upsilon''(r)= \frac{-\psi(r)\psi''(r)+2\psi'(r)^2}{\psi(r)^3}. \label{relUp}
	\end{gather}
	Moreover,
	\begin{equation}\label{mapL_p}
		\psi'(r) \sim \frac{r}{(\pi - r)^2}, \qquad |\psi(r)| \sim \frac{\var - r}{|\pi - r|}, \quad \mbox{for all $0 < r \ne \pi < \var$.  }
	\end{equation}
\end{lem}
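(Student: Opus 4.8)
The plan is as follows. I would first pin down the sign and smoothness of $\psi$ on the relevant set, and then \eqref{relUp} is pure calculus. By \eqref{N32ei1} one has $\psi(r) = 2\sum_{j\ge 1}[(j\pi)^2 - r^2]^{-1} > 0$ for $0 \le r < \pi$; the same series gives $\psi'(r)/r = 4\sum_{j\ge 1}[(j\pi)^2 - r^2]^{-2} > 0$ on $(0,\var)\setminus\{\pi\}$ (every denominator is nonzero there, since $\var < \frac{3}{2}\pi < 2\pi$), so $\psi$ is strictly increasing on $(\pi,\var)$, and because $\psi(\var) = (1 - \var\cot\var)/\var^2 = 0$ by \eqref{cN32}, we get $\psi < 0$ on $(\pi,\var)$ together with $\psi'(\var) > 0$. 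In particular $\psi$ and $\Upsilon = r^2/(1 - r\cot r) = 1/\psi$ are smooth and nowhere zero on $(0,\pi)\cup(\pi,\var)$, so differentiating $\Upsilon = 1/\psi$ once and twice yields $\Upsilon' = -\psi'/\psi^2$ and $\Upsilon'' = (-\psi\psi'' + 2(\psi')^2)/\psi^3$, which is exactly \eqref{relUp}.

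For the two estimates in \eqref{mapL_p}, the idea is to split off the dominant $j=1$ term in \eqref{N32ei1}: write $\psi(r) = 2(\pi^2 - r^2)^{-1} + R_0(r)$ and $\psi'(r)/r = 4(\pi^2 - r^2)^{-2} + R_1(r)$ with $R_0(r) := 2\sum_{j\ge 2}[(j\pi)^2 - r^2]^{-1}$ and $R_1(r) := 4\sum_{j\ge 2}[(j\pi)^2 - r^2]^{-2}$. Both tails $R_0, R_1$ are positive and extend to smooth (hence bounded) functions on the closed interval $[0,\var]$, since for $j\ge 2$ and $0 \le r \le \var$ one has $(j\pi)^2 - r^2 \ge (2\pi)^2 - (\frac{3}{2}\pi)^2 = \frac{7}{4}\pi^2 > 0$. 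Multiplying the second identity by $(\pi-r)^2/r$ gives $(\pi-r)^2\psi'(r)/r = 4(\pi+r)^{-2} + (\pi-r)^2 R_1(r)$; the right-hand side is continuous on $[0,\var]$, bounded, and bounded below by $4(\pi+\var)^{-2} > 0$, hence $\sim 1$ uniformly in $r$, i.e. $\psi'(r) \sim r(\pi-r)^{-2}$. For the other one, using the signs recorded above, the quantity $|\psi(r)|\,|\pi-r|/(\var - r)$ equals, on both $(0,\pi)$ and $(\pi,\var)$, the single function $F(r) := \psi(r)(\pi-r)/(\var-r) = N(r)/[(\pi+r)(\var - r)]$ with $N(r) := \psi(r)(\pi^2 - r^2) = 2 + R_0(r)(\pi^2 - r^2)$; since $N$ is smooth on $[0,\var]$ with $N(\pi) = 2 > 0$ and $N(\var) = \psi(\var)(\pi^2 - \var^2) = 0$, the ratio $F$ extends continuously to $[0,\var]$ with $F(\var) = -N'(\var)/(\pi+\var) = \psi'(\var)(\var^2 - \pi^2)/(\pi+\var) > 0$, and $F$ is positive at every other point of $[0,\var]$. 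By compactness $F \sim 1$, i.e. $|\psi(r)| \sim (\var - r)/|\pi - r|$, which completes \eqref{mapL_p}.

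This lemma is essentially routine; the only points needing a little care are the two-sided bounds \emph{at} the pole $r = \pi$, where $\psi$ and $\psi'$ blow up while $(\pi-r)\psi(r)$ and $(\pi-r)^2\psi'(r)$ stay bounded and bounded away from $0$, and \emph{at} the endpoint $r = \var$, where $\psi$ and $\var - r$ vanish simultaneously, so that $F(\var)$ must be extracted through $\psi(\var) = 0$ and $\psi'(\var) > 0$. In both situations the resolution is the same: exhibit the relevant auxiliary ratio as a continuous, strictly positive function on the compact interval $[0,\var]$, whence the uniform two-sided bound is immediate.
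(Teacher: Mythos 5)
Your proof is correct and follows essentially the same route as the paper's: \eqref{relUp} by differentiating $\Upsilon=1/\psi$ on the set where $\psi\neq 0$, and \eqref{mapL_p} as a consequence of the series expansions in \eqref{EFs}/\eqref{N32ei1} together with $\psi(\var)=0$ and $\psi'(\var)>0$. You merely spell out in detail (isolating the $j=1$ term and treating the simple zero at $\var$ by a compactness/continuity argument) what the paper dismisses as a direct consequence.
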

\begin{proof}
The equations in \eqref{relUp} are trivial. For \eqref{mapL_p}, the first claim (resp. the second one) is a direct consequence of the second equality in \eqref{N32ei1} (resp. \eqref{EFs}, the fact that $\psi(\var) = 0$ and $\psi'(\var) > 0$).
\end{proof}

Now we turn to the property of $\psi$ on $(\pi, \var)$, which plays crucial roles in the proof of the second analytic-diffeomorphism in Theorem \ref{mapL}.

\begin{lem} \label{Ll34}
	It holds for any $r \in (0, \ +\infty) \setminus \{ j \pi; \, j\in\nn^*  \}$ that:
	\begin{align} \label{32ei3}
		\K_1 := \frac{\psi'(r)}{r} = 4 \sum_{j = 1}^{+\infty} \left[(j \, \pi)^2 - r^2 \right]^{-2} > 0.
	\end{align}
	Moreover, we have for any $\pi < r < \var$ that:
	\begin{gather}
		0 >  \frac{1 -  r \cot{r}}{r^2} = \psi(r) = - 2 \left[ \frac{1}{r^2 - \pi^2} - \sum_{j = 2}^{+\infty} \frac{1}{(j \, \pi)^2 - r^2} \right], \label{32ei2} \\[1mm]
		\K_2 := r^{-1} \left( \frac{\psi'(r)}{r} \right)'  = -16  \left[ \frac{1}{[r^2 - \pi^2]^3} - \sum_{j = 2}^{+\infty} \frac{1}{[(j \, \pi)^2 - r^2]^3} \right] < 0, \label{32ei4}\\[2mm]
		2 \psi(r) \, \K_2 - 4 \K_1^2 < 0, \qquad 5\K_1 + \K_2 \, r^2 <0. \label{32ei5}
	\end{gather}
	
\end{lem}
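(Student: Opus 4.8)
The plan is to read everything off the partial‑fraction expansion $\psi(r)=2\sum_{j\ge1}[(j\pi)^2-r^2]^{-1}$ recorded in \eqref{EFs}, which (together with all its term‑by‑term derivatives) converges locally uniformly on $(0,+\infty)\setminus\{j\pi:j\in\nn^*\}$, not merely on $[0,\pi)$. Differentiating it term by term gives $\psi'(r)=4r\sum_{j\ge1}[(j\pi)^2-r^2]^{-2}$, which is \eqref{32ei3}, and differentiating once more gives $\K_2=r^{-1}(\psi'(r)/r)'=16\sum_{j\ge1}[(j\pi)^2-r^2]^{-3}$. On the interval $(\pi,\var)$, which contains no pole of $\psi$ since $\var<2\pi$, only the $j=1$ term has a negative denominator, and isolating it puts $\psi$ and $\K_2$ into the forms displayed in \eqref{32ei2} and \eqref{32ei4}. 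The sign assertion in \eqref{32ei2} is then immediate: $\psi'(r)=r\,\K_1(r)>0$ on $(\pi,\var)$ by \eqref{32ei3}, so $\psi$ is strictly increasing there, while $\psi(\var)=\var^{-2}(1-\var\cot\var)=0$ because $\tan\var=\var$ (see \eqref{cN32}); hence $\psi<0$ on $(\pi,\var)$. For \eqref{32ei4} it remains to check $(r^2-\pi^2)^{-3}>\sum_{j\ge2}[(j\pi)^2-r^2]^{-3}$ on $(\pi,\var)$, which follows from the coarse bounds $r^2-\pi^2<\var^2-\pi^2$ and $(j\pi)^2-r^2>(j\pi)^2-\var^2\ge4\pi^2-\var^2$ for $j\ge2$ (the $j=2$ term dominating the tail), reducing it to a numerical inequality that holds with a comfortable margin (a factor of several).

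For the first inequality in \eqref{32ei5} I would introduce, for $\pi<r<\var$, the positive quantities
\[
a:=\tfrac1{r^2-\pi^2},\quad A:=\sum_{j\ge2}\tfrac1{(j\pi)^2-r^2},\quad B:=\sum_{j\ge2}\tfrac1{((j\pi)^2-r^2)^2},\quad C:=\sum_{j\ge2}\tfrac1{((j\pi)^2-r^2)^3},
\]
so that, by the expansions above, $\psi=2(A-a)$, $\K_1=4(a^2+B)$ and $\K_2=16(C-a^3)$. A direct expansion then gives
\[
2\psi\,\K_2-4\K_1^2=64\big[(a-A)(a^3-C)-(a^2+B)^2\big]=-64\big[2a^2B+B^2+aC+A(a^3-C)\big],
\]
and every term of the final bracket is strictly positive — the only non‑obvious one being $a^3-C>0$, which is precisely \eqref{32ei4}. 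Hence $2\psi\,\K_2-4\K_1^2<0$.

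The remaining inequality $5\K_1+r^2\K_2<0$ is the genuine obstacle, because it degenerates to an equality at the endpoint $\var$, so no crude estimate can succeed and one must combine monotonicity with the exact boundary behaviour. The key identity is $5\K_1+r^2\K_2=\psi''(r)+\tfrac4r\psi'(r)$ (immediate from $\K_1=\psi'/r$ and $r^2\K_2=\psi''-\psi'/r$), and the series gives
\[
\psi''(r)+\tfrac4r\psi'(r)=\sum_{j\ge1}\big[20(j\pi)^2-4r^2\big]\big[(j\pi)^2-r^2\big]^{-3}=-4f(r),\qquad f(r):=\frac{5\pi^2-r^2}{(r^2-\pi^2)^3}-\sum_{j\ge2}\frac{5(j\pi)^2-r^2}{((j\pi)^2-r^2)^3}.
\]
So it suffices to prove $f>0$ on $(\pi,\var)$. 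First, $f$ is strictly decreasing there: $\frac{d}{dr}\frac{5\pi^2-r^2}{(r^2-\pi^2)^3}=\frac{-4r(7\pi^2-r^2)}{(r^2-\pi^2)^4}<0$ since $r^2<7\pi^2$, while $\frac{d}{dr}\frac{5(j\pi)^2-r^2}{((j\pi)^2-r^2)^3}=\frac{4r(7(j\pi)^2-r^2)}{((j\pi)^2-r^2)^4}>0$ for $j\ge2$, so each subtracted term contributes a decreasing function. Second, $f$ is continuous at $\var$, and using $\tan\var=\var$ together with the identity $w''=2\csc^2r\cdot w$ for $w:=1-r\cot r$ one obtains $w(\var)=0$, $w'(\var)=\var$, $w''(\var)=0$, hence $\psi'(\var)=1/\var$ and $\psi''(\var)=-4/\var^2$ from $\psi=w/r^2$, so $f(\var)=-\tfrac14\big(\psi''(\var)+4\psi'(\var)/\var\big)=0$. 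Therefore $f(r)>f(\var)=0$ for $\pi<r<\var$, i.e. $5\K_1+r^2\K_2=-4f(r)<0$. The delicate point — and the reason this is the hard step — is that $\psi$ and its first two derivatives at $\var$ must be pinned down exactly from the defining relation $\tan\var=\var$, since a merely numerical value of $\var$ would leave the sign of $f$ near $\var$ undetermined.
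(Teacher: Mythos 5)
Your proposal is correct, and it shares the same backbone as the paper (the partial--fraction expansion \eqref{EFs} and its term-by-term derivatives, which give \eqref{32ei3} and, after isolating the $j=1$ term, the displayed forms of $\psi$ and $\K_2$), but it diverges from the paper at three points. For the sign in \eqref{32ei2} you argue via $\psi'=r\K_1>0$ and $\psi(\var)=0$; the paper treats this as immediate from the definition. For \eqref{32ei4} you reduce to a numerical inequality using $\var<\tfrac32\pi$ (your margin estimate is right, though you should actually bound the tail, e.g.\ by $(j\pi)^2-\var^2\ge\pi^2(j^2-\tfrac94)$, rather than just saying the $j=2$ term dominates); the paper avoids all numerics by deducing \eqref{32ei4} from \eqref{32ei2} together with $(j\pi)^2-r^2>r^2-\pi^2$ for $j\ge2$, which is cleaner. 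For the first half of \eqref{32ei5} your exact expansion $2\psi\K_2-4\K_1^2=-64\,[\,2a^2B+B^2+aC+A(a^3-C)\,]$ (with $a^3-C>0$ from \eqref{32ei4}) is a nice sharpening of the paper's cruder bound $2\psi\K_2<64(r^2-\pi^2)^{-4}<4\K_1^2$. The genuine difference is the last inequality: you prove $5\K_1+r^2\K_2=-4f(r)<0$ by showing the series-defined $f$ is strictly decreasing and vanishes at $\var$, the endpoint values $\psi'(\var)=1/\var$, $\psi''(\var)=-4/\var^2$ being extracted from $\tan\var=\var$ via $w''=2w\csc^2 r$ for $w=1-r\cot r$ --- all of which checks out. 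The paper instead pushes the same identity $5\K_1+r^2\K_2=\psi''+4\psi'/r$ one step further to the closed form $2\,\psi(r)\,(r^2\csc^2 r-1)/r^2$ (essentially by using your $w''=2w\csc^2 r$ globally, not just at $\var$), so the sign follows in one line from $\psi<0$ and $r^2\csc^2 r>1$; your remark that the degeneracy at $\var$ blocks crude estimates is accurate, but the trigonometric identity sidesteps the difficulty entirely, whereas your monotonicity argument buys nothing extra at the cost of differentiating the series once more.
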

\begin{proof}
	Using \eqref{N32ei1}, we get \eqref{32ei3} and \eqref{32ei2} without difficulties. The inequality \eqref{32ei4} follows from \eqref{32ei2} and the observation that $(j \, \pi)^2 - r^2 > r^2 - \pi^2$ for each $j \geq 2$.
	
	From \eqref{32ei2} and \eqref{32ei4}, we see that
	\[
	2 \psi(r) \, \K_2 < 2 \times 2 \times 16 \frac{1}{r^2 - \pi^2} \times \frac{1}{(r^2 - \pi^2)^3} = 4 \times \left( \frac{4}{(r^2 - \pi^2)^2} \right)^2 < 4 \, \K_1^2,
	\]
	where we have used \eqref{32ei3} in the last inequality, which proves the first assertion of \eqref{32ei5}. To prove the second one, it suffices to employ \eqref{32ei2} and the following:
	\begin{align}  \label{psi_sum}
		5 \K_1 + \K_2 \, r^2 = 4 \, \frac{\psi'(r)}{r} + \psi''(r) = 2 \, \psi(r) \, \frac{r^2\csc^2r - 1}{r^2}, \quad 0 < r \neq \pi < \vartheta_1,
	\end{align}
	which is due to the trivial facts that
	\begin{equation*}
		\psi'(r)=\frac{r^2 \csc ^2r+r \cot r-2}{r^3},\quad \psi''(r)=-2\,\frac{r^3\cot r\csc ^2r + r^2 \csc ^2r+r\cot r-3}{r^4}.
	\end{equation*}
	This finishes the proof of Lemma \ref{Ll34}.
\end{proof}

Recall that (cf. \eqref{nABn3}) $h(r) =  r^2 + r \sin{r} \cos{r} - 2 \sin^2{r}$ for $r > 0$. Notice that $h(j \, \pi) = (j \, \pi)^2$ ($j \in \N^*$) and $h(r) = \psi'(r) \, r^3 \sin^2r$ for $r \not\in \{ j \, \pi; \, j \in \Z^*\}$. A direct consequence of \eqref{32ei3} is the following:

\begin{cor} \label{nCn1}
	We have $h(r) > 0$ for all $r > 0$.
\end{cor}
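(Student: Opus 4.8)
The statement is a direct corollary of \eqref{32ei3}, so the plan is simply to split according to whether $r$ is an integer multiple of $\pi$ and invoke the factorization of $h$ already recorded above. First I would treat the points $r = j\pi$ with $j \in \N^*$: here $\sin r = 0$, so the formula $h(r) = \psi'(r)\, r^3 \sin^2 r$ degenerates, but we are told directly that $h(j\pi) = (j\pi)^2$, which is strictly positive.

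For the remaining $r \in (0, +\infty) \setminus \{j\pi : j \in \N^*\}$, I would use the identity $h(r) = \psi'(r)\, r^3 \sin^2 r$ (valid precisely off the set $\{j\pi\}$, and itself an immediate consequence of the explicit formula $\psi'(r) = (r^2 \csc^2 r + r \cot r - 2)/r^3$ noted in the proof of Lemma \ref{Ll34}). Rewriting this as
\[
h(r) = \frac{\psi'(r)}{r} \cdot r^4 \sin^2 r,
\]
the factor $r^4 \sin^2 r$ is strictly positive since $r > 0$ and $\sin r \neq 0$ on this set, while $\tfrac{\psi'(r)}{r} = 4 \sum_{j=1}^{+\infty} [(j\pi)^2 - r^2]^{-2} > 0$ by \eqref{32ei3}. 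Hence $h(r) > 0$. Combining the two cases gives $h(r) > 0$ for all $r > 0$, as claimed.

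There is essentially no obstacle here: all the work was front-loaded into establishing the series representation \eqref{32ei3} (via \eqref{N32ei1}) and into checking the elementary identity relating $h$ and $\psi'$. The only point requiring a moment's care is the bookkeeping at $r = j\pi$, where the factorization is not available and one must fall back on the separately computed value $h(j\pi) = (j\pi)^2$; this is why the two-case split is needed rather than a one-line argument.
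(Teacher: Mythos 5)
Your proposal is correct and follows exactly the paper's argument: the corollary is stated as a direct consequence of \eqref{32ei3} via the factorization $h(r) = \psi'(r)\,r^3\sin^2 r$ off the set $\{j\pi;\, j\in\N^*\}$, together with the separately noted value $h(j\pi) = (j\pi)^2$ at those points. The only difference is that you spell out the two-case bookkeeping explicitly, which the paper leaves implicit.
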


Now, we give the

\subsection{Proof of the two key analytic-diffeomorphisms}

\begin{proof} There are four steps.
	
	{\em Step 1. The Jacobian determinant of $\Lz$ is positive on $\Oz_{+,1}$ and negative on $\Oz_{-,4}$, respectively.}
	Notice that (cf. \eqref{32N96}) for $0<r = |(v_1, v_2)|\neq\pi<\var$ we have
	\begin{align*}
		\Lambda(v_1, v_2) = \nabla_v[v_2^2 \,\psi(|v|)] = \left(\frac{\psi'(r)}{r} \, v_1  v_2^2,  \ v_2 \left[ \frac{\psi'(r)}{r} \, v_2^2 + 2 \, \psi(r) \right] \right).
	\end{align*}
	Then  the Jacobian matrix of $\Lambda$ at $v = (v_1, v_2)$, saying $\JA(v, r)$, equals by a routine calculation
	\begin{align} \label{nJAn}
		\left(
		\begin{array}{cc}
			v_2^2 \left[ \frac{\psi'(r)}{r} + \left( \frac{\psi'(r)}{r} \right)' \frac{v_1^2}{r} \right] \qquad & \qquad v_1 v_2 \left[ 2 \frac{\psi'(r)}{r} + \left( \frac{\psi'(r)}{r} \right)' \frac{v_2^2}{r}  \right] \\
			\mbox{} \qquad & \qquad \mbox{} \\
			v_1 v_2 \left[ 2 \frac{\psi'(r)}{r} + \left( \frac{\psi'(r)}{r} \right)' \frac{v_2^2}{r}  \right] \qquad & \qquad  2 \psi(r) + v_2^2 \left[ 5 \frac{\psi'(r)}{r} + \left( \frac{\psi'(r)}{r} \right)' \frac{v_2^2}{r} \right]  \\
		\end{array}
		\right).
	\end{align}
	Notice that $\JA(v, r) = \He_v  \left( v_2^2 \, \psi(|v|) \right) $, which is positive definite on $\Omega_{+,1}$ by Lemma \ref{Hetau1}. This in particular gives the first claim. For the second one, recalling (see \eqref{32ei3}, \eqref{32ei4}, and \eqref{nK3N})
	\begin{align*}
		\K_1 := \frac{\psi'(r)}{r}, \quad \K_2 := r^{-1} \left( \frac{\psi'(r)}{r} \right)', \quad \K_3 := 2 \psi(r) + \K_1 v_2^2,
	\end{align*}
	we find
	\begin{equation*}
		\det \JA(v, r) = v_2^2 \left\{ (\K_1 + \K_2 v_1^2) \, [\K_3 + v_2^2 (4 \K_1 + \K_2 v_2^2)] - v_1^2 (2 \K_1 + \K_2 v_2^2)^2 \right\} := v_2^2 \, \K,
	\end{equation*}
	with the observation that
	\begin{align}
		\K(v_1,v_2) &= (2 \psi(r) + \K_1 v_2^2) \cdot (\K_1 + v_1^2 \K_2) + 4 \K_1^2 v_2^2 + \K_1 \K_2 v_2^4 - 4 \K_1^2 v_1^2  \nonumber \\[1mm]
		&= 2  \psi(r) \K_1 + v_1^2 \left[ 2 \psi(r) \K_2 - 4 \K_1^2 \right] + v_2^2 \, \K_1 \left( 5 \K_1 + \K_2 \, r^2 \right). \label{32nK0}
	\end{align}
	Hence from \eqref{32ei3}, \eqref{32ei2}, and \eqref{32ei5}, we conclude that $\det \JA(v, r)$ is negative on $\Oz_{-,4}$.
	
	{\em Step 2. The analytic map $\Lambda$ is a diffeomorphism from $\Omega_{+, 1}$ onto $\R^2_{>,+}$. } First we claim that $\Lambda$ is from $\Omega_{+, 1}$ into $\R^2_{>,+}$, namely,
	\begin{align*}
		v_2 \left( 2 \psi(r) + \psi'(r) \frac{v_2^2}{r} \right) > \frac{2}{\sqrt{\pi}} \sqrt{\psi'(r) \frac{v_2^2}{r}  v_1}, \quad v_1, v_2 > 0 \mbox{  with } r = \sqrt{v_1^2 + v_2^2} < \pi.
	\end{align*}
	This is a direct consequence of the inequality $\psi(r) > \sqrt{\frac{\psi'(r)}{r}}$ for  $0 \leq r < \pi$; see \eqref{iN38}.
	
	Notice that $\Omega_{+ , 1}$ and $\R^2_{>,+}$ are both connected and simply connected and we have shown the Jacobian determinant of $\Lambda$ vanishes nowhere in $\Omega_{+, 1}$. Then by Hadamard's theorem (see for example \cite[\S~ 6.2]{KP02}), it remains to prove that $\Lambda$ is proper, that is, whenever $\{ v^{(j)} \}_{j = 1}^{+\infty} \subseteq \Omega_{+, 1}$ satisfies $v^{(j)} = (v^{(j)}_1, v^{(j)}_2)  \to \partial \Omega_{+, 1}$ then $\Lambda(v^{(j)})  \to \partial \R^2_{>,+}$. To see this, by contradiction, assume that $\Lambda$ is not proper. Then we can find a sequence $\{ v^{(j)} \}_{j = 1}^{+\infty} \subseteq \Omega_{+, 1}$ such that $v^{(j)} \to \partial \Omega_{+, 1}$ but $\{\Lambda(v^{(j)})\}_{j = 1}^{+\infty}$ stays in a compact set in  $\R^2_{>,+}$. After picking subsequences, we can assume further that $v^{(j)} \to v^{(0)} \in \partial \Omega_{+, 1}$ and $\Lambda(v^{(j)}) \to  (a_1, a_2) \in \R^2_{>,+}$. Recalling the definition of  $\psi$ (see \eqref{EFs}), we always obtain a contradiction in each of the following 4 possible cases:
	
	\begin{enumerate}[(i)]
		
		\item If $ v^{(0)} \in [0, \pi) \times \{0\} $, then $\Lambda(v^{(j)})  \to 0 \in  \partial \R^2_{>,+}$.
		
		\item If $ v^{(0)} \in \{v; \, |v| = \pi, v_1 \ge 0, v_2 > 0\}$,  by the fact that (cf. \eqref{N32ei1})
		\begin{align} \label{nnp}
			\lim_{r \to \pi^-} (\pi - r) \, \psi(r) = \frac{1}{\pi}, \qquad \lim_{r \to \pi^-} (\pi - r)^2 \, \psi'(r) = \frac{1}{\pi},
		\end{align}
		we have  $\Lambda(v^{(j)})  \to \infty \in  \partial \R^2_{>,+}$.
		
		\item If $v^{(0)} \in \{0\} \times (0,\pi) $, then $\Lambda(v^{(j)})  \to \{ 0 \} \times (0, +\infty)   \subseteq  \partial \R^2_{>,+}$.
		
		\item Assume $ v^{(0)} = (\pi,0) \in \partial \Omega_{+, 1}$. Set $\Lambda(v^{(j)})  = (u^{(j)}_1, u^{(j)}_2)$. Then we have
		\begin{align*}
			a_1 = \lim_{j \to +\infty} u^{(j)}_1 =  \lim_{j \to +\infty}  \frac{v^{(j)}_1}{|v^{(j)}|} \left( v^{(j)}_2 \right)^2 \psi'(|v^{(j)}|)   =  \lim_{j \to +\infty}  \frac{1}{\pi} \left( \frac{v^{(j)}_2}{\pi - |v^{(j)}|} \right)^2,
		\end{align*}
		where we have used \eqref{nnp} in the last equality. Using \eqref{nnp} again, it
		implies that
		\begin{align*}
			\lim_{j \to +\infty} u^{(j)}_2 &=  \lim_{j \to +\infty}  \left[ 2  v^{(j)}_2 \psi(|v^{(j)}|) + \frac{v^{(j)}_2}{|v^{(j)}|} \left( v^{(j)}_2 \right)^2 \psi'(|v^{(j)}|)  \right] \\
			&=  \lim_{j \to +\infty}  \frac{2}{\pi} \frac{v^{(j)}_2}{\pi - |v^{(j)}|}  = \frac{2}{\sqrt{\pi}} \sqrt{a_1},
		\end{align*}
		and hence $ (a_1, \frac{2}{\sqrt{\pi}} \sqrt{a_1}) \in \partial \R^2_{>,+}$.
	\end{enumerate}
	
	{\em Step 3. $\Lambda$ is a diffeomorphism from $\Omega_{-, 4}$ onto $\R^2_{<, +}$.} First, we need to show $\Lambda$ is from $\OC$ into $\RS$, i.e,
	\begin{align*}
		2 \psi(r) +  \frac{\psi'(r)}{r} v_2^2 > - \frac{2}{\sqrt{\pi}} \sqrt{\frac{\psi'(r)}{r} v_1}, \quad \forall \, (v_1, v_2) \in \OC.
	\end{align*}
	Using \eqref{32ei3}, it remains to prove that $\psi(r) > - \sqrt{\frac{\psi'(r)}{r}}$, which is obvious by \eqref{32ei2} and \eqref{32ei3}.
	
	Using an argument similar to that in  {\it Step 2}, it is easily seen that the smooth function $\Lambda$ is proper. Moreover, we have obtained in {\it Step 1} that the Jacobian determinant of $\Lambda$ vanishes nowhere on $\Omega_{- , 4}$. From these and Hadamard's theorem the desired conclusion follows, since both $\Omega_{- , 4}$ and $\R^2_{<,+}$ are connected and simply connected.
	
	{\em Step 4. The $3 \times 3$ matrix ${\He}_{\theta} \, \phi(g_u ; \theta)$ is nonsingular, negative definite on $\Omega_{+,1}$, and has exactly two positive eigenvalues on $\Omega_{-,4}$}. Indeed, setting $\tilde{\theta} = \left(\theta_1, \theta_2\right) := \Lambda^{-1}(\tilde{u})$, then a direct calculation gives
	\begin{equation} \label{Hes_phi}
		{\He}_{\theta} \, \phi(g_u ; \theta)=-\left(\begin{array}{cc}
			\mathrm{J}_{\Lambda}(\widetilde{\theta},|\theta|)  & \mathbb{O}_{2 \times 1} \\[1mm]
			\mathbb{O}_{1 \times 2} & 2 \psi(|\theta|)+\theta_2^2 \frac{\psi^{\prime}(|\theta|)}{|\theta|}=\K_3(\tilde{\theta})
		\end{array}\right).
	\end{equation}
	Note that we have $\K_3(\tilde{\theta})>0$ on $\Oz_{+,1}$ by \eqref{32ei3}, and $\K_3(\tilde{\theta}) < 0$ on $\Oz_{-,4}$ (cf. \eqref{O-4}). Moreover, from {\it Step 1} we know the matrix $\mathrm{J}_{\Lambda}(\widetilde{\theta},|\theta|)$ is positive definite on $\Oz_{+,1}$ and $\mathrm{det\,J}_{\Lambda}(\widetilde{\theta},|\theta|) =\tz_2^2\, \K(\tilde{\theta}) <0 $ on $\Omega_{-,4}$. Then the claim of {\em Step 4} follows easily and the proof of Theorem \ref{mapL} is completed.
\end{proof}

The above proof and the fact that $\He_\theta \, \phi(g; \theta) = |x|^2 \, \He_\theta \, \phi(g_u; \theta)$ give the following lemma immediately.
\begin{lem}
	Let $g$ and $\tz$ be given as in Assumption (A) (cf. \eqref{n78n}). We have
	\begin{equation} \label{detph}
		\det(- \He_\theta \,\phi(g;\theta)) = \tz_2^2\, \K(\tz_1,\tz_2) \,\K_3(\tz_1,\tz_2) \, |x|^6.
	\end{equation}
\end{lem}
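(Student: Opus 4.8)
The plan is to deduce \eqref{detph} directly from two facts already in hand: the block–diagonal form of the Hessian obtained in \emph{Step 4} of the proof of Theorem~\ref{mapL}, and the elementary homogeneity of the reference function $\phi$ in the base point.

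\emph{First}, I would record the scaling identity $\phi(g;\tau) = |x|^2\,\phi(g_u;\tau)$, valid for every $\tau \in \R^3$. This is immediate from \eqref{defref2}: with $g = \delta_{|x|}(g_u) = (|x|e_1,\,|x|^2 u_*)$ the bracket $\bigl[\,1 - \psi(|\tau|)(\tau_2^2+\tau_3^2) + u\cdot\tau\,\bigr]$ is exactly the one appearing in $\phi(g_u;\tau)$, so only the prefactor $|x|^2$ survives. Differentiating twice in $\tau$ and evaluating at $\theta$ gives $\He_\theta\,\phi(g;\theta) = |x|^2\,\He_\theta\,\phi(g_u;\theta)$; taking determinants of these $3\times 3$ matrices produces the factor $(|x|^2)^3 = |x|^6$, i.e.\ $\det(-\He_\theta\,\phi(g;\theta)) = |x|^6\,\det(-\He_\theta\,\phi(g_u;\theta))$.

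\emph{Second}, I would evaluate $\det(-\He_\theta\,\phi(g_u;\theta))$ using \eqref{Hes_phi}, where $-\He_\theta\,\phi(g_u;\theta)$ is block diagonal with a $2\times 2$ block $\JA(\widetilde{\theta},|\theta|)$ (and $\widetilde{\theta} = (\theta_1,\theta_2) = \Lambda^{-1}(\widetilde{u})$) and a scalar block $\K_3(\widetilde{\theta})$. Hence its determinant equals $\det\JA(\widetilde{\theta},|\theta|)\cdot\K_3(\widetilde{\theta})$. By \emph{Step 1} (the computation of $\det\JA$ in \eqref{nJAn}--\eqref{32nK0}, together with the identity $\JA(v,r) = \He_v\bigl(v_2^2\,\psi(|v|)\bigr)$) one has $\det\JA(\widetilde{\theta},|\theta|) = \theta_2^2\,\K(\theta_1,\theta_2)$, and therefore $\det(-\He_\theta\,\phi(g_u;\theta)) = \theta_2^2\,\K(\theta_1,\theta_2)\,\K_3(\theta_1,\theta_2)$. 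Substituting this into the previous display yields \eqref{detph}.

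There is no genuine obstacle here: the lemma is purely a bookkeeping consequence of formulas already established, and the cases $(\theta_1,\theta_2)\in\Omega_{+,1}$ and $(\theta_1,\theta_2)\in\Omega_{-,4}$ need not be distinguished since the algebraic identity holds on the whole range of Assumption~(A). The only places where a slip is possible are (i) remembering that the Hessian is $3\times 3$, so base-point homogeneity contributes $|x|^6$ rather than $|x|^2$ or $|x|^4$, and (ii) correctly identifying the $2\times 2$ Hessian block as $\JA$, so that the value $\det\JA = v_2^2\,\K$ from \emph{Step 1} can be quoted verbatim.
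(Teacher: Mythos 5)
Your proposal is correct and coincides with the paper's own argument: the lemma is stated immediately after the proof of Theorem~\ref{mapL} precisely as a consequence of the block form \eqref{Hes_phi}, the identity $\det \mathrm{J}_{\Lambda}(\widetilde{\theta},|\theta|)=\theta_2^2\,\K(\theta_1,\theta_2)$ from \emph{Step 1}, and the scaling $\He_\theta\,\phi(g;\theta)=|x|^2\,\He_\theta\,\phi(g_u;\theta)$ contributing $|x|^6$. Nothing further is needed.
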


\begin{remark}\label{Lrmk}
Suppose Assumption (A)  holds. Then from \eqref{32N96} it follows that
	\begin{gather}
		u_1\sim\tz_1\, \tz_2^2\, |\pi-|\tz|\,|^{-2};\label{u1sim}\\[1mm]
		u_2\sim|\tz_2|^3 \, |\pi-|\tz|\,|^{-2}+|\tz_2|\,|\pi-|\tz|\,|^{-1},\quad {\rm as}\,\,|\tz|<\pi;\label{u2sim1}\\[1mm]
		\tz_1\sim1 \,\,{\rm and}\,\, \frac{\ep}{|\pi-|\tz|\,|} \sim 2\,|\psi(|\theta|)| = \frac{u_1}{\theta_1} + \frac{u_2}{|\tz_2|} \sim \frac{\tz_2^2}{(\pi - |\tz|)^2} + \frac{u_2}{|\tz_2|},\quad {\rm as}\,\,|\tz|>\pi,\label{u2sim2}
	\end{gather}
where we have used   \eqref{mapL_p} with $r = |\theta|$.  These estimates will be used in the proof of Lemma \ref{asyin} below.
\end{remark}

Recall that $\mathrm{K}_3$ and $\Omega_{-,4}$ are defined by \eqref{nK3N} and \eqref{O-4}, respectively. The following characterization of $\Omega_{-,4}$ will be used in the proof of Theorem \ref{tmm} (cf. Subsection \ref{ss53} below):

\begin{lem}
	We have
	\begin{align} \label{ChaO-4}
		\Omega_{-,4} = \left\{(v_1, v_2); \, v_2 < 0 < v_1, \ \mathrm{K}_3(v_1, v_2) < 0, \ \pi \neq r  = \sqrt{v_1^2 + v_2^2}  < \vartheta_1 \right\}.
	\end{align}
\end{lem}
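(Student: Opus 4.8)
The statement juxtaposes the original definition \eqref{O-4} of $\Omega_{-,4}$ with the alternative description \eqref{ChaO-4}, and only one inclusion carries content. If $(v_1,v_2)$ lies in the set defining $\Omega_{-,4}$ in \eqref{O-4}, i.e. $v_2<0$ and $\pi<v_1<r<\vartheta_1$ with $\K_3(v_1,v_2)<0$, then trivially $v_1>0$, $r>v_1>\pi$ (so $r\neq\pi$) and $r<\vartheta_1$, hence $(v_1,v_2)$ belongs to the set on the right of \eqref{ChaO-4}. The whole point is therefore the reverse inclusion: assuming $v_2<0<v_1$, $\K_3(v_1,v_2)<0$ and $\pi\neq r=\sqrt{v_1^2+v_2^2}<\vartheta_1$, one must deduce $v_1>\pi$; once this is known, $v_1<r$ is automatic (as $v_2\neq 0$), whence $\pi<v_1<r<\vartheta_1$ and $(v_1,v_2)\in\Omega_{-,4}$. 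The plan is to establish the sharper quantitative bound $v_2^2<r^2-\pi^2$, which is exactly $v_1^2>\pi^2$.

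The argument I would run has three short steps. First, locate $r$: since $\K_1:=\psi'(r)/r>0$ by \eqref{32ei3} and $v_2^2>0$, the inequality $\K_3=2\psi(r)+\K_1 v_2^2<0$ forces $\psi(r)<0$; but $\psi>0$ on $(0,\pi)$ by the series representation in \eqref{EFs}, so together with $r\neq\pi$ and $r<\vartheta_1$ this gives $\pi<r<\vartheta_1$. Second, rewrite $\K_3<0$ as $v_2^2<-2\psi(r)/\K_1=-2r\,\psi(r)/\psi'(r)$, so it suffices to prove the analytic inequality
\[
-2r\,\psi(r)\le (r^2-\pi^2)\,\psi'(r),\qquad \pi<r<\vartheta_1,
\]
equivalently $-2\psi(r)\le (r^2-\pi^2)\,\K_1$. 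Substituting the partial-fraction series \eqref{32ei2} for $\psi$ and \eqref{32ei3} for $\K_1$, the two ``$j=1$'' contributions (both equal to $4/(r^2-\pi^2)$) cancel, and one is left with
\[
(r^2-\pi^2)\,\K_1-\bigl(-2\psi(r)\bigr)=4(r^2-\pi^2)\sum_{j\ge 2}\frac{1}{[(j\pi)^2-r^2]^2}+4\sum_{j\ge 2}\frac{1}{(j\pi)^2-r^2},
\]
which is strictly positive because $(j\pi)^2-r^2>0$ for all $j\ge 2$ (using $r<\vartheta_1<\frac{3}{2}\pi$, cf. \eqref{cN32}) and $r^2-\pi^2>0$. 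Third, conclude: $v_2^2<r^2-\pi^2$ yields $v_1=\sqrt{r^2-v_2^2}>\pi$, as desired.

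The only genuine work is the analytic inequality $-2r\,\psi(r)\le(r^2-\pi^2)\,\psi'(r)$ on $(\pi,\vartheta_1)$; everything else is sign bookkeeping. However, once the series representations \eqref{32ei2}--\eqref{32ei3} are inserted, the dominant $j=1$ terms cancel exactly, leaving an expression that is positive term by term, so I expect no real difficulty there; the single point requiring attention is to invoke $\vartheta_1<\frac{3}{2}\pi$ from \eqref{cN32} in order to guarantee $(j\pi)^2>r^2$ for every $j\ge 2$.
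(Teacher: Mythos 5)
Your proposal is correct and follows essentially the same route as the paper: the forward inclusion is trivial, and the reverse one is obtained by inserting the series representations \eqref{32ei2}--\eqref{32ei3} into $\K_3<0$, observing that the $j\ge 2$ terms are positive (since $r<\vartheta_1<\tfrac32\pi$), and isolating the $j=1$ contribution to force $v_1>\pi$. The paper does this in a single chain of inequalities bounding $\K_3$ below by $4(\pi^2-v_1^2)/(r^2-\pi^2)^2$, while you first deduce $r>\pi$ from $\psi(r)<0$ and then prove the equivalent inequality $-2\psi(r)<(r^2-\pi^2)\K_1$; the difference is only organizational.
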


\begin{proof}
	Indeed, $\Omega_{-,4}$ is obviously contained in RHS. Conversely, given a point $(v_1,v_2)$ in RHS, since it deduces from \eqref{N32ei1} that
		\begin{align*}
			0>\mathrm{K}_3\left(v_1, v_2\right) &=4\left[\sum_{j=1}^{+\infty} v_2^2\left((j \pi)^2-r^2\right)^{-2}+\sum_{j=2}^{+\infty}\left((j \pi)^2-r^2\right)^{-1}-\frac{1}{r^2-\pi^2}\right]\\
			&>4\left[v_2^2\, (\pi^2-r^2)^{-2}-\frac{1}{r^2-\pi^2}\right]=4 \,\frac{\pi^2-v_1^2}{\left(r^2-\pi^2\right)^2},
		\end{align*}
		then $r>v_1>\pi$ whenever $v_1>0>v_2$, which implies that $(v_1,v_2)\in\Omega_{-,4}$.
\end{proof}

The expression of the function $\K$ (cf. \eqref{32nK0}) in fact can be simplified as the following lemma shows, which will be used in Section \ref{sec11}.
\begin{lem} \label{K_form}
	Suppose that $v  = (v_1, v_2)  \in \rr^2$ with $0<|v|\neq\pi < \var$. Then
	\begin{align}\label{32nK}
		\K(v_1,v_2) = \frac{2}{\Upsilon(|v|)^3 \, |v|} \left[ \frac{-\Upsilon''(|v|)}{|v|} v_1^2 +  \frac{-\Upsilon'(|v|)}{\sin^2(|v|)} v_2^2 \right].
	\end{align}
\end{lem}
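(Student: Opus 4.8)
The plan is to treat Lemma \ref{K_form} as a pure algebraic identity, the starting point being the formula \eqref{32nK0} already in hand,
\[
\K(v_1,v_2) = 2\psi(r)\K_1 + v_1^2\bigl[2\psi(r)\K_2 - 4\K_1^2\bigr] + v_2^2\,\K_1\bigl(5\K_1 + \K_2\,r^2\bigr), \qquad r = |v|,
\]
where $\K_1 = \psi'(r)/r$ and $\K_2 = r^{-1}(\psi'(r)/r)' = (r\psi''(r) - \psi'(r))/r^3$. The aim is to rewrite the right-hand side as a genuine quadratic form in $(v_1, v_2)$, i.e.\ with coefficients depending only on $r$. The one nonroutine point is that the leading term $2\psi(r)\K_1$ looks as though it is free of $v_1, v_2$ but is not: writing $1 = (v_1^2 + v_2^2)/r^2$ one splits it into $\frac{2\psi\K_1}{r^2}\,v_1^2 + \frac{2\psi\K_1}{r^2}\,v_2^2$ and absorbs each half into the matching bracket.

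Carrying this out, the coefficient of $v_1^2$ becomes $\frac{2\psi\K_1}{r^2} + 2\psi\K_2 - 4\K_1^2$; substituting $\K_1$ and $\K_2$, the two terms $\pm\frac{2\psi\psi'}{r^3}$ cancel and only $\frac{2(\psi\psi'' - 2\psi'^2)}{r^2}$ survives. The coefficient of $v_2^2$ becomes $\frac{2\psi\K_1}{r^2} + \K_1(5\K_1 + \K_2 r^2)$; here I would feed in the identity \eqref{psi_sum}, namely $5\K_1 + \K_2 r^2 = 4\psi'/r + \psi'' = 2\psi(r)(r^2\csc^2 r - 1)/r^2$, multiply by $\K_1 = \psi'/r$, and add $\frac{2\psi\K_1}{r^2} = \frac{2\psi\psi'}{r^3}$; again a pair of $\frac{2\psi\psi'}{r^3}$ terms cancels and what is left is $\frac{2\psi\psi'}{r\sin^2 r}$. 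This lone cancellation is the only place where \eqref{psi_sum}, and hence anything beyond trivial algebra, is needed. At this stage
\[
\K(v_1,v_2) = \frac{2(\psi\psi'' - 2\psi'^2)}{r^2}\,v_1^2 + \frac{2\psi\psi'}{r\sin^2 r}\,v_2^2 .
\]

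The last step converts this into the stated $\Upsilon$-form by plugging in \eqref{relUp}: from $\Upsilon = 1/\psi$, $\Upsilon' = -\psi'/\psi^2$ and $\Upsilon'' = (-\psi\psi'' + 2\psi'^2)/\psi^3$ one obtains $\frac{2}{\Upsilon^3 r} = \frac{2\psi^3}{r}$, $\frac{-\Upsilon''}{r} = \frac{\psi\psi'' - 2\psi'^2}{r\psi^3}$ and $\frac{-\Upsilon'}{\sin^2 r} = \frac{\psi'}{\psi^2\sin^2 r}$, whose products with the common prefactor recover precisely the two coefficients found above. I do not expect a real obstacle: the only thing to watch is the domain, and the hypothesis $0 < |v| \neq \pi < \var$ is exactly what makes $\psi, \psi', \psi'', \cot r, \csc r$ finite and $\psi(r) \neq 0$ (so $\Upsilon(r)$ finite and nonzero), legitimising every division performed above.
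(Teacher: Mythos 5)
Your proposal is correct and follows essentially the same route as the paper's own proof: both start from \eqref{32nK0}, split the term $2\psi(r)\K_1$ via $r^2 = v_1^2+v_2^2$ into the two quadratic coefficients, invoke \eqref{psi_sum} exactly once to handle the $v_2^2$ bracket (the $v_1^2$ bracket reducing by the same cancellation of $\pm 2\psi\psi'/r^3$), and convert to the $\Upsilon$-form through \eqref{relUp}. The intermediate identity $\K = \frac{2(\psi\psi''-2\psi'^2)}{r^2}v_1^2 + \frac{2\psi\psi'}{r\sin^2 r}v_2^2$ you reach is precisely the paper's penultimate line, so nothing further is needed.
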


\begin{proof} Write $r:=|v|$. Then from \eqref{32nK0} and a routine computation it follows that
	\begin{align}
		\K(v_1,v_2) &= 2  \psi(r) \K_1 + v_1^2 \left[ 2 \psi(r) \K_2 - 4 \K_1^2 \right] + v_2^2 \, \K_1 \left( 5 \K_1 + \K_2 \, r^2 \right) \nonumber\\[1mm]
		&=2 v_1^2 \left[ \frac{\psi(r)}{r^2} \K_1 + \psi(r) \K_2 - 2 \K_1^2 \right] + v_2^2 \, \K_1 \left[ 2 \frac{\psi(r)}{r^2} + 5 \K_1 + \K_2 \, r^2 \right] \nonumber\\[1mm]
		&\xlongequal{\eqref{psi_sum}}2 v_1^2 \left[ \psi(r)\,\left(\frac{\K_1}{r^2} + \K_2 \right) -2 \frac{\psi'(r)^2}{r^2}\right] + v_2^2 \, \frac{\psi'(r)}{r}\left[2\frac{\psi(r)}{r^2} + 2\frac{\psi(r)}{r^2} (r^2\csc^2 r -1) \right] \nonumber\\[1mm]
		&=2 v_1^2 \left[\frac{\psi(r)\psi''(r)-2\psi'(r)^2}{r^2} \right] + 2 v_2^2 \left[ \frac{\psi(r)\psi'(r)}{r \sin^2 r}\right] \nonumber\\[1mm]
		&\xlongequal{\eqref{relUp}}\frac{2}{\Upsilon(r)^3 \, r} \left[ \frac{-\Upsilon''(r)}{r} v_1^2 +  \frac{-\Upsilon'(r)}{\sin^2(r)} v_2^2 \right] \nonumber.
	\end{align}
	This is the desired result.
\end{proof}

\section{Proof of Proposition \ref{nPn1} and Theorem \ref{tmm} } \label{s4}
	\setcounter{equation}{0}
	In this section, we will show the more useful integral expression for the heat kernel \eqref{ehk4}, which plays a role in overcoming our major obstacle in Section \ref{bigchr} below. Furthermore, we give some basic properties for the ``intrinsic distance'' $\rD$, as well as the proof of Theorem \ref{tmm}. Let us begin with the
	
	\subsection{Proof of Proposition \ref{nPn1}} \label{sec51}
	
	\begin{proof}
		Let $\vartheta_k \, (k \in \N^*)$ denote the unique solution of $\tan{r} = r$ in $(k \pi, (k + \frac{1}{2}) \pi)$. The following equality, which is a counterpart of \eqref{defV} for $r/\sinh{r}$, comes from the property of the Bessel function $J_{\frac{3}{2}}$ (cf. \cite[\S~8.544, \S~8.464.3]{GR15})
		\begin{align}\label{J32}
			\frac{r^3}{r \cosh{r} - \sinh{r}} &= 3\, \prod_{k = 1}^{+\infty} \left( 1 + \frac{r^2}{\vartheta_k^2} \right)^{-1}.
		\end{align}
		Then taking logarithmic derivative of the equality above, a direct calculation gives:
		\begin{align}\label{J33}
			\frac{r^2}{r \coth{r} - 1} &= 3 + 2 \, \sum_{k = 1}^{+\infty} \frac{r^2}{\vartheta_k^2 + r^2} =: \widetilde{\Upsilon}(r).
		\end{align}
		
		Next, let $A$ be a real, positive definite $q \times q$ matrix, and $Y \in \mathbb{C}^q$. Recall the well-known formula (cf. for instance \cite[Theorem~7.6.1]{H90})
		\medskip
		\begin{align}\label{Hor90}
			\int_{\R^q} e^{-\frac{1}{2} \langle A \, s, \, s \rangle + i \langle Y, \, s \rangle} ds = (2\pi)^{\frac{q}{2}} \, \frac{1}{\sqrt{\det{A}}} \,  e^{-\frac{1}{2} \langle A^{-1} \, Y, \, Y \rangle}.
		\end{align}
		\medskip
		Now applying this formula with $q=2$, $Y = \frac{|x|^2 \, \ww}{2} (\lz_2, \lz_3)$ and $A = \frac{1}{2} \frac{|\lz|^2}{|\lz|\coth|\lz|-1} |x|^2 \, \ww^2 \, \I_2$ to \eqref{ehk2'}, we obtain the desired result
		\begin{align*}
			p(g) = \frac{1}{4\pi} \, |x|^2 \, \ww^2\, e^{-\frac{|x|^2}{4}} \int_{\R^2} \rP\left( s|x|\,\ww,\frac14|x|^2(u + 2\ww\, s_1 \, e_2 + 2\ww\, s_2 \, e_3)\right) \, ds,
		\end{align*}
		where
		\begin{align}\label{defP2}
			\rP(X,T) = \int_{\R^3}  \, \vv(\lambda) \, e^{-\frac{1}{4} \Ga((X,T);\lambda)} \, d\lambda,
		\end{align}	
		with
		\begin{gather}
			\vv(\lambda) = \frac{|\lambda|^3}{|\lambda| \cosh{|\lambda|} - \sinh{|\lambda|}} = 3\, \prod_{k = 1}^{+\infty} \left( 1 + \frac{\lambda \cdot \lambda}{\vartheta_k^2} \right)^{-1}, \label{defcV} \\
			\Ga((X,T);\lambda) = \widetilde{\Upsilon}(|\lambda|) \, |X|^2  - 4i \,  T \cdot \lambda = 3|X|^2 + \sum_{k = 1}^{+\infty} \frac{2 |\lambda|^2}{\vartheta_k^2 + |\lambda|^2} |X|^2 - 4i \, T \cdot \lambda. \label{MFF2}
		\end{gather}
		
		Notice that \eqref{defP2}-\eqref{MFF2} have been defined in Subsection \ref{ideaS}.
	\end{proof}
	
	\subsection{The ``intrinsic distance'' $\rD$ associated to $\rP$} \label{sec52}
	
	We are in a position to provide some basic properties of the squared ``intrinsic distance'' associated to $\rP$,
	\begin{align*}
		\rD(X,T)^2 := \sup_{\tau \in B_{\R^3}(0, \vartheta_1)}   \Gamma((X,T);\tau), \qquad (X,T) \in \R^2 \times \R^3,
	\end{align*}
	where the smooth function $\Gamma((X,T);\cdot)$ is defined by
	\begin{align} \label{MFF3}
		\Gamma((X,T);\tau) : = \Ga((X,T);i\tau) = \Upsilon(|\tau|) |X|^2 + 4T \cdot \tau , \qquad \tau \in B_{\R^3}(0, \vartheta_1),
	\end{align}
	with
	\begin{align}\label{deD}
		\Upsilon(r):= \frac{r^2}{1 - r \cot{r}} (= \frac{1}{\psi(r)}, \ r \neq \pm \pi) = 3 - 2 \sum_{k = 1}^{+\infty} \frac{r^2}{\vartheta_k^2 - r^2}, \,\,   - \vartheta_1 < r < \vartheta_1,
	\end{align}
	where we have used \eqref{J33} in the last equality with a complexification. Hence $-\Upsilon$ is operator convex on $(-\var, \  \var)$ in the sense of \cite{B97}.
	
From the above series expansion,  one has immediately
\begin{equation}
	\begin{gathered}\label{Ups_ser}
		\Upsilon^\prime(r) = - 4 \sum_{k=1}^{+\infty} \frac{\vtz_k^2 \, r}{(\vtz_k^2-r^2)^2},\quad \Upsilon^{\prime\prime}(r) = - 4 \sum_{k=1}^{+\infty} \frac{\vtz_k^2 \,  (\vtz_k^2+3r^2)}{(\vtz_k^2-r^2)^3} \ (< 0), \\
		\Upsilon^{\prime\prime\prime}(r) = - 48 \sum_{k=1}^{+\infty} \frac{\vtz_k^2 \, (\vtz_k^2+r^2) \, r}{(\vtz_k^2-r^2)^4}, \quad r\in(-\vtz_1,\vtz_1).
	\end{gathered}
\end{equation}
In particular, we get the following counterpart of the key function $\mu(r) = (2 r - \sin{2 r})/(2 \sin^2{r})$ ($-\pi < r < \pi$) in the setting of Heisenberg group as well as generalized Heisenberg-type groups (cf. \cite[Lemma~3, p. 112]{G77}, \cite[Theorem~1.36]{BGG00} and \cite[Section 7]{LZ22}):
\begin{lem} \label{lmDp}
	The function $r \mapsto - \Upsilon^\prime(r)$ is an odd increasing diffeomorphism between $(-\vartheta_1, \vartheta_1)$ and $\R$.
\end{lem}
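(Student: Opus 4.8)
The plan is to read off everything from the series expansion of $\Upsilon'$ in \eqref{Ups_ser}. First I would recall that, for $r \in (-\vartheta_1, \vartheta_1)$, we have $\vartheta_k > \vartheta_1 > |r|$ for all $k \ge 1$, so each term $\frac{\vartheta_k^2 \, r}{(\vartheta_k^2 - r^2)^2}$ in the series $\Upsilon'(r) = -4 \sum_{k=1}^{+\infty} \frac{\vartheta_k^2 \, r}{(\vartheta_k^2 - r^2)^2}$ is well-defined and has the sign of $r$. Consequently $-\Upsilon'(r) = 4 \sum_{k=1}^{+\infty} \frac{\vartheta_k^2 \, r}{(\vartheta_k^2 - r^2)^2}$ is an odd function of $r$, which gives the oddness claim for free.

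Next, for monotonicity, I would invoke the formula for $\Upsilon''$ from \eqref{Ups_ser}, namely $\Upsilon''(r) = -4 \sum_{k=1}^{+\infty} \frac{\vartheta_k^2 \, (\vartheta_k^2 + 3 r^2)}{(\vartheta_k^2 - r^2)^3}$. Since $|r| < \vartheta_1 \le \vartheta_k$, every summand has a strictly positive numerator and a strictly positive denominator, so $\Upsilon''(r) < 0$ throughout $(-\vartheta_1, \vartheta_1)$; hence $(-\Upsilon')' = -\Upsilon'' > 0$, i.e. $r \mapsto -\Upsilon'(r)$ is strictly increasing. Smoothness (indeed real-analyticity) on the open interval is clear from the same expansions, since the series converge locally uniformly together with all their derivatives on compact subsets of $(-\vartheta_1, \vartheta_1)$.

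Finally, to see that $-\Upsilon'$ is a diffeomorphism \emph{onto} all of $\R$, it suffices, given that it is continuous, odd and strictly increasing, to check that $-\Upsilon'(r) \to +\infty$ as $r \to \vartheta_1^-$. This follows by isolating the $k = 1$ term: $-\Upsilon'(r) \ge 4 \, \frac{\vartheta_1^2 \, r}{(\vartheta_1^2 - r^2)^2}$, and the right-hand side blows up as $r \uparrow \vartheta_1$ (the remaining terms are nonnegative for $r > 0$, so they only help). By oddness $-\Upsilon'(r) \to -\infty$ as $r \to -\vartheta_1^+$, and by the intermediate value theorem the range is exactly $\R$. A strictly increasing smooth bijection between open intervals with nowhere-vanishing derivative has a smooth inverse, completing the proof.

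None of the steps is genuinely hard; the only point requiring a modicum of care is the justification that the term-by-term differentiation leading to \eqref{Ups_ser} is legitimate and that the series for $-\Upsilon'$ genuinely diverges at the endpoints rather than merely having an unbounded partial sum — but this is immediate once one notes the $k=1$ term already diverges and all terms are of one sign for $r > 0$. I do not expect any real obstacle here; the lemma is essentially a bookkeeping consequence of the partial-fraction expansion of $\Upsilon$ already recorded in \eqref{deD}.
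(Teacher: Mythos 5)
Your proof is correct and follows exactly the route the paper intends: the lemma is stated right after the series expansions \eqref{Ups_ser} precisely because oddness, strict monotonicity (from $\Upsilon''<0$), divergence at $\pm\vartheta_1^{\mp}$ (already visible from the $k=1$ term), and smoothness of the inverse via the nonvanishing derivative all follow immediately from them, which is what you did. No gaps.
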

We denote the smooth inverse function of $-\Upsilon'(\cdot)$ by $\cZ(\cdot)$, that is,
\begin{equation} \label{cZd}
	-\Upsilon^\prime(\cZ(\rho)) := \rho, \quad \rho \in \R.
\end{equation}And we define a smooth function $\Phi(\cdot)$ on the real line by setting
\begin{align} \label{defUps}
	\Phi(\rho) := \Upsilon(\cZ(\rho)) + \rho \, \cZ(\rho),  \quad  \rho \in \R.
\end{align}
The following result (especially the assertions  (ii) and (iii) below) says that $\rD$ can be imagined as a Carnot--Carath\'eodory distance on $\R^2 \times \R^3$. Indeed it comes from the same spirit of the characterization for the squared Carnot--Carath\'eodory distance on GM-groups, even on GM-M\'etivier groups (cf. \cite{Li20} and \cite{LZ22}). So one can consider that we are in a special case of GM-groups.

\begin{prop}\label{lD} It holds that:
	{\em\begin{compactenum}[(i)]
			\item The smooth function $\Gamma((X,T); \cdot)$ is concave in $B_{\R^3}(0,\vartheta_1)$.
			
			\item  The function $\rD(X,T)^2$ has the following explicit expression:
			\begin{align}\label{eD3}
				\rD(X,T)^2 =\begin{cases}
					4\vartheta_1 |T|, & \mbox{if \ } |X| = 0,  \\
					\Upsilon(|\tau^*|)|X|^2 + 4T \cdot \tau^* = \Phi\left( \frac{4|T|}{|X|^2} \right) |X|^2 , & \mbox{if \ } |X| \ne 0,
				\end{cases}
			\end{align}
			where $\tau^* := \tau^*(X,T)$ denotes the unique critical point of $\Gamma((X,T); \cdot)$ in $B_{\R^3}(0,\vartheta_1)$, i.e.,
			\begin{align}\label{Dts}
				\frac{\Upsilon^\prime(|\tau^*|)}{|\tau^*|} |X|^2 \, \tau^* + 4T = 0, \quad {\rm so} \quad \tau^* = \cZ\left( \frac{4|T|}{|X|^2} \right) \, \widehat{T},
			\end{align}
			with the convention $\widehat{T} = 0$ for $T = 0$, and $\widehat{T} = T/|T|$ otherwise.
			\item $\rD(X, T)^2 > 0$ for any $(X,T) \ne (0,0)$.  Moreover, the following properties hold
			for all $(X, T) \in \R^2 \times \R^3$ :
			\begin{equation} \label{D_scal}
				\rD(X,T)^2 \sim |X|^2 + |T|, \qquad	\rD\left( \frac{X}{\sqrt{h}}, \frac{T}{h} \right)^2 = \frac{1}{h} \, \rD(X,T)^2, \qquad \forall \, h > 0.
			\end{equation}
			
			\item $\rD^2$ is continuous on $\R^2 \times \R^3$.
			
	\end{compactenum}}
\end{prop}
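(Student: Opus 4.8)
\textbf{Proof proposal for Proposition \ref{lD}.} The plan is to establish the four assertions essentially in the order stated, since each one feeds into the next. I would begin with (i): by \eqref{MFF3} we have $\Gamma((X,T);\tau) = \Upsilon(|\tau|)|X|^2 + 4 T\cdot\tau$, and the linear term $4T\cdot\tau$ is harmless for concavity, so it suffices to show $\tau\mapsto\Upsilon(|\tau|)|X|^2$ is concave on $B_{\R^3}(0,\vartheta_1)$. This follows from the fact that $-\Upsilon$ is operator convex on $(-\vartheta_1,\vartheta_1)$ (noted just after \eqref{deD}), or more elementarily from the Hessian computation $\He_\tau(\Upsilon(|\tau|)) = \frac{\Upsilon'(r)}{r}\I_3 + \left(\Upsilon''(r) - \frac{\Upsilon'(r)}{r}\right)\widehat\tau\widehat\tau^\T$ with $r = |\tau|$: by \eqref{Ups_ser} one has $\Upsilon''(r) < 0$ and, from the series expansion, $\Upsilon'(r)/r < 0$ as well for $0 < r < \vartheta_1$ (and the limit is negative at $r = 0$), so both eigenvalues $\Upsilon'(r)/r$ and $\Upsilon''(r)$ are negative; hence $\He_\tau(\Upsilon(|\tau|)) \le 0$ and $\Gamma((X,T);\cdot)$ is concave.

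Next, for (ii), concavity from (i) shows $\Gamma((X,T);\cdot)$ has at most one critical point in $B_{\R^3}(0,\vartheta_1)$, and since $\Upsilon(|\tau|)\to+\infty$ as $|\tau|\to\vartheta_1^-$ while the linear term stays bounded on the ball, $\Gamma((X,T);\cdot)\to+\infty$ at the boundary — wait, that is the wrong sign; rather, since $-\Upsilon'$ is an increasing diffeomorphism of $(-\vartheta_1,\vartheta_1)$ onto $\R$ by Lemma \ref{lmDp}, the critical point equation $\frac{\Upsilon'(|\tau^*|)}{|\tau^*|}|X|^2\tau^* = -4T$ forces $\tau^*$ to be a positive multiple of $-T$ (hence of $\widehat T$ up to sign; one checks the sign is $+$), and its modulus is determined by $-\Upsilon'(|\tau^*|) = 4|T|/|X|^2$, i.e. $|\tau^*| = \cZ(4|T|/|X|^2)$, giving \eqref{Dts}. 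Substituting into $\Gamma$ and using \eqref{defUps} yields $\rD(X,T)^2 = \Upsilon(|\tau^*|)|X|^2 + 4|T||\tau^*| = \Phi(4|T|/|X|^2)|X|^2$. For the degenerate case $|X| = 0$, the supremum of $4T\cdot\tau$ over the open ball is $4\vartheta_1|T|$ (not attained), which is the stated value. The maximizer being the unique critical point when $|X|\ne0$ follows because a concave function on a ball that blows up to $-\infty$ at — here I should note $\Upsilon(|\tau|)\to+\infty$ so $\Gamma\to+\infty$ only if $|X|\ne 0$, meaning the supremum is $+\infty$?? — no: $\Upsilon$ is \emph{positive} and increasing toward $\vartheta_1$, so $\Gamma((X,T);\tau)\to+\infty$ as $|\tau|\to\vartheta_1$; the resolution is that $\Gamma$ is concave but the supremum over the \emph{open} ball equals its value at the interior critical point precisely because concavity plus existence of an interior critical point (guaranteed by Lemma \ref{lmDp}, which makes \eqref{Dts} solvable for \emph{every} $(X,T)$ with $|X|\ne 0$) forces that critical point to be the global max on $B_{\R^3}(0,\vartheta_1)$; I would state this carefully, as it is the one genuinely delicate point.

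For (iii), positivity when $(X,T)\ne(0,0)$: if $|X| = 0$ then $T\ne0$ and $\rD^2 = 4\vartheta_1|T| > 0$; if $|X|\ne0$ then $\Gamma((X,T);0) = \Upsilon(0)|X|^2 = 3|X|^2 > 0$, and $\rD(X,T)^2\ge\Gamma((X,T);0) > 0$. The two-sided bound $\rD(X,T)^2\sim|X|^2+|T|$ follows by splitting cases and using $\Upsilon$ bounded above and below by positive constants on compact subsets of $(-\vartheta_1,\vartheta_1)$ together with the blow-up rate of $\Upsilon$ (equivalently the growth of $\cZ$ and $\Phi$) near the boundary; concretely, from \eqref{eD3} and properties of $\Phi$, when $|T|/|X|^2$ is bounded we get $\rD^2\sim|X|^2$, and when it is large we get $\rD^2\sim|T|$ (the $\vartheta_1$-scaling matching the $|X| = 0$ limit). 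The scaling identity $\rD(X/\sqrt h, T/h)^2 = h^{-1}\rD(X,T)^2$ is immediate from the noted scaling $\Gamma((hX,h^2T);i\tau) = \Ga((hX,h^2T);\tau) = h^2\Ga((X,T);\tau)$ — or more directly from \eqref{eD3} since $4|T/h|/|X/\sqrt h|^2 = 4|T|/|X|^2$ is scale-invariant while $|X/\sqrt h|^2 = |X|^2/h$. Finally (iv), continuity of $\rD^2$: on $\{|X|\ne0\}$ it is continuous because $(X,T)\mapsto 4|T|/|X|^2$ is continuous, $\Phi$ is smooth, and $|X|^2$ is continuous; the only issue is continuity across $\{X = 0\}$, which I would handle by showing that as $X\to0$ with $T\to T_0$, the quantity $\Phi(4|T|/|X|^2)|X|^2\to 4\vartheta_1|T_0|$ using the asymptotics $\Phi(\rho)\sim\vartheta_1\rho$ as $\rho\to+\infty$ (which follows from $\cZ(\rho)\to\vartheta_1$ and \eqref{defUps}), and $\rD^2(0,T) = 4\vartheta_1|T|$ is visibly continuous in $T$. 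The main obstacle, as flagged above, is the careful verification in (ii) that the interior critical point is genuinely the supremum over the open ball despite $\Gamma$ tending to $+\infty$ at the boundary sphere in a radial sense — the point is that the concavity from (i), combined with the fact that for $|X|\ne0$ equation \eqref{Dts} \emph{always} has a solution in the \emph{open} ball (by Lemma \ref{lmDp}), is exactly what rules out the supremum being attained only in the limit; once this is pinned down, everything else is bookkeeping with the series expansions \eqref{Ups_ser}.
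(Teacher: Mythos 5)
Your route is essentially the paper's: (i) via the Hessian/series expansion, (ii) via "a critical point of a smooth concave function is the global maximizer" together with Lemma \ref{lmDp} to solve \eqref{Dts}, (iii) via test values and the growth of $\Phi$, plus the scaling identity from the definition, and (iv) via smoothness of $\Phi$ plus boundary asymptotics. However, there is a concrete error in your treatment of (ii). You assert that $\Upsilon$ is positive and increasing toward $\vartheta_1$, so that $\Gamma((X,T);\cdot)\to+\infty$ at the boundary sphere, and you present reconciling this with the interior maximizer as "the one genuinely delicate point." The facts are the opposite: by \eqref{Ups_ser} one has $\Upsilon'<0$ on $(0,\vartheta_1)$, so $\Upsilon$ decreases from $\Upsilon(0)=3$, vanishes at $\pi$, and (cf.\ \eqref{aDp3}) $\Upsilon(r)\to-\infty$ as $r\to\vartheta_1^-$; hence for $|X|\neq0$ the function $\Gamma((X,T);\cdot)$ tends to $-\infty$ at the boundary and there is no tension at all. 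As written, your "resolution" is self-contradictory: a concave differentiable function on the open ball that has an interior critical point attains its global maximum there, and therefore cannot blow up to $+\infty$ anywhere; your premise would refute your conclusion. Once the sign of $\Upsilon$ near $\vartheta_1$ is corrected, the delicate point evaporates, and the argument reduces to exactly the concavity fact the paper cites plus the solvability of \eqref{Dts} via Lemma \ref{lmDp}. (Relatedly, since $\Upsilon'(|\tau^*|)/|\tau^*|<0$, the critical-point equation makes $\tau^*$ a \emph{positive} multiple of $T$, not of $-T$; this should be stated rather than hedged.)

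Two smaller points. In (iv), the asymptotic $\Phi(\rho)\sim\vartheta_1\rho$ does not follow merely from $\cZ(\rho)\to\vartheta_1$ and \eqref{defUps}, because $\Upsilon(\cZ(\rho))\to-\infty$; you need the rate, namely $|\Upsilon(\cZ(\rho))|\sim\sqrt{\vartheta_1\rho}=o(\rho)$, which comes from $\Upsilon(r)\sim-\vartheta_1/(\vartheta_1-r)$ and $-\Upsilon'(r)\sim\vartheta_1/(\vartheta_1-r)^2$ near $\vartheta_1$ — this is precisely how the paper handles continuity at points $(0,T_0)$, $T_0\neq0$, writing $\rD(X,T)^2=4|T|\bigl(|\tau^*|-\Upsilon(|\tau^*|)/\Upsilon'(|\tau^*|)\bigr)$. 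In (iii), your case split using \eqref{eD3} and the growth of $\Phi$ (i.e.\ \eqref{Phi_est}) is correct and non-circular, but it is slightly heavier than the paper's argument, which gets the upper bound from $\Upsilon\le3$ (so $\rD^2\le 3|X|^2+4\vartheta_1|T|$) and the lower bound from the two test points $\tau=0$ and $\tau=\pi\widehat T$ (using $\Upsilon(\pi)=0$), without invoking $\Phi$ at all.
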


\begin{proof}  For the reader’s convenience, we provide a direct proof.
	
	First, the concavity can be directly verified by showing that $\He_{\tau} \Gamma((X, T), \tau) \le 0$ from the series expression \eqref{deD}. Now we prove (ii). Recall that for a smooth concave function, the maximizer is equivalent to the critical point (cf. \cite[IMPORTANT, p.146]{NP18}). Through a simple computation we have
	\begin{align*}
		\rD(X,T)^2 = \begin{cases}
			4\vartheta_1 |T|, & \mbox{if \ } |X| = 0 , \\
			\Upsilon(|\tau^*|)|X|^2 + 4T \cdot \tau^*, & \mbox{if \ } |X| \ne 0,
		\end{cases}
	\end{align*}
	where $\tau^*$ is defined by \eqref{Dts} (this is clearly consistent with the one defined in Proposition \ref{lP} (iii)). From this and \eqref{defUps} we obtain (ii).
		
		Next we show (iii). It follows from \eqref{deD} that $\Upsilon(r) \le 3$. Hence \eqref{MFF3} and the definition of $\rD^2$ yield that $\rD(X, T)^2 \le 3 |X|^2 + 4 \var \, |T|$. Next, notice that $\Gamma((X, T); 0) = 3 |X|^2$, and $\Gamma((X, T); \pi \, \widehat{T}) = 4 \pi |T|$ since $\Upsilon(\pi) = 0$ (cf. the first equality in \eqref{deD}). In conclusion, we get that $\rD(X, T)^2 \sim |X|^2 + |T|$, which implies immediately the first claim.  On the other hand, the scaling property $\rD(X/\sqrt{h}, T/h)^2 = \rD(X, T)^2/h$ is clear from the definition of $\rD^2$.
		
			We are left with the proof of (iv). Notice that the continuity of $\rD^2$ at $(X_0,T_0)$ where $X_0\neq0$ follows from \eqref{eD3} and the smoothness of $\Phi$. And the continuity at $(X_0,T_0)=(0,0)$ follows easily from the first formula of \eqref{D_scal}. For the point $(X_0,T_0)=(0,T_0)$ where $T_0\neq0$, initially by \eqref{eD3} we have $\rD(0,T_0)^2 = 4\var |T_0|$. Supposing $(X,T)\to(0,T_0)$, we consider two cases. If $X=0$, then by the first equality in \eqref{eD3} we have $\rD(X,T)^2=4\var|T|\to \rD(X_0,T_0)^2$. If $X\neq0$, by \eqref{Dts} we have $|X|^2 = -4|T|/\Upsilon'(|\tau^*|)$ and $|\tau^*|=\cZ(4|T|/|X|^2) \to \vartheta_1^-$ (from Lemma \ref{lmDp} and the fact that $|T|/|X|^2 \to +\infty$). Then  noticing that $\Upsilon(r) \sim - \vartheta_1/(\vartheta_1 - r)$ and $\Upsilon'(r) \sim - \vartheta_1/(\vartheta_1 - r)^2$ for $0 < \vartheta_1 - r$ small enough, by \eqref{eD3} we obtain
				\[
				\rD(X,T)^2 = 4|T|\left( |\tau^*|- \frac{\Upsilon(|\tau^*|)}{\Upsilon'(|\tau^*|)} \right) \to 4\var |T_0| = \rD(X_0,T_0)^2
				\]
				as well.
		\end{proof}

\begin{remark}\label{smoD}
Combining \eqref{eD3} with \eqref{aPsi} below we can see that $\rD^2$ is not $C^1$ at $\{(0,T); \, T \ne 0\}$. Moreover, $\rD^2$ is not $C^1$ at $(0,0)$ by the first estimate of \eqref{D_scal}. In conclusion, $\rD^2$ is smooth on $(\R^2 \setminus \{0\}) \times \R^3$ and is not $C^1$ on $\{0\} \times \R^3$.
\end{remark}
		
		Before providing the proof of Theorem \ref{tmm}, we recall that (cf. \eqref{defs}) $\varphi_1(r) = \frac{r^2 - \sin^2{r}}{r - \sin{r} \cos{r}} > 0$ for $r > 0$, and state the following simple observation:
		
		\begin{lem} \label{0zl1}
			$\varphi_1 > 0$ is strictly increasing on $(0, \ +\infty)$.
		\end{lem}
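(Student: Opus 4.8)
The plan is to differentiate $\varphi_1$ directly and reduce the sign of $\varphi_1'$ to that of a perfect square. First I would record that the denominator $d(r) := r - \sin r\cos r = \int_0^r 2\sin^2 t\,dt$ is strictly positive on $(0,+\infty)$ (using $d'(r) = 1 - \cos 2r = 2\sin^2 r \geq 0$ and $d(0)=0$), so that $\varphi_1$ is well defined and real-analytic there. The same identity $d'(r) = 2\sin^2 r$ together with the observation that the numerator $N(r):=r^2-\sin^2 r$ satisfies $N'(r) = 2(r-\sin r\cos r) = 2\,d(r)$ will be the two computational inputs.

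Then I would apply the quotient rule:
\[
\varphi_1'(r) = \frac{N'(r)\,d(r) - N(r)\,d'(r)}{d(r)^2} = \frac{2\,d(r)^2 - 2\sin^2 r\,(r^2-\sin^2 r)}{d(r)^2},
\]
so that it suffices to check that the bracketed quantity $d(r)^2 - \sin^2 r\,(r^2-\sin^2 r)$ is nonnegative. Expanding $d(r)^2 = r^2 - 2r\sin r\cos r + \sin^2 r\cos^2 r$ and using $\sin^2 r + \cos^2 r = 1$, a short algebraic simplification collapses this expression to $r^2\cos^2 r - 2r\sin r\cos r + \sin^2 r = (r\cos r - \sin r)^2$. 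Hence $\varphi_1'(r) = 2\,(r\cos r-\sin r)^2/d(r)^2 \geq 0$ on $(0,+\infty)$.

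Finally, since $r\mapsto r\cos r - \sin r$ is real-analytic and not identically zero, its zero set is discrete (it is $\{0\}$ together with the solutions of $\tan r = r$), so $\varphi_1'>0$ off a countable set of isolated points; a nonnegative continuous derivative vanishing only on a discrete set still forces strict monotonicity, and the lemma follows. I do not expect any genuine obstacle here: the only point requiring care is the algebraic collapse of the numerator of $\varphi_1'$ to $(r\cos r-\sin r)^2$, and the elementary (but worth stating) fact that $\varphi_1'\ge 0$ with isolated zeros yields that $\varphi_1$ is strictly increasing.
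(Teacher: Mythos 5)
Your proposal is correct and follows essentially the same route as the paper: the paper's proof simply records the identity $\varphi_1'(r)=8\,(r\cos r-\sin r)^2/(2r-\sin(2r))^2$ (the same perfect-square formula you derive, since $r-\sin r\cos r=\tfrac12(2r-\sin 2r)$) and concludes strict monotonicity. Your additional remarks on the positivity of the denominator and on the zeros of $r\cos r-\sin r$ being isolated are just a more explicit rendering of the same argument.
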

		
		\begin{proof}
			A simple computation shows
			\begin{equation*}
				\varphi_1'(r) = 8\ \frac{(r \cos{r} - \sin{r})^2}{(2 r - \sin(2r))^2}> 0, \quad \forall \, r \in (0, \ +\infty)\setminus\{\vtz_k;k\in\nn^*\},
			\end{equation*}
			which implies the monotonicity of $\varphi_1$.
		\end{proof}
		
		\subsection{Proof of Theorem \ref{tmm}} \label{ss53}
		
		\begin{proof}
			Let us begin with the proof of \eqref{dEn2}.
			Recall that the Assumption (A) (cf. \eqref{n78n}) says that $\ww = \theta_2 \, \psi(|\theta|) > 0$, $\ss = (\ss_1, \ss_2) = (-1, 0)$, and $u = (u_1, u_2, 0)$ with
			\[
			u_1 = \frac{\psi'(|\theta|)}{|\theta|} \, \theta_2^2 \, \theta_1 > 0, \qquad u_2 = \theta_2 \, \left[ \frac{\psi'(|\theta|)}{|\theta|} \, \theta_2^2 + 2 \, \psi(|\theta|) \right] = \theta_2 \, \K_3(\theta_1, \theta_2) > 0.
			\]
			It follows from  $\Upsilon(r) = 1/\psi(r)$ ($0 < r \neq \pi < \var$) that
			\begin{align}\label{crieq}
				\frac{\Upsilon^\prime(|\theta|)}{|\tz|} \, \ww^2 \, |\ss|^2 \, \theta + 4 \times \frac{1}{4} \Big(u + 2 \, \ww \,\ss_1  e_2 + 2 \, \ww \,\ss_2 e_3 \Big)
				= 0.
			\end{align}
			Combining this with \eqref{Dts} and \eqref{eD3}, we obtain that
			\begin{align*}
				\cD(u;\ss) = \Upsilon(|\theta|) \ww^2 |\ss|^2+ u \cdot \theta + 2\ww \,\ss_1 \theta_2 = -\theta_2^2 \, \psi(|\theta|) +   u \cdot \theta,
			\end{align*}
			which is the first equality in \eqref{dEn2}.
			
			Next, observe that $u \cdot \theta = \theta_2^2 \, ( |\theta| \, \psi'(|\theta|) + 2 \, \psi(|\theta|) )$. Then to obtain the second equality in \eqref{dEn2}, it suffices to use the first one and
			\[
			\psi(r) + r \psi'(r) = (r \psi(r))' = \left( \frac{1 - r \cot{r}}{r} \right)' = \frac{1}{\sin^2{r}} - \frac{1}{r^2}.
			\]
			To show the third one, we make use of the second one and the following basic equality
			\[
			\varphi_1(r) = \frac{\varphi_0(r)}{r^2 \psi'(r) + 2r \psi(r)}, \quad
			\mbox{with} \quad
			\varphi_0(r) := \left( \frac{r}{\sin{r}} \right)^2 - 1.
			\]
			For the fourth one, it suffices to use the second one, the fact that $u_1/\theta_1 = \psi'(|\theta|) \, \theta_2^2/|\theta|$ and $\varphi_2(r) = \varphi_0(r)/(r^2 \, \psi'(r))$.
			The last one follows from the fourth one, the fact that
			\[
			u_1 \, ( u_1 + u_2 \frac{\theta_2}{\theta_1} ) = u_1^2 \frac{|\theta|^2 \, \psi'(|\theta|) + 2 \, |\theta| \, \psi(|\theta|)}{\psi'(|\theta|) \, \theta_1^2} = \left( u_1 \frac{|\theta|}{\theta_1} \right)^2 \frac{\frac{\varphi_0(|\theta|)}{\varphi_1(|\theta|)}}{|\theta|^2 \, \psi'(|\theta|)} = \left( u_1 \frac{|\theta|}{\theta_1} \right)^2 \frac{\varphi_2(|\theta|)}{\varphi_1(|\theta|)}
			\]
			and $\varphi_3 = \sqrt{\varphi_1 \, \varphi_2}$.	
			
			Turning to the first assertion. Fix $u$ as in Assumption (A). Notice that from Proposition \ref{lD} (iii)-(iv), the continuous function $\cD(u;\cdot)$ must attain its minimum at some point $s^* \in \R^2$. Then we only need to show that $s^*=\ss$. This proceeds in three steps as follows:

			\paragraph{Step 1: It holds that $s^* = (s_1^*, s_2^*) \ne 0$.} We argue by contradiction. If $s^* = 0$,
			it follows from \eqref{eD3} that $\cD(u;0) = \rD(0, u/4)^2 = \vartheta_1 \, |u|$. Next, recalling $\tan{\var} = \var$, a direct calculation yields that
			\[
			\varphi_1(\var) = \frac{\var^2 - \sin^2{\var}}{\var - \sin{\var} \, \cos{\var}} = \frac{\tan^2{\var} - \sin^2{\var}}{\tan{\var} - \sin{\var} \, \cos{\var}} = \tan{\var} = \var.
			\]
			Thus the fact that $\varphi_1$ is strictly increasing on $(0, +\infty)$ (cf. Lemma \ref{0zl1})  implies that
			$\cD(u;0) =  \varphi_1(\vartheta_1) \, |u|>\frac{\varphi_1(|\theta|)}{|\tz|} u\cdot\tz = \cD(u;\ss)$. This leads to a contradiction.
			
			It follows from \eqref{eD3} and \eqref{Dts} that $\rD^2$ is smooth on $(\R^2 \setminus \{ 0 \}) \times \R^3$. Then $\cD(u; \cdot) \in C^{\infty}(\R^2 \setminus \{ 0 \})$.
			Since its minimum point $s^* \ne 0$, using \eqref{eD3} again, we obtain
			\begin{align}\label{gras}
				0= \partial_{s_1} \cD(u;s^*) = 2 \, \ww \, ( \Upsilon(|\tau_*|) \, \ww \, s^*_1 + \tau_{*2}), \quad
				0= \partial_{s_2} \cD(u;s^*) = 2 \, \ww \, ( \Upsilon(|\tau_*|) \, \ww \, s^*_2 + \tau_{*3}),
			\end{align}
			with $\tau_*= (\tau_{*1}, \tau_{*2}, \tau_{*3}) = \tau(s^*)$, where the smooth mapping $\tau(s) = (\tau_1(s),\tau_2(s), \tau_3(s))$ on $\R^2 \setminus \{ 0 \}$ is defined by
			\begin{align}\label{gratau}
				\frac{\Upsilon^\prime(|\tau(s)|)}{|\tau(s)|} \, \ww^2 \, |s|^2 \, \tau(s) + u + 2 \, \ww \, s_1  e_2 + 2 \, \ww \, s_2 e_3 = 0.
			\end{align}
			In particular,
			\begin{align}  \label{nABn2}
				\frac{\Upsilon^\prime(|\tau(s)|)}{|\tau(s)|} \, \ww^2 \, |s|^2 \, \tau_3(s) + 2 \, \ww \, s_2 = 0.
			\end{align}
			
			\paragraph{Step 2: It holds that $|\tau_*| \ne \pi$.} We argue by contradiction again. Assume that $|\tau_*| = \pi$. Observing that $\Upsilon(|\tau_*|) = \Upsilon(\pi) = 0$, then \eqref{gras} implies that $\tau_{*2} = \tau_{*3} = 0$. Inserting this into \eqref{gratau} with $s = s^*$ and noticing $\Upsilon^\prime(\pi) = -\pi$ (which can be checked by \eqref{relUp} and \eqref{nnp} for example), we get that $s^*_2 = 0$ and $\pi u_2^2 = 4 u_1$, which contradicts with our Assumption (A) (cf. \eqref{n78n}).
			
			\paragraph{Step 3: We have $\tau_* = \theta$ and $s^* = \ss$.}
			
			Since $s^* \ne 0$ and $0 < |\tau_*| \ne \pi < \vtz_1$,
			it follows from \eqref{gras} and $\Upsilon = 1/\psi$ that
			\begin{align} \label{nABn1}
				\ww \, s^*_1 = - \psi(|\tau_*|) \, \tau_{*2}, \quad \ww \, s^*_2 = - \psi(|\tau_*|) \, \tau_{*3}, \quad \ww^2 \, |s^*|^2 =  \psi(|\tau_*|)^2 \, \Big( (\tau_{*2})^2 + (\tau_{*3})^2 \Big).
			\end{align}
			Substituting this into \eqref{gratau} with $s = s^*$, together with $\Upsilon' = - \psi'/\psi^2$, we can write
			\begin{align*}
				u =  \frac{\psi^\prime(|\tau_*|)}{|\tau_*|} \Big( (\tau_{*2})^2 + (\tau_{*3})^2 \Big) \, \tau_* + 2 \,  (\tau_{*2} \ e_2 + \tau_{*3} \ e_3) \,  \psi(|\tau_*|).
			\end{align*}
			The equation together with the fact that $\psi'(r)/r > 0$ for $0 < r \ne \pi < \var$ (cf. \eqref{32ei3}), we get that $\tau_{*1} > 0$,  $\tau_{*2} \ne 0$, $\tau_{*3} = 0$  (otherwise we would yield from $u_3 = 0$ that $\frac{\psi^\prime(|\tau_*|)}{|\tau_*|} \Big( (\tau_{*2})^2 + (\tau_{*3})^2 \Big) + 2 \, \psi(|\tau_*|) = 0$, which leads to $u_2 = 0$, a contradiction!),  and $\Lambda(\tau_{*1},\tau_{*2}) = (u_1,u_2)$. In particular, recalling the definition of $\K_3$ (cf. \eqref{nK3N}), we have $u_2 = \tau_{*2} \, \K_3(\tau_{*1}, \tau_{*2}) > 0$.
			
			At this point, \eqref{nABn1} leads to $s_1^* = -\frac{\tau_{*2}}{\ww} \psi(|\tau_*|)$, $s_2^* = 0$  and $\ww^2 |s^*|^2 = \psi(|\tau_*|)^2 \tau_{*2}^2 $.  Taking gradient w.r.t. $s$ on both sides of \eqref{nABn2}, we obtain $\nabla_{s^*} \tau_3(s^*) = (0, -\frac{2}{\ww |s^*|^2} \frac{|\tau_*|}{\Upsilon'(|\tau_*|)})$. Then using the expression of $\partial_{s_2} \cD(u;s^*)$ (cf.  \eqref{gras}), a direct computation gives  $\partial_{s_1s_2}^2 \cD(u;s^*) = 0$ and
			\begin{align*}
				\partial_{s_2}^2 \cD(u;s^*) = 2 \, \ww^2 \,  \Upsilon(|\tau_*|) + 2 \, \ww \, \partial_{s_2} \tau_3(s^*) = \frac{2 \, \ww^2 \, |\tau_*|}{(\tau_{*2})^2 \,  \psi(|\tau_*|) \, \psi^\prime(|\tau_*|) } \, \K_3(\tau_{*1}, \tau_{*2}),
			\end{align*}
			which is non-negative, by the fact that the Hessian matrix at a minimum point  is positive semi-definite.
			
			To summarize, we get that
			\begin{gather*}
				\tau_* = (\tau_{*1},\tau_{*2}, 0), \quad 0 < |\tau_*| \ne \pi < \var, \quad \tau_{*1} > 0, \quad \tau_{*2} \, \K_3(\tau_{*1}, \tau_{*2}) > 0, \\[1mm]
				\psi(|\tau_*|)\,\K_3(\tau_{*1}, \tau_{*2}) > 0, \quad  \Lambda(\tau_{*1},\tau_{*2}) = (u_1,u_2).
			\end{gather*}
			Using the characterization of $\Omega_{-, 4}$ (cf. \eqref{ChaO-4}), one can easily verify that $(\tau_{*1},\tau_{*2})\in\Omega_{+,1}\cup\Omega_{-,4}$.
			Then by Theorem \ref{mapL} we see that $\tau_* = \theta$. Therefore, $s_1^* = -\frac{\tau_{*2}}{\ww} \psi(|\tau_*|) = -1$ and $s_2^* = 0$, namely,
			$s^*=\ss$, which finishes the proof of Theorem \ref{tmm}.
		\end{proof}
		
		Recall that $\ww = \theta_2 \, \psi(|\theta|)$ and $u_2 = \theta_2 \, \K_3(\theta_1, \theta_2)$. The above proof also gives the following:
		
		\begin{cor} \label{C54n}
			Let $\theta, u$ be as in Assumption (A) (cf. \eqref{n78n}). Then
			\begin{align}\label{ps2D}
				\partial_{s_1s_2}^2 \cD(u; \ss) = 0, \ \mbox{and} \quad
				\partial_{s_2}^2 \cD(u; \ss) =  2 \, \frac{\psi(|\tz|) \, |\tz|}{\psi'(|\tz|)} \, \K_3(\tz_1,\tz_2) = 2 \, \frac{u_2  \, \ww \, |\theta|}{\theta_2^2 \, \psi'(|\theta|)} > 0.
			\end{align}
		\end{cor}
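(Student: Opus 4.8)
The plan is to read both identities directly off \emph{Step~3} of the proof of Theorem~\ref{tmm}, where all the relevant quantities have already been computed. Recall that there one identifies $s^* = \ss = (-1,0)$ as the minimizer of $\cD(u;\cdot)$, shows that the accompanying point $\tau_* = \tau(\ss)$ defined through \eqref{gratau} equals $\theta$, and records that $s_1^* = -\tz_2\,\psi(|\tz|)/\ww = -1$, $s_2^* = 0$, $\ww^2 |\ss|^2 = \psi(|\tz|)^2\,\tz_2^2$; moreover, differentiating the scalar relation \eqref{nABn2} in $s$ and evaluating at $\ss$ gives $\nabla_{\ss}\tau_3(\ss) = \bigl(0,\, -\tfrac{2}{\ww\,|\ss|^2}\,\tfrac{|\tz|}{\Upsilon'(|\tz|)}\bigr)$.

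First I would differentiate the formula $\partial_{s_2}\cD(u;s) = 2\,\ww\,(\Upsilon(|\tau(s)|)\,\ww\,s_2 + \tau_3(s))$ coming from \eqref{gras}. Taking $\partial_{s_1}$ and evaluating at $\ss$, each resulting term carries either the factor $s_2^* = 0$ or the first component $\partial_{s_1}\tau_3(\ss) = 0$, so $\partial_{s_1s_2}^2\cD(u;\ss) = 0$. Taking $\partial_{s_2}$ instead, the terms proportional to $s_2^*$ vanish and one is left with $\partial_{s_2}^2\cD(u;\ss) = 2\,\ww^2\,\Upsilon(|\tz|) + 2\,\ww\,\partial_{s_2}\tau_3(\ss)$; substituting $\nabla_{\ss}\tau_3(\ss)$, then $\Upsilon(|\tz|) = 1/\psi(|\tz|)$ and $\Upsilon'(|\tz|) = -\psi'(|\tz|)/\psi(|\tz|)^2$ from \eqref{relUp}, and finally $\ww^2|\ss|^2 = \psi(|\tz|)^2\,\tz_2^2$, this collapses to
\[
\partial_{s_2}^2\cD(u;\ss) = \frac{2\,\ww^2\,|\tz|}{\tz_2^2\,\psi(|\tz|)\,\psi'(|\tz|)}\,\K_3(\tz_1,\tz_2).
\]

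The remaining step is purely algebraic. Using $\ww = \tz_2\,\psi(|\tz|)$, so that $\ww^2/(\tz_2^2\,\psi(|\tz|)) = \psi(|\tz|)$, rewrites the right-hand side as $2\,\psi(|\tz|)\,|\tz|\,\K_3(\tz_1,\tz_2)/\psi'(|\tz|)$; inserting in addition $u_2 = \tz_2\,\K_3(\tz_1,\tz_2)$ (and $\psi(|\tz|) = \ww/\tz_2$ once more) turns this into $2\,u_2\,\ww\,|\tz|/(\tz_2^2\,\psi'(|\tz|))$. Strict positivity then follows at once from Assumption~(A): $u_2 > 0$, $\ww > 0$, $|\tz| > 0$, and $\psi'(|\tz|) > 0$ because $\psi'(r)/r = \K_1 > 0$ for $0 < r \ne \pi < \var$ by \eqref{32ei3}. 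I do not expect any real obstacle here: the substantive content — locating $\ss$, proving $\tau_* = \theta$, and computing $\nabla_{\ss}\tau_3(\ss)$ by implicit differentiation of \eqref{gratau}--\eqref{nABn2} — was already carried out in Step~3 of Theorem~\ref{tmm}, so what is left is careful bookkeeping of the chain rule together with the identity $\ww^2 = \tz_2^2\,\psi(|\tz|)^2$.
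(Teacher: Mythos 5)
Your proposal is correct and coincides with the paper's own argument: the corollary is presented there as an immediate byproduct of Step~3 of the proof of Theorem~\ref{tmm}, where exactly the same chain-rule differentiation of \eqref{gras}, the implicit differentiation of \eqref{nABn2} giving $\nabla_{\ss}\tau_3(\ss)$, and the substitutions $\Upsilon = 1/\psi$, $\ww = \tz_2\psi(|\tz|)$, $u_2 = \tz_2\,\K_3(\tz_1,\tz_2)$ are carried out. The final algebra and the positivity argument via \eqref{32ei3} are exactly as in the paper.
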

		
\section{The complete answer to the Gaveau--Brockett problem on $N_{3,2}$} \label{ssd}
\setcounter{equation}{0}

Now we are in a position to determine the expression for the sub-Riemannian distance from the origin to any given point. As indicated in Introduction and Remark \ref{Rem21}, the main result (i.e., Theorem \ref{RLT1}) has been proved in \cite{Li20,LZ21}, via two different methods. Here our third method use only the heat kernel. Although we can apply our uniform heat kernel asymptotic at infinity, namely Theorem \ref{bigL1} below (combining with Remark \ref{Vara2}), to obtain Theorem \ref{RLT1}, it turns out that a simpler argument in the proof below works as well.  Indeed the argument is closely related to one of basic ideas in \cite{Li20, LZ212}.  Also note that at least in this very broad framework of step-two groups, this method looks much more practical and elementary than the classical one.

\subsection{Proof of Theorem \ref{RLT1}}
\begin{proof}

	Combining the scaling property \eqref{ehk} with \eqref{ehk4}, we obtain that
	\[
	p_h(g_u) = \frac{\mathbf{C} \, \ww^2}{4\pi h^{\frac{11}{2}}} e^{-\frac{1}{4h}} \int_{\R^2}  \rP\left( \frac{s\,\ww}{\sqrt{h}},\frac{1}{4h}(u + 2\ww \, s_1 \, e_2 + 2\ww \, s_2 \,  e_3)\right) \, ds.
	\]
	Then it follows from (ii) of Proposition \ref{lP} and the second equation of \eqref{D_scal} that
	\begin{equation} \label{smhh}
		p_h(g_u) \sim \frac{\ww^2}{h^{4}} \int_{\R^2} \frac{e^{-\frac{\cD(u;s) + 1}{4h}}}{(h + \cD(u;s)) (h^2 + \ww^2 \, |s|^2 \, \cD(u;s))^{\frac{1}{4}}} \, ds.
	\end{equation}
	
	Recall that $\cD(u; s) \ge \max\{\cD(u; \ss), c\,(\ww^2 |s|^2 + u_1)\}$ for some constant $c>0$
 (cf.  Theorem \ref{tmm} and the first equation in \eqref{D_scal}). Then
		\[
		p_h(g_u) \lsim \frac{\ww}{h^{4}} e^{-\frac{\cD(u;\ss) + 1}{4h}} \int_{\R^2} \frac{ds}{(\ww^2 |s|^2 + u_1) |s|} \lesssim {u}^{-\frac{1}{2}}_1 \, h^{-4} \,  e^{-\frac{\cD(u;\ss) + 1}{4h}}.
		\]
On the other hand, for any given $1 > \varsigma_0 > 0$, by the continuity of $\cD(u; \cdot)$, there exists a ball $B_{u,\varsigma_0}$ with center $\ss$ and radius $< 1$ where $\cD(u; s) \le \cD(u; \ss) + \varsigma_0$. Hence
		\[
		p_h(g_u) \gtrsim_{u,\varsigma_0} \frac{\ww^2}{h^{4}} e^{-\frac{\cD(u;\ss) +\varsigma_0 + 1}{4h}} (h + \cD(u; \ss) + 1)^{-1} \left[h^2 + 4 \ww^2 \,  (\cD(u; \ss) + 1) \right]^{-\frac{1}{4}}.
		\]
		
		It follows from Varadhan's formulas that $\cD(u;\ss) + 1 \le d(g_u)^2 \le \cD(u;\ss) +\varsigma_0 + 1$, which implies immediately the desired result.
\end{proof}

Up to a slight modification, the proof below is extracted from  \cite{Li20, LZ21}.

\subsection{Proof of Corollary \ref{RLT2}}

\begin{proof}
We first show (ii). Fix an $\alpha > 0$ and let $g_{\alpha} = (e_1, 4^{-1} (\frac{\alpha^2}{\pi}, \frac{2}{\pi} \alpha, 0))$. From the proof of Theorem \ref{mapL}, we can pick a sequence $\{v^{(j)} = (v_1^{(j)}, v_2^{(j)})\} \subseteq \Omega_{+ , 1}$ such that $v^{(j)} \to (\pi,0) \in \partial  \Omega_{+ , 1}$ and
	\[
	\frac{v_2^{(j)}}{\pi - |v^{(j)}|} \to \alpha, \quad u^{(j)} = \Lambda(v^{(j)}) \to \left(\frac{\alpha^2}{\pi}, \frac{2}{\pi} \alpha\right), \quad\mbox{as}\,\, j \to +\infty.
	\]
	Hence $g_{u^{(j)}} \to g_\alpha$ as $j \to +\infty$.  Since $d(\cdot)^2$ is continuous, it yields from Theorem \ref{RLT1} that
	\[
	d(g_\alpha)^2 = \lim_{j \to +\infty} d(g_{u^{(j)}})^2 = \varphi_1(\pi) \, \frac{\alpha^2}{\pi} \times \frac{\pi}{\pi} + 1 = 1 + \alpha^2.
	\]
	
	To show (iii), as before we first fix a $\beta > 0$.
	Let $0 < \kz < \frac{2}{\sqrt{\pi}} \sqrt{\beta}$, $u(\kz) := (\beta, \kz)$, and $v(\kz) = (v_1(\kz),v_2(\kz)) := \Lambda^{-1} (u_\kz)$. Since $\{u(\kz)\} \subseteq \R^2_{<, +}$, by Theorem  \ref{mapL} (2) and \eqref{32N96} we have $\{v(\kz)\} \subseteq \Omega_{- , 4}$ and
	\begin{gather}
		v_2(\kz) < 0 < \pi < v_1(\kz) < |v(\kz)| < \vartheta_1, \nonumber \\[1mm]
		\beta = \frac{\psi'(|v(\kz)|)}{|v(\kz)|} \, v_1(\kz) \, v^2_2(\kz), \qquad \kz = v_2(\kz) \left( \frac{\psi'(|v(\kz)|)}{|v(\kz)|} \, v^2_2(\kz) + 2 \, \psi(|v(\kz)|) \right). \label{ncin}
	\end{gather}
	
	By the compactness of $\overline{B_{\R^2}(0, \vartheta_1)}$, up to subsequences, we may take $\kz_j \to 0^+$ as $j \to +\infty$  such that the corresponding $v(\kz_j) \to v(0) := (v_1(0), v_2(0))$. Obviously, $v_2(0) \leq 0$ and $\pi \le v_1(0) \le r:=|v(0)| \le \vartheta_1$.
	
	We claim that $v_2(0) \neq 0$, so $v(0) \notin \{(\pi, 0), (\vartheta_1, 0)\}$ and $\pi < r < \vartheta_1$ (by \eqref{ncin} and the fact that $\psi(\vtz_1)=0,\, \psi'(\var)>0$). Were this not the case, it would follow that $v_2(\kz_j) \to 0^-$. Then the first equation in \eqref{ncin} implies that
	\[
	\lim_{j \to +\infty} \frac{\psi'(|v(\kz_j)|)}{|v(\kz_j)|} = +\infty, \quad \mbox{so } \ |v(\kz_j)| \to \pi^+ \  \mbox{and } \ v_1(\kz_j)
	\to \pi^+,
	\]
	since $\pi < v_1(\kz_j) <  |v(\kz_j)| < \vartheta_1$ and $\psi'(\rho) \to +\infty$ ($\pi < \rho < \vartheta_1$) only if $\rho \to \pi^+$  (cf. \eqref{N32ei1}). Notice  that the limit \eqref{nnp} remains valid as $r \to \pi^+$, then using \eqref{ncin} again we infer that
	\begin{align*}
		\lim_{j \to +\infty} \frac{1}{\pi} \left( \frac{v_2(\kz_j)}{|v(\kz_j)| - \pi} \right)^2 = \beta, \quad 0 = - \frac{2}{\pi} \lim_{j \to +\infty} \frac{v_2(\kz_j)}{|v(\kz_j)| - \pi} = \frac{2}{\sqrt{\pi}} \sqrt{\beta} > 0.
	\end{align*}
	This leads to a contradiction.
	
	In conclusion, by the continuity of  $d(\cdot)^2$ and the last equality in \eqref{dEn}, we obtain that $d(g(\beta))^2 =  \varphi_3(r) \, \beta + 1$, where $\pi < r < \vartheta_1$ satisfies
	\begin{align} \label{NICN}
		\beta = \frac{\psi'(r)}{r} \, v_1(0) \, v_2(0)^2, \qquad  \frac{\psi'(r)}{r} \, v_2(0)^2 + 2 \, \psi(r) = 0.
	\end{align}
	That is
	\begin{align*}
		\beta = - 2 \, \psi(r) \sqrt{r^2 - v_2(0)^2} = - 2 \, \psi(r) \, \sqrt{r^2 + 2 \, r \, \frac{\psi(r)}{\psi'(r)}},
	\end{align*}
	which, together with Remark \ref{Rem21} (1), implies (iii).
	
	Noticing that $\rho^2 \, \psi'(\rho) + 2 \rho \, \psi(\rho) = (\rho^2 \, \psi(\rho))'= \mu(\rho)$, then a similar and simpler argument gives (iv).

	Finally we are left with the proof of (i).  Applying the continuity of $d(g)^2$ again with \eqref{orthod}, it follows from the scaling property (cf. \eqref{scap})  and (iv) that
		\[
		d(g_*)^2 = d(0,e_2)^2 = \lim_{\varepsilon \to 0^+} d(\varepsilon e_1, e_2)^2
		= \lim_{\varepsilon \to 0^+} \varepsilon ^2 \, d(g(4/\varepsilon^2)^*)^2
		= \lim_{\varepsilon \to 0^+} \left( \frac{r(\varepsilon)}{\sin{r(\varepsilon)}} \right)^2 \varepsilon ^2,
		\]
		where $r(\varepsilon)$ is the unique solution of $\mu(r(\varepsilon)) = 4/\varepsilon^2$.  From this and the facts that
				\[
				\lim_{\rho \to \pi^-} (\rho - \pi)^2 \mu(\rho) = \pi, \qquad
				\lim_{\rho \to \pi^-} (\rho - \pi)^2 \left( \frac{\rho}{\sin{\rho}} \right)^2 = \pi^2,
				\]
				the assertion (i) follows easily.
			\end{proof}

\section{Uniform asymptotics for the simplest case:  $|\tz|\le 3$ and $\tz_2|x|\to+\infty$} \label{s44}
\setcounter{equation}{0}

Recall that $d(g)^2 = \phi(g; \theta)$. In this section we establish the following theorem:
\begin{theo} \label{asyab}
	Let $|\tz|\le 3
	$ and $\tz_2 \, |x| \ge\nzz$ with $\nzz\gg1$. Then
	\begin{equation}\label{hk_as1}
		p(g) =  (8\pi)^{\frac{3}{2}} \, e^{-\frac{d(g)^2}{4}} \, \cV(i\theta)
		\, \frac{1}{\sqrt{\det(- \He_\theta \, \phi(g;\theta))}} \,  (1 + o_{\nzz}(1)).
	\end{equation}
\end{theo}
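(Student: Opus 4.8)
The plan is to establish \eqref{hk_as1} by a uniform, anisotropically rescaled stationary-phase analysis of the oscillatory integral \eqref{ehk2'}. Since $|\theta| \le 3 < \pi$, Assumption~(A) forces $(\theta_1,\theta_2) \in \Omega_{+,1}$, so by Theorem~\ref{mapL}(1) and \eqref{Hes_phi} the symmetric matrix $-\He_\theta\,\phi(g;\theta)$ --- which equals $|x|^2\,\mathrm{diag}\bigl(\mathrm{J}_\Lambda(\widetilde\theta,|\theta|),\ \K_3(\widetilde\theta)\bigr)$ --- is positive definite, and $\psi,\psi',\psi''$ are all $\sim 1$ on the compact interval $[0,3]$. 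First I would translate the contour: $\cV(\lambda) = |\lambda|/\sinh|\lambda|$ is holomorphic (and suitably decaying) in the slab $\{|\Im\lambda| < \pi\}$, since its singularities lie on $\{\lambda\cdot\lambda = -k^2\pi^2,\ k\in\N^*\}$, which never meets that slab; so Cauchy's theorem gives
\begin{align*}
p(g) &= \int_{\R^3 + i\theta}\cV(\lambda)\,e^{-\frac14\widetilde\phi(g;\lambda)}\,d\lambda \\
&= e^{-\frac{d(g)^2}{4}}\int_{\R^3}\cV(\xi + i\theta)\,\exp\Bigl\{-\tfrac18\bigl\langle(-\He_\theta\,\phi(g;\theta))\,\xi,\ \xi\bigr\rangle - \tfrac14\,R(\xi)\Bigr\}\,d\xi,
\end{align*}
using that $i\theta$ is the critical point of $\widetilde\phi(g;\cdot)$ with $\widetilde\phi(g;i\theta) = \phi(g;\theta) = d(g)^2$ (cf. \eqref{nDFn}) and $\He_\lambda\widetilde\phi(g;i\theta) = -\He_\theta\,\phi(g;\theta)$, with $R$ denoting the Taylor remainder of order $\ge 3$.

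The decisive point --- and the reason the hypothesis reads $\theta_2|x| \to +\infty$ and not merely $|x| \to +\infty$ --- is that the least eigenvalue of $-\He_\theta\phi(g;\theta)$ degenerates as $\theta_2 \to 0$: Lemma~\ref{Hetau1} (applied with $\zeta_0 = \pi - 3$) gives $\mathrm{J}_\Lambda(\widetilde\theta,|\theta|) = \He_v\bigl(v_2^2\,\psi(|v|)\bigr)\big|_{v = \widetilde\theta} \sim \mathrm{diag}(\theta_2^2,1)$, while $\K_3(\widetilde\theta) \sim 1$. Accordingly I would substitute $\xi = \bigl(\eta_1/(\theta_2|x|),\ \eta_2/|x|,\ \eta_3/|x|\bigr)$, with Jacobian $(\theta_2|x|^3)^{-1}$; then the quadratic form turns into $\tfrac18\langle M(\theta)\,\eta,\eta\rangle$ with $M(\theta)$ uniformly positive definite and uniformly bounded, and $\det M(\theta) = (\theta_2^2|x|^6)^{-1}\det\bigl(-\He_\theta\phi(g;\theta)\bigr)$. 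The structural fact that tames $R$ is that the $\xi_1$-dependence of $\widetilde\phi(g;\cdot)$ enters only through $g(\lambda)(\lambda_2^2 + \lambda_3^2)$ with $g(\lambda) = (|\lambda|\coth|\lambda| - 1)/|\lambda|^2$: a pure derivative $\partial_{\xi_1}^a$ ($a\ge 1$) brings down the prefactor $\lambda_2^2 + \lambda_3^2 = -\theta_2^2 + O(|\xi''| + |\xi''|^2)$ --- in particular $\partial_{\xi_1}^3\widetilde\phi = O(|x|^2\theta_2^2)$ near $i\theta$ --- while each $\partial_{\xi_2}$ or $\partial_{\xi_3}$ hitting this monomial contributes $O(\theta_2 + |\xi''|)$; a count of powers then shows that, after the rescaling, every monomial of $R$ of degree $\ge 3$ acquires a strictly negative power of $\theta_2|x|$ or of $|x|$. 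Hence the rescaled remainder tends to $0$ uniformly on the region $|\eta| \le K\sqrt{\log(\theta_2|x|)}$, and the Gaussian principal term produces
\begin{align*}
p(g) &= e^{-\frac{d(g)^2}{4}}\,\frac{\cV(i\theta)}{\theta_2|x|^3}\,\frac{(8\pi)^{3/2}}{\sqrt{\det M(\theta)}}\,\bigl(1 + o(1)\bigr) \\
&= (8\pi)^{\frac32}\,\cV(i\theta)\,e^{-\frac{d(g)^2}{4}}\,\frac{1 + o(1)}{\sqrt{\det\bigl(-\He_\theta\,\phi(g;\theta)\bigr)}},
\end{align*}
which is \eqref{hk_as1} once one recalls $\cV(i\theta) = |\theta|/\sin|\theta|$ and $8(2\pi)^{3/2} = (8\pi)^{3/2}$ (cf. \eqref{rteigen}).

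Two complementary uniform estimates then finish the argument. First, since $|\theta| \le 3$ keeps $\cV$ away from its singularities, $\cV$ is smooth with bounded derivatives near $i\theta$, so $\cV(\xi + i\theta) = \cV(i\theta)(1 + o(1))$ on the main region --- the displacement $|\eta_1|/(\theta_2|x|) + |\eta''|/|x|$ there being $o(1)$, using $|x| \ge (\theta_2|x|)/3$. Second, one needs a global lower bound of the schematic form
\[
\Re\,\widetilde\phi(g;\xi + i\theta) - d(g)^2 \ \gtrsim\ |x|^2\Bigl(\theta_2^2\,\frac{|\xi_1|^2}{1+|\xi|^2} + \frac{|\xi''|^2}{1+|\xi''|^2}\Bigr),
\]
together with $|\cV(\xi + i\theta)| \lesssim e^{-c|\xi|}$ for $|\xi|$ large; these I would derive by the operator-convexity method of \cite[\S 4]{Li20} (there applied to the GM case) from the series expansions \eqref{IS}--\eqref{EFs}, now tracked uniformly in $\theta_2$. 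They bound the contribution of $|\eta| > K\sqrt{\log(\theta_2|x|)}$ by a super-polynomially small multiple of the principal term, after which a dominated-convergence argument upgrades the pointwise Gaussian limit of the rescaled integrand to convergence of the integrals; the resulting error is $O\bigl((\log(\theta_2|x|))^{3/2}/(\theta_2|x|)\bigr)$, uniformly for $|\theta| \le 3$.

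The hard part is precisely this uniformity as $\theta_2 \to 0$, where $i\theta$ is a critical point whose non-degeneracy is not uniform and a naive Laplace expansion would fail. The resolution rests on the coincidence that both the small eigenvalue of the phase Hessian and all higher $\xi_1$-derivatives of the phase scale like the same power $\theta_2^2$, so a single anisotropic rescaling by $\theta_2|x|$ in the $\xi_1$-direction simultaneously normalizes the quadratic form, annihilates the cubic remainder, and (via the global lower bound) controls the tail --- and the hypothesis $\theta_2|x| \to +\infty$ is exactly what makes the rescaled large-parameter problem well posed.
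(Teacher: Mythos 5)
Your route is essentially the paper's: deform the contour to $\R^3+i\theta$, expand at the nondegenerate critical point $i\theta$ taking the anisotropy $-\He_\theta\,\phi(g;\theta)\sim\mathrm{diag}(\theta_2^2|x|^2,|x|^2,|x|^2)$ into account (your rescaling of $\xi_1$ by $\theta_2|x|$ plays exactly the role of the paper's anisotropic box $\diamondsuit_1$ with sides $(\theta_2|x|)^{-3/4}$ and $(\theta_2|x|)^{1/4}|x|^{-1}$), control the cubic remainder by the same power counting, and kill the tail with a global lower bound on $\Re\big(\phi(g;\theta-i\lambda)-\phi(g;\theta)\big)$ from the series expansion of $\psi$ together with $|\cV(\lambda+i\theta)|\le\cV(i\theta)\,\cV(\lambda)$. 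Up to the choice of cutoff ($K\sqrt{\log(\theta_2|x|)}$ versus fixed negative powers of $\theta_2|x|$), steps one through three coincide with the paper's proof.

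The gap is in the tail: the ``global lower bound of the schematic form'' you state is false. The correct bound (Lemma~\ref{rephi}) is $\Re\big(\phi(g;\theta-i\lambda)-\phi(g;\theta)\big)\gtrsim |x|^2\,\frac{\theta_2^2\lambda_1^2+\lambda_2^2+\lambda_3^2}{1+|\lambda|^2}$, whereas your version claims transverse decay $|x|^2\,|\lambda'|^2/(1+|\lambda'|^2)$ uniform in $\lambda_1$. That cannot hold: with $x=|x|e_1$ one has $\widetilde\phi(g;\lambda)=|x|^2\big[1+\tfrac{|\lambda|\coth|\lambda|-1}{|\lambda|^2}(\lambda_2^2+\lambda_3^2)\big]-4it\cdot\lambda$, and along $\lambda=(\xi_1,0,\xi_3)+i\theta$ with $\xi_3$ fixed and $\xi_1\to+\infty$ the factor $(|\lambda|\coth|\lambda|-1)/|\lambda|^2$ decays like $1/\xi_1$, so $\Re\widetilde\phi(g;\lambda)-\phi(g;\theta)\to |x|^2\theta_2^2\psi(|\theta|)$, which is far smaller than $|x|^2\xi_3^2/(1+\xi_3^2)$ when $\theta_2$ is small — precisely the regime the theorem is about. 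The point is quantitative: the principal term has size $\cV(i\theta)\,(\theta_2|x|^3)^{-1}$ while $|x|$ may be arbitrarily large compared with $\theta_2|x|$, so a bare phase bound of the form $e^{-c(\theta_2|x|)^2}$ on the far region (which is all the phase gives in directions close to $e_1$) is not $o$ of the main term; one must additionally extract a factor $|x|^{-2}$ from the transverse directions. With the correct $(1+|\lambda|^2)^{-1}$-damped bound this gain has to be recovered by coupling it with the exponential decay of $\cV$ in $|\lambda|$ — this is exactly what the paper does on $\diamondsuit_3$, where after using $|\cV(\lambda+i\theta)|\lesssim \cV(i\theta)e^{-|\lambda|/2}$ one passes to polar coordinates and the spherical integral $\int_{\mathbb{S}^2}e^{-c(\gamma_2^2+\gamma_3^2)|x|^2}\,d\gamma\lesssim|x|^{-2}$ supplies the missing factor. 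You do list the $\cV$-decay among your tools, so the repair is short and stays within your framework, but as written the assertion that your stated bound makes the tail a super-polynomially small multiple of the principal term rests on an inequality that is not true.
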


Essentially, Theorem \ref{asyab} is a special case of \cite[Theorem~2.2]{Li20} (up to a slight modification), which is based on the method of stationary phase and the operator convexity. For the reader’s convenience, we provide a direct proof.
Let us begin with the following crucial lemma:
	
	\begin{lem} \label{rephi}
		Let $|\tz|\le 3$. Then there exists a constant $c > 0$ such that
		\[
		\Re (\phi(g;\theta - i\lambda) - \phi(g;\theta)) \ge c \, \frac{- \lambda^\T\, \He_\theta \, \phi(g;\theta) \lambda}{1 + |\lambda|^2}, \quad \forall \, \lambda \in \R^3.
		\]
	\end{lem}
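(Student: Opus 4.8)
The plan is to expand $\phi(g;\theta-i\lambda)$ explicitly using \eqref{defref2}, isolate the real part of the difference $\phi(g;\theta-i\lambda)-\phi(g;\theta)$, and compare it with the quadratic form $-\lambda^\T\He_\theta\phi(g;\theta)\lambda$ after dividing by $1+|\lambda|^2$. Since $\phi(g;\tau)=|x|^2[1-\psi(|\tau|)(\tau_2^2+\tau_3^2)+u\cdot\tau]$ and the term $u\cdot\tau$ is linear (hence contributes nothing to $\Re(\phi(g;\theta-i\lambda)-\phi(g;\theta))$, as $u\cdot(\theta-i\lambda)$ has real part $u\cdot\theta$), the whole problem reduces to controlling the function $\tau\mapsto -\psi(|\tau|)(\tau_2^2+\tau_3^2)$ along the complex line $\tau=\theta-i\lambda$. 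Writing $|\tau|^2=\tau\cdot\tau=|\theta|^2-|\lambda|^2-2i\,\theta\cdot\lambda$, I would use the series representation $\psi(r)=2\sum_{j\ge1}((j\pi)^2-r^2)^{-1}$ from \eqref{EFs}, which extends to a function of $z=\tau\cdot\tau$ via $\psi=2\sum_j((j\pi)^2-z)^{-1}$, so that $\Re\big(\psi(|\tau|)(\tau_2^2+\tau_3^2)\big)$ becomes a sum over $j$ of rational expressions in $(\theta,\lambda)$ with denominators $|(j\pi)^2-\tau\cdot\tau|^2$.

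Next I would exploit the hypothesis $|\theta|\le 3<\pi$, which guarantees $\theta\in\Omega_{+,1}$ (the only relevant region since $|\theta|<\pi$), so by Step 4 of the proof of Theorem \ref{mapL} the Hessian $-\He_\theta\phi(g;\theta)$ is positive definite; moreover by \eqref{Hes_phi} it has the block form $|x|^2\,\mathrm{diag}(\mathrm{J}_\Lambda(\theta,|\theta|),\K_3(\theta))$, and Lemma \ref{Hetau1} gives the two-sided bound $\mathrm{J}_\Lambda(\theta,|\theta|)=\He_\theta(\theta_2^2\psi(|\theta|))\sim\mathrm{diag}(\theta_2^2,1)$, with $\K_3(\theta)\sim 1$, all uniformly for $|\theta|\le 3$ (here one uses $|\theta|\le 3<\pi$ to stay bounded away from the singularity at $\pi$, i.e.\ take $\zeta_0=\pi-3$). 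Thus the right-hand side of the claimed inequality is, up to constants, $|x|^2(\theta_2^2\lambda_1^2+\lambda_2^2+\lambda_3^2)/(1+|\lambda|^2)$, and it suffices to show $\Re(\phi(g;\theta-i\lambda)-\phi(g;\theta))\gtrsim |x|^2(\theta_2^2\lambda_1^2+\lambda_2^2+\lambda_3^2)/(1+|\lambda|^2)$.

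To prove that lower bound I would split into the regime $|\lambda|$ bounded (say $|\lambda|\le R$ for a suitable absolute $R$) and $|\lambda|$ large. For bounded $|\lambda|$ I would use a Taylor/convexity argument: the function $t\mapsto\Re\,\phi(g;\theta-it\lambda)$ has vanishing first derivative contribution in the relevant sense and its second-order behavior is governed by $-\lambda^\T\He_\theta\phi(g;\theta)\lambda>0$, but since $-\Upsilon=-1/\psi$ (equivalently $-s\cot s$) is \emph{operator convex} — the structural fact emphasized throughout and recorded near \eqref{deD} — the real part of the quadratic-type term $\psi(|\tau|)(\tau_2^2+\tau_3^2)$ along the complex line can be bounded below by its second-order Taylor polynomial globally, not just infinitesimally; this is exactly the mechanism of \cite[Theorem~2.2]{Li20}. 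For $|\lambda|$ large, the denominators $|(j\pi)^2-(|\theta|^2-|\lambda|^2-2i\theta\cdot\lambda)|^2\ge |\lambda|^4+\ldots$ force $\Re(\psi(|\tau|)(\tau_2^2+\tau_3^2))\to 0$ like $|\lambda|^{-2}$ times $|\lambda|^2=O(1)$, while one checks the sign works in our favor, so the difference stays $\gtrsim$ a constant, which dominates $|x|^2(\theta_2^2\lambda_1^2+\lambda_2^2+\lambda_3^2)/(1+|\lambda|^2)$ once we note $\theta_2^2\le|\theta|^2\le 9$ and $(1+|\lambda|^2)^{-1}(\theta_2^2\lambda_1^2+\lambda_2^2+\lambda_3^2)$ is bounded.

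The main obstacle I anticipate is the bounded-$|\lambda|$ region: making the operator-convexity comparison fully rigorous and uniform in $\theta$ (with $|\theta|\le 3$) and in $|x|$, i.e.\ showing that the error in replacing $\Re(\phi(g;\theta-i\lambda)-\phi(g;\theta))$ by its quadratic part is controlled by the quadratic part itself with a uniform constant, rather than merely being $o(|\lambda|^2)$ pointwise. This requires the integral representation \eqref{IS} for $-r\cot r$ together with the Herglotz/operator-monotone structure, applied to the matrix argument coming from $\tau_2^2+\tau_3^2$ as a quadratic form, essentially reproducing the key estimate of \cite[\S~4]{Li20} in the present concrete coordinates; everything else (the block-diagonalization via \eqref{Hes_phi}, the two-sided bounds from Lemma \ref{Hetau1}, and the large-$|\lambda|$ decay from the series \eqref{EFs}) is routine bookkeeping once this core inequality is in hand.
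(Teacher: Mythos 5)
Your reduction of the right-hand side is correct and matches the paper: via \eqref{Hes_phi} and Lemma \ref{Hetau1} one gets $-\He_\theta\,\phi(g;\theta)\sim |x|^2\,\mathrm{diag}(\theta_2^2,1,1)$ uniformly for $|\theta|\le 3$ (this is \eqref{Hetau11}), so the target becomes $\Re(\phi(g;\theta-i\lambda)-\phi(g;\theta))\gtrsim |x|^2(\theta_2^2\lambda_1^2+\lambda_2^2+\lambda_3^2)/(1+|\lambda|^2)$. But the core inequality is not actually proved in your proposal. For the bounded-$|\lambda|$ regime you defer entirely to an operator-convexity mechanism that you acknowledge is ``the main obstacle'' and do not carry out; convexity of $-s\cot s$ by itself gives tangent-plane lower bounds, not the quantitative bound by the full quadratic form damped by $(1+|\lambda|^2)^{-1}$, so something substantive is missing there. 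Worse, your large-$|\lambda|$ claim that ``the difference stays $\gtrsim$ a constant'' (even after normalizing by $|x|^2$) is false when $\theta_2$ is small: taking $\theta=(\theta_1,\ez,0)$ with $\ez\to0$ and $\lambda=(R,0,0)$, both sides are $O(\ez^2)$, so no $\ez$-independent constant lower bound holds, and one must track the $\theta_2^2\lambda_1^2$ structure all the way through rather than discard it.

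The paper's proof avoids all of this by a single explicit computation. Plugging the series \eqref{EFs} into \eqref{defref2}, it writes $\Re(\phi(g;\theta-i\lambda)-\phi(g;\theta))/|x|^2$ as a sum over $j\ge1$ whose $j$-th term has numerator
\[
\theta_2^2 (j^2 \pi^2 - |\theta|^2 + |\lambda|^2)\,|\lambda|^2 + \lambda_2^2\, |\lambda|^2 (j^2 \pi^2 - |\theta|^2) + \lambda_3^2 (j^2 \pi^2 - |\theta|^2)(j^2 \pi^2 - |\theta|^2 + |\lambda|^2) + \bigl[\lambda_2 (j^2 \pi^2 - |\theta|^2) + 2 \theta_2 (\lambda \cdot \theta)\bigr]^2
\]
over a positive denominator (using $|\theta|\le 3<\pi$), hence every term is non-negative. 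It then simply drops all terms except $j=2$ and uses the elementary inequality $(a+b)^2\ge a^2/(1+c)-b^2/c$ to absorb the cross term, yielding exactly $\gtrsim(\theta_2^2\lambda_1^2+\lambda_2^2+\lambda_3^2)|x|^2/(1+|\lambda|^2)$ for all $\lambda$ simultaneously — no splitting into regimes and no appeal to operator convexity. You should replace your deferred convexity step with this direct termwise positivity computation; as it stands the proposal has a genuine gap at its central estimate.
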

	
	\begin{proof}
		In fact, from the definition of the reference function (cf. \eqref{defref2}) and \eqref{EFs}, we have
		\[
		\phi(g;\theta - i\lambda) = |x|^2 - 2 \sum_{j = 1}^{+\infty} \frac{(\theta_2 - i \lambda_2)^2 + (i\lambda_3)^2}{j^2 \pi^2 - (|\theta|^2 - 2 i \lambda \cdot \theta - |\lambda|^2) } |x|^2 + 4t \cdot (\theta - i \lambda).
		\]
		Hence $\Re (\phi(g;\theta - i\lambda) - \phi(g;\theta))/|x|^2$ equals
		\begin{align*}
			& 2 \sum_{j = 1}^{+\infty}
			\Re \left[ \frac{\lambda_2^2 + \lambda_3^2 - \theta_2^2 + 2i \lambda_2 \theta_2}{j^2 \pi^2 - |\theta|^2 + |\lambda|^2 + 2i \lambda \cdot \theta} +  \frac{  \theta_2^2 }{j^2 \pi^2 - |\theta|^2 }\right]  \\[1mm]
			&\qquad= 2 \sum_{j = 1}^{+\infty}   \Re \left[
			\frac{(\lambda_2^2 + \lambda_3^2 - \theta_2^2 + 2i \lambda_2 \theta_2)(j^2 \pi^2 - |\theta|^2) + \theta_2^2 (j^2 \pi^2 - |\theta|^2 + |\lambda|^2 + 2i \lambda \cdot \theta)}{(j^2 \pi^2 - |\theta|^2 + |\lambda|^2 + 2i \lambda \cdot \theta)(j^2 \pi^2 - |\theta|^2)} \right]  \\[1mm]
			&\qquad= 2 \sum_{j = 1}^{+\infty} \left\{\frac{\theta_2^2 (j^2 \pi^2 - |\theta|^2 + |\lambda|^2) \, |\lambda|^2 + \lambda_2^2 \, |\lambda|^2 \, (j^2 \pi^2 - |\theta|^2)}{[(j^2 \pi^2 - |\theta|^2 + |\lambda|^2)^2 + 4 (\lambda \cdot \theta)^2](j^2 \pi^2 - |\theta|^2)}  \right.\\[1mm]
			&\qquad\qquad +\left.\frac{ \lambda_3^2 (j^2 \pi^2 - |\theta|^2) (j^2 \pi^2 - |\theta|^2 + |\lambda|^2) + [\lambda_2 (j^2 \pi^2 - |\theta|^2) + 2 \theta_2 (\lambda \cdot \theta)]^2 }{[(j^2 \pi^2 - |\theta|^2 + |\lambda|^2)^2 + 4 (\lambda \cdot \theta)^2](j^2 \pi^2 - |\theta|^2)}\right\}.
		\end{align*}
		
		Notice that every term in the series is non-negative, and thus its value is greater than the single term $j_0 = 2$. Thus,
			\begin{align*}
				&\Re (\phi(g;\theta - i\lambda) - \phi(g;\theta))\\[1mm]
				&\qquad\gtrsim  \frac{\theta_2^2 (1 + |\lambda|^2 ) |\lambda|^2 + \lambda_2^2 |\lambda|^2 + \lambda_3^2 (1 + |\lambda|^2) + [\lambda_2 (j_0^2 \pi^2 - |\theta|^2) + 2 \theta_2 (\lambda\cdot\theta)]^2}{(1 + |\lambda|^2)^2} \, |x|^2.
			\end{align*}
			Next, it follows from $j_0^2\pi^2-|\tz|^2\sim1$ and the following elementary inequality
   \[
   (a + b)^2 \ge \frac{a^2}{1 + c} - \frac{b^2}{c}, \qquad a, b \in  \R, \  c > 0,
   \]
   that $\theta_2^2 (1 + |\lambda|^2 ) |\lambda|^2 + [\lambda_2 (j_0^2 \pi^2 - |\theta|^2) + 2 \theta_2 (\lambda\cdot\theta)]^2 \gtrsim \theta_2^2 (1 + |\lambda|^2 ) |\lambda|^2 + \lambda_2^2$.
Consequently,
			\begin{align*}
				\Re (\phi(g;\theta - i\lambda) - \phi(g;\theta)) \gsim \frac{\theta_2^2  \lambda_1^2  + (\lambda_2^2 + \lambda_3^2) }{1 + |\lambda|^2} \, |x|^2.
			\end{align*}
		
		Therefore, to finish the proof of Lemma \ref{rephi}, it suffices to show that for all $|\tz|\le3$,
		\begin{align}\label{Hetau11}
			- \He_\theta \, \phi(g; \theta) \sim \begin{pmatrix}
				\theta_2^2 |x|^2 & 0 & 0 \\
				0 & |x|^2 & 0\\
				0 & 0 & |x|^2 \\
			\end{pmatrix}.
		\end{align}		
		In fact, by \eqref{EFs} and \eqref{32ei3}, $\K_3(\tz_1,\tz_2)\sim1$. Then from this,  \eqref{Hes_phi}, Lemma \ref{Hetau1}, and the fact that $\He_\theta \, \phi(g; \theta) = |x|^2 \, \He_\theta \, \phi(g_u; \theta)$, the estimate \eqref{Hetau11} follows.
	\end{proof}
	
	Next, using the trivial inequality
	\[
	\big| 1 + (i \tau + \lambda) \cdot (i \tau + \lambda) \big| \ge (1 - |\tau|^2) \, (1 + |\lambda|^2), \quad \tau, \lambda \in \R^3 \  \mbox{with} \  |\tau| < 1,
	\]
	it follows from \eqref{defV}
	that
	
	\begin{lem}
		It holds that
		\begin{align} \label{nAnz1}
			\big| \cV(\lambda + i \theta) \big| \le \cV(i \theta) \, \cV(\lambda) = \frac{|\theta|}{\sin{|\theta|}} \, \cV(\lambda) \le \frac{|\theta|}{\sin{|\theta|}}, \quad |\theta| < \pi, \  \lambda \in \R^3.
		\end{align}
	\end{lem}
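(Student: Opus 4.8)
The plan is to exploit the Weierstrass product in \eqref{defV}: since that product converges locally uniformly, it defines $\cV$ as an entire function of $\lambda\cdot\lambda := \lambda_1^2+\lambda_2^2+\lambda_3^2$, and the product representation persists under complexification. First I would record that for $\lambda\in\R^3$ one has $(\lambda + i\theta)\cdot(\lambda + i\theta) = |\lambda|^2 - |\theta|^2 + 2i\,\lambda\cdot\theta$, whence
\[
\big|\cV(\lambda + i\theta)\big| = \prod_{j=1}^{+\infty}\left|\,1 + \frac{(\lambda + i\theta)\cdot(\lambda + i\theta)}{j^2\pi^2}\,\right|^{-1},
\]
the product being legitimate as soon as we know that none of the factors vanishes.

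Next, for each fixed $j\in\N^*$ I would apply the quoted elementary inequality with $\tau$ there replaced by $\theta/(j\pi)$ and $\lambda$ there replaced by $\lambda/(j\pi)$; the hypothesis $|\tau|<1$ holds because $|\theta| < \pi \le j\pi$, and it gives
\[
\left|\,1 + \frac{(\lambda + i\theta)\cdot(\lambda + i\theta)}{j^2\pi^2}\,\right| \ge \left(1 - \frac{|\theta|^2}{j^2\pi^2}\right)\left(1 + \frac{|\lambda|^2}{j^2\pi^2}\right) > 0 .
\]
Taking reciprocals and multiplying over all $j$ then yields $\big|\cV(\lambda + i\theta)\big| \le \cV(i\theta)\,\cV(\lambda)$, because the classical factorizations $\sin r = r\prod_j\big(1 - r^2/(j\pi)^2\big)$ and $\sinh r = r\prod_j\big(1 + r^2/(j\pi)^2\big)$ identify the two factors on the right with the product representations of $\cV(i\theta) = |\theta|/\sin|\theta|$ and $\cV(\lambda) = |\lambda|/\sinh|\lambda|$, which also gives the middle equality in \eqref{nAnz1}. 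Finally, $\cV(\lambda) = |\lambda|/\sinh|\lambda| \le 1$ since $\sinh r \ge r$ for $r\ge 0$, delivering the last inequality.

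Since every ingredient is elementary, there is no genuine obstacle here; the only point requiring a line of care is that the displayed elementary inequality, being strictly positive for $|\theta| < \pi$, simultaneously rules out vanishing factors, so that passing to reciprocals and to the infinite product is fully justified.
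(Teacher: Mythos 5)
Your proof is correct and follows essentially the same route as the paper: the paper derives \eqref{nAnz1} by applying the quoted elementary inequality $|1+(i\tau+\lambda)\cdot(i\tau+\lambda)|\ge(1-|\tau|^2)(1+|\lambda|^2)$ factorwise to the product representation \eqref{defV}, exactly as you do after rescaling by $j\pi$. Your additional remarks on nonvanishing of the factors and the identification $\cV(i\theta)=|\theta|/\sin|\theta|$ just make explicit what the paper leaves implicit.
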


	We are in a position to give the
	
	\subsection{Proof of Theorem \ref{asyab}}
	
	\begin{proof}
		Under our assumptions, first it deduces from \eqref{Hetau11} and \eqref{32N96} that $\theta$ is a nondegenerate critical point of $\phi(g;\cdot)$ in $B_{\R^3}(0, \pi)$, i.e. the open ball in $\R^3$ with the center $0$ and the radius $\pi$.
		
		Next, notice that the integrand in \eqref{ehk2'} is holomorphic on $\R^3 + i \, \overline{B_{\R^3}(0, 3)} = \{ \lambda + i \tau; \, \lambda \in \R^3, \  \tau \in \overline{B_{\R^3}(0, 3)} \} \subset \C^3$, and decays exponentially from Lemma \ref{rephi} and \eqref{nAnz1}. Hence	we can deform the contour from $\R^3$ to $\R^3 + i \theta$ in \eqref{ehk2'}, and get
		\begin{align}\label{ehk1}
			p(g) = e^{-\frac{d(g)^2}{4}} \int_{\R^3} \cV(\lambda + i \theta) \, \exp\left\{-\frac{1}{4} \Big( \phi(g; \theta - i \lambda) -  \phi(g; \theta) \Big) \right\} \, d \lambda.
		\end{align}
		Since $\theta_2 \, |x| \gg 1$ (so $|x| \gg 1$), we split $\rr^3$ into the following three regions:
		$$\begin{gathered}
			\diamondsuit_1 := \{\lambda \in \R^3; \, |\lambda_1| \le (\theta_2 |x|)^{-\frac{3}{4}}, \, |\lambda^\prime| \le  (\theta_2 |x|)^{\frac{1}{4}} \, |x|^{-1}\}, \\[1mm]
			\diamondsuit_2 := \{\lambda \in \R^3; \,  |\lambda|  \le 1\}\setminus\diamondsuit_1, \quad \mbox{and}\quad \diamondsuit_3 := \{\lambda \in \R^3; \,  |\lambda|  > 1\}.
		\end{gathered}$$
		Then $p(g)=e^{-\frac{\phi(g;\theta) }{4}}\sum_{l=1}^3\J_l$, where
		\begin{align*}
			\J_l := \int_{\diamondsuit_l}\cV(\lambda + i \theta) \, \exp\left\{-\frac{1}{4}\Big( \phi(g; \theta - i \lambda) -  \phi(g; \theta) \Big) \right\}  \, d \lambda, \qquad l = 1,  2, 3.
		\end{align*}
		
		We begin with the estimate of the leading term $\J_1$.  It can be checked that
		\begin{gather*}
			\cV(\lambda + i \theta) =  \cV(i \theta) \, \big( 1 + O((\theta_2 |x|)^{-\frac{3}{4}}) \big),  \\
			\begin{aligned}
				\phi(g; \theta - i \lambda) -  \phi(g; \theta) &= - \frac{1}{2} \lambda^\T \He_{\theta} \, \phi(g; \theta) \lambda +   O\big( |\lambda| \, |x|^2( |\lambda^\prime|^2 + \theta_2^2 \, |\lambda|^2 ) \big) \\
				&= - \frac{1}{2}\lambda^\T \He_{\theta} \, \phi(g; \theta) \lambda  +   O((\theta_2 |x|)^{-\frac{1}{4}}),
			\end{aligned}
		\end{gather*}
		for all $\lz\in\diamondsuit_1$.
		As a result, the standard Laplace's method shows that:
		\begin{align*}
			\J_1 &=  \cV(i\theta)  \int_{\diamondsuit_1} e^{\frac{1}{8}\lambda^\T \He_{\theta} \, \phi(g; \theta) \lambda}  \, d\lambda \,
			( 1 + O((\theta_2 |x|)^{-\frac{1}{4}}) ), \\
			&= \cV(i\theta)   \left(\int_{\R^3} - \int_{(\diamondsuit_1)^c} \right) ( 1 + O((\theta_2 |x|)^{-\frac{1}{4}}) ), \\
			&=  \cV(i\theta) \, \frac{(8\pi)^{\frac{3}{2}}}{\sqrt{\det(-\He_{\theta} \, \phi(g; \theta))}} \, ( 1 + O((\theta_2 |x|)^{-\frac{1}{4}}) ).
		\end{align*}
		
		For $\J_2$,  it follows from Lemma \ref{rephi} that
		$$\Re (\phi(g;\theta - i\lambda) - \phi(g;\theta))\gsim-\lambda^\T \He_{\theta} \, \phi(g; \theta) \lambda\gsim (\theta_2 |x|)^{\frac{1}{2}},\quad \forall \, \lz \in \diamondsuit_2,
		$$
		which, together with \eqref{nAnz1}, implies that
		\begin{align*}
			|\J_2| \le \cV(i \theta) \,  e^{-c \, (\theta_2 |x|)^{\frac{1}{2}}} \int_{\R^3} e^{c \, \lambda^\T \He_{\theta} \, \phi(g; \theta) \lambda}  \, d\lambda =  o(\J_1).
		\end{align*}
		
		We are left with $\J_3$. Set $\widehat{\lambda} = \lambda/|\lambda|$ for $\lambda \neq 0$.
		Notice that Lemma \ref{rephi} again yields
		$$\Re (\phi(g;\theta - i\lambda) - \phi(g;\theta))\gsim-\widehat{\lambda}^\T \, \He_{\theta} \, \phi(g; \theta) \widehat{\lambda} \gsim (\tz_2|x|)^2,\quad \forall\,\lz\in\diamondsuit_3.
		$$
		Moreover, \eqref{nAnz1} says that $|\cV(i \theta + \lambda)| \le 100\, \cV(i \theta) \, e^{-|\lambda|/2}$. Then using the polar coordinates $\lambda = r\, \gz$  with $\gz \in \mathbb{S}^2$, we have that:
		\begin{align*}
			\cV(i \theta)^{-1} \, |\J_3| & \lesssim e^{-c \, (\theta_2 |x|)^2} \, \int_{\diamondsuit_3}  e^{- |\lambda|/2} \, e^{c \,\widehat{\lambda}^\T \He_{\theta} \, \phi(g; \theta) \widehat{\lambda}}  d \lambda \\[1mm]
			& \lesssim e^{-c \, (\theta_2 |x|)^2} \int_1^{+\infty} r^{2} e^{-r/2 } dr \int_{\mathbb{S}^{2}} e^{c \, \gz^\T \He_{\theta} \, \phi(g; \theta)    \gz  }  d\gz  \\
			& \lesssim e^{-c \, (\theta_2 |x|)^2 }
			\int_{\mathbb{S}^{2}} e^{-c \, (\gz_2^2 + \gz_3^2) \, |x|^2}
			d\gz,
		\end{align*}
		because of \eqref{Hetau11}. Next, using the spherical coordinates $\gamma = (\cos{\rho_1},\, \sin{\rho_1} \cos{\rho_2}, \, \sin{\rho_1} \sin{\rho_2})$, the last integral equals
		\begin{align*}
			\int_0^{2\pi} d\rho_2 \int_0^\pi e^{-c \, |x|^2\sin^2{\rho_1} } \sin{\rho_1} d\rho_1 & = 4\pi \int_0^{\frac{\pi}{2}} e^{-c \,|x|^2 \sin^2{\rho_1} } \sin{\rho_1} d\rho_1  \\
			& \lesssim  \int_0^{\frac{\pi}{2}} \rho_1 e^{-c \, |x|^2 \rho_1^2 } d\rho_1  \lesssim \frac{1}{|x|^2},
		\end{align*}
		where the penultimate  ``$\lesssim$''  from the fact that $\sin{\rho} \sim \rho$ on $[0,\frac{\pi}{2}]$. Hence from \eqref{Hetau11}, we get also $|\J_3| = o(\J_1)$ under our assumption.

  Combining all the estimates obtained above, we complete the proof of Theorem \ref{asyab}.
	\end{proof}
	
	\begin{remark} \label{Vara1}
		From the above proof, one can easily see that the condition $|\theta| \le 3$ in Theorem \ref{asyab} can be weakened to
		$|\theta| \le \az_0$ with $3 \le \az_0 < \pi$ and in such case $o_{\zeta_0}(1)$ should be modified to $o_{\zeta_0, \, \az_0}(1)$ as well. Also note that in this case the choice of $\zeta_0$ depends on $\az_0$.
	\end{remark}					

\section{Preparations for asymptotics in difficult cases} \label{s5}
		\setcounter{equation}{0}
		
		In this section, we study more properties of the functions introduced in previous sections, which will be used to develop the uniform asymptotics of our heat kernel.
		
		The first lemma is concerned with the modified Bessel function of order 0 with an additional parameter $\rho$, which is defined by
		\begin{align}
			I_0(\rho;r) := \frac1{2\pi} \int_{-\rho}^\rho e^{r \cos{\gz}} \, d\gz, \qquad r \in \R,\ 0 < \rho \le \pi.
		\end{align}
		Notice that  $I_0(\pi \,; r)$ is exactly $I_0(r)$ (cf. \eqref{defI0}).
		
		\begin{lem} \label{lasinI0}
			It holds that:
			\begin{gather}
				I_0(r) =  \frac{e^{r}}{\sqrt{2\pi r}} \, (1+O(r^{-1})), \qquad{\rm as}\,\, r \to +\infty, \label{asinI0} \\[1mm]
				I_0(r) \sim e^r \, (1+r)^{-\frac12},\quad \forall \, r > 0,  \label{aI0} \\[1mm]
				I_0(\rho; r) = \frac{e^{r}}{\sqrt{2\pi r}} \, (1+o(1))  = I_0(r)  \, (1+o(1)), \qquad{\rm as}\,\, r\rho^2 \to +\infty. \label{asinI}
			\end{gather}
		\end{lem}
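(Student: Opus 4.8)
The plan is to establish the three assertions in order, with \eqref{asinI0} serving as the engine. For \eqref{asinI0} I would simply invoke the classical large-argument asymptotic expansion of the modified Bessel function of order $0$ (see e.g. \cite[\S~8.451.5]{GR15}), whose leading term is precisely $e^r/\sqrt{2\pi r}$ with relative remainder $O(r^{-1})$; equivalently, one derives it directly by Laplace's method applied to $\int_0^\pi e^{r\cos\gz}\,d\gz$, using $\cos\gz = 1 - \tfrac12\gz^2 + O(\gz^4)$ near the maximum $\gz = 0$, rescaling $\gz = s/\sqrt r$, and retaining the first correction. I expect this to be a one-liner.

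For \eqref{aI0} I would split at a fixed large constant $R_0$. When $r \ge R_0$, \eqref{asinI0} already gives $I_0(r) = \frac{e^r}{\sqrt{2\pi r}}\bigl(1 + O(r^{-1})\bigr) \sim e^r r^{-1/2} \sim e^r(1+r)^{-1/2}$. When $0 < r \le R_0$, both $I_0(\cdot)$ and $r \mapsto e^r(1+r)^{-1/2}$ are continuous and strictly positive on the compact interval $[0,R_0]$ (recall $I_0(0) = 1$), hence each is squeezed between two positive constants there, so $I_0(r) \sim e^r(1+r)^{-1/2}$ on that range as well. Combining the two ranges yields \eqref{aI0}.

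The substantive point is \eqref{asinI}. Since the integrand is even, $I_0(\rho; r) = \frac1\pi\int_0^\rho e^{r\cos\gz}\,d\gz$, hence $0 \le I_0(r) - I_0(\rho; r) = \frac1\pi\int_\rho^\pi e^{r\cos\gz}\,d\gz$, and it suffices to show this tail is $o\bigl(I_0(r)\bigr)$ when $r\rho^2 \to +\infty$. Using the elementary inequality $1 - \cos\gz \ge \frac{2}{\pi^2}\gz^2$ on $[0,\pi]$ together with the Gaussian tail bound $\int_M^{+\infty} e^{-t^2}\,dt \le \frac1{2M}e^{-M^2}$ for $M>0$, one finds
\[
\frac1\pi\int_\rho^\pi e^{r\cos\gz}\,d\gz \ \le\ \frac{e^r}{\pi}\int_\rho^{+\infty} e^{-\frac{2r}{\pi^2}\gz^2}\,d\gz \ \lesssim\ \frac{e^r}{r\rho}\,e^{-\frac{2}{\pi^2}r\rho^2}.
\]
Since $\rho \le \pi$, the hypothesis $r\rho^2 \to +\infty$ forces $r \to +\infty$, so \eqref{asinI0} applies and $I_0(r) \sim e^r r^{-1/2}$; dividing, the ratio of the tail to $I_0(r)$ is $\lesssim \frac{1}{\rho\sqrt r}\,e^{-\frac{2}{\pi^2}r\rho^2} = (r\rho^2)^{-1/2}\,e^{-\frac{2}{\pi^2}r\rho^2} \to 0$ (writing $r\rho^2 = (\rho\sqrt r)^2$). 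Therefore $I_0(\rho; r) = I_0(r)\bigl(1 + o(1)\bigr)$, and combining once more with \eqref{asinI0} gives $I_0(\rho; r) = \frac{e^r}{\sqrt{2\pi r}}\bigl(1+o(1)\bigr)$, which is \eqref{asinI}. I do not anticipate a serious obstacle; the only point requiring care is the bookkeeping in this last step, namely that $r\rho^2 \to +\infty$ simultaneously forces $r \to +\infty$ (via $\rho \le \pi$) and $\rho\sqrt r \to +\infty$, so that the Gaussian decay in $r\rho^2$ beats the $r^{-1/2}$ factor uniformly, whether $\rho$ is small or stays bounded away from $0$.
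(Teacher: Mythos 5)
Your proposal is correct and follows essentially the same route as the paper: the paper disposes of \eqref{asinI0} and \eqref{asinI} by Laplace's method (citing \cite[\S~8.451.5]{GR15} for \eqref{asinI0}) and handles \eqref{aI0} by noting $I_0(r) \sim 1$ for bounded $r$, exactly the splitting you use. Your explicit tail bound via $1-\cos\gz \ge \tfrac{2}{\pi^2}\gz^2$ and the Gaussian tail estimate is just the Laplace-method concentration step written out in detail, and your bookkeeping that $r\rho^2\to+\infty$ forces $r\to+\infty$ and makes the error depend only on $r\rho^2$ is sound.
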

		
		\begin{proof}
			The asymptotics \eqref{asinI0} and \eqref{asinI} follow directly from Laplace's method; see also \cite[\S~8.451.5, p. 920]{GR15} for  \eqref{asinI0}. To show \eqref{aI0}, it suffices to observe that $I_0(r) \sim 1$ whenever $r$ is bounded.
		\end{proof}
		
		Some basic properties of the functions $\Upsilon'$ (cf. \eqref{Ups_ser}), $\cZ$ (cf. \eqref{cZd}) and $\Phi$ (cf. \eqref{defUps}) originated from $\Upsilon$ (cf. \eqref{qdef} or \eqref{deD})
		are collected in the following subsection:
		\subsection{Properties of $\Upsilon', \cZ$ and $\Phi$}
	\begin{lem}\label{pPsi}
		The following conclusions hold:
		{\em\begin{compactenum}[(i)]
				\item $\Phi$ is even, positive and strictly increasing on $[0, \  +\infty)$.
				
				\item It holds uniformly for all $0 < r < \var$  that
				\begin{equation}\label{Ups_est}
					-\Upsilon'(r) \sim \frac{r}{(\var-r)^2}, \quad
					-\Upsilon''(r) \sim \frac{1}{(\var-r)^3}, \quad
					-\Upsilon'''(r) \sim \frac{r}{(\var-r)^4}.
				\end{equation}
				
				\item  We have uniformly for all $\rho \ge 0$ that
				\begin{equation}\label{Z_est}
					0 \le \cZ(\rho) < \var, \quad |\cZ'(\rho)|\lsim (\var-\cZ(\rho))^3, \quad |\cZ''(\rho)|\lsim (\var-\cZ(\rho))^5.
				\end{equation}
				
				\item As $\rho \to +\infty$, we have
				\begin{gather}
					\Phi(\rho) = \vartheta_1\, \rho - 2 \sqrt{\vartheta_1 \, \rho} +2 + O(\rho^{-\frac{1}{2}}), \label{aPsi} \\
					\cZ(\rho) = \vartheta_1 - \sqrt{\frac{\vartheta_1}{\rho}} +  O(\rho^{-\frac{3}{2}}), \quad \cZ^\prime(\rho) = O(\rho^{-\frac{3}{2}}), \quad
					\cZ^{\prime\prime}(\rho) = O(\rho^{-\frac{5}{2}}). \label{asP}
				\end{gather}
				Moreover,
				\begin{align}\label{Phi_est}
					\Phi(\rho)  \sim 1+\rho \sim (\var-\cZ(\rho))^{-2}, \qquad \rho \ge 0.
				\end{align}
		\end{compactenum}}
	\end{lem}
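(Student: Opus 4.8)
The plan is to read every assertion off the series representation \eqref{deD} for $\Upsilon$ and \eqref{Ups_ser} for its derivatives, together with the defining relations \eqref{cZd}--\eqref{defUps} and Lemma \ref{lmDp}. For (i), the key observation I would use is that $\Phi'(\rho)=\cZ(\rho)$: differentiating \eqref{defUps} gives $\Phi'(\rho)=\bigl(\Upsilon'(\cZ(\rho))+\rho\bigr)\cZ'(\rho)+\cZ(\rho)$, and by \eqref{cZd} the parenthesis vanishes. By Lemma \ref{lmDp}, $\cZ=(-\Upsilon')^{-1}$ is an odd increasing diffeomorphism of $\R$ onto $(-\var,\var)$ with $\cZ(0)=0$; hence $\cZ>0$ on $(0,+\infty)$ and $\Phi$ is strictly increasing there. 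Evenness of $\Phi$ follows since $\Upsilon$ is even by \eqref{deD} and $\cZ$ is odd, so both $\Upsilon\circ\cZ$ and $\rho\mapsto\rho\,\cZ(\rho)$ are even. Since $\Phi(0)=\Upsilon(0)=3>0$, monotonicity then gives $\Phi>0$ on $[0,+\infty)$.

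For (ii), I would keep the $k=1$ term in each of the three series in \eqref{Ups_ser}: on $(0,\var)$ one has $\var^2-r^2=(\var-r)(\var+r)$ with $\var+r\sim 1$, so $\var^2-r^2\sim\var-r$, and the $k=1$ terms of $-\Upsilon'$, $-\Upsilon''$, $-\Upsilon'''$ are respectively $\sim r/(\var-r)^2$, $\sim(\var-r)^{-3}$, $\sim r/(\var-r)^4$. The tails $\sum_{k\ge 2}$, controlled via $\vartheta_k^2-r^2\ge\vartheta_2^2-\var^2>0$, are bounded by a convergent series times $r$, $1$, $r$, each dominated by the corresponding $k=1$ term (since $r/(\var-r)^2\gsim r$ and $(\var-r)^{-3}\gsim 1$ on $(0,\var)$); this proves \eqref{Ups_est}. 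For (iii), $0\le\cZ(\rho)<\var$ is immediate from Lemma \ref{lmDp}; differentiating $-\Upsilon'(\cZ(\rho))=\rho$ twice yields $\cZ'(\rho)=\bigl(-\Upsilon''(\cZ(\rho))\bigr)^{-1}$ and $\cZ''(\rho)=\Upsilon'''(\cZ(\rho))\cZ'(\rho)\,\Upsilon''(\cZ(\rho))^{-2}$, and substituting \eqref{Ups_est} together with $\cZ(\rho)<\var\lsim 1$ gives $|\cZ'(\rho)|\lsim(\var-\cZ(\rho))^3$ and $|\cZ''(\rho)|\lsim(\var-\cZ(\rho))^5$.

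Part (iv) is where the real work lies, and it hinges on the Laurent expansion of $\Upsilon$ at $r=\var$. Writing $f(r):=1-r\cot r$ (so $\Upsilon=r^2/f$), I would note that $f(\var)=0$ because $\tan\var=\var$, that $f'(\var)=-\cot\var+\var\csc^2\var=\var$, and that the identity $f''(r)=2\csc^2 r\,f(r)$ forces $f''(\var)=0$; hence $f(r)=\var(r-\var)\bigl(1+O((r-\var)^2)\bigr)$, so $\Upsilon(r)=\var/(r-\var)+g(r)$ with $g$ analytic near $\var$, and a short computation gives $g(\var)=2$. In terms of $\delta:=\var-r\to 0^+$ this reads $\Upsilon(r)=-\var/\delta+2+O(\delta)$, and differentiating the analytic remainder, $-\Upsilon'(r)=\var/\delta^2+O(1)$, $-\Upsilon''(r)=2\var/\delta^3+O(1)$, $-\Upsilon'''(r)=6\var/\delta^4+O(1)$. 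Setting $\delta(\rho):=\var-\cZ(\rho)$ and inverting $\rho=-\Upsilon'(\cZ(\rho))=\var/\delta(\rho)^2+O(1)$ by a one-step bootstrap gives $\delta(\rho)=\sqrt{\var/\rho}+O(\rho^{-3/2})$, which is the expansion of $\cZ$ in \eqref{asP}; those of $\cZ'$ and $\cZ''$ follow from the part-(iii) formulas and the expansions of $\Upsilon''$, $\Upsilon'''$ just obtained. For $\Phi$ I would substitute into \eqref{defUps}: $\Upsilon(\cZ(\rho))=2-\var/\delta(\rho)+O(\delta(\rho))$ and $\rho\,\cZ(\rho)=\var\rho-\rho\,\delta(\rho)=\var\rho-\var/\delta(\rho)+O(\delta(\rho))$, so the two $-\var/\delta(\rho)$ terms combine, and using $\var/\delta(\rho)=\sqrt{\var\rho}+O(\rho^{-1/2})$ one arrives at $\Phi(\rho)=\var\rho-2\sqrt{\var\rho}+2+O(\rho^{-1/2})$, i.e. \eqref{aPsi}. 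Finally, for \eqref{Phi_est}: on bounded $\rho$-sets $\Phi(\rho)$, $(\var-\cZ(\rho))^{-2}$ and $1+\rho$ are all $\sim 1$ by continuity, positivity and $\cZ(\rho)\in[0,\var)$, while as $\rho\to+\infty$, \eqref{aPsi} gives $\Phi(\rho)\sim\rho$ and $(\var-\cZ(\rho))^{-2}=(\rho/\var)(1+o(1))\sim\rho$; patching the two regimes covers all $\rho\ge 0$.

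The step I expect to be the main obstacle is pinning down the \emph{exact} constant term $+2$ in \eqref{aPsi} rather than a mere $O(1)$: this requires the sharp local expansion $\Upsilon(r)=-\var/(\var-r)+2+O(\var-r)$, hence the vanishing $f''(\var)=0$, for which the identity $f''=2\csc^2 r\cdot f$ is the clean device. Once that is in hand, the bootstrap inversion of $\rho=-\Upsilon'(\cZ(\rho))$ and the bookkeeping through $\Phi=\Upsilon\circ\cZ+\rho\,\cZ$ are routine, but one must still verify that the $O(1)$ contributions of the analytic remainder of $\Upsilon$ do not survive in the $\rho^0$-coefficient of $\Phi$ — they do not, because in $\rho\,\cZ(\rho)$ such a term is multiplied by $\delta(\rho)=\var-\cZ(\rho)\to 0$, while the genuinely singular $-\var/\delta(\rho)$ terms coming from $\Upsilon(\cZ(\rho))$ and from $\rho\,\cZ(\rho)$ combine into the $-2\sqrt{\var\rho}$ of \eqref{aPsi}.
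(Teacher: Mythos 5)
Your proof is correct and follows essentially the same route as the paper: (i) via $\Phi'=\cZ$ and the properties of $\cZ$, (ii)–(iii) read off the series \eqref{Ups_ser} and the formulas \eqref{deZP}, and (iv) via the local expansion $\Upsilon(r)=\vartheta_1/(r-\vartheta_1)+2+O(|r-\vartheta_1|)$ followed by the bootstrap inversion of $\rho=-\Upsilon'(\cZ(\rho))$ and substitution into $\Phi=\Upsilon\circ\cZ+\rho\,\cZ$. The only (harmless) deviation is how you obtain that expansion — writing $\Upsilon=r^2/f$ with $f=1-r\cot r$ and using $f''=2\csc^2 r\,f$ to get $f''(\vartheta_1)=0$, whereas the paper Taylor-expands the numerator and denominator of $r^2\sin r/(\sin r-r\cos r)$ at $\vartheta_1$ — both yield the same constant term $2$ in \eqref{aPsi}.
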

	
	\begin{proof}
		Recall that the odd function $\cZ$ satisfies $\cZ(0) = 0$ and $\cZ(\rho) < \var$. From the definition of $\Upsilon$ (cf. \eqref{deD}), we see that $\Phi(\rho) = \Upsilon(\cZ(\rho)) + \rho \cZ(\rho)$ is even, and $\Phi(0) = \Upsilon(0) = 3 > 0$. Taking derivatives, it is clear that
		\begin{align} \label{deZP}
			\Phi^\prime(\rho) = \cZ(\rho), \quad \cZ^\prime(\rho) = -\frac{1}{\Upsilon^{\prime\prime}(\cZ(\rho))} > 0, \quad \cZ^{\prime\prime}(\rho)=-\frac{\Upsilon^{\prime\prime\prime}(\cZ(\rho))}{(\Upsilon^{\prime\prime}(\cZ(\rho)))^3},
			\quad \forall \, \rho \in \R,
		\end{align}
		which implies item (i).
		
		Notice that item (ii)  (resp. (iii)) is a simple consequence of \eqref{Ups_ser}  (resp. item (ii) and \eqref{deZP}).

		Now we turn to the proof of item (iv).  For $r$ near the point $\vartheta_1$, we first observe that $\Upsilon(r) = r^2 \sin{r}/(\sin{r} - r \cos{r})$. Then
		using the Taylor expansion
		at $\vartheta_1$ (recalling $\tan \vartheta_1 = \vartheta_1$):
		\begin{align*}
			r^2 \sin{r} &= \vartheta_1^2 \sin{\vartheta_1} + 3 \vartheta_1 \sin{\vartheta_1} (r - \vartheta_1) + O(|r - \vartheta_1|^2), \\
			\sin{r} - r \cos{r} &= \vartheta_1 \sin{\vartheta_1} (r - \vartheta_1) + \sin{\vartheta_1} (r - \vartheta_1)^2 + O(|r - \vartheta_1|^3),
		\end{align*}
		a direct calculation shows that:
		\begin{align}\label{aDp3}
			\Upsilon(r) = \frac{\vartheta_1}{r - \vartheta_1} + 2 +\square(r),
		\end{align}
		where $\square(r)$ is an analytic function satisfying $\square(\vtz_1) = 0$. Taking derivative, we obtain that:
		\begin{equation}\label{aDp}
			\begin{gathered}
				\Upsilon^\prime(r) = - \frac{\vartheta_1}{(r - \vartheta_1)^2} + O(1),\quad \Upsilon^{\prime\prime}(r) = \frac{2\vartheta_1}{(r - \vartheta_1)^3} + O(1), \\
				\Upsilon^{\prime\prime\prime}(r) = -\frac{6\vartheta_1}{(r - \vartheta_1)^4} + O(1).
			\end{gathered}
		\end{equation}
		
		As a result, whenever $\rho \to +\infty$, the first asymptotic above gives
		\begin{align}\label{aDp2}
			\rho = -\Upsilon^\prime(\cZ(\rho)) =  \frac{\vartheta_1}{(\vartheta_1 - \cZ(\rho))^2} + O(1), \quad \mbox{so} \quad \vartheta_1 - \cZ(\rho) = O(\rho^{-\frac{1}{2}}).
		\end{align}
		Taking square root,  we get that
		\begin{align*}
			\sqrt{\rho} = \frac{\sqrt{\vartheta_1}}{\vartheta_1 - \cZ(\rho)} + O(\vartheta_1 - \cZ(\rho)) = \frac{\sqrt{\vartheta_1}}{\vartheta_1 - \cZ(\rho)} + O(\rho^{-\frac{1}{2}}),
		\end{align*}
		which implies the first equation of \eqref{asP}.  So we get the other two estimates of \eqref{asP} from \eqref{Z_est}. On the other hand, using \eqref{aDp3},  it turns out that:
		\begin{align*}
			\Phi(\rho) &= \Upsilon(\cZ(\rho)) + \rho \cZ(\rho) = \frac{\vartheta_1}{\cZ(\rho) - \vartheta_1} + 2 + \vartheta_1 \rho  - \sqrt{\vartheta_1 \rho} +  O(\rho^{-\frac{1}{2}}) \\
			&= - \sqrt{\vartheta_1 \rho} \, (1 + O(\rho^{-1})) + 2 + \vartheta_1 \rho  - \sqrt{\vartheta_1 \rho} +  O(\rho^{-\frac{1}{2}}) = \vartheta_1 \rho - 2 \sqrt{\vartheta_1 \rho} +2 + O(\rho^{-\frac{1}{2}}),
		\end{align*}
		which is exactly \eqref{aPsi}.
		
		Finally, \eqref{Phi_est} follows from items (i) and (iv).
	\end{proof}
	
	Recall the even function $\q$ is defined by (cf. \eqref{qdef})
	\begin{align*}		
		\q(r) = \frac{r^2 \, \Upsilon(r)}{-\sin{r} \,  \Upsilon'(r) \, \sqrt{-\Upsilon''(r)}}, \quad -\vtz_1 < r < \vtz_1.
	\end{align*}
	The following simple observation will be used in the proof of Propositions \ref{lP} and \ref{Dsim}, as well as of Theorem \ref{sL1}  below:
	
	\begin{lem} \label{q_lem}
		The even function $\q$ is positive and smooth on $(-\var, \ \var)$. Moreover, we have
		\begin{equation}\label{q_var}
			\q(r) = \frac{\sqrt{2\vtz_1}\, \vtz_1}{-2\sin\vtz_1} (\vartheta_1 -r)^{\frac{5}{2}} \, (1 + O(\var - r)), \quad{\rm as}\,\,r \to \vartheta_1^-,
		\end{equation}
		and
		\begin{equation} \label{qvar2}
			\q(r) \sim (\var - |r|)^\frac52, \qquad
			|\q'(r)| \lsim (\var - |r|)^\frac32, \qquad  -\var < r < \var.
		\end{equation}
	\end{lem}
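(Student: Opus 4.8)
The plan is to reduce the three assertions to properties of $\Upsilon$, $\Upsilon'$ and $\Upsilon''$ that are already available, namely the series expansions \eqref{deD} and \eqref{Ups_ser} and the local behaviour near $\vartheta_1$ recorded in \eqref{aDp3}--\eqref{aDp}.

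For (i), the crucial point is to rewrite, after cancelling a factor $r^2$,
\[
\q(r) = \frac{\Upsilon(r)}{g(r)}, \qquad g(r) := -\,\frac{\sin r}{r}\cdot\frac{\Upsilon'(r)}{r}\cdot\sqrt{-\Upsilon''(r)}.
\]
Here $\Upsilon$ is smooth on $(-\vartheta_1,\vartheta_1)$ by \eqref{deD}; $-\Upsilon''>0$ there by \eqref{Ups_ser}, so $\sqrt{-\Upsilon''}$ is smooth; and $\Upsilon'(r)/r = -4\sum_k \vartheta_k^2 (\vartheta_k^2 - r^2)^{-2}$ is smooth and strictly negative. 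Hence $g$ is smooth, and on $(-\vartheta_1,\vartheta_1)$ it vanishes exactly at $r=\pm\pi$ --- simple zeros inherited from $\sin r$. Since $\Upsilon(\pm\pi)=0$ while $\Upsilon'(\pm\pi)=\mp\pi\neq0$ (cf. \eqref{relUp} and \eqref{nnp}), $\Upsilon$ has simple zeros at $\pm\pi$ as well, so by Hadamard's lemma the ratio $\Upsilon/g$ extends smoothly across $\pm\pi$; elsewhere it is visibly smooth. This reformulation also removes the apparent $0/0$ of the original formula at $r=0$, where $\Upsilon'(0)=0$. Positivity then follows by a sign count: on $(0,\pi)$ one has $\Upsilon>0$, $\sin r/r>0$, $\Upsilon'(r)/r<0$, hence $\q>0$; on $(\pi,\vartheta_1)$ one has $\Upsilon<0$ (by \eqref{deD}, or by \eqref{32ei2} via $\Upsilon=1/\psi$) and $\sin r/r<0$, hence again $\q>0$; and the three removable points give the strictly positive values $\q(0)=3\,(-\Upsilon''(0))^{-3/2}$ and $\q(\pm\pi)=\pi^2\,(-\Upsilon''(\pi))^{-1/2}$. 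Evenness covers $(-\vartheta_1,0)$.

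For (ii) --- and the part of (iii) near $\pm\vartheta_1$ --- I would work locally at $\vartheta_1$. By \eqref{aDp3} the function $a(r):=(\vartheta_1-r)\,\Upsilon(r)$ is analytic near $\vartheta_1$ with $a(\vartheta_1)=-\vartheta_1\neq0$; differentiating $\Upsilon=a/(\vartheta_1-r)$ twice gives $\Upsilon'(r)=b(r)(\vartheta_1-r)^{-2}$ and $\Upsilon''(r)=c(r)(\vartheta_1-r)^{-3}$ with $b=a'(\vartheta_1-r)+a$ and $c=b'(\vartheta_1-r)+2b$ analytic, $b(\vartheta_1)=-\vartheta_1$, $c(\vartheta_1)=-2\vartheta_1$ (compatibly with \eqref{aDp}). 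Plugging these into the definition of $\q$ and collecting the powers of $(\vartheta_1-r)$ yields the factorization
\[
\q(r)=(\vartheta_1-r)^{5/2}\,A(r), \qquad A(r):=\frac{r^2\,a(r)}{-\sin r\,b(r)\sqrt{-c(r)}},
\]
where $A$ is real-analytic and nonvanishing near $\vartheta_1$ (since $-c(r)>0$ and $\sin\vartheta_1\neq0$ there), with $A(\vartheta_1)=\frac{\sqrt{2\vartheta_1}\,\vartheta_1}{-2\sin\vartheta_1}$. Expanding $A(r)=A(\vartheta_1)\,(1+O(\vartheta_1-r))$ gives \eqref{q_var}, and differentiating the factorization gives $\q'(r)=-\tfrac52(\vartheta_1-r)^{3/2}A(r)+(\vartheta_1-r)^{5/2}A'(r)$, whence $|\q'(r)|\lesssim(\vartheta_1-r)^{3/2}$ for $r$ near $\vartheta_1$.

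Finally (iii) is obtained by splicing the local estimate near $\pm\vartheta_1$ with the global smoothness from (i): on any compact subinterval of $(-\vartheta_1,\vartheta_1)$ the positive continuous function $\q$ satisfies $\q\sim1$ and $|\q'|\lesssim1$, which on that set are $\sim(\vartheta_1-|r|)^{5/2}$ and $\lesssim(\vartheta_1-|r|)^{3/2}$, while near $\pm\vartheta_1$ the estimates come from \eqref{q_var} and the factorization above; evenness of $\q$ and oddness of $\q'$ then finish the argument. The one genuinely delicate step is the smoothness of $\q$ across $r=\pm\pi$ and $r=0$ in (i) --- that is, matching the orders of vanishing of numerator and denominator --- which is precisely what the rewriting $\q=\Upsilon/g$ is designed to handle.
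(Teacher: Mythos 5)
Your proof is correct and follows essentially the same route as the paper's (much terser) argument: cancel the simple zero of $\Upsilon$ at $\pi$ against that of $\sin r$, the simple zero of $\Upsilon'$ at $0$ together with $\sin r$ against the factor $r^2$, read off positivity from the signs of $\Upsilon$, $\Upsilon'/r$ and $\sin r$ on $(0,\pi)$ and $(\pi,\vartheta_1)$, and deduce \eqref{q_var} and \eqref{qvar2} from the local expansions \eqref{aDp3}--\eqref{aDp} near $\vartheta_1$. Your constant $A(\vartheta_1)=-\vartheta_1^2/(\sin\vartheta_1\sqrt{2\vartheta_1})$ indeed equals $\sqrt{2\vartheta_1}\,\vartheta_1/(-2\sin\vartheta_1)$, so the asymptotic matches.
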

	\begin{proof}
		The proof is easy. Using \eqref{deD} and \eqref{Ups_ser}, it suffices to observe that $\Upsilon$ is monotonically decreasing on $[0, \
		\var)$, $\Upsilon(0) = 3$, and $\pi$ (resp. $0$)  is a simple zero of $\Upsilon$ (resp. $\Upsilon'$).  Notice that we can use directly \eqref{aDp3} and \eqref{aDp} to deduce \eqref{q_var}.
\end{proof}

In Sections \ref{bigchr} and \ref{schr} below, we need to utilize more information of $\cD$ defined by \eqref{decD} near its unique minimum point. To do this, we introduce

\subsection{The auxiliary function $\bH$} \label{afH}

Let $u, \ww$ be defined as in Assumption (A) (cf. \eqref{n78n}). We set for $s = (s_1,s_2) \in \R^2$
\begin{align}\label{defT}
	\bT(s) &= \bT(u;s) :=\sqrt{u_1^2+(u_2+2\ww\, s_1)^2+4\ww^2 s_2^2} =\sqrt{|u|^2+4\ww^2|s|^2+4u_2\ww\, s_1},
\end{align}
namely, the modulus of the vector  $u + 2\ww s_1 \, e_2 + 2\ww\,  s_2  \, e_3$ in \eqref{decD},
and write
\begin{align}
	\L(s) = \L(u;s) :=\ww^{-2} |s|^{-2}\bT(s), \qquad s \ne 0.
\end{align}
Then \eqref{eD3} implies the following more suitable expression of $\cD$ than \eqref{decD}:
\begin{align}\label{eD4}
	\cD(u;s) = \ww^2|s|^2 \, \Phi(\L(s)), \quad \forall \, s \ne 0.
\end{align}

In what follows, it will be convenient to work in the modified polar coordinates
\begin{equation}\label{pol_cor}
	s =(s_1,s_2)= (-\w \cos{\gz}, -\w \sin{\gz}),\quad \w > 0,\quad -\pi < \gz < \pi,  \  \mbox{for $s \neq 0$}.
\end{equation}
Correspondingly, given a function $f$ on $\rr^2$ we shall write $f_\pl(\w,\gz):=f(s)$.
And for brevity, given a function $g(u;s)$ (resp. $g(u;\w,\gz)$) on $\R^2$ (resp. $(0,\infty)\times(-\pi,\pi)$) with the parameter $u$, we will write it simply $g(s)$ (resp. $g(\w,\gz)$) when there is no confusion. For example, the notation above implies that
$$\cD(u;s)=\cD(u;-\w \cos{\gz},-\w \sin{\gz})=:\cD_\pl(u;\w,\gz)=\cD_\pl(\w,\gz).$$

Since the unique minimizer  of $\cDp(\cdot)$ (which is exactly $(1,0)$) belongs to $(0,\infty) \times \{0\}$, we will see later that it is natural to consider the restriction of $\cDp(\cdot)$ on $(0,\infty) \times \{0\}$. To be more precise,  we set
\begin{align}\label{defH}
	\bH(\w) := \cDp(\w,0), \qquad  \w > 0.
\end{align}
Similarly, after introducing the counterparts of $\bT$ and $\L$:
\begin{equation}\label{defu}
	\begin{gathered}
		\bA(\w) :=  \bT(-\w,0) = \sqrt{|u|^2+4\ww^2\,\w^2-4u_2\,\ww\, \w}, \\
		\u(\w) :=  \L(-\w, 0) = \ww^{-2} \, \w^{-2}\bA(\w),
	\end{gathered}
\end{equation}
we obtain
\begin{align}\label{eD5}
	\bH(\w) = \ww^2 \, \w^2 \, \Phi(\u(\w)), \qquad \forall \, \w > 0.
\end{align}

From the definition, $\bH$ is smooth on $(0, +\infty)$ and $1$ is its minimizer, so $\bH'(1) = 0$.  For the convenience of readers, we list here some basic facts, which can be deduced immediately from their definitions by recalling \eqref{crieq}, \eqref{deD}, \eqref{cZd}, \eqref{defUps} and \eqref{deZP} :
\begin{equation}  \label{nBNn}
	\left\{
	\begin{gathered}
		\bA(1) =  \theta_2^2 \psi'(|\theta|) > 0, \quad \u(1) = \frac{\bA(1)}{\ww^2} = \frac{\psi'(|\theta|)}{\psi^2(|\theta|)} = - \Upsilon'(|\theta|),  \\
		\cZ(\u(1)) = \cZ(- \Upsilon'(|\theta|)) = |\theta|, \quad \cZ'(\u(1)) = -\frac{1}{\Upsilon''(|\theta|)}, \\
		\Phi(\u(1)) = \Upsilon(|\theta|) - |\theta| \Upsilon'(|\theta|) = \Upsilon(|\theta|) + |\theta| \u(1).
	\end{gathered}
	\right.
\end{equation}

In Section \ref{bigchr} below, we will fully utilize the Taylor expansion of order $2$ of $\bH$ at its minimizer $1$ and the fact that

\paragraph{The critical point $1$ is nondegenerate.}  To do this, we start to calculate $\bH''(1)$ explicitly (equality \eqref{Hpp}). Indeed, a simple calculation shows that:
\begin{gather}\label{dws}
	\bA^\prime = 2\ww \, \frac{ 2\ww\,\w - u_2 }{\bA}, \quad \bA^{\prime\prime} = \frac{4u_1^2 \, \ww^2}{\bA^3},  \quad
	\bA^{\prime\prime\prime} = - \frac{12 u_1^2 \, \ww^2}{\bA^4} \, \bA^\prime, \\[1mm]
	\label{dws2}
	\u^\prime =\ww^{-2}(-2 \w^{-3} \bA + \w^{-2} \bA^\prime), \quad \u^{\prime\prime}= \ww^{-2}(6 \w^{-4} \bA - 4 \w^{-3} \bA^\prime + \w^{-2} \bA^{\prime\prime}), \\[1mm]
	\label{dws3}
	\begin{aligned}\u^{\prime\prime\prime} &= \ww^{-2}(-24 \w^{-5} \bA + 18 \w^{-4} \bA^\prime - 6 \w^{-3} \bA^{\prime\prime} + \w^{-2} \bA^{\prime\prime\prime})\\[1mm]
		&= - 6 \, \w^{-1} \, \u'' - 6 \, \w^{-2} \, \u' + \ww^{-2} \, \w^{-2} \, \bA'''. \end{aligned}
\end{gather}
Moreover, using the chain rule and the fact that $\Phi'(r) = \cZ(r)$ (cf. \eqref{deZP}), we get that:
\begin{gather}\label{dws4}
	\bH^\prime =\ww^2 \, [2 \w \, \Phi(\u) + \w^2 \Phi^\prime(\u) \u^\prime] = \ww^2 \, [2 \w \, \Phi(\u) + \w^2  \cZ(\u) \u^\prime], \\[1mm]
	\label{dws5}
	\bH^{\prime\prime} =\ww^2 \, [ 2 \Phi(\u) + 4 \w \, \cZ(\u) \u^\prime + \w^2 \cZ^\prime(\u) (\u^\prime)^2 + \w^2 \cZ(\u) \u^{\prime \prime}], \\[1mm]
	\label{dws6}
	\bH^{\prime\prime\prime} = \cZ(\u) \bA^{\prime\prime\prime} + \ww^2\cZ^\prime(\u) [6 \w \, (\u^\prime)^2 + 3 \w^2 \, \u^{\prime} \u^{\prime\prime}] + \ww^2\w^2 \cZ^{\prime\prime}(\u) (\u^\prime)^3,
\end{gather}
where we have used \eqref{dws3} in the last equality.

In particular, at the minimizer $\w = 1$, by the fact that $\cZ(\u(1)) = |\theta|$ and $\bH^\prime(1) = 0$, it follows from \eqref{dws4} that
\begin{equation}\label{U_prim}
	\u^\prime(1) = - 2 \frac{\Phi(\u(1))}{ |\theta|} = 2\Upsilon'(|\theta|) - 2\frac{\Upsilon(|\theta|)}{|\theta|}
\end{equation}
by the last equation in \eqref{nBNn}.
Using the first equation of \eqref{dws2}, we obtain
\[
\bA^{\prime}(1) = \ww^2 \, \u^{\prime}(1) + 2 \bA(1).
\]
This together with the fact that $\bA''(1) = 4 \, u_1^2 \, \ww^2/\bA(1)^3$ (via the second equality in \eqref{dws}) and $\u(1) = \ww^{-2} \bA(1)$ (cf. \eqref{nBNn}) allows us to yield
\[
\u^{\prime\prime}(1) = -2 \u(1) - 4 \u^\prime(1) + \frac{4 u_1^2}{\bA(1)^3}
\]
from the second equation of \eqref{dws2}. Inserting this into \eqref{dws5}, together again with the fact that $\cZ(\u(1)) = |\theta|$ and $\cZ'(\u(1)) = - 1/\Upsilon''(|\theta|)$ (cf. \eqref{nBNn}), we have
\begin{align*}
	\bH^{\prime\prime}(1)  =
	\left[2 \Phi(\u(1)) - 2 |\theta| \u(1) - \frac{(\u^{\prime}(1))^2}{\Upsilon^{\prime\prime}(|\theta|)} + \frac{4 |\theta| u_1^2}{\bA(1)^3}\right] \ww^2
	= \left[\p(|\theta|) +  \frac{4|\theta| u_1^2}{\bA(1)^3}\right] \ww^2,
\end{align*}
where we have used \eqref{U_prim} and the last equality of \eqref{nBNn} in the last ``=", and set that
\begin{equation}
	\p(r):=2 \, \Upsilon(r) - 4 \, \frac{(\Upsilon(r) - r \Upsilon^\prime(r))^2}{r^2 \, \Upsilon^{\prime\prime}(r)} = 4 \frac{\Upsilon(r)^2}{-\Upsilon''(r)\sin^2r}\label{p_cal}, \quad 0 < r < \vtz_1.
\end{equation}
The second ``=" in \eqref{p_cal} can be verified directly. In fact, from \eqref{relUp}, we see that:
\begin{align*}
	\p(r)&= 2 \frac{r^2 \left[ -\psi \, \psi'' + 2 \, (\psi')^2 \right] - 2 \, ( \psi +  r \, \psi')^2 }{r^2 \, \Upsilon'' \, \psi^4} = 2 \frac{2 \, \psi + r^2 \, (\psi'' + 4 \frac{\psi'}{r})}{-r^2 \, \Upsilon'' \, \psi^3} \\[1mm]
	&=2 \frac{2 \, \psi + 2 \, \psi \, (r^2 \, \csc^2{r} - 1)}{-r^2 \, \Upsilon'' \, \psi^3} = 4 \frac{\Upsilon(r)^2}{-\Upsilon''(r)\sin^2r},
\end{align*}
where the penultimate equality follows from \eqref{psi_sum}.
Remark that $\pi$, the unique zero of $\Upsilon$ on $(0, \  \var)$, is simple. By \eqref{Ups_ser}, it is obvious that $\p > 0$, which shows that $\bH^{\prime\prime}(1)> 0$. In conclusion, by the fact that $\bA(1) = \theta_2^2 \, \psi'(|\theta|)$, $u_1 = \theta_1 \, \theta_2^2 \, \psi'(|\theta|)/|\theta|$ and $\ww = \theta_2 \, \psi(|\theta|)$, we yield the following:

\begin{lem} \label{L64n}
Let $\theta, u$ be as in Assumption (A) (cf. \eqref{n78n}). Then
\begin{align} \label{Hpp}
	\bH''(1) = \partial^2_{s_1} \cD(u;\ss) = 4
	\left(\frac{\tz_1^2}{-\Upsilon'(|\tz|)|\tz|}+\frac{\tz_2^2}{-\Upsilon''(|\tz|)\sin^2|\tz|}\right) > 0.
\end{align}
\end{lem}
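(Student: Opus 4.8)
The plan is to assemble the computation already carried out in Subsection~\ref{afH}, so that essentially no new work is required. First I would observe that the second identity in \eqref{Hpp} is purely a matter of definitions: by the modified polar coordinates \eqref{pol_cor} with $\gz = 0$ and by \eqref{defH}, one has $\bH(\w) = \cDp(\w,0) = \cD(u;-\w,0)$ on a neighbourhood of $\w=1$, where $\cD(u;\cdot)$ is smooth (it is away from the origin, cf.\ Proposition~\ref{lD} and Remark~\ref{smoD}); hence $\bH''(\w) = \partial^2_{s_1}\cD(u;-\w,0)$, and evaluating at $\w=1$ gives $\bH''(1) = \partial^2_{s_1}\cD(u;\ss)$ since $\ss=(-1,0)$.

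Next I would identify the number $\bH''(1)$ itself. Starting from $\bH = \ww^2\,\w^2\,\Phi(\u)$ (cf.\ \eqref{eD5}) and differentiating twice with $\Phi'=\cZ$ (cf.\ \eqref{deZP}), one reaches \eqref{dws4}--\eqref{dws5}. At the minimizer $\w=1$ we have $\bH'(1)=0$, which, combined with \eqref{dws4} and the data collected in \eqref{nBNn} (namely $\cZ(\u(1))=|\tz|$, $\u(1)=-\Upsilon'(|\tz|)$, $\cZ'(\u(1))=-1/\Upsilon''(|\tz|)$, $\Phi(\u(1))=\Upsilon(|\tz|)+|\tz|\,\u(1)$), yields $\u'(1)=-2\,\Phi(\u(1))/|\tz|$, i.e.\ \eqref{U_prim}. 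Feeding \eqref{dws}--\eqref{dws2} together with $\bA(1)=\ww^2\,\u(1)$ and $\bA''(1)=4u_1^2\ww^2/\bA(1)^3$ then produces $\u''(1)=-2\u(1)-4\u'(1)+4u_1^2/\bA(1)^3$; substituting all of this into \eqref{dws5} and recalling the definition \eqref{p_cal} of $\p$ gives
\[
\bH''(1) = \Big[\,\p(|\tz|) + \frac{4\,|\tz|\,u_1^2}{\bA(1)^3}\,\Big]\,\ww^2 .
\]

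Finally I would plug in $\bA(1)=\tz_2^2\,\psi'(|\tz|)$, $u_1=\tz_1\,\tz_2^2\,\psi'(|\tz|)/|\tz|$ and $\ww=\tz_2\,\psi(|\tz|)$, and use $\Upsilon=1/\psi$, $\Upsilon'=-\psi'/\psi^2$ from \eqref{relUp} together with the alternative form $\p(r)=4\,\Upsilon(r)^2/(-\Upsilon''(r)\sin^2 r)$ from \eqref{p_cal}. A one-line simplification then turns $\ww^2\,\p(|\tz|)$ into $4\tz_2^2/(-\Upsilon''(|\tz|)\sin^2|\tz|)$ and $4|\tz|\,\ww^2 u_1^2/\bA(1)^3$ into $4\tz_1^2/(-\Upsilon'(|\tz|)|\tz|)$, which is exactly \eqref{Hpp}; positivity is then immediate because $-\Upsilon'$ and $-\Upsilon''$ are strictly positive on $(0,\vtz_1)$ by the series \eqref{Ups_ser}. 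The whole argument is mechanical bookkeeping; the only nonroutine ingredient is the second equality in \eqref{p_cal}, whose verification rests on the trigonometric identity \eqref{psi_sum}, so I would regard that as the one genuinely substantive step.
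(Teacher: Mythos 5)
Your proposal is correct and coincides with the paper's own derivation: the paper proves \eqref{Hpp} by exactly the same bookkeeping in Subsection \ref{afH} — using $\bH'(1)=0$ with \eqref{dws4} and \eqref{nBNn} to get \eqref{U_prim}, computing $\u''(1)$ from \eqref{dws}--\eqref{dws2}, substituting into \eqref{dws5} to reach $\bH''(1)=[\p(|\tz|)+4|\tz|u_1^2/\bA(1)^3]\,\ww^2$, and then using the second equality in \eqref{p_cal} (verified via \eqref{psi_sum}) together with $\bA(1)=\tz_2^2\psi'(|\tz|)$, $u_1=\tz_1\tz_2^2\psi'(|\tz|)/|\tz|$, $\ww=\tz_2\psi(|\tz|)$ and \eqref{relUp}. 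Your identification of the $\p$-identity as the only substantive step, and of positivity via \eqref{Ups_ser}, matches the paper as well.
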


It is a simple but vital fact that both $\cDp$ and $\bH$ enjoy some monotonicity properties, which will be used extensively in Section \ref{bigchr}. Actually, one has:
\begin{lem} \label{pcD} The following conclusions hold:
{\em\begin{compactenum}[(i)]
		\item For fixed $\w$, the even function $\gz \mapsto \cDp(\w,\gz)$ is increasing w.r.t. $|\gz|$;
		\item The function $\bH(\cdot)$ is decreasing on the interval $(0, 1)$ and increasing on $(1, + \infty)$.
\end{compactenum}}
\end{lem}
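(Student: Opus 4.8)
The plan is to treat the two assertions separately: (i) follows at once from the factorization of $\cD$ recorded in \eqref{eD4}, while (ii) reduces to showing that $\w=1$ is the \emph{only} critical point of $\bH$. For (i), in the modified polar coordinates \eqref{pol_cor} one has $|s|=\w$ and $s_1=-\w\cos\gz$, so \eqref{eD4} and \eqref{defT} give $\cDp(\w,\gz)=\ww^2\w^2\,\Phi(\L_\pl(\w,\gz))$ with $\L_\pl(\w,\gz)=\ww^{-2}\w^{-2}\bigl(|u|^2+4\ww^2\w^2-4u_2\ww\,\w\cos\gz\bigr)^{1/2}\ge 0$. Since $u_2,\ww,\w>0$, for each fixed $\w$ the quantity $-\cos\gz$, hence $\L_\pl(\w,\cdot)$, is nondecreasing in $|\gz|$ on $(-\pi,\pi)$; composing with $\Phi$, which is even and increasing on $[0,+\infty)$ by Lemma \ref{pPsi}(i), shows that $\gz\mapsto\cDp(\w,\gz)$ increases with $|\gz|$. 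This is (i).

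For (ii), note first that $\bH(\w)=\cD(u;(-\w,0))$ is smooth on $(0,+\infty)$, and that by \eqref{eD3}, \eqref{decD}, \eqref{defT} the value $\cD(u;s)$ depends on $s$ only through $|s|$ and $\bT(s)$, hence is even in $s_2$; in particular $\partial_{s_2}\cD(u;(s_1,0))\equiv 0$, and by Theorem \ref{tmm} (as $\ss=(-1,0)$) the point $\w=1$ is the strict global minimizer of $\bH$. The key step is that $\w=1$ is the only critical point of $\bH$. Assume $\bH'(\w_0)=0$. Since $\bH'(\w)=-\partial_{s_1}\cD(u;(-\w,0))$ and $\partial_{s_2}\cD$ vanishes on the axis, $s^\sharp:=(-\w_0,0)\ne 0$ is a critical point of $\cD(u;\cdot)$, so \eqref{gras} and \eqref{gratau} hold at $s^\sharp$ with $\tau_*:=\tau(s^\sharp)$ (their derivation in the proof of Theorem \ref{tmm} uses only criticality, the formulas for the partials being identities). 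Then, just as in Steps 2--3 there: \eqref{gratau} gives, since $u_1\ne 0$, that $\tau_*\ne 0$ and $\tau_{*3}=0$; the possibility $|\tau_*|=\pi$ is ruled out by Assumption (A)'s condition $\pi u_2^2\ne 4u_1$; and \eqref{gratau} yields $\tau_{*1}>0$, $\tau_{*2}\ne 0$, $\Lambda(\tau_{*1},\tau_{*2})=(u_1,u_2)$ and $u_2=\tau_{*2}\,\K_3(\tau_{*1},\tau_{*2})>0$.

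Here comes the new ingredient. For the \emph{minimizer} one concluded at this stage using positive semidefiniteness of the Hessian; for a generic critical point of $\bH$ that information is absent, so instead I would use $s^\sharp_1=-\w_0<0$: the $s_1$-component of \eqref{gras} reads $\ww\,s^\sharp_1=-\psi(|\tau_*|)\,\tau_{*2}$, hence $\psi(|\tau_*|)\,\tau_{*2}>0$, and multiplying by $\tau_{*2}\,\K_3(\tau_{*1},\tau_{*2})>0$ gives $\psi(|\tau_*|)\,\K_3(\tau_{*1},\tau_{*2})>0$. Together with $\tau_{*1}>0$, $\tau_{*2}\ne 0$, $\tau_{*2}\,\K_3>0$ and the characterization \eqref{ChaO-4} of $\Omega_{-,4}$, this places $(\tau_{*1},\tau_{*2})$ in $\Omega_{+,1}\cup\Omega_{-,4}$ (in the subcase $|\tau_*|<\pi$ one gets $\K_3>0$ and $\tau_{*2}>0$, hence $\Omega_{+,1}$; in the subcase $|\tau_*|>\pi$ one gets $\K_3<0$ and $\tau_{*2}<0$, hence $\Omega_{-,4}$), so by the injectivity in Theorem \ref{mapL} we must have $(\tau_{*1},\tau_{*2})=(\theta_1,\theta_2)$, i.e.\ $\tau_*=\theta$; then $\ww\,s^\sharp_1=-\psi(|\theta|)\,\theta_2=-\ww$ forces $\w_0=1$. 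Consequently $\bH'$ is continuous and nonvanishing on $(0,1)$ and on $(1,+\infty)$, so of constant sign on each; it cannot be positive on $(0,1)$ (that would make $\bH$ increasing there, contradicting that $1$ is its global minimizer), hence $\bH'<0$ on $(0,1)$ and, symmetrically, $\bH'>0$ on $(1,+\infty)$, which is (ii). I expect the critical-point uniqueness to be the only real difficulty: removing the spurious branch ``$|\tau_*|>\pi$, $\tau_{*2}>0$'' — which for the minimizer was killed by the Hessian — is exactly what the sign $s^\sharp_1<0$ is needed for.
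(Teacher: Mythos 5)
Your proof is correct, and while its skeleton matches the paper's (reduce (ii) to showing $\w=1$ is the only critical point of $\bH$, then feed the hypothetical critical point into the machinery of Theorem \ref{tmm} — the critical-point identities \eqref{gras}--\eqref{gratau}, the exclusion of $|\tau_*|=\pi$ via $\pi u_2^2\ne 4u_1$, the characterization \eqref{ChaO-4} and the diffeomorphism of Theorem \ref{mapL}), the decisive step is handled differently. The paper transfers second-order information: from part (i) it gets $\partial_\gz \cDp(\w_*,0)=0$ and $\partial^2_\gz\cDp(\w_*,0)\ge 0$, i.e.\ $\partial_{s_1}\cD=\partial_{s_2}\cD=0$ and $\partial^2_{s_2}\cD\ge 0$ at $(-\w_*,0)$, and then reuses verbatim the Step-3 computation of Theorem \ref{tmm}, where the formula $\partial^2_{s_2}\cD = \frac{2\ww^2|\tau_*|}{\tau_{*2}^2\,\psi(|\tau_*|)\psi'(|\tau_*|)}\K_3(\tau_{*1},\tau_{*2})$ and its nonnegativity yield $\psi\,\K_3>0$ and hence kill the spurious branch $|\tau_*|>\pi$, $\tau_{*2}>0$. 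You instead extract $\psi(|\tau_*|)\tau_{*2}>0$ purely from the first-order identity $\ww\,s_1^\sharp=-\psi(|\tau_*|)\tau_{*2}$ together with the sign $s_1^\sharp=-\w_0<0$, which is automatic for a point on the ray; combined with $u_2=\tau_{*2}\K_3>0$ this gives the same $\psi\,\K_3>0$ without any Hessian computation. You also replace the paper's local sign determination near $\w=1$ (via $\bH'(1)=0$, $\bH''(1)>0$ from Lemma \ref{L64n}) by a global argument using that $1$ is the strict minimizer from Theorem \ref{tmm}. What each buys: the paper's route is economical because the second-derivative computation is already on record in Theorem \ref{tmm} and part (i) supplies the needed inequality in one line; your route is slightly leaner in prerequisites (no $\partial^2_{s_2}\cD$ formula, no use of $\bH''(1)>0$), and it isolates cleanly the observation that, on the negative $s_1$-axis, the sign of $s_1$ alone removes the branch that minimality had to remove in Theorem \ref{tmm}. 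Part (i) of your argument coincides with the paper's.
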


\begin{proof}
The proof of (i) is easy. Since $\w$ is fixed, then by \eqref{defT} the positive function
\begin{equation}\label{A_pol}
	\begin{aligned}
		\bT_\pl(\w,\gz) &= \sqrt{u_1^2+(u_2-2\ww\,\w\cos\gz)^2 + 4 \, \ww^2 \, \w^2 \sin^2\gz} \\
		&= \sqrt{|u|^2 + 4\ww^2\,\w^2 - 4 u_2\ww\, \w\cos{\gz} }
	\end{aligned}
\end{equation}
is increasing w.r.t. $|\gz|$,  so is $\L_\pl(\w, \gz) = \ww^{-2} \w^{-2} \bT_\pl(\w,\gz)$. Thus (i) follows from the fact that $\cDp(\w,\gz) = \ww^2 \, \w^{2} \, \Phi(\L_\pl(\w,\gz))$ and the first claim in Lemma \ref{pPsi}.

To prove (ii), it suffices to show that $\bH^\prime(\w) < 0$ on $(0, 1)$ and $\bH^\prime(\w) > 0$ on $(1, + \infty)$. Since $\bH'(1)=0$ and $\bH^{\prime\prime}(1) > 0$, then $\bH^\prime(\w) < 0$  whenever $1 - \w > 0$ is small enough,  and $\bH^\prime(\w) > 0$ for $\w - 1 > 0$ small enough. As a result,  by the smoothness of $\bH$, it remains to show that  there is no $\w_* \in (0, 1) \cup (1, + \infty)$ such that $\bH^\prime(\w_*) = 0$. If this is not the case, from (i) it will deduce that
\begin{align*}
	\partial_{\w} \cDp(u;\w_*,0) = 0, \quad  \partial_{\gz} \cDp(u;\w_*,0) = 0, \quad \partial^2_{\gz} \cDp(u;\w_*,0) \ge 0,
\end{align*}
which is equivalent to
\begin{align*}
	\partial_{s_1} \cD(u;-\w_*,0) = 0, \quad  \partial_{s_2} \cD(u;-\w_*,0) = 0, \quad \partial^2_{s_2} \cD(u;-\w_*,0) \ge 0.
\end{align*}
Then following the argument in the proof of Theorem \ref{tmm}, we have $(-\w_*,0) = (-1,0)$ and this leads to a contradiction, which completes the proof.
\end{proof}

We are now in the position to introduce

\subsection{Some key parameters related to $4^{-1}|x|^2\,\cD(u;\cdot)$ }
\label{pfN}

Under Assumption (A) (cf. \eqref{n78n}), recalling \eqref{Hpp} and \eqref{ps2D}, we set in the sequel,
\begin{gather}
\m=\m(g):=\frac{|x|^2}{4}\bH(1)=\frac{|x|^2}{4}\cD(u;\ss),\label{mdef}\\[1mm]
\chri=\chri(g):= \frac{|x|^2}{4}\partial^2_{s_1} \cD(u;\ss) = \frac{|x|^2}{4}\bH''(1) =\left(\frac{\tz_1^2}{-\Upsilon'(|\tz|)|\tz|}+\frac{\tz_2^2}{-\Upsilon''(|\tz|)\sin^2|\tz|}\right)|x|^2,\label{chr1def}\\[1mm]	\chrii=\chrii(g):=\frac{|x|^2}{4} \, \partial^2_{s_2} \cD(u;\ss) = \frac{\psi(|\tz|) \, |\tz|}{2\psi'(|\tz|)} \, \K_3(\tz_1,\tz_2) \, |x|^2 = \frac{u_2 \, |x|^2 \, \ww \, |\theta|}{2\bA(1)}, \label{chr2def}
\end{gather}
where we have used in the last ``$=$'' of \eqref{chr2def} the fact that $\bA(1) = \theta_2^2 \, \psi'(|\theta|)$.
Note that $\m$ is exactly the minimum of $\frac{|x|^2}{4}\cD(u;\cdot)$, and in fact  $\chri,\, \chrii > 0$ are two eigenvalues of the Hessian matrix of $\frac{|x|^2}{4}\cD(u;\cdot)$ at its unique minimum  point $(-1,0)$.

These parameters are important for the asymptotics of the heat kernel in Sections \ref{bigchr} and \ref{schr} below. We summarize  some useful properties of them in the following lemma, which is a direct application of  Theorems \ref{mapL} and \ref{tmm}.

\begin{lem}\label{asyin} Supposing that Assumption (A) (cf. \eqref{n78n}) holds, then we have:
{\em\begin{compactenum}[(i)]
		\medskip\item $\mz \sim u_1 \, \tz_1^{-1} \sim \tz_2^2 \, |\pi-|\tz| \, |^{-2} \sim u \cdot \tz
  \sim \ep^{-2} \, \ww^2 \sim |\tz|^{-1} \, \bA(1)$.
		\medskip\item $\chrii \sim \ep \, u_2 \, |x|^3 \, \m^{-\frac12}$, $\chri \sim \ep^2 \, d(g)^2$, and $\chri \sim \chrii + \ep \, \m$.
		
		\medskip\item If $|u|\gtrsim1$, then
		\begin{align}\label{con1}
			|\tz| \sim 1,\quad \ep \sim 1, \quad |u|\sim \frac{\tz_2^2}{(\pi-|\tz|)^2}, \quad  \mz \sim |u|, \quad \ww \sim |u|^{\frac12}.
		\end{align}
		
		\medskip\item If $|u|\ll1$ and $|\tz|\ge1$, then
		\begin{align}\label{con6}
			\epsilon \sim u_1 + u_2 \, u_1^{-\frac12}, \quad \mz \sim u_1, \quad \ww \sim \ep \, u_1^{\frac12}, \quad \theta_1 \sim 1.
		\end{align}
		In particular, $u_2/\sqrt{u_1}$ is bounded.
		
		\medskip\item If $|\tz|\le3$, then
		\begin{align}\label{con7}
			u_1\sim\tz_1\tz_2^2,\quad u_2\sim\tz_2,\quad\mz\sim\tz_2^2,\quad\chrii\sim|x|^2.
		\end{align}
		
		\medskip\item It holds that
		\begin{equation}\label{crieq2}
			|u_2 - 2 \ww| = -\frac{\Upsilon'(|\tz|)}{|\tz|}\, |\tz_2| \, \ww^2 \sim |\tz_2| \, \ep^{-2} \,\ww^2 \sim |\pi-|\tz| \, | \,\mz^{\frac32} \lsim \mz^\frac32.
		\end{equation}
\end{compactenum}}
\end{lem}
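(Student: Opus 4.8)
The plan is to establish each item of Lemma~\ref{asyin} by the same two‑step recipe: first rewrite $\mz=\bH(1)=\cD(u;\ss)$, $\chri$ and $\chrii$ in closed form in terms of $(\theta_1,\theta_2)$, $(u_1,u_2)$, $\ww$ and $|x|$, and then substitute the sharp two‑sided asymptotics of the special functions that appear. The closed forms are already at hand: Theorem~\ref{tmm} (cf.\ \eqref{dEn2}) supplies the equivalent expressions for $\cD(u;\ss)$, among them $\cD(u;\ss)=\tfrac{\varphi_1(|\theta|)}{|\theta|}\,u\cdot\theta=\varphi_2(|\theta|)\,u_1\,\tfrac{|\theta|}{\theta_1}$; \eqref{chr1def}, \eqref{chr2def} together with \eqref{Hpp} and \eqref{ps2D} give $\chri$ and $\chrii$; Assumption~(A) and \eqref{32N96} give $u_1=\tfrac{\psi'(|\theta|)}{|\theta|}\theta_1\theta_2^2$, $u_2=\theta_2\,\K_3(\theta_1,\theta_2)$ and $\ww=\theta_2\,\psi(|\theta|)$; and \eqref{relUp}, \eqref{nBNn} relate $\bA(1)$, $\Upsilon'$, $\Upsilon''$ to $\psi$, $\psi'$. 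The analytic input is: \eqref{mapL_p}, i.e.\ $\psi'(r)\sim r|\pi-r|^{-2}$ and $|\psi(r)|\sim(\var-r)|\pi-r|^{-1}$; Lemma~\ref{pPsi}(ii) (i.e.\ \eqref{Ups_est}), $-\Upsilon'(r)\sim r(\var-r)^{-2}$, $-\Upsilon''(r)\sim(\var-r)^{-3}$; the elementary facts $\varphi_1(r)/r\sim1$ and $r\,\varphi_2(r)\sim1$ on $(0,\var)$; and the relations of Remark~\ref{Lrmk}. One should keep in mind throughout that $|\theta|<\var$ and $\theta_1\le|\theta|$ (hence $|\theta|,\theta_1\lsim1$), and the structural point that $\pi\neq\var$, so that $|\pi-|\theta||$ and $\ep=\var-|\theta|$ cannot both be small; this will separate the asymptotic regimes cleanly.

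Items (i), (v), (vi) are then direct. For (i) one reads off $\mz\sim u\cdot\theta\sim u_1/\theta_1$ from \eqref{dEn2} using $\varphi_1(r)/r\sim1$ and $r\,\varphi_2(r)\sim1$, then $u_1/\theta_1\sim\theta_2^2|\pi-|\theta||^{-2}$ by \eqref{u1sim}, and finally $\ep^{-2}\ww^2=\theta_2^2\psi(|\theta|)^2(\var-|\theta|)^{-2}\sim\theta_2^2|\pi-|\theta||^{-2}$ and $|\theta|^{-1}\bA(1)=\theta_2^2\psi'(|\theta|)/|\theta|\sim\theta_2^2|\pi-|\theta||^{-2}$ by \eqref{mapL_p}. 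For (vi) one uses the exact identity $u_2-2\ww=\tfrac{\psi'(|\theta|)}{|\theta|}\theta_2^3$ (immediate from the formulas for $u_2$, $\ww$, $\K_3$), rewrites $-\tfrac{\Upsilon'(|\theta|)}{|\theta|}|\theta_2|\ww^2=\tfrac{\psi'(|\theta|)}{|\theta|}|\theta_2|^3$ via \eqref{relUp}, and substitutes the above asymptotics, the bound $|\pi-|\theta||\lsim1$ yielding the final ``$\lsim\mz^{3/2}$''. For (v), the constraint $|\theta|\le3<\pi$ makes $\psi(|\theta|)$, $\K_1(|\theta|)=\psi'(|\theta|)/|\theta|$ and $\K_3(\theta_1,\theta_2)$ all $\sim1$ by the series \eqref{EFs}, \eqref{32ei3}, while $\theta_1,\theta_2\lsim1$; the listed equivalences follow from the closed forms.

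The case‑sensitive items (ii)--(iv) are the technical core, and one splits on whether $\theta\in\Omega_{+,1}$ or $\theta\in\Omega_{-,4}$. On $\Omega_{+,1}$ one has $\ep\sim1$ (because $|\theta|<\pi$) and $\theta_1,\theta_2\lsim1$, so the estimates reduce to elementary bounds on $\sin|\theta|,\psi,\psi'$ and the identity $d(g_u)^2=\theta_1^2|\theta|^{-2}+\theta_2^2\sin^{-2}|\theta|$ (cf.\ \eqref{dEn}). On $\Omega_{-,4}$ one has $\theta_1\in(\pi,\var)$, hence $\theta_1\sim1$, but $\ep$ may be small; the essential difficulty is that $\K_3=2\psi(|\theta|)+\tfrac{\psi'(|\theta|)}{|\theta|}\theta_2^2<0$ is a difference of two comparable positive quantities, so $u_2=\theta_2\K_3$ and (through \eqref{ps2D}) $\chrii$ cannot be estimated by subtracting asymptotics. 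The way around this is to invoke the packaged relation \eqref{u2sim2}, $\tfrac{u_1}{\theta_1}+\tfrac{u_2}{|\theta_2|}\sim\ep|\pi-|\theta||^{-1}$, together with $\tfrac{u_1}{\theta_1}\sim\theta_2^2|\pi-|\theta||^{-2}$. Granting this: $\chrii\sim\ep u_2|x|^3\m^{-1/2}$ holds in fact with no case split, since the ratio of the two sides equals $\tfrac{|\psi(|\theta|)|\,|\theta|\,\cD(u;\ss)^{1/2}}{4\ep\,\psi'(|\theta|)\,|\theta_2|}$, which is $\sim1$ by \eqref{mapL_p} and item (i); $\chri\sim\ep^2 d(g)^2$ follows by matching the two summands of \eqref{Hpp} against $\ep^2\theta_1^2|\theta|^{-2}$ and $\ep^3\theta_2^2\sin^{-2}|\theta|$ via Lemma~\ref{pPsi}(ii) and then checking $\tfrac{\theta_1^2}{|\theta|^2}+\ep\tfrac{\theta_2^2}{\sin^2|\theta|}\sim d(g_u)^2$, which is exactly where $\pi\neq\var$ (at most one of $\ep$, $|\pi-|\theta||$ small) is used; and $\chri\sim\chrii+\ep\m$ then follows by combining these with $\m\sim|x|^2u_1/\theta_1$ and one further appeal to \eqref{u2sim2}. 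For (iii) and (iv) one first notes that $|\Lambda(\theta)|\to0$ both as $|\theta|\to0$ and, within $\Omega_{-,4}$, as $|\theta|\to\var^-$ (the constraint $\K_3<0$ forcing $\theta_2\to0$ in that limit); hence $|u|\gsim1$ pins $|\theta|$ and $\ep$ at $\sim1$, whereas $|u|\ll1$ with $|\theta|\ge1$ forces $\theta_2$ small and $\theta_1\sim1$. The remaining equivalences follow from item (i) and the formulas for $u_1,u_2$, treating $\Omega_{-,4}$ once more through \eqref{u2sim2} and the dichotomy ``$|\pi-|\theta||\lsim1$'' versus ``$\ep\sim1$''; the boundedness of $u_2/\sqrt{u_1}$ in (iv) is then immediate.

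I expect the genuine obstacle to be precisely this near‑degeneracy on $\Omega_{-,4}$: because $\K_3$ — equivalently $u_2$, equivalently $\chrii$ — can be far smaller than either of the two positive terms composing it, a term‑by‑term analysis is too lossy, and one must route the argument through the relation \eqref{u2sim2} from Remark~\ref{Lrmk}, combined with the dichotomy noted above. Everything else is careful but routine bookkeeping with \eqref{mapL_p}, Lemma~\ref{pPsi}(ii), the identities \eqref{relUp} and \eqref{nBNn}, and the relations of Remark~\ref{Lrmk}.
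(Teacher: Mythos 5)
Your proposal is correct, and it runs on exactly the same machinery as the paper's proof: the closed forms from \eqref{dEn2}, \eqref{chr1def}--\eqref{chr2def} and Assumption (A), the two-sided bounds \eqref{mapL_p} and \eqref{Ups_est}, the relations \eqref{u1sim}--\eqref{u2sim2} of Remark \ref{Lrmk}, and the dichotomy that $\ep$ and $|\pi-|\tz|\,|$ cannot both be small. The genuine difference is organizational, and it sits in item (ii): the paper proves only the first claim of (ii) up front, then items (iii)--(v), and only afterwards deduces $\chri\sim\ep^2 d(g)^2$ and $\chri\sim\chrii+\ep\m$ by splitting into $|u|\gsim1$ and $|u|\ll1$ (with the sub-split $|\tz|\ge1$ versus $|\tz|\le3$) and reusing \eqref{con1}, \eqref{con6}, \eqref{con7}; you instead establish all of (ii) in a unified way, reading $\chri\sim\ep^2|x|^2\bigl(\tfrac{\tz_1^2}{|\tz|^2}+\ep\tfrac{\tz_2^2}{\sin^2|\tz|}\bigr)$ off \eqref{Hpp} via \eqref{Ups_est} and comparing with the exact identity $d(g_u)^2=\tfrac{\tz_1^2}{|\tz|^2}+\tfrac{\tz_2^2}{\sin^2|\tz|}=1+\mz$ from \eqref{dEn}, the dichotomy (plus $\tz_1>\pi$ on $\Omega_{-,4}$) closing the small-$\ep$ regime; I checked that this goes through, and it buys you independence of (ii) from (iii)--(v) and no case split on $|u|$, at the price of a slightly more delicate direct verification (the paper's route is nearly free once (iii)--(v) are in hand). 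Likewise your derivation of $\chrii\sim|x|^2$ in (v) directly from \eqref{chr2def} is a small shortcut relative to the paper, which routes it through the first claim of (ii). One caveat worth making explicit: for $\chri\sim\chrii+\ep\m$ your ``one further appeal to \eqref{u2sim2}'' only covers $|\tz|>\pi$; on $\Omega_{+,1}$ the needed lower bound (that $\chrii\gsim|x|^2$ when $\mz\lsim1$) comes from \eqref{u2sim1} or, even more simply, from \eqref{iN38} applied to $\chrii/|x|^2=\tfrac{\psi(|\tz|)^2|\tz|}{\psi'(|\tz|)}+\tfrac{\psi(|\tz|)\tz_2^2}{2}$ — consistent with your opening remark that $\Omega_{+,1}$ reduces to elementary bounds, but it should be said.
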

\begin{proof}	
Recall that $\mz = \cD(u;\ss)$, $\ep = \var - |\theta|$,  $\ww = \theta_2 \, \psi(|\theta|)$ and $d(g)^2 \sim |x|^2 \, (1 + |u|)$ (cf. \eqref{ehd}).

For item (i), let us begin with the proof of $\cD(u;\ss) \sim u_1/\theta_1$. By the penultimate equality in \eqref{dEn2}, it remains to prove that the continuous function $\varphi_2(r) \, r = r^2 \, (r^2 - \sin^2{r})/h(r) \sim 1$ for all $0 < r \le \var$. In fact, it follows from Corollary \ref{nCn1} that $\varphi_2(r) \, r > 0$. Next, a simple calculation, via Taylor's expansion at $0$ for  $r^2 \, (r^2 - \sin^2{r})$ and $h(r)$, shows that $\lim\limits_{r \to 0^+} \varphi_2(r) \, r > 0$, which implies the desired result. Similarly, using the third and the second ``$=$'' in \eqref{dEn2} successively, we get that  \[
\cD(u;\ss) \sim u \cdot \tz = |u \cdot \tz| \sim  \tz_2^2 \, (\pi-|\tz|)^{-2}.
\]

On the other hand, applying \eqref{mapL_p} with $r = |\theta|$, we have that
\[
\ww^2 = \theta_2^2 \, \psi^2(|\theta|) \sim \frac{\theta_2^2}{(\pi - |\theta|)^2} \, \ep^2, \quad \mbox{and} \quad
\bA(1) = \theta_2^2 \, \psi'(|\theta|) \sim \frac{\theta_2^2}{(\pi - |\theta|)^2} \, |\theta|.
\]
Hence $\mz \sim \ep^{-2} \, \ww^2 \sim |\tz|^{-1} \bA(1)$.

For item (ii), to show the first claim, it suffices to use
the last  ``='' in \eqref{chr2def}, since it holds that $\ww \sim \ep \, |x|^{-1} \m^\frac12$ and $\bA(1) \sim |\tz| \, |x|^{-2} \m$ by item (i) and the fact that $\bH(1) = 4\m/|x|^2$. Next we prove items (iii)-(v) before proving  the remaining claims.

For item (iii),  it follows from Assumption (A) that
\begin{eqnarray} \label{nSEn1}
	1 \lsim |u| \sim u_1+u_2 = \frac{\psi'(|\theta|)}{|\theta|} \theta_2^2 \, \theta_1 + \theta_2 \left( \frac{\psi'(|\theta|)}{|\theta|} \theta_2^2 + 2 \psi(|\theta|) \right) \lsim \frac{\tz_2^2}{(\pi-|\tz|)^2} + \frac{|\tz_2|}{|\pi - |\tz| \, |},
\end{eqnarray}
where we have used \eqref{mapL_p} in the last inequality. Hence
$|\tz|\sim1$ and $|\tz_2|/|\pi-|\tz| \, | \gsim 1$.

In the case $|\theta| < \pi$ (so $\ep \sim 1$), by \eqref{u1sim} and \eqref{u2sim1}, the estimate \eqref{nSEn1} can be improved as
\begin{align*}
	|u| \sim \frac{\tz_2^2}{(\pi-|\tz|)^2} + \frac{\tz_2}{\pi - |\tz|} \sim \frac{\tz_2^2}{(\pi-|\tz|)^2}.
\end{align*}

In the opposite case $\pi < |\theta| < \var$, it deduces from \eqref{u2sim2} that $\ep/(|\theta| - \pi) \gsim \tz_2^2/(|\theta| - \pi)^2 \gsim1$, and thus $\ep \sim 1$. Using \eqref{u2sim2} again we obtain $u_2 \lsim |\tz_2|/(|\theta| - \pi)\sim \sqrt{u_1}$ by \eqref{u1sim} and the fact that $\tz_1\sim1$. This also leads to $|u|\sim \tz_2^2/(|\theta| - \pi)^2$.

In conclusion, we establish the first three estimates in \eqref{con1}. The others are clear by (i).

We are in a position to prove item (iv). Indeed by item (i) we have  $\mz\lesssim|u|$ and $|\tz_2| \sim |\pi-|\tz| \, | \, \mz^{\frac12}$, whence  $|\tz_2| \ll 1$ and $\tz_1 \sim 1$.  By (i) again, it turns out
that
\begin{align} \label{nZn1}
\mz \sim u_1 \sim \tz_2^2 \, (\pi-|\tz|)^{-2} \ll 1, \qquad  \ww \sim \ep \, \sqrt{u_1}.
\end{align}

To show the first estimate,  consider the following three cases: (1) $\ep \ll 1$, (2) $\ep \sim 1$ with $|\theta| < \pi$, and (3) $\ep \sim 1$ with $|\theta| > \pi$. In the first case, it follows from \eqref{u2sim2} that $\ep \sim u_1 + u_2 \, |\tz_2|^{-1}$. And in such case, we remark that $|\tz_2|\sim u_1^\frac12$, thereby showing that $\ep \sim u_1 + u_2 \, u_1^{-\frac12}$. In the second case, by \eqref{u2sim1} and the first equation in \eqref{nZn1}, it holds that
$$ u_2 \sim \frac{|\tz_2|}{|\pi-|\tz| \, |} \left(1+ |\tz_2| \frac{|\tz_2|}{|\pi-|\tz| \, |} \right) \sim \frac{|\tz_2|}{|\pi-|\tz| \, |} \sim u_1^\frac12,
$$
and therefore, $u_1+u_2 u_1^{-\frac12}\sim1\sim\ep$. In the third case, the argument is similar except using \eqref{u2sim2} instead.

For item (v), since $|\tz|\le3$ (so $\ep\sim1$), then by \eqref{u1sim} and \eqref{u2sim1} we have $u_1\sim\tz_1\tz_2^2,\,u_2\sim\tz_2$. Thus, by item (i) we obtain $\mz\sim\tz_2^2\sim u_2^2$.  It follows from the first claim in item (ii) that $\chrii\sim u_2|x|^3\m^{-\frac12}\sim|x|^2$.

Now we return to the rest claims in item (ii). Recall \eqref{chr1def}. By \eqref{Ups_est} and the trivial fact that $\sin^{-2}{r} \sim r^{-2} \, (\pi - r)^{-2}$ for $0 < r \neq \pi < \var$, we obtain
\begin{equation}\label{chris}
	\chri\sim|x|^2\ep^2|\tz|^{-2} \left(\, \tz_1^2+\ep\, |\pi-|\tz| \, |^{-2}\, \tz_2^2 \,\right).
\end{equation}
There are two possible cases:

{\bf (ii-1)} $|u|\gtrsim1$. We have $d(g)^2 \sim |x|^2 \, |u|$. Then combining item (i) with item (iii) yields  that $\ep \sim 1$, $\chri \sim |x|^2 \,(\tz_1^2+|u|) \sim |x|^2 \, |u| \sim \ep^2 \, d(g)^2 \sim \ep \, \m$ and $\chrii \sim |x|^2 \, u_2 \, |u|^{-\frac12} \lesssim |x|^2 \, |u| \sim \chri$, whence $\chrii + \ep \, \m \sim \chri$.

{\bf (ii-2)} $|u|\ll1$. We have $d(g)^2 \sim |x|^2$. {\bf (ii-2a) } If $|\tz|\ge1$, recalling that $\tz_1\sim1$ (cf. \eqref{con6}), then by \eqref{chris} and the first equation in \eqref{nZn1}, we get $\chri \sim |x|^2 \, \ep^2 \, (1 + \ep \, u_1) \sim \ep^2 \, d(g)^2$. By using \eqref{con6} and again the first claim in item (ii), we yield that $\ep \, \m + \chrii \sim \ep \, (|x|^2 \, u_1 + |x|^2 \, u_2 \, u_1^{-\frac12}) \sim \ep^2 \, |x|^2 \sim \chri$. {\bf (ii-2b)} If $|\tz|\le3$, by \eqref{chris} and item (v) we obtain $\chri \sim |x|^2 \, |\tz|^{-2} \, (\tz_1^2 + \tz_2^2) = |x|^2 \sim \ep^2 \, d(g)^2 \sim \chrii$, and therefore $\ep \, \m \lesssim |x|^2 \, |u| \ll \chri$. Hence item (ii) is fully verified.

Finally, item (vi) follows easily from \eqref{crieq}, \eqref{Ups_est} and item (i).
\end{proof}

\begin{remark} \label{rem67}
	For Lemma \ref{asyin} (iv), if we replace the condition ``$|\tz|\ge1$" by ``$|\tz|\ge\az_0$ with $\az_0\in(0,1]$", then \eqref{con6} still holds with the implicit constants depending additionally on $\az_0$.
	
	Similarly for Lemma \ref{asyin} (v), if we replace the condition ``$|\tz|\le3$" by ``$|\tz| \le \beta_0$ with $\beta_0 \in [3,\pi)$", then \eqref{con7} still holds with the implicit constants depending additionally on $\beta_0$.
	
\end{remark}

			\section{Uniform asymptotics for  the subtlest case: $|\tz|\ge1$, $\chri\gg1$ and $\m \to +\infty$} \label{bigchr}
			\setcounter{equation}{0}
			
			Recall that  $d(g)^2 = |x|^2 + 4 \m$, $\m = \frac{|x|^2}{4} \bH(1)$ is the minimum of $\frac{|x|^2}{4}\cD(u;\cdot)$, and   $\chri,\, \chrii > 0$ are two eigenvalues of the Hessian matrix of $\frac{|x|^2}{4}\cD(u;\cdot)$ at its unique minimum  point $(-1,0)$. Our target in this section is the following:
			
			\begin{theo}\label{bigL1}
				Let $|\tz|\ge1$ and $\chri, \m \ge \nzz$ with $\nzz\gg1$. Then
				\medskip
				$$
				p(g) = 16 \pi^2 \sqrt{\pi\vtz_1} \,  \q(|\tz|) \, \frac{e^{-\chrii} \, I_0(\chrii)}{\sqrt{\ep \, \chri}} \, e^{ -\frac{\vartheta_1|x|^2 \, \ww^2}{2\ep}} \, I_0\left(\frac{\vartheta_1|x|^2 \, \ww^2}{2\ep}\right) \,  e^{-\frac{d(g)^2}{4}} \, (1+o_{\nzz}(1)).
				$$
			\end{theo}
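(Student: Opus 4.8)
The plan is to run a Laplace-type analysis on the integral representation \eqref{ehk4}, i.e. $p(g)=\tfrac{1}{4\pi}|x|^2\ww^2 e^{-|x|^2/4}\int_{\R^2}\cP(s)\,ds$. Since $\m\to+\infty$ and $\ss=(-1,0)$ is the unique (nondegenerate) minimizer of $\cD(u;\cdot)$ (Theorem \ref{tmm}, Corollary \ref{C54n}, Lemma \ref{L64n}), the integral concentrates near $\ss$, where the scaling relation $\rD(\cdot)^2=|x|^2\cDp(\w,\gz)$ on the integrand (Proposition \ref{lD}(iii), \eqref{decD}) forces $\rD$ to be large, so that the refined expansion in Proposition \ref{lP}(iii) applies. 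First I would pass to the modified polar coordinates \eqref{pol_cor} and split $\R^2$ into a main region $\{|\w-1|\le\delta_1,\ |\gz|\le\delta_2\}$ (with $\delta_1,\delta_2\to0$ slowly, tuned to $\chri$ and $\chrii$ respectively) and its complement. On the complement I would use only crude bounds: Proposition \ref{lP}(ii), the monotonicity of $\bH$ on $(0,1)$ and $(1,\infty)$ and of $\gz\mapsto\cDp(\w,\gz)$ in $|\gz|$ (Lemma \ref{pcD}), the quantitative lower bound $\tfrac{|x|^2}{4}\big(\cDp(\w,\gz)-\cD(u;\ss)\big)\gtrsim\chri(\w-1)^2+\chrii(1-\cos\gz)$ near $\ss$ together with a global positive gap (including the region $\w\to0$, where $\cDp\to\cD(u;0)=\vartheta_1|u|>\cD(u;\ss)$ by Step 1 of the proof of Theorem \ref{tmm}), and the size estimates of Lemma \ref{asyin}; these combine to show the complement contributes $e^{-d(g)^2/4}\,o_{\nzz}(1)$ times the main term.

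On the main region I would insert the asymptotic of Proposition \ref{lP}(iii) for $\rP$, factor out $e^{-|x|^2/4}e^{-\frac{|x|^2}{4}\cD(u;\ss)}=e^{-d(g)^2/4}$, and reduce to
\[
\int\!\!\int \w\,g(\w,\gz)\,\exp\Big\{-\tfrac{|x|^2}{4}\big(\cDp(\w,\gz)-\cD(u;\ss)\big)\Big\}\,d\w\,d\gz,
\]
where $g$ gathers the slowly varying prefactors $\q(|\tau^*|)$, $(\vartheta_1-|\tau^*|)^{-1/2}$, $|X|^{-2}$ and $e^{-a}I_0(a)$ with $a=\frac{\vartheta_1|X|^2}{2(\vartheta_1-|\tau^*|)}$, evaluated at $\tau^*(\w,\gz)$ determined by $|\tau^*|=\cZ(\L_\pl(\w,\gz))$. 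The structural facts I would establish are: (a) the phase decouples, $\cDp(\w,\gz)-\cD(u;\ss)=\big(\bH(\w)-\bH(1)\big)+\big(\cDp(\w,\gz)-\bH(\w)\big)$ with no leading cross term (Corollary \ref{C54n}), and $\tfrac{|x|^2}{4}\big(\bH(\w)-\bH(1)\big)=\tfrac{\chri}{2}(\w-1)^2(1+o(1))$ by Taylor expansion at the nondegenerate minimizer $\w=1$ (using $\bH'(1)=0$, $\tfrac{|x|^2}{4}\bH''(1)=\chri$ from Lemma \ref{L64n}, and the cubic remainder controlled through \eqref{dws6}, Lemma \ref{pPsi} and Lemma \ref{asyin}); and (b) since $\cDp(\w,\gz)$ depends on $\gz$ only through $\cos\gz$ (via $\bT_\pl$), $\tfrac{|x|^2}{4}\big(\cDp(\w,\gz)-\bH(\w)\big)=\chrii(1-\cos\gz)(1+o(1))$ with $\chrii=\tfrac{|x|^2}{4}\partial_\gz^2\cDp(1,0)=\tfrac{|x|^2}{4}\partial^2_{s_2}\cD(u;\ss)$. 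Then I would replace $g$ by its value at $(\w,\gz)=(1,0)$, namely $\q(|\tz|)\,\ep^{-1/2}(|x|^2\ww^2)^{-1}\,e^{-a(1,0)}I_0(a(1,0))$ with $a(1,0)=\frac{\vartheta_1|x|^2\ww^2}{2\ep}$, which is legitimate because $\rho\mapsto e^{-\rho}I_0(\rho)$ has logarithmic derivative $O((1+\rho)^{-1})$ (hence stays slowly varying even when $a$ is large); the $\w$-integral then gives $\sqrt{2\pi/\chri}\,(1+o(1))$ and the $\gz$-integral gives $2\pi e^{-\chrii}I_0(\chrii)(1+o(1))$. The latter is the point where I would borrow the ``angular'' technique of \cite{Li07}: if $\chrii\to+\infty$ one extends the range and applies Laplace's method, the tail being harmless since $\chrii\delta_2^2\to+\infty$ and by \eqref{asinI}; if $\chrii$ stays bounded, the $\cos\gz$-coefficient in $\bT_\pl^2$ is relatively small, the phase is genuinely $\chrii(1-\cos\gz)(1+o(1))$ uniformly on $(-\pi,\pi)$, and $\int_{-\pi}^\pi e^{-\chrii(1-\cos\gz)}\,d\gz=2\pi e^{-\chrii}I_0(\chrii)$ exactly. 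Collecting the numerical constants ($\tfrac1{4\pi}\cdot16\pi^2\cdot\sqrt{2\vartheta_1}\cdot\sqrt{2\pi}\cdot2\pi=16\pi^2\sqrt{\pi\vartheta_1}$) and the remaining powers of $\ep$ and $\chri$ yields the stated asymptotic.

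The main obstacle is controlling the angular integral and its interaction with the radial one, uniformly across the three parameter regimes isolated in Lemma \ref{asyin} ($|u|\gtrsim1$; $|u|\ll1$ with $|\tz|\ge1$; the intermediate case) and for all admissible sizes of $\chrii$. Proving that $\cDp(\w,\gz)-\cD(u;\ss)$ decouples to the precision needed requires tracking how the coefficient $\partial_\gz^2\cDp(\w,0)$, the higher $\gz$-Taylor coefficients, and the Bessel argument $a(\w,\gz)$ vary over the main region; and because $e^{-\rho}I_0(\rho)$ decays only polynomially, the replacement $a(\w,\gz)\rightsquigarrow a(1,0)$ (as well as $\q(|\tau^*|)\rightsquigarrow\q(|\tz|)$, which is delicate when $\ep$ is small since $\q(r)\sim(\vartheta_1-|r|)^{5/2}$ near $\vartheta_1$ by Lemma \ref{q_lem}) must be justified by an explicit bound on the increments, not by concentration. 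I expect this bookkeeping — in particular the uniformity of every $o_{\nzz}(1)$ error — to be the heart of the matter, while the radial Laplace step and the constant-chasing are routine.
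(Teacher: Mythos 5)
Your plan coincides with the paper's own proof: the paper's Proposition \ref{Dsim} is precisely your decoupling of the phase into $\tfrac{\chri}{2}(\w-1)^2+\chrii(1-\cos\gz)$ on a regime-dependent main region, together with the replacement of the slowly varying prefactors ($\q(|\tau^*|)$, $(\vartheta_1-|\tau^*|)^{-1/2}$ and the factor $e^{-a}I_0(a)$) by their values at $(-1,0)$, after which the radial Laplace step, the angular Bessel identity (exact when $\chrii\lesssim1$, asymptotic via \eqref{asinI} when $\chrii\gg1$), and the complement bounds via Proposition \ref{lP}(ii), Lemma \ref{pcD} and the case analysis of Lemma \ref{asyin}/Table \ref{tab1} proceed exactly as you describe. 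The uniform bookkeeping you single out as the heart of the matter (the choice of $\dz,\eta$ per regime and the justification of the prefactor replacements, e.g. \eqref{Ds1}) is indeed where the paper spends its effort, but your outline is the same argument.
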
\medskip
			
			 Its proof is based on Propositions \ref{nPn1} and \ref{lP}. As mentioned in  Subsection \ref{ideaS}, Laplace's method is not enough for our purpose, since $\chrii$ could be bounded. To prove it in a uniform way, we will use Proposition \ref{Dsim} below, via the modified polar coordinates provided by  \eqref{pol_cor}.  In short, we introduce a suitable coordinate system, give a nice expression for the integrand (especially, \eqref{simpD} for the phase), and determine the main contributing region $\cw_1$ of the integral.

					{\bf Structure of $\boldsymbol\cw_1$.} This set depends on two parameters $\dz$ and $\eta$ (cf. \eqref{delg} below), which in fact involves three different cases. To avoid redundant computation, we list some preparatory estimates for the aforementioned parameters in the following Table \ref{tab1}. These results, which can be checked without difficulties by Lemma \ref{asyin} (i)-(iv) and the fact that $d(g)^2 \sim |x|^2 \, (1 + |u|)$, will be used iteratively throughout the proof of Theorem \ref{bigL1}.
					
					\medskip
					{\bf More explanations for Table \ref{tab1}:}
					\begin{itemize}
						\item The condition for all estimates in Table \ref{tab1} to hold is the same as that in Theorem \ref{bigL1}, i.e., $|\tz|\ge1$, $\chri\gg1$ and $\m\gg1$, while the last two columns need an additional hypothesis that $\chrii\gg1$.
						
						\item These estimates should be used in the sense of ``$\sim$". For instance, ``1" in the $\ep$-Column means $\ep\sim1$.
					\end{itemize}
					
					\begin{table}[ht]
						\centering
						\caption{{\bf Preparatory estimates for Theorem \ref{bigL1}}}\label{tab1}
						\vspace{.2cm}
						\renewcommand{\arraystretch}{2}
						\begin{tabular}{|c|c|c|c|c|c|c|c|}
							\hline
							{\bf Case} & {\bf Condition} & $\ep$ & $\mz$ & $\dz$ & $\chri \, \dz^2$ & $\eta$ & $\chrii \, \eta^2$ \\
							\hline
							{\bf (1) } & $|u|\gsim1$ & 1 & $|u|$  & $\chri^{-\frac38}$ &  $\m^{\frac14}$  & $\chrii^{-\frac38}$ & $\chrii^{\frac14}$  \\
							\hline
							{\bf (2) }  & $|u|\ll1$, $u_2 \gsim u_1$ & $u_2 \, u_1^{-\frac12}$ & $u_1$ & $|x|^{-\frac34}\, u_2^{-1} \, u_1^\frac58$ & $\m^\frac14$ & $|x|^{-\frac34} \, u_2^{-1} \, u_1^\frac58$ & $\m^\frac14$  \\
							\hline
							{\bf (3) } & $|u|\ll1$, $u_2 \ll u_1$ & $u_1 + \frac{u_2}{\sqrt{u_1}}$ & $u_1$ & $\chri^{-\frac38}$ & $\chri^{\frac14}$ & $\chrii^{-\frac38}$ &  $\chrii^{\frac14}$ \\
							\hline
						\end{tabular}
					\end{table}

					\begin{prop}\label{Dsim}
						Let $|\tz|\ge1$ and  $\chri, \m \ge \nzz$ with $\nzz \gg1$. Then it holds that
						\begin{equation}\label{simpD}
							\cDp(\w,\gz) = \bH(1) + \frac12 \bH^{\prime\prime}(1) (\w-1)^2 +  \frac{4\chrii}{|x|^2}(1 -  \cos{\gz}) +  o_{\nzz}(|x|^{-2}), \quad\forall (\w,\gz)\in\cw_1,
						\end{equation}
						where $\cw_1:=\{(\w,\gz);|\w-1|\le\dz,\,|\gz|\le\eta\}$ with
						\begin{equation}\label{delg}
							\dz=\dz(g):=\chri^{-\frac38}\left(\frac{\mz}{|u|}\right)^\frac14\ll1,\quad
							\eta=\eta(g):=\begin{cases}
								\chrii^{-\frac38}\left(\frac{\mz}{|u|}\right)^\frac14\ll1,&\chrii\gg1,\\
								\pi,&\chrii\lesssim1.
							\end{cases}
						\end{equation}
						Moreover, we have uniformly on $\cw_1$ (cf. \eqref{def_cP}, \eqref{qdef} and the convention \eqref{n78n} for pertinent definitions)
						\begin{equation}\label{ma1}
							\begin{aligned}
								\w \, \cP_\pl(\w,\gz) &=\sqrt{2\vtz_1}16\pi^2  \, \q(\atz)  \, \frac{\ep^{-\frac12}} {|x|^2 \, \ww^2}  \, \exp\left(-\chrii-\frac{\vartheta_1|x|^2 \, \ww^2}{2\ep}\right)
								\,I_0 \left(\frac{\vartheta_1|x|^2 \, \ww^2}{2\ep} \right) e^{-\m} \\[1mm]
								&\qquad \cdot  \exp\left( - \frac12 \chri(\w-1)^2 +  \chrii \cos{\gz}\right)\,(1 + o_{\nzz}(1)).
							\end{aligned}
						\end{equation}
						
					\end{prop}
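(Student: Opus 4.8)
I would deduce \eqref{simpD} from a controlled second--order Taylor expansion of $\cDp$ at its minimiser $(\w,\gz)=(1,0)$ (cf.\ Theorem \ref{tmm} and Lemma \ref{pcD}(ii)). Since $\bT_\pl$ depends on $\gz$ only through $y:=1-\cos\gz$, it is convenient to write $\cDp(\w,\gz)=\Xi(\w,y)$, where, using \eqref{eD4}, \eqref{defT}, \eqref{defu},
\[
\Xi(\w,y):=\ww^2\,\w^2\,\Phi\!\Big(\ww^{-2}\w^{-2}\sqrt{\bA(\w)^2+4u_2\ww\w\,y}\Big),\qquad \Xi(\w,0)=\bH(\w).
\]
Expanding $\Xi$ at $(1,0)$: because $1$ minimises $\bH$ one has $\Xi_\w(1,0)=\bH'(1)=0$; one computes $\Xi_{\w\w}(1,0)=\bH''(1)$, so $\tfrac{|x|^2}{4}\Xi_{\w\w}(1,0)=\chri$ by \eqref{chr1def} and Lemma \ref{L64n}; and, by the chain rule together with $\partial_{s_1}\cD(u;\ss)=\partial_{s_2}\cD(u;\ss)=\partial_{s_1s_2}^2\cD(u;\ss)=0$ (Corollary \ref{C54n}), $\Xi_y(1,0)=\partial_\gz^2\cDp(1,0)=\partial_{s_2}^2\cD(u;\ss)$, so $\tfrac{|x|^2}{4}\Xi_y(1,0)=\chrii$ by \eqref{chr2def} (equivalently $\Xi_y(1,0)$ falls out of the formula for $\Xi$ on using $\Phi'=\cZ$ and $\cZ(\u(1))=|\tz|$, cf.\ \eqref{nBNn}). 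Hence the second--order jet of $\Xi$ along $(1,0)$ is exactly $\bH(1)+\tfrac12\bH''(1)(\w-1)^2+\tfrac{4\chrii}{|x|^2}y$, and \eqref{simpD} amounts to showing that the mixed term $\Xi_{\w y}(1,0)(\w-1)y$, the term $\tfrac12\Xi_{yy}(1,0)y^2$, and the third--order remainder all contribute $o_{\nzz}(|x|^{-2})$ on $\cw_1$.

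For the error analysis I would Taylor--expand $\Xi$ jointly at $(1,0)$ with remainder, so that what is left is a sum of monomials $(\w-1)^a y^b\,\partial_\w^a\partial_y^b\Xi$ with $a+b=3$ (remainder) or $a+b=2$, $(a,b)\notin\{(2,0),(0,1)\}$ (the mixed and $y^2$ terms), evaluated over $|\w-1|\le\dz$ and $0\le y\le 1-\cos\eta$ (so $y\lesssim\eta^2$, or $y\le2$ when $\chrii\lesssim1$ and $\eta=\pi$). The pure $\w$--derivatives are the explicit expressions \eqref{dws}--\eqref{dws6}; the $y$--derivatives of $\Xi$ are the analogous ones, involving $\Phi'=\cZ$, $\cZ'$ and $\bA,u_2,\ww$. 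Their sizes follow from the behaviour of $\Phi,\cZ$ near $\var$ (Lemma \ref{pPsi}, \eqref{Ups_est}, \eqref{Z_est}) combined with the parameter estimates of Lemma \ref{asyin}; one then checks, separately for each of the three regimes of Table \ref{tab1}, that $\tfrac{|x|^2}{4}$ times each such monomial is $o_{\nzz}(1)$ on $\cw_1$. This is precisely what the exponents $-\tfrac38,\tfrac14,\dots$ in the definition \eqref{delg} of $\dz$ and $\eta$ are engineered to achieve (the quantities $\chri\dz^2$, $\chrii\eta^2$ are kept of size $\m^{1/4}$ or $\chri^{1/4},\chrii^{1/4}$, as recorded in Table \ref{tab1}). \emph{This uniform bookkeeping over the three cases, in the degenerating regime $\ep\to0^+$, is the main obstacle of the proposition.}

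Granting \eqref{simpD}, I would obtain \eqref{ma1} by feeding it into Proposition \ref{lP}(iii). On $\cw_1$ the argument of $\rP$ inside $\cP_\pl(\w,\gz)$ (cf.\ \eqref{def_cP}) is a pair $(X,T)$ with $|X|=\w|x|\ww\ne0$ and, by the scaling property \eqref{D_scal} and \eqref{decD}, $\rD(X,T)^2=|x|^2\cDp(\w,\gz)\ge4\m$ (cf.\ \eqref{mdef}), so Proposition \ref{lP}(iii) applies. Substituting \eqref{simpD} into the factor $e^{-\rD(X,T)^2/4}$ there produces exactly $e^{-\m}e^{-\chrii}\exp\!\big(-\tfrac12\chri(\w-1)^2+\chrii\cos\gz\big)$ up to $1+o_{\nzz}(1)$. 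For the remaining factors $\q(|\tau^*|)$, $(\var-|\tau^*|)^{-1/2}/|X|^2$ and $e^{-a}I_0(a)$ with $a=\tfrac{\var|X|^2}{2(\var-|\tau^*|)}$, recall $|\tau^*|=\cZ(4|T|/|X|^2)$ and that $\tau^*=\theta$ when $s=\ss$ (this is \eqref{crieq}, via \eqref{Dts}). The key uniform estimate is $|\tau^*|-|\tz|=o_{\nzz}(\ep)$ on $\cw_1$, which I would get from the $\cw_1$--bounds and a mean value argument using $|\cZ'|\lesssim(\var-\cZ)^3$ (cf.\ \eqref{Z_est}); it gives $\q(|\tau^*|)=\q(|\tz|)(1+o(1))$ (by \eqref{qvar2}), $\var-|\tau^*|=\ep(1+o(1))$, and---using $|X|=\w|x|\ww$ with $\w=1+O(\dz)$---both $(\var-|\tau^*|)^{-1/2}/|X|^2=\ep^{-1/2}/(|x|^2\ww^2)\cdot(1+o(1))$ and $a=\tfrac{\var|x|^2\ww^2}{2\ep}(1+o(1))$. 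Since $r\mapsto e^{-r}I_0(r)$ changes only by a factor $1+o(1)$ when its argument is multiplied by $1+o(1)$ (one checks this from \eqref{asinI0} for $r$ large and continuity for $r$ bounded; cf.\ also \eqref{aI0}), the Bessel factor may be frozen at $a=\tfrac{\var|x|^2\ww^2}{2\ep}$. Collecting these factors and multiplying by $\w=1+O(\dz)$ yields \eqref{ma1}.
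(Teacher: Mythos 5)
Your proposal is correct and takes essentially the same route as the paper's proof: a second-order expansion of $\cDp$ at the minimiser $(1,0)$ with the cross, quadratic-in-$y$ and third-order terms shown to be $o_{\nzz}(|x|^{-2})$ by case-by-case bookkeeping over the three regimes of Table \ref{tab1} (the paper organises this sequentially — the radial expansion \eqref{Hpp5} of $\bH$ followed by the angular corrections \eqref{prep2}--\eqref{prep14} — while you package it as a joint Taylor expansion in $(\w,1-\cos\gz)$, but the derivative estimates required are the same as \eqref{firstE} and its $y$-analogues). Your derivation of \eqref{ma1} also matches the paper's: Proposition \ref{lP}(iii) plus the key estimate $|\tau^*|-|\tz|=o_{\nzz}(\ep)$ to freeze $\q$, $\var-|\tau^*|$ and the Bessel factor, with the same dichotomy $\ep\,\m\gg1$ versus $\ep\,\m\lesssim1$ for the latter.
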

					
					The proof of Proposition \ref{Dsim} is postponed to Subsection \ref{ss73} below. At this point we see how to apply it to obtain Theorem \ref{bigL1}.  For the sake of brevity, from now on we will omit the dependence of undetermined constants in the proof.
					
					Let us decompose $(0,+\infty) \times (-\pi,\pi)
					= \bigcup\limits_{i=1}^3\cw_i$ with $\cw_1$ defined as in Proposition \ref{Dsim} and
					\begin{equation}
						\cw_2:=\{(\w,\gz);\w< C|u| \, \du^{-1} \, \ww^{-1}\}\setminus\cw_1, \quad
						\cw_3:=\{(\w,\gz);\w\geq C|u| \, \du^{-1} \, \ww^{-1}\},
					\end{equation}
					where the universal  constant $C$ will be chosen later. Then by \eqref{ehk4} and \eqref{pol_cor} we can write
					\begin{align}
						p(g) &= \frac{1}{4\pi} \, e^{-\frac{|x|^2}{4}}|x|^2 \, \ww^2 \int_0^{+\infty} \int_{-\pi}^\pi \w \, \cP_\pl(\w, \gz)  \, d\w d\gz \nonumber\\[1mm]
						&= \frac{1}{4\pi} \, e^{-\frac{|x|^2}{4}} \sum_{i=1}^3 \int_{\cw_i}  |x|^2 \, \ww^2 \,  \w \, \cP_\pl(\w, \gz)  \, d\w d\gz =: \frac{1}{4\pi} \, e^{-\frac{|x|^2}{4}} \sum_{i=1}^3\pp_i.
					\end{align}
					We will see that the main contribution comes from $\pp_1$.
					
					{\bf Estimate of $\pp_1$}.   We use \eqref{ma1} to treat $\pp_1$. Under our assumptions, observe that $\chri \dz^2 \gtrsim \min\{\m, \chri\}^{1/4} \ge \nzz^{1/4} \gg 1$. Then the standard Laplace's method implies that:
                  \begin{align*}
						\int_{1 - \delta }^{1 + \delta} e^{- \frac12\chri (\w - 1)^2} d\w
						= \sqrt{\frac{2\pi}{\chri}}\int_{-\sqrt{\frac{\chri\dz^2}{2\pi}}}^{\sqrt{\frac{\chri\dz^2}{2\pi}}} e^{-\pi r^2}dr = \sqrt{\frac{2\pi}{\chri}} \, \left( 1 + O(e^{-c \, \chri \, \dz^2}) \right).						
					\end{align*}
					On the other hand, if $\chrii \lesssim 1$, then $\eta=\pi$ and therefore,
					\begin{equation*}
						\int_{|\gz|<\eta}e^{\chrii\cos\gz}d\gz=2\pi I_0(\chrii).
					\end{equation*}
					If $\chrii\gg1$, notice that  $\chrii \,  \eta^2 \gtrsim \min\{\chrii, \m\}^{1/4} \gg1$ (cf. also Table \ref{tab1}). Then Lemma \ref{lasinI0} gives:
				\begin{equation*}
					\int_{|\gz|<\eta}e^{\chrii\cos\gz}d\gz=2\pi I_{0}(\eta;\chrii)=2\pi I_0(\chrii) \, (1+o(1)).
				\end{equation*}
				
				Combining all the estimates above yields that
				\begin{align}
					\pp_1 &= 64 \, \pi^3 \, \sqrt{\pi \vtz_1}  \, \q(|\tz|) \,  \frac{e^{-\chrii}  I_0 (\chrii)}{\sqrt{\ep \, \chri} }
					e^{-\frac{\vartheta_1|x|^2 \, \ww^2}{2\ep}} \, I_0 \left(\frac{\vartheta_1|x|^2 \, \ww^2}{2\ep}\right)  e^{-\m}(1+o(1))\nonumber\\
					&\sim \ep^2 \, \chri^{-\frac12} \, (1 + \ep \, \m)^{-\frac12}(1+\chrii)^{-\frac12} \, e^{-\m}, \label{pbnd1}
				\end{align}
				where we have used in ``$\sim$''
				\eqref{qvar2}, \eqref{aI0} and the fact that $|x|^2 \, \ww^2/\ep \sim \ep \, \m$ (cf. Lemma \ref{asyin} (i)  with the definition of $\m$ \eqref{mdef}).
				
				As a consequence, to finish the proof of
				Theorem \ref{bigL1},  it is enough to
				show that
				\begin{equation} \label{NRE1N}
					\pp_2+\pp_3  =  o(\pp_1).
				\end{equation}
				This is done in the following two cases, $|u|\gtrsim1$ and $|u|\ll1$, which will be treated in Subsections \ref{ss71} and \ref{usmall}, respectively.
				
				The following estimates will be used repeatedly hereafter:
				\begin{equation}\label{sim_P}
					\cDp \sim \ww^2 \, \w^2 + \bT_\pl, \qquad
					\cP_\pl \lsim \m^{-1} \, e^{-\frac{|x|^2}{4}\cDp}, \quad\forall\, (\w,\gz).
				\end{equation}
				Indeed, by recalling the definition of $\cD$ (cf. \eqref{decD}),
				$\bT$ (cf. \eqref{defT}) and $\cP$ (cf. \eqref{def_cP}) (with the coordinates \eqref{pol_cor}), the first estimate comes from  \eqref{D_scal}, and the second one from Proposition \ref{lP} (ii) and the fact that the minimum of $|x|^2 \, \cD/4$ equals to $|x|^2 \, \mz/4 = \m \gg 1$ (cf. \eqref{mdef} again and Theorem \ref{tmm}).
				
				\subsection{Proof of {\eqref{NRE1N}} in the case where $|u| \gtrsim 1$} \label{ss71}
				
				In this case, notice that $d(g_u)^2 \sim 1 + |u| \sim |u|$ (cf. \eqref{ehd}), so $d(g)^2 \sim |x|^2 \, |u|$. First from Table \ref{tab1} with  Lemma \ref{asyin} (i)-(iii), we remark that
				\begin{align} \label{nZn2}
					\ep \sim 1, \  d(g)^2 \sim \chri \sim \m \sim |x|^2 \, \ww^2 \sim |x|^2 \, |u| \gg 1,  \ \chrii \lesssim \m, \ \chri\dz^2 \sim \m^{\frac14} \gg 1.
				\end{align}
				
				Then \eqref{pbnd1} can be simplified as
				\begin{equation}\label{p1_1}
					\pp_1 \sim (1+\chrii)^{-\frac12} \, \m^{-1} \, e^{-\m}  \gtrsim \m^{-\frac{3}{2}} \, e^{-\m}.
				\end{equation}
				
				{\bf Estimate of $\pp_3$}. On $\cw_3$, by \eqref{sim_P} we obtain $|x|^2 \, \cDp \gtrsim |x|^2 \, \ww^2 \, \w^2 \gsim C^2 \, \m$, whence there is a positive constant $c$ such that $\frac{|x|^2}{4}\cDp-\m \geq c \, \m + c \, |x|^2 \, \ww^2 \, \w^2$ by selecting $C$ large enough. Combining this with the second claim in \eqref{sim_P}, we get that
				\begin{equation*}
					\pp_3 \lesssim |x|^2 \, \ww^2 \, \m^{-1} \, e^{-\m} \, e^{-c \, \m} \int_0^\infty e^{-c \, |x|^2 \, \ww^2 \, \w^2} \, \w \, d\w \sim \m^{-1} \, e^{-\m} \, e^{-c \, \m} =  \pp_1 \, o(e^{-c' \, \m}).
				\end{equation*}
				
				{\bf Estimate of $\pp_2$}. We have to deal with two kinds of subcases: $\chrii\lsim1$ and $\chrii\gg1$.
				For $\chrii\lesssim1$, recalling $\eta=\pi$, then by applying Lemma \ref{pcD} and \eqref{simpD} (with $\gz=0$)  successively, we have
				\begin{equation}\label{H_incre}
					\frac{|x|^2}{4} \cDp \ge \frac{|x|^2}{4}\min\{\bH(1-\dz),\bH(1+\dz)\}=\m+\frac12\chri\dz^2+o(1)\ge\m+ c \, \m^{\frac14}
				\end{equation}
				on $\cw_2$ for some positive constant $c$. Hence by  the second estimation in \eqref{sim_P} again, the fact that $d(g_u)^2 \sim |u| \sim \ww^2$  and  $|x|^2\,\ww^2 \m^{-1}\sim1$ (cf. the second equation in \eqref{nZn2}), we obtain
				\begin{equation*}
					\pp_2 \lesssim |x|^2 \, \ww^2 \, \m^{-1} \, e^{-\m} \, e^{-c\m^{\frac14}}\int_{\cw_2} \w \, d\w d\gz\lsim e^{-\m}e^{-c\m^{\frac14}} = \pp_1 \, o(\exp\{-c' \, \m^{\frac{1}{4}}\}).
				\end{equation*}
				
				In the opposite case $\chrii\gg1$, we decompose $\cw_2=\cw_2'\bigcup\cw_2''$ with
				\begin{equation}\label{setw2}
					\begin{gathered}
						\cw_2':=\{(\w,\gz);|\w-1|\le\dz, \, |\gz|>\eta\},\\[1mm]
						\cw_2'':=\{(\w,\gz);|\w-1|>\dz, \,  \w< C \, |u| \, \du^{-1} \, \ww^{-1}\},
					\end{gathered}
				\end{equation}
				on which the integrals of $|x|^2\, \ww^2 \, \w \, \cP_\pl$ are denoted by $\pp_2'$ and $\pp_2''$ respectively. Notation for such decomposition will be used repeatedly in the rest of this section without particular statements.

				The same argument as in the case $\chrii\lsim1$ yields $\pp_2'' =  \pp_1 \, o(e^{-c' \, \m^\frac14})$.
				
				To estimate  $\pp_2'$, first notice that $\chrii \, \eta^2 \sim \chrii^{\frac14} \gg1$.  Similar to the proof of \eqref{H_incre}, there exists some positive constant $c$ such that
				\begin{equation}\label{H_incre2}
				\frac{|x|^2}{4} \cDp(\w,\gz) \ge \frac{|x|^2}{4} \cDp(\w,\eta) \ge \m + \frac{\chri}{2} (\w - 1)^2 + c \,  \chrii^{\frac14}, \qquad \forall (\w,\gz) \in \cw_2'.
				\end{equation}
				Combining this with  $|x|^2 \, \ww^2 \sim \m$ and Proposition \ref{lP} (ii), we conclude
				\begin{equation} \label{L2est}
					\w \, \cP_\pl(\w, \gz) \lesssim \m^{-3/2} \, \exp\left\{-\m - \frac{\chri}{2} (\w - 1)^2 - c \,  \chrii^{\frac14}\right\},
				\end{equation}
				which implies immediately that $\pp_2' = \pp_1 \, o(\exp\{-c' \, \chrii^{\frac14}\})$ because of the estimate $\chri \sim \m$ and \eqref{p1_1}.
				
				This completes the proof of \eqref{NRE1N} under our assumptions.
				
				\subsection{Proof of {\eqref{NRE1N}} in the case where $|u| \ll 1$} \label{usmall}
				
				In such case, $d(g_u) \sim 1$, $d(g) \sim |x| \gg 1$,  $\bH(1) \sim u_1$ and  $\m \sim |x|^2 u_1 \gg 1$ (cf. \eqref{con6}). Moreover, Lemma \ref{asyin} (ii) says that $\chrii \lesssim \chri$ and $\chri \sim \ep^2 \, |x|^2 \gg 1$. Thus it follows from \eqref{pbnd1} that
				\begin{equation}\label{p_low}
					\pp_1 \gtrsim \ep^2 \, \chri^{-1} \, \m^{-\frac12} \, e^{-\m} \sim |x|^{-2} \, \m^{-\frac12} \, e^{-\m}.
				\end{equation}
				
				{\bf Estimate of $\pp_3$}.  It's very similar to the case $|u| \gtrsim 1$. Since  $u_2\leq|u|\sim|u|\,\du^{-1}$,  then on $\cw_3$, we can choose $C$ large enough so that $u_2<C|u|\du^{-1}/2 \leq \ww\w/2$. Hence it deduces from \eqref{A_pol} that $\bT_\pl^2\ge|u|^2+4\ww\w(\ww\w-u_2)\ge|u|^2+2\ww^2\w^2$, and therefore, $\m\lesssim |x|^2|u|\lesssim|x|^2\ww\w/C\lsim|x|^2\bT_\pl/C$. As a result, with $C$ large enough, by the first estimate in \eqref{sim_P} we have
				\begin{equation*}
					\frac{|x|^2}{4} \cDp(\w,\gz) > 2 \m + c \, |x|^2 \, \ww \, \w, \qquad \forall (\w,\gz) \in \cw_3.
				\end{equation*}
				Consequently, combining this with the second estimate in \eqref{sim_P} and  \eqref{p_low}, we yield
				\begin{equation} \label{nren}
					\pp_3 \lesssim |x|^2 \, \ww^2 \, \m^{-1} e^{-2 \, \m} \, \int_{\cw_3} e^{- c \, |x|^2 \, \ww \, \w} \, \w \, d\w d\gz \lsim |x|^{-2} \m^{-1} e^{-2 \, \m} = \pp_1 \, o(e^{-\m}).
				\end{equation}
				
				{\bf Estimate of $\pp_2$}. We divide this remaining estimate into two cases.
				
				{\em Case I. $u_2 \gsim u_1.$} By Table \ref{tab1} with Lemma \ref{asyin} (i)-(ii), we have $\ep \sim u_2 \, u_1^{-\frac12}$, $\chri \sim |x|^2 \, u_2^2 \, u_1^{-1} \sim \chrii
				\gg 1$ and $\chri\dz^2\sim\chrii\eta^2\sim\m^\frac14\gg1$. Now we write $\cw_2=\cw_2^*\bigcup\cw_2^{**}$ in the rectangular coordinates with
				\begin{gather*}
					\cw_2^*=\{s\in\cw_2; |s_1+1|< C_*u_1\ww^{-1}, |s_2|< C_*u_1\ww^{-1}\},\\[1mm]
					\cw_2^{**}=\{s\in\cw_2; |s_1+1|
					\ge C_*u_1\ww^{-1} \,\,{\rm or}\,\, |s_2|\ge C_*u_1\ww^{-1}\},
				\end{gather*}
				where the constant $C_*\ge1$ will be determined later.
				
				As in the proof of \eqref{H_incre} and \eqref{H_incre2}, there exists a constant $c > 0$ such that for all $(\w,\gz) \in \cw_2^*$,
					$$
					\frac{|x|^2}{4}\cDp(\w,\gz) \ge \frac{|x|^2}{4} \inf_{(\w, \gz) \in \partial \cw_1} \cDp(\w,\gz) \ge \m + c \, \m^{\frac{1}{4}}.
					$$
					As a consequence,
					\begin{equation*}
						\pp_2^* \lsim |x|^2 \, \ww^2 \, \m^{-1} \, e^{-\m} \, e^{-c \, \m^\frac14} \int_{\cw_2^*} \, ds \lsim |x|^{-2}\, \m \, e^{-\m} \, e^{-c \, \m^\frac14} = \pp_1 \, o(e^{-c' \, \m^\frac14})
					\end{equation*}
					where we have used $\m \sim |x|^2 \, u_1$ in the second ``$\lsim$'' and \eqref{p_low} in ``$=$''.
					
					To deal with $\pp_2^{**}$, we use an argument similar to that in the proof of \eqref{nren}. In brief, on $\cw_2^{**}$, recalling that $|u_2 - 2 \, \ww| \ll u_1$ (from \eqref{crieq2} and $\bH(1) \sim u_1 \ll 1$), we obtain:
					\begin{align*}
						\bT_\pl^2 &= u_1^2+(u_2+2 \, \ww s_1)^2+4\ww^2 \, s_2^2=u_1^2+(u_2-2 \, \ww+2 \, \ww \, (s_1+1))^2+4\ww^2 \, s_2^2 \\
						\mbox{} &\ge \left( 2 \, \ww \, |s_1 + 1| - | u_2-2 \, \ww | \right)^2+4\ww^2 \, s_2^2 \ge (\ww \, |s_1 + 1|)^2 + 4\ww^2 \, s_2^2 \ge C_*^2 u_1^2,
					\end{align*}
					for $C_*$ large enough. Then by the first estimate in \eqref{sim_P}, there is a constant $c > 0$ such that
					\[
					\frac{|x|^2}{4} \, \cDp \ge 2 \, \m + c \, |x|^2 \, ( \ww \, |s_1 + 1| + 2 \, \ww |s_2|),
					\]
					which further implies that
					\begin{equation*}
						\pp_2^{**} \lsim |x|^2 \, \ww^2 \, \m^{-1} \, e^{-2 \, \m} \int_{\cw_2^{**}} e^{-c \, |x|^2 \, ( \ww \, |s_1 + 1|
							+ 2 \, \ww |s_2|)} \, ds \lsim |x|^{-2} \, \m^{-1} \, e^{- 2 \, \m} = \pp_1 \, o(e^{-\m}).
					\end{equation*}
					Thus we have shown that $\pp_2=\pp_2^*+\pp_2^{**}=o(\pp_1)$.
					
					{\em Case II. $u_2 \ll u_1.$} From Table \ref{tab1} we can read that $\ep\ll1$ and $\chri\dz^2\sim\chri^\frac14\gg1$. There are two subcases as well: $\chrii\lsim1$ and $\chrii\gg1$.
					
					{\em Subcase II-1. $\chrii\lsim1.$}
					In such subcase, by \eqref{pbnd1} and the fact that $\ep \, \m + \chrii \sim \chri \sim \ep^2 \, |x|^2$ (cf. Lemma \ref{asyin} (ii)), we have
					\begin{equation}\label{p_low2}
						\pp_1\gsim \ep^2\chri^{-\frac32}e^{-\m}\sim|x|^{-2}\chri^{-\frac12}e^{-\m}.
					\end{equation}
					
					Recall that $\eta=\pi$ (cf. \eqref{delg}). Fix selected constant $C$. Without loss of generality, we may assume that $C u_1 \le C |u| \ll 1$. And  we decompose $\cw_2 = \cw_{2}^\sharp \bigcup \cw_{2}^{\sharp\sharp}$  with
					\begin{gather*}
						\cw_{2}^\sharp = \{(\w,\gz);\w\le C_\sharp, \,  |\w-1|>\dz\},\quad
						\cw_{2}^{\sharp\sharp} =\{(\w,\gz);C_\sharp<\w< C\,|u|\,\du^{-1}\ww^{-1}\},
					\end{gather*}
					where the constant $C_\sharp \ge2$ will be chosen later. The argument used in the estimation  \eqref{H_incre} shows that we have uniformly on $\cw_2^\sharp$, $\frac{|x|^2}{4} \cDp(\w,\gz) \ge \m + \frac12\chri\dz^2+o(1)$. Then, using \eqref{p_low2}, a simple calculation yields $\pp_{2}^\sharp = \pp_1 \, O_{C_\sharp}(e^{-c \, \chri^\frac14})$.
					
					The estimate for  $\pp_{2}^{\sharp\sharp}$ is somewhat more complicate.  Recall that $\bA(1) \sim \bH(1) \sim u_1$ (cf.  Lemma \ref{asyin} (i) and \eqref{con6}). Since now it holds that $u_1\sim|u|\gg u_2$ and $\ww \sim \ep u_1^\frac12 \sim u_1^\frac32 + u_2 \ll u_1$ by \eqref{con6}, then from the first equality in \eqref{A_pol} and \eqref{nBNn}, a direct calculation shows that, for any $\w<C\,|u|\,\du^{-1}\ww^{-1}$ and $\gz\in[-\pi,\pi]$,
					\begin{equation}\label{AU}
						u_1 + \w \ww \sim  \bT_\pl \sim_C \bA(1) \sim u_1\ll1,\quad \u(1)\sim \frac{1}{\ep^2} \gg1, \quad \L_\pl = \frac{\bT_\pl}{\ww^2 \, \w^2}  \gtrsim \frac{1}{\w \ww} \gtrsim \frac{1}{C u_1} \gg1.
					\end{equation}
					Consequently, we can invoke \eqref{eD4} and \eqref{aPsi} to obtain
					\begin{gather*}
						\frac{|x|^2}{4}\cDp(\w,\gz)=\frac{|x|^2}{4} \ww^2 \, \w^2 \Phi(\L_\pl) \ge \frac{|x|^2}{4}\left(\vartheta_1 \bT_\pl - 2 \sqrt{\vartheta_1 \bT_\pl}\,\ww \, \w + \frac{3}{2} \ww^2\w^2\right),\\
						\m=\frac{|x|^2}{4}\ww^2 \, \Phi(\u(1)) \le \frac{|x|^2}{4}\left(\vartheta_1 \bA(1) - 2 \sqrt{\vartheta_1 \bA(1)}\, \ww + \frac{5}{2} \ww^2\right),
					\end{gather*}
					which follows that
					\begin{equation}\label{d1}
						\frac{|x|^2}{4}\cDp - \m \ge \frac{|x|^2}{4}\left[(\sqrt{\vartheta_1 \bT_\pl} - \ww\w)^2 - (\sqrt{\vartheta_1 \bA(1)} - \ww)^2 + \frac{1}{2}\ww^2(\w^2-3)\right].
					\end{equation}
					Note that $\w^2\ge4$ on $\cw_2^{\sharp\sharp}$ and $u_2\lsim\ww$. Then by \eqref{AU} we can choose $C_\sharp$ properly such that, for any $(\w,\gz)\in\cw_{2}^{\sharp\sharp}$,
					$$
					\begin{aligned}
						\sqrt{ \bT_\pl} - \sqrt{\bA(1)} &= \frac{4u_2\ww (1-\w\cos\gz) + 4\ww^2 (\w^2 - 1)}{(\sqrt{ \bT_\pl} + \sqrt{\bA(1)})(\bT_\pl +\bA(1))}\,
      \gsim \,\frac{\ww\w \, (\ww \w -  2 \, u_2) }{(u_1 + \ww \w)^\frac32}   \\
      &\gsim \min\left\{ u_1^{-\frac32} \ww \w, (\ww \w)^{-\frac12} \right\} \ww \w \gsim \min\left\{C_\sharp, (C u_1)^{-\frac12} \right\} \ww\w,
					\end{aligned}
					$$
					where we have used \eqref{A_pol} in the equality and $\ww \gtrsim u_1^{\frac32}$ in the last inequality. Hence
					\begin{equation}\label{d2}
						\sqrt{\vartheta_1 \bT_\pl} - \sqrt{\vartheta_1 \bA(1)} -\ww\w + \ww \, \gsim \, u_1^{-\frac32}\ww^2\w^2/C^2,
					\end{equation}
					as long as $C_\sharp$ is large enough. Besides, observing on $\cw_2^{\sharp\sharp}$ it holds that $\ww<\ww\w \, \lsim \, C \, u_1$, thus by \eqref{AU} we get
					\begin{equation}\label{d3}
						\sqrt{\vartheta_1 \bT_\pl} + \sqrt{\vartheta_1 \bA(1)} -\ww\w -\ww  \, \gsim \,
						u_1^\frac12.
					\end{equation}
					Consequently, it follows from \eqref{d1}-\eqref{d3} that
					\begin{equation}\label{schrD}
						\frac{|x|^2}{4}\cDp(\w,\gz) -\m \,
      \gsim \, u_1^{-1}|x|^2 \, \ww^2 \, \w^2/C^2.
					\end{equation}
					On the other hand, as in the proof of \eqref{H_incre},
					we also have $\frac{|x|^2}{4}\cDp-\m\gsim\chri\dz^2\sim\chri^\frac14\gg1$. Hence there is a positive constant $c$ such that
					\begin{equation}\label{schrD2}
						\frac{|x|^2}{4} \cDp(\w,\gz) - \m \ge  c \, u_1^{-1} \, |x|^2 \, \ww^2 \,
						\w^2 + c \, \chri^\frac14, \quad\forall \, (\w,\gz)\in\cw_2^{\sharp\sharp}.
					\end{equation}
					By the second estimate in \eqref{sim_P} again, we yield that
					\begin{equation*}
						\pp_2^{\sharp\sharp} \lsim |x|^2 \, \ww^2 \, \m^{-1} \, e^{-c \, \chri^\frac14} \, e^{-\m} \int_0^\infty e^{-c \, u_1^{-1} \, |x|^2 \,  \ww^2 \, \w^2} \, \w d\w \lsim |x|^{-2} \, e^{-c \, \chri^\frac14} \, e^{-\m}=o(\pp_1)
					\end{equation*}
					where we have used that $\m \sim |x|^2 \, u_1$ in the second ``$\lsim$'', and \eqref{p_low2} in ``$=$''. In conclusion, $\pp_2=\pp_{2}^\sharp+\pp_{2}^{\sharp\sharp}=o(\pp_1)$ and the proof for $\chrii\lsim1$ is finished.
					
					{\em Subcase II-2. $\chrii\gg1.$} We decompose $\cw_2$ as in \eqref{setw2} and write $\pp_2=\pp_2'+\pp_2''$. To show $\pp_2''=o(\pp_1)$ we can argue as in {\em Subcase II-1}, in fact the two key estimates \eqref{p_low2} and \eqref{schrD2} (on $\cw_2''$ instead of $\cw_2^{\sharp\sharp}$) are still valid.
				
				To show $\pp_2'=o(\pp_1)$ we can argue as in the case where $\chrii\gg1$ in Subsection \ref{ss71}, whereas the key estimates at this point we yield are \eqref{L2est} with the  non-exponential factor replaced by $\m^{-1} \,(1+\ep\,\m)^{-\frac12}$ and \eqref{pbnd1}.
				
				In summary, the estimate \eqref{NRE1N} is established. To complete the proof of Theorem \ref{bigL1}, we are left to provide the
				
				\subsection{Proof of Proposition \ref{Dsim}} \label{ss73}
				
				\begin{proof}
					The proof of \eqref{simpD} is elementary, mainly by means of Taylor's formula and Chain Rule.  However, it is very cumbersome and we have to make use of Lemma \ref{asyin} and Table \ref{tab1} by distinguishing three cases therein repeatedly, as explained earlier.
					
					Let us begin with the ``radial'' part. Under our assumptions, it holds that for any $|\w-1|\le\dz$,
					\begin{equation}\label{firstE}
						\begin{gathered}
							\bA \sim \mz, \quad |\bA^\prime| \lesssim \ep^2 \du^2  \, \mz|u|^{-1}, \quad |\bA^{\prime\prime}| \lesssim  \ep^2 \,  \du^2, \quad
							|\bA^{\prime\prime\prime}| \lesssim \ep^4  \, \du^4|u|^{-1}, \\[2mm]
							\u \sim \epsilon^{-2}, \quad |\u^\prime| \lesssim \ep^{-2} \, \mz^{-1} \, |u|, \quad |\u^{\prime\prime}| \lesssim \ep^{-2} \, \mz^{-2} \, |u|^2, \quad
							|\u^{\prime\prime\prime}| \lesssim \ep^{-2} \, \mz^{-3} \, |u|^3.
						\end{gathered}
					\end{equation}
					Here we only prove the first two estimates, which together with \eqref{dws}-\eqref{dws3}, the fact that $d(g_u)^2 \sim 1 + |u|$ (cf. \eqref{ehd}), and Table \ref{tab1} with Lemma \ref{asyin} will allow us to conclude the remaining claims.
					
					To this end, recall the definition of $\bA(\cdot)$ (cf. \eqref{defu}), \eqref{crieq2} and the fact that $\ww\sim \ep\,\mz^\frac12$. Then
					$$\begin{aligned}|\bA(\w)^2-\bA(1)^2|&=4\ww \, |\w-1|\,|u_2-\ww(\w+1)|\le4\ww \, |\w-1|(|u_2-2\ww|+\ww \, |\w-1|)\\[1mm]
						&\lsim\dz\ep\,\mz^\frac12 \, \left[\mz^\frac32+\ep\,\mz^\frac12 \, \dz\right]\ll\mz^2\sim\bA(1)^2,
					\end{aligned}
					$$
					where ``$\sim$" trivially holds since at this point $|\tz|\sim1$ and $\bA(1)\sim|\tz|\mz$
					(cf. Lemma \ref{asyin} (i)). We explain how to obtain ``$\ll$'', or equivalently, $\delta^2 \, \ep^2 \ll \mz$.  Indeed, its proof is based on Table \ref{tab1} and Lemma \ref{asyin}. For instance, we consider the subtlest situation: $|u| \ll 1$ and $u_2 \gtrsim u_1$. In such case, Table \ref{tab1} says that $\ep \sim u_2/\sqrt{u_1}$, $\mz \sim u_1$ and $\delta \sim |x|^{-3/4} \, u_2^{-1} \, u_1^{5/8}$. Hence $\mz/(\delta^2 \, \ep^2) \sim (|x|^2 \, u_1)^{3/4} \sim \m^{3/4} \gg 1$, which implies the desired estimate.  As a consequence we obtain $\bA\sim\bA(1)\sim\mz$.
					
					Similarly, $|2\ww\w-u_2|\le|u_2-2\ww|+2\ww|\w-1|\lsim \mz^\frac32+\ep\,\mz^\frac12\dz$, thereby  from \eqref{dws}
\begin{align}\label{estAp}
				|\bA'|\lsim\ep\,\mz^\frac12 \, \frac{\mz^\frac32+\ep\,\mz^\frac12 \, \dz}{\mz}\sim\ep\,\mz+\dz \, \ep^2\lsim \ep^2  \, \du^2  \, \mz \, |u|^{-1},
\end{align}
					where the last ``$\lsim$" can also be checked using Table \ref{tab1}  and Lemma \ref{asyin} again.
				
				Note that we have shown in \eqref{firstE} that $\u\sim \ep^{-2} \gsim 1$.  It follows from the first estimate in \eqref{Ups_est} and the definition of $\cZ(\cdot)$ (cf. \eqref{cZd}) that
				\[
				\frac{\cZ(\u)}{(\var - \cZ(\u))^2} \sim  - \Upsilon'(\cZ(\u)) = \u \sim \ep^{-2} \gsim 1,
				\]
				which implies that $\cZ(\u) \sim 1$ and $\var - \cZ(\u) \sim \ep$.
				
				Thus using \eqref{dws6}, \eqref{firstE}, \eqref{Z_est} and Table \ref{tab1}, we can show that:
				$$|\bH^{\prime\prime\prime}| \lesssim \ep^{2} \, \du^2 \, \mz^{-1} \, |u|. $$
				Hence by the mean value theorem, we get
				\begin{gather}\label{Hpp5}
					\bH(\w) = \bH(1) + \frac{1}{2} \bH^{\prime\prime}(1) (\w - 1)^2 + O( \delta^3 \, \ep^2 \,  \du^2 \, \mz^{-1} \, |u|),\quad\forall \,  \w\in[1-\dz,1+\dz].
				\end{gather}
				
				Now we handle the ``angular'' part. Recall that  $\m = |x|^2 \, \mz/4$, $\ww \sim \ep \, \mz^\frac12$, and $\chrii \sim \ep u_2 |x|^3 \m^{-\frac12}$ (cf. Lemma \ref{asyin} (i)-(ii)), it follows from the first estimate in \eqref{firstE} that:
				\begin{equation}\label{Ds_con}
					\frac{2 u_2 \, \ww \, \w (1 - \cos{\gz})}{\bA(\w)^2} \sim \ep \, \mz^{-\frac32} u_2 \, \gz^2
					\lsim \chrii \, \eta^2 \, \m^{-1} \ll1 , \quad \forall (\w,\gz)\in \cw_1.
				\end{equation}
				Here to show ``$\ll$'', we have distinguished the following cases: (I) $\chrii \lsim 1$; (II) $\chrii \gg 1$ with: (II-1) $|u| \gtrsim 1$; (II-2a) $|u| \ll 1$ and $u_2 \gtrsim u_1$; and (II-2b) $|u| \ll 1$ and $u_2 \ll u_1$. We only show the subtlest case (II-2b), and the others are trivial. In such case, from Lemma \ref{asyin} and Table \ref{tab1}, we yield
				\[
				\frac{\chrii \, \eta^2}{\m} \lsim \frac{\chrii^{\frac{1}{4}}}{\m} \lsim  \frac{\chri^{\frac{1}{4}}}{\m}  \sim \left(  \frac{|x|^2 \, \ep^2}{(|x|^2 \, u_1)^4} \right)^{\frac{1}{4}} \lsim \left(  \frac{|x|^2 \, u_1}{(|x|^2 \, u_1)^4} \right)^{\frac{1}{4}} \lsim \m^{-3/4} \ll 1.
				\]
				
				Therefore,  using \eqref{defu} and the Taylor's expansion of $\sqrt{1 + \rho}$ at $0$, we can write $\L_\pl(\w,\gz)$ as
				\begin{align}\nonumber
					\ww^{-2}\w^{-2} \bT_\pl(\w,\gz) &=\ww^{-2} \w^{-2} \sqrt{|u|^2 + 4\ww^{2}\w^2 - 4 u_2\ww \,  \w \cos{\gz} } \\
					\nonumber
					&=  \,\ww^{-2}\w^{-2} \bA(\w) \left( 1 + \frac{4 u_2\ww \,  \w(1 -  \cos{\gz})}{\bA(\w)^2}\right)^{\frac{1}{2}} \\
					\nonumber
					&= \, \u(\w) \left[ 1 + \frac{2u_2\ww \,  \w(1 - \cos{\gz})}{\bA(\w)^2} + O\left(\ep^2 \, \mz^{-3}u_2^2 \gz^4\right)\right] \\
					\label{prep2}
					&= \, \u(\w) + \frac{2u_2 (1 -  \cos{\gz})}{\ww \, \w \bA(\w)} + O\left(\mz^{-3}u_2^2\gz^4\right),
				\end{align}
where we have used $\u(\w) \sim \ep^{-2}$ (cf. \eqref{firstE}) in the last equality.
				In particular, from \eqref{Ds_con} again,  we find that
				\begin{equation}\label{prep6}
					|\L_\pl-\u(\w)| \lsim \ep^{-1}\mz^{-\frac32}u_2\gz^2 \ll\ep^{-2}, \qquad (\w,\gz) \in \cw_1.
				\end{equation}

Now applying the Taylor's expansion to the function $\cZ$ at $\u(1)$ (also recalling that $\cZ(\u(1)) = |\tz|$ by \eqref{nBNn}), we obtain that
\begin{align*}
&|(\var-\cZ(\L_\pl)) - \ep| = |(\var-\cZ(\L_\pl)) - (\var - |\tz|)| = |\cZ(\L_\pl) - \cZ(\u(1))| \\
\le & \sup_{\iota \in [0,1]} |\cZ'(\iota  \L_\pl + (1 - \iota) \u(1))| \, |\L_\pl-\u(1)|.
\end{align*}
It follows from \eqref{prep6} and \eqref{firstE} that for all $\iota \in [0,1]$ we have $\iota \u(1) + (1 - \iota) \L_\pl \sim \ep^{-2}$ and thus by \eqref{asP}
\[
\sup_{\iota \in [0,1]} |\cZ'(\iota \u(1) + (1 - \iota) \L_\pl)| = O(\ep^3),
\]
which implies
\begin{align}\label{prep13}
|(\var-\cZ(\L_\pl)) - \ep| \ll\ep,  \qquad (\w,\gz) \in \cw_1
\end{align}
and also
\begin{align}\label{prep133}
|\cZ(\u(\w)) - |\tz|| = |(\var-\cZ(\u(\w)) - \ep| = O(\ep^3 \, |\u(\w) - \u(1)| ) = O(\dz \, \ep \, \mz^{-1} \, |u|)
\end{align}
by another Taylor's expansion and \eqref{firstE}.

Similarly, using the Taylor's expansion to the function $\Phi$ at $\u(\w)$ gives
\begin{align} \nonumber
&\Phi(\L_\pl) = \Phi(\u(\w)) + \cZ(\u(\w)) (\L_\pl - \u(\w)) + O(\ep^3 \, |\L_\pl - \u(\w)|^2) \\
\label{prep3}
= & \,  \Phi(\u(\w)) + \cZ(\u(\w)) \frac{2u_2 \,  (1 -  \cos{\gz})}{\ww \, \w\bA(\w)} + O\left(\mz^{-3} \, u_2^2 \,  \gz^4\right),
\end{align}
where in the first ``$=$'' we have used \eqref{deZP} and \eqref{asP}, and in the second ``$=$'' \eqref{prep13} and \eqref{prep6}. Writing
\[
\frac{\cZ(\u(\w)) \, \w}{\bA(\w)}-\frac{|\tz|}{\bA(1)} = \frac{\cZ(\u(\w)) \, (\w - 1)}{\bA(\w)} + \frac{\cZ(\u(\w)) - |\tz|}{\bA(\w)} + |\tz| \left( \frac{1}{\bA(\w)} - \frac{1}{\bA(1)}  \right),
\]
by \eqref{firstE}, the first term is bounded by $\dz \, \mz^{-1}$. Using \eqref{prep133} in addition, the second term is bounded by
\[
\dz \,\ep \, \mz^{-2} \, |u|  \lsim \dz \, \ep^2  \, \du^2 \, \mz^{-1} \, |u|^{-1}.
\]
As before, the ``$\lsim$" here can be checked using Table \ref{tab1}  and Lemma \ref{asyin}. For the last term, we just use Taylor's expansion, together with \eqref{firstE} again to obtain the same upper bound. In conclusion, we obtain that
\begin{gather}
					\left|\frac{\cZ(\u(\w)) \, \w}{\bA(\w)}-\frac{|\tz|}{\bA(1)}\right|
					\lsim \dz\left[\mz^{-1}+\ep^2  \, \du^2 \, \mz^{-1} \, |u|^{-1}\right]. \label{prep14}
				\end{gather}			
				
				Furthermore, substituting \eqref{prep3}, \eqref{Hpp5} and \eqref{prep14} into $\cDp =\ww^2 \w^2 \, \Phi(\L_\pl)$, we can yield:
				$$\begin{aligned}
					\cDp(\w,\gz) &= \bH(\w) + \cZ(\u(\w)) \frac{2u_2 \, \ww \, \w(1 -  \cos{\gz})}{\bA(\w)} + O\left(\ep^2 \,  \mz^{-2} \, u_2^2 \, \gz^4 \, \right) \\
					&= \bH(1) + \frac12 \bH^{\prime\prime}(1) (\w-1)^2 +  \frac{4\chrii}{|x|^2}(1 -  \cos{\gz})+o(|x|^{-2}),
				\end{aligned}$$
				where in the last  equality, we have used  $|\gz|\le\eta$ and repeated the trick of combining Table \ref{tab1} with Lemma \ref{asyin}. This establishes \eqref{simpD}.
				
				It remains to show \eqref{ma1}.  By Lemma \ref{q_lem}, \eqref{prep13} and the mean value theorem we can check \begin{align*}
					\q(\cZ(\L_\pl))=\q(\atz) \, (1+o(1)), \qquad \frac{(\vtz_1-\cZ(\L_\pl))^{-\frac12}}{|x|^2 \, \ww^2 \, \w^2}=\frac{\ep^{-\frac12}}{|x|^2 \, \ww^2}(1+o(1)),
				\end{align*}
				and temporarily take it for granted that
				\begin{equation}\label{Ds1}
					e^{-\frac{\vtz_1|x|^2 \, \ww^2\w^2}{2(\vtz_1-\cZ(\L_\pl))}}I_0\left(\frac{\vtz_1|x|^2 \, \ww^2\w^2}{2(\vtz_1-\cZ(\L_\pl))}\right)=e^{-\frac{\vtz_1|x|^2 \, \ww^2}{2\ep}}I_0\left(\frac{\vtz_1|x|^2 \, \ww^2}{2\ep}\right)(1+o(1)),
				\end{equation}
				then by \eqref{def_cP}, Proposition \ref{lP} (iii) (with the fact that  $\frac{|x|^2}{4}\cDp(\w,\gz)\ge\m\gg1
				$) and \eqref{simpD}, we get \eqref{ma1}.
				
				Finally, to prove \eqref{Ds1},
				the above procedure allows us to obtain:
				\begin{equation}\label{Ds2}
						\begin{aligned}
							\frac{\vtz_1|x|^2 \, \ww^2 \, \w^2}{2(\vtz_1-\cZ(\L_\pl))} = \frac{\vtz_1|x|^2 \, \ww^2}{2\ep}(1+o(1)) \sim \ep \, \m.
						\end{aligned}
					\end{equation}
					Thus, if $\ep \, \m \gg1$ then \eqref{Ds1} follows from \eqref{asinI0}; if $\ep \, \m \lsim 1$ then from \eqref{Ds2} we see that
					\begin{equation*}
						\frac{\vtz_1|x|^2 \, \ww^2\w^2}{2(\vtz_1-\cZ(\L_\pl))}=\frac{\vtz_1|x|^2 \, \ww^2}{2\ep}+o(1),
					\end{equation*}
					which also leads to \eqref{Ds1},  since $e^{-r} I_0(r)$ is smooth and positive on $\R$.
				\end{proof}

				\begin{remark} \label{Vara2}
					From the above proof and Remark \ref{rem67}, we observe that Theorem \ref{bigL1} still holds if the condition ``$|\tz|\ge1$" is replaced by ``$|\tz|\ge\az_0$  with $\az_0\in(0,1]$", the cost of which is that the remainder ``$o_{\zeta_0}(1)$" will depend on one more parameter $\az_0$ than before. As before, the choice of $\zeta_0$ depends on $\az_0$.
				\end{remark}
				
				\section{Uniform asymptotics for $\chri\lesssim1$ and $\m\to\infty$}\label{schr}
				\setcounter{equation}{0}
			
				We continue to use the notation as before, and for the sake of brevity we still
omit the dependence of undetermined constants in the proof of our theorems.
    The aim of this section is to establish the following:
				
				\begin{theo}\label{sL1}
					Let $\zeta_0 > 0$.  Then there is a constant $C(\zeta_0) \gg 1$ such that, for all $g$ satisfying  $\chri \le \zeta_0$ and $\m \ge C(\zeta_0)$,
					$$p(g) = 4\pi \, \frac{\vartheta_1^2 }{ - \sin{\vartheta_1}  }\, e^{-\frac{d(g)^2}{4}}
     \int_0^{\w_0}\int_{-\pi}^\pi  \widetilde{\cP}_\pl(\w,\gz)\, d\w \,d\gz\,(1 +  o_{\zeta_0}(1)),
					$$
					where $\w_0:=|x|^{-\frac12}\, u_1^\frac34\,\ww^{-1}$ and
					\begin{align}
						\widetilde{\cP}_\pl(\w,\gz) &:= \exp \left(-\frac{|x|^2}{4} \left[ \left(\sqrt{\vartheta_1 \bT_\pl(\w,\gz)} - \ww^2\w\right)^2 - \left(\sqrt{\vartheta_1 \bA(1)} - \ww\right)^2 \right] \right) \nonumber\\
						&\qquad \cdot \exp\left(-\frac{ \vartheta_1 |x|^2\ww^2\w^2}{2(\vartheta_1 - \cZ(\L_\pl))} \right)
						I_0 \left(  \frac{ \vartheta_1 |x|^2\ww^2\w^2}{2(\vartheta_1 - \cZ(\L_\pl))} \right) \,
						\frac{ (\vartheta_1 - \cZ(\L_\pl))^2}{\w}. \label{tild_P}
					\end{align}
				\end{theo}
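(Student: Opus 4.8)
The plan is to run the strategy of Subsection~\ref{ideaS} for the regime $\m\to+\infty$, but \emph{without} the final one-dimensional Laplace step: since here $\chri\le\zeta_0$ and hence also $\chrii\lesssim\chri\le\zeta_0$ (Lemma~\ref{asyin}(ii)), there is no Gaussian concentration, so the answer must be left as the unevaluated integral $\int_0^{\w_0}\!\int_{-\pi}^{\pi}\widetilde{\cP}_\pl\,d\w\,d\gz$. Starting from the Bessel-type identity \eqref{ehk4} and passing to the modified polar coordinates \eqref{pol_cor},
\[
p(g)=\frac{1}{4\pi}\,|x|^2\,\ww^2\,e^{-\frac{|x|^2}{4}}\int_0^{+\infty}\!\!\int_{-\pi}^{\pi}\w\,\cP_\pl(\w,\gz)\,d\w\,d\gz .
\]
By Theorem~\ref{tmm}, Proposition~\ref{lD}(iii) and Lemma~\ref{pcD}, the intrinsic distance obeys $\rD\big(s|x|\ww,\tfrac14|x|^2(u+2\ww s_1e_2+2\ww s_2e_3)\big)^2=|x|^2\cDp(\w,\gz)\ge|x|^2\bH(1)=4\m\ge4C(\zeta_0)\gg1$ uniformly, so the whole integrand may be replaced, uniformly up to $(1+o_{\zeta_0}(1))$, by the expansion of Proposition~\ref{lP}(iii): there $|X|^2=|x|^2\ww^2\w^2$ and $4|T|/|X|^2=\L_\pl(\w,\gz)$, so $|\tau^*|=\cZ(\L_\pl(\w,\gz))$, and extracting $e^{-\frac{|x|^2}{4}(1+\cDp)}=e^{-\frac{d(g)^2}{4}}e^{-\frac{|x|^2}{4}(\cDp-\bH(1))}$ (using $d(g)^2=|x|^2(1+\bH(1))$, from Theorems~\ref{RLT1} and~\ref{tmm}) puts $p(g)$ in the form $e^{-d(g)^2/4}$ times a double integral of $4\pi\sqrt{2\var}\,\q(\cZ(\L_\pl))(\var-\cZ(\L_\pl))^{-1/2}\w^{-1}$ against the exponential and $I_0$ factors.

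The core step is the simplification on the main strip $\cw:=(0,\w_0)\times(-\pi,\pi)$. Because $\chri\le\zeta_0$, Lemma~\ref{asyin} forces $\ep\,\m\lesssim_{\zeta_0}1$, $\m\lesssim_{\zeta_0}|x|$, $|u|\ll1$, $\u(1)\sim\ep^{-2}$, and $\ww^2\w_0^2\ll|u|$ on $\cw$, so $\L_\pl(\w,\gz)$ is large and $\cZ(\L_\pl)\to\var^-$ uniformly on $\cw$. Hence: (a) by \eqref{q_var}, $\sqrt{2\var}\,\q(\cZ(\L_\pl))(\var-\cZ(\L_\pl))^{-1/2}\w^{-1}=\tfrac{\var^2}{-\sin\var}(\var-\cZ(\L_\pl))^2\w^{-1}(1+o(1))$, producing the constant $4\pi\tfrac{\var^2}{-\sin\var}$ and the algebraic factor of \eqref{tild_P}; (b) the $I_0$-argument $\tfrac{\var|x|^2\ww^2\w^2}{2(\var-\cZ(\L_\pl))}$ stays bounded ($\sim\ep\m\lesssim\zeta_0$ at $\w=1$, $\to0$ as $\w\to0^+$), so $I_0$ is kept as is; (c) writing $\cDp=\ww^2\w^2\Phi(\L_\pl)$ and $\bH(1)=\ww^2\Phi(\u(1))$ (cf. \eqref{eD4}--\eqref{eD5}) and expanding $\Phi$ by the large-argument asymptotics \eqref{aPsi}, the quantity $\tfrac{|x|^2}{4}(\cDp-\bH(1))$ collapses to the difference-of-squares in the first exponential of $\widetilde{\cP}_\pl$ plus an error which must be shown to be $o_{\zeta_0}(1)$ throughout $\cw$. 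The latter is precisely why $\w_0=|x|^{-1/2}u_1^{3/4}\ww^{-1}$ is the right cutoff: it is calibrated so that the $O(\cdot)$ remainder in \eqref{aPsi}, after multiplication by $\tfrac{|x|^2}{4}\ww^2\w^2$, stays $o(1)$ for all $\w<\w_0$, and the verification uses exactly the Lemma~\ref{asyin}/Table~\ref{tab1} bookkeeping carried out in Subsection~\ref{usmall}. Together, (a)--(c) identify $\tfrac{1}{4\pi}|x|^2\ww^2e^{-|x|^2/4}\,\w\,\cP_\pl$ with $4\pi\tfrac{\var^2}{-\sin\var}e^{-d(g)^2/4}\,\widetilde{\cP}_\pl$ up to $(1+o_{\zeta_0}(1))$, uniformly on $\cw$.

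Finally one discards the tail $\{\w\ge\w_0\}$. Using $\w\,\cP_\pl\lesssim\m^{-1}e^{-\frac{|x|^2}{4}\cDp}$ (Proposition~\ref{lP}(ii), cf. \eqref{sim_P}) together with lower bounds for $\cDp$ on $\{\w\ge\w_0\}$ --- the monotonicity of $\bH$ (Lemma~\ref{pcD}(ii)) near $\w=1$, and for larger $\w$ the estimate $\tfrac{|x|^2}{4}\cDp-\m\gtrsim u_1^{-1}|x|^2\ww^2\w^2$ obtained as in the proof of \eqref{schrD} --- one sees that the excess at $\w=\w_0$ is $\gtrsim(|x|^2u_1)^{1/2}\sim\m^{1/2}\to+\infty$, so the tail is negligible compared to the main contribution, exactly as in Subsection~\ref{usmall}; a routine lower bound for $\int_\cw\widetilde{\cP}_\pl$ near $(\w,\gz)=(1,0)$ closes the comparison. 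Assembling the two parts yields the stated formula. The main obstacle is step (c): controlling \emph{uniformly} over the whole strip $\cw$ --- whose $\w$-extent $\w_0\gg1$ now grows with $g$, in contrast with the shrinking neighbourhoods of Section~\ref{bigchr} --- the error of replacing $\Phi$ by \eqref{aPsi}, which is what dictates the calibrated choice of $\w_0$ and makes the fine arithmetic of $\ep$, $\ww$, $u_1$, $\m$ in Lemma~\ref{asyin} and Table~\ref{tab1} indispensable.
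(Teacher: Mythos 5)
Your route is essentially the paper's: the same passage to the modified polar coordinates in \eqref{ehk4}, the same cutoff $\w_0=|x|^{-\frac12}u_1^{\frac34}\ww^{-1}$, the same simplification of the integrand on the strip via Proposition \ref{lP}(iii), \eqref{q_var} and the expansion \eqref{aPsi} (your bookkeeping $|x|^2\ww^2\w_0^2\lsim |x|u_1^{\frac32}\lsim\sqrt{u_1}\ll1$ is exactly the paper's), and the same tail machinery from Subsection \ref{usmall}. Two of your side remarks are off but harmless: the argument of $I_0$ is \emph{not} bounded on all of the strip (near $\w=\w_0$ it is of size $\m^{\frac34}u_1^{\frac12}$, which may grow), but nothing is lost since $e^{-A}I_0(A)$ is carried along unevaluated; and the \eqref{schrD}-type bound only covers $\w\lsim u_1\ww^{-1}$, the far region $\w\gsim u_1\ww^{-1}$ requiring the separate $\cw_3$/$\Xi_3$ argument — which your appeal to Subsection \ref{usmall} does contain.

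The genuine gap is the closing comparison. You propose to dominate the tail by "a routine lower bound for $\int_\cw\widetilde{\cP}_\pl$ near $(\w,\gz)=(1,0)$". Near $(1,0)$ one has $\L_\pl\sim\u(1)\sim\ep^{-2}$, so the algebraic factor is $(\vtz_1-\cZ(\L_\pl))^2/\w\sim\ep^2$ while both exponential-times-$I_0$ factors are $\sim_{\zeta_0}1$; hence a unit neighbourhood of $(1,0)$ only yields $\int\widetilde{\cP}_\pl\gsim\ep^2\sim\chri\,|x|^{-2}$. The tail, after factoring out $e^{-\m}$, is bounded by $C\,|x|^{-2}e^{-c\sqrt{\m}}$, and the hypotheses give \emph{no} lower bound on $\chri$: for fixed large $\m$ one may let $|x|\to+\infty$ with $\ep\sim u_1\sim\m|x|^{-2}$, so that $\chri\sim\m^{2}|x|^{-2}$ is far smaller than $e^{-c\sqrt{\m}}$, and your comparison fails exactly in the small-$\chri$ regime this theorem addresses (there is no Gaussian concentration, so the mass is \emph{not} near the minimizer $(1,0)$: the bulk of $\int_\cw\widetilde{\cP}_\pl$ sits at $\w$ of order $(\ep|x|)^{-1}\sim\chri^{-1/2}$, i.e. in the region $\{\w\le C'|x|^{-1}u_1^{1/2}\ww^{-1}\}$, where the phase is within $O(1)$ of $\m$, the $I_0$-argument is $\lsim1$, and $(\vtz_1-\cZ(\L_\pl))^2/\w\sim\ww^2\w/u_1$ integrates to $\sim|x|^{-2}$). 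The fix is what the paper does: lower bound the main term by the contribution of that small-$\w$ region, i.e. reuse $L_1\sim|x|^{-2}e^{-\m}$ from the proof of Theorem \ref{sL1*} (equivalently, invoke $p(g)\sim_{\zeta_0}|x|^{-2}e^{-d(g)^2/4}$), which makes the tail a factor $O(e^{-c\sqrt{\m}})$ of the main contribution uniformly. With that replacement your argument closes; as written, the last step would fail.
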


				Recall that even in the special $5$-dimensional non-isotropic Heisenberg group, it indeed happens that the leading term of the asymptotic expansion for the heat kernel cannot be represented in an explicit expression but an integral form. See \cite[Theorem 4]{LZ19}. Hence we believe that in our group $N_{3,2}$, which is $6$-dimensional, it is unpractical to find all the explicit expressions  as in the setting of isotropic Heisenberg groups (cf. \cite{Li07}). Nevertheless, in such case precise estimates for the heat kernel can still be provided in a very concise form:
				
				\begin{theo}\label{sL1*}
					Let $\zeta_0 > 0$.  Then there is a constant $C(\zeta_0) \gg 1$ such that, for all $g$ satisfying  $\chri \le \zeta_0$ and $\m \ge C(\zeta_0)$,
					\begin{equation} \label{psL1}
						p(g) \sim_{\zeta_0}|x|^{-2}\, e^{-\frac{d(g)^2}{4}}.
                   \end{equation}
				\end{theo}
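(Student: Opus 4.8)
The plan is to derive Theorem~\ref{sL1*} from the uniform two--sided bound of Proposition~\ref{lP}(ii); since only the order of magnitude of $p(g)$ is wanted, we need not go through the precise leading term of Theorem~\ref{sL1}. Using \eqref{ehk4}--\eqref{def_cP}, the scaling identity in \eqref{D_scal} (which turns the ``intrinsic distance'' of the arguments of $\cP(s)$ into $|x|^2\cD(u;s)$, and the product $|X|\,\rD$ into $|x|^2\ww|s|\sqrt{\cD(u;s)}$), and \eqref{ulbdd}, together with $\cD(u;s)\ge\cD(u;\ss)=4\m/|x|^2\gg|x|^{-2}$ (so that $1+|x|\sqrt{\cD(u;s)}\sim|x|\sqrt{\cD(u;s)}$, by Theorem~\ref{tmm} and \eqref{mdef}) and $e^{-|x|^2/4}e^{-\m}=e^{-d(g)^2/4}$, one obtains
\[
p(g)\ \sim\ \ww^2\,e^{-\frac{d(g)^2}{4}}\,I,\qquad
I:=\int_{\R^2}\frac{e^{-\frac{|x|^2}{4}\left(\cD(u;s)-\cD(u;\ss)\right)}}{\cD(u;s)\,\left(1+|x|^2\ww|s|\sqrt{\cD(u;s)}\right)^{1/2}}\,ds .
\]
Thus everything reduces to the claim $I\sim_{\zeta_0}\ww^{-2}|x|^{-2}$. (The same is reached from Theorem~\ref{sL1} by estimating $\int_0^{\w_0}\!\int_{-\pi}^{\pi}\widetilde{\cP}_\pl$ after unwinding \eqref{tild_P} and \eqref{eD4}; this is slightly less convenient because of the Bessel factor.)

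First I would pin down the regime and collect the tools. From $\chri\le\zeta_0$ and $\m\ge C(\zeta_0)$, Lemma~\ref{asyin}(ii)--(iii) gives $\chri\sim\ep^2 d(g)^2$, so $\ep\,d(g)\lesssim\sqrt{\zeta_0}$; combined with $\m$ being large this forces $|u|\ll1$, whence $|\tz|$ is close to $\var$ (so $\ep$ is small and $|\tz|\ge1$), and Lemma~\ref{asyin}(iv) yields $\m\sim|x|^2u_1$, $\ww\sim\ep\sqrt{u_1}$, $\tz_1\sim1$, $u_2/\sqrt{u_1}\lesssim1$, $\chri\sim\ep^2|x|^2\le\zeta_0$, $\chrii\lesssim\zeta_0$; in particular $\ww^{-2}|x|^{-2}\sim(\chri u_1)^{-1}$, so the target is $I\sim_{\zeta_0}(\chri u_1)^{-1}$. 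I would also record: the second--order data at $\ss=(-1,0)$, namely $\partial_{s_1}^2\cD(u;\ss)=4\chri/|x|^2$, $\partial_{s_2}^2\cD(u;\ss)=4\chrii/|x|^2$, $\partial_{s_1s_2}^2\cD(u;\ss)=0$ (Lemma~\ref{L64n}, Corollary~\ref{C54n}); the crude bounds $\cD(u;s)\ge c(\ww^2|s|^2+u_1)$ and $\cD(u;s)\ge\cD(u;\ss)\sim u_1$ from \eqref{D_scal} and \eqref{ehd}; the fact that for $|s|>C_\sharp$ (a fixed large constant) one has $\tfrac{|x|^2}{4}(\cD(u;s)-\cD(u;\ss))\gsim\chri|s|^2$, which is \eqref{schrD}, while $\cD(u;s)\sim u_1$ there (since then $\ww^2|s|^2\ll u_1$); and $\tfrac{|x|^2}{4}(\cD(u;0)-\cD(u;\ss))=\tfrac{|x|^2}{4}(\var|u|-\cD(u;\ss))\lesssim\chrii\lesssim\zeta_0$.

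For the upper bound I would decompose $\R^2$ into the ``bulk'' $B=\{|s-\ss|\le\kappa\}$, the ``large modulus'' part $E=\{|s|\ge M\}$, and the remaining bounded annulus, for fixed small $\kappa$ and large $M=M(\zeta_0)$. On $B$, Taylor's formula with the Hessian data gives $\tfrac{|x|^2}{4}(\cD(u;s)-\cD(u;\ss))\gsim\chri(s_1+1)^2+\chrii s_2^2$, while $\cD(u;s)\sim u_1$ and $1+|x|^2\ww|s|\sqrt{\cD(u;s)}\sim1+\ep\m\lesssim_{\zeta_0}1$, so this part is $\lesssim_{\zeta_0}u_1^{-1}\int_{\R^2}e^{-c(\chri t_1^2+\chrii t_2^2)}dt\lesssim_{\zeta_0}(\chri u_1)^{-1}$ (using that $\chrii$ bounded makes the $t_2$--integral $\lesssim_{\zeta_0}1$). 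On $E$ one has $\cD(u;s)\sim u_1$, $1+|x|^2\ww|s|\sqrt{\cD(u;s)}\gtrsim1$, and $\tfrac{|x|^2}{4}(\cD(u;s)-\cD(u;\ss))\gsim\chri|s|^2$ by \eqref{schrD} (beyond $|s|\gtrsim\sqrt{u_1}/\ep$ one even gets $\tfrac{|x|^2}{4}\cD(u;s)\gsim\m+|x|^2\ww|s|$, which yields an $e^{-\m}$--saving and is negligible), so, in polar coordinates, $\int_E(\cdots)\lesssim u_1^{-1}\int_M^{\infty}r\,e^{-c\chri r^2}dr\lesssim_{\zeta_0}(\chri u_1)^{-1}$. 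The bounded annular part $\{\kappa<|s|<M\}$ is handled by $\cD(u;s)\gtrsim u_1$, $1+|x|^2\ww|s|\sqrt{\cD(u;s)}\lesssim_{\zeta_0}1$ and nonnegativity of $\cD(u;\cdot)-\cD(u;\ss)$ — using near $s=0$ the displayed bound — giving $\lesssim_{\zeta_0}u_1^{-1}\le\zeta_0(\chri u_1)^{-1}$. For the matching lower bound I would restrict $I$ to a neighbourhood of $\ss$ of the form $|s_1+1|\lesssim\chri^{-1/2}$, $|s_2|\lesssim\chrii^{-1/2}$ (truncated at $\pi$ if necessary), on which one checks that $\tfrac{|x|^2}{4}(\cD(u;s)-\cD(u;\ss))\lesssim1$, $\cD(u;s)\sim u_1$ and the amplitude is $\lesssim1$, so that the integrand is $\gtrsim_{\zeta_0}u_1^{-1}$; integrating then yields $\gtrsim_{\zeta_0}(\chri u_1)^{-1}$, completing the proof.

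I expect the matching lower bound to be the main obstacle. When $\chri\ll1$ the Gaussian in the $s_1$--direction furnished by the Hessian is very wide, and one must verify that $\cD(u;\cdot)-\cD(u;\ss)$ stays small — and, crucially, that the amplitude factor $(1+|x|^2\ww|s|\sqrt{\cD(u;s)})^{1/2}$ stays bounded — over an $s_1$--interval of full length $\sim\chri^{-1/2}$; this is exactly where $\cD(u;\cdot)$ degenerates as $\ep,u_1\to0$, and it forces the same case analysis (the subcases $u_2\gtrsim u_1$ versus $u_2\ll u_1$, and $|s|$ near $1$ versus $|s|$ moderately large, where the decay passes from the Hessian term to the quantitative separation \eqref{schrD}) that drives Section~\ref{bigchr}. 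Everything else — the reduction, the upper bound, and the parameter estimates — is routine.
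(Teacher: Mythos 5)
Your reduction of \eqref{psL1} to $I\sim_{\zeta_0}(\chri\,u_1)^{-1}$, your identification of the regime (note that \eqref{con10} automatically forces $u_2\ll u_1$ here, so the case split you anticipate at the end never arises), and your upper bound are essentially in line with the paper's own proof, which splits the integral into a disk $\Xi_1$ of radius $\sim C'|x|^{-1}u_1^{1/2}\ww^{-1}\sim\chri^{-1/2}$ about the origin, an intermediate region handled by \eqref{schrD}, and a far region handled as $\cw_3$ was. The genuine gap is the lower bound, and it is not merely the unfinished step you flag as ``the main obstacle'': the region you choose is wrong. In this regime $\chrii$ can be much smaller than $\chri$ (take $\theta$ close to the curve $\K_3=0$, i.e.\ $t_2$ small, points near the cut locus; concretely $u_1\sim|x|^{-1}$, $\ep\sim u_1$, $u_2\ll\ww$, so $\m\sim|x|\gg1$, $\chri\sim1$, $\chrii\ll1$). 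On your box $\{|s_1+1|\lesssim\chri^{-1/2},\,|s_2|\lesssim\chrii^{-1/2}\}$ the three properties you assert (phase $\lesssim1$, $\cD\sim u_1$, amplitude bounded) then fail: along $s_1=-1$ one has $\bT(-1,s_2)=\sqrt{\bA(1)^2+4\ww^2s_2^2}$, and \eqref{eD4} together with \eqref{aPsi} give, for $1\ll|s_2|$ in the admissible range, that $\tfrac{|x|^2}{4}\bigl(\cD(u;(-1,s_2))-\cD(u;\ss)\bigr)$ is of size $\chri s_2^2-O(\ep\,\m\,|s_2|)$, which at $|s_2|\sim\chrii^{-1/2}$ is of order $\chri/\chrii\gg1$ (and the factor $|x|^2\ww|s|\sqrt{\cD}\sim\ep\m|s|$ can also leave the bounded range). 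So on a large part of your box the integrand is exponentially small rather than $\gtrsim_{\zeta_0}u_1^{-1}$, and ``pointwise bound times area'' does not yield $\gtrsim(\chri u_1)^{-1}$.

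The repair — and the paper's actual argument — is to take the main region of size $\chri^{-1/2}$ in \emph{both} variables (the disk $\Xi_1$), whose area $\chri^{-1}$ already suffices since $\chrii\lesssim\chri$, and to prove flatness of the phase there by structural means rather than by Taylor expansion at $\ss$: quadratic extrapolation of the Hessian over a region of diameter $\chri^{-1/2}$ has an uncontrolled remainder (for the same reason your claimed bound $\tfrac{|x|^2}{4}(\cD-\cD(\ss))\gtrsim\chri(s_1+1)^2+\chrii s_2^2$ on the fixed ball $B$ is unjustified, though there it is harmless, because nonnegativity of the phase and $\chri\le\zeta_0$ already give the bulk upper bound). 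Concretely, the paper shows on $\Xi_1$ that $\L_\pl\gtrsim|x|^2\gg1$ and $\bT_\pl\sim\bA(1)\sim u_1$ with $|\bT_\pl-\bA(1)|\lesssim|x|^{-2}$ — this is exactly where \eqref{con10}, i.e.\ $u_1\lesssim|x|^{-1}$, $u_2\lesssim|x|^{-1}\sqrt{u_1}$, $\ww\lesssim|x|^{-1}\sqrt{u_1}$, enters — so that by \eqref{aPsi} the phase equals $\m+O_{\zeta_0}(1)$ and $1+|x|^2\ww|s|\sqrt{\cD}=O(1)$ on all of $\Xi_1$; the two-sided bound of Proposition \ref{lP}(ii) then gives upper and lower estimates simultaneously, hence \eqref{psL1}. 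Until the phase is shown to be $\m+O_{\zeta_0}(1)$ on a full $\chri^{-1/2}$-neighbourhood along these (or equivalent) lines, your lower bound, and with it the proposed proof, is incomplete.
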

				
				Let us begin with the
				\begin{proof}[Proof of Theorem \ref{sL1*}]
					Under our assumptions, it deduces from Lemma \ref{asyin}  that $1 \ll \m \lsim |x|^2 \, |u| \lesssim d(g)^2$ and $\ep^2 d(g)^2 \sim \chri \lsim1$, which yield  $d(g)^2 \gg 1$ and $\ep \ll 1$. This occurs only in the case of Lemma \ref{asyin} (iv), whence $|u|\ll1$ and \eqref{con6} holds. In particular, $\m = \bH(1)\, |x|^2/4 \sim |x|^2 \, u_1$. Now from \eqref{con6}, $d(g)^2 \sim |x|^2$ and $\chri\sim\ep^2|x|^2\lesssim1$ force that
					\begin{align}\label{con10}
						u_1 \lesssim |x|^{-1} \ll 1, \qquad u_2 \lesssim
						|x|^{-1} \, u_1^\frac12 \sim \frac{u_1}{\sqrt{\m}} \ll u_1, \qquad \ww \sim \epsilon \, u_1^\frac12 \lesssim |x|^{-1} \, u_1^\frac12.
					\end{align}
					
					We split the integral \eqref{ehk4},  by means of the modified polar coordinates \eqref{pol_cor}, into the following three parts:
						\begin{align*}
							p(g) &= \frac{1}{4\pi} \, e^{-\frac{|x|^2}{4}}|x|^2 \, \ww^2 \int_0^{+\infty} \int_{-\pi}^\pi \w \, \cP_\pl(\w, \gz)  \, d\w d\gz \nonumber\\[1mm]
							&= \frac{1}{4\pi} e^{-\frac{|x|^2}{4}} \, |x|^2 \, \ww^2 \left( \int_{\Xi_1} + \int_{\Xi_2} + \int_{\Xi_3} \right)  =: \frac{1}{4\pi} e^{-\frac{|x|^2}{4}} \sum_{i=1}^3 L_i  =: \frac{1}{4\pi} e^{-\frac{|x|^2}{4}}L,
						\end{align*}
					where
					\begin{gather}
						\Xi_1 := \{(\w,\gz); \w \le C^\prime \, |x|^{-1} \, u_1^\frac12 \, \ww^{-1} \}, \\[1mm]
						\Xi_2 := \{(\w,\gz); C^\prime \,  |x|^{-1} \, u_1^\frac12 \, \ww^{-1} < \w \le C \, u_1 \, \ww^{-1}  \}, \\[1mm]
						\Xi_3 := \{(\w,\gz); \w > C u_1 \, \ww^{-1}  \}.
					\end{gather}
					Here $C$ and $C^\prime$ are two constants to be chosen later.
					
					The choice of $C$ and the estimate of $L_3$ are similar to those for $\pp_3$ in Subsection \ref{usmall}, which yield that
					\begin{equation}\label{L3}
						L_3 \lesssim |x|^{-2} \, \m^{-1}  e^{- 2 \, \m}.
					\end{equation}
					
					The estimate of $L_2$ follows in a similar argument  but with some simplification for $\pp_2^{\sharp\sharp}$ as in Subsection \ref{usmall}. Indeed, using \eqref{con10} one can check \eqref{schrD} is still valid on $\Xi_2$ provided that $C'$ is large enough. Consequently, by  the second claim in \eqref{sim_P} we have
					\begin{equation}\label{L2}
						L_2 \lesssim |x|^2 \, \ww^2 \, \m^{-1} \, e^{-\m}\int_{\Xi_2} e^{-c \, u_1^{-1} \, |x|^2 \, \ww^2 \, \w^2} \w \, d\w d\gz \lsim |x|^{-2} \, e^{-\m}.
					\end{equation}
					
					Aiming now at $L_1$, as in the proof of \eqref{AU}, we can show that $\u(1)\sim\ep^{-2}\gg1$, $\bT_\pl \sim \bA(1) \sim u_1$ and  $\L_\pl  = \bT_\pl/(\ww^2 \, \w^2)  \gtrsim |x|^2 \gg1$ on $\Xi_1$.
					Then an application of \eqref{aPsi} with the fact that $|x| u_1\lsim1$ gives
					\begin{equation*}
						\frac{|x|^2}{4}\cDp = \frac{|x|^2}{4} \ww^2 \w^2 \Phi(\L_\pl) =  \frac{|x|^2}{4}\vartheta_1 \bT_\pl + O(1), \qquad \m = \frac{|x|^2}{4} \ww^2 \Phi(\u(1)) =  \frac{|x|^2}{4}\vartheta_1 \bA(1) + O(1).
					\end{equation*}
					Moreover, it follows from \eqref{A_pol} and \eqref{con10} that
					\begin{align*}
						&|\bT_\pl - \bA(1)| = \frac{|\bT_\pl^2 - \bA(1)^2|}{\bT_\pl +\bA(1)} \le \frac{4u_2 \, \ww \, (\w + 1) + 4 \, \ww^2 \, ( \w^2 +1)}{\bT_\pl +\bA(1)} \lesssim |x|^{-2}.
					\end{align*}
					Hence $\frac{|x|^2}{4}\cDp = \m + O(1) (\sim |x|^2 u_1) \gg1$ and therefore, $|x| \, \ww \, \w \cdot \sqrt{|x|^2 \,  \cDp}  \lesssim u_1 |x| \lesssim 1$ on $\Xi_1$. As a result, from \eqref{ehk4} and Proposition \ref{lP} (ii), we get
					\begin{equation}\label{L1}
						L_1 \sim |x|^2 \, \ww^2 \, \m^{-1} \,(|x|^{-1} \, u_1^\frac12 \, \ww^{-1})^2 \, e^{-\m} \sim |x|^{-2} \, e^{-\m}.
					\end{equation}
					
					To summarize, by \eqref{L3}-\eqref{L1}  and the assumption that $\m\gg1$, we obtain
					\begin{equation}
						L\sim |x|^{-2} e^{-\m}, \qquad p(g) \sim |x|^{-2} e^{-\m - \frac{|x|^2}{4}}.
					\end{equation}
					
This together with the fact that $d(g)^2=|x|^2+4\m$ concludes the proof of Theorem \ref{sL1*}.
				\end{proof}
				
				Now, we return to the
				
				\begin{proof}[Proof of Theorem \ref{sL1}]
					We split the integral \eqref{ehk4} again and write this time
		\begin{align*}
						p(g) = \frac{1}{4\pi} \, e^{-\frac{|x|^2}{4}} |x|^2 \, \ww^2 \left( \int_{\widetilde{\Xi}_1} + \int_{\widetilde{\Xi}_2} + \int_{\widetilde{\Xi}_3} \right)  =:  \frac{1}{4\pi} e^{-\frac{|x|^2}{4}} \, (\widetilde{L}_1 + \widetilde{L}_2 + \widetilde{L}_3)  =:  \frac{1}{4\pi} e^{-\frac{|x|^2}{4}}\widetilde{L},
					\end{align*}
					where $\widetilde{\Xi}_3$ is equal to $\Xi_3$ and
					\begin{align*}
						\widetilde{\Xi}_1 := \{(\w,\gz); \w < |x|^{-\frac12} \, u_1^\frac34 \, \ww^{-1}\}, \quad
						\widetilde{\Xi}_2 := \{(\w,\gz); |x|^{-\frac12} \, u_1^\frac34 \, \ww^{-1} \le \w \le C u_1 \, \ww^{-1} \}.
					\end{align*}
					
					Recall that we have already shown that $\widetilde{L}_3 = L_3 = L_1 \, O(e^{-\m})$. To treat $\widetilde{L}_2$, remark that
					\[
					\frac{|x|^{-\frac{1}{2}} \, u_1^{\frac{3}{4}} \, \ww^{-1}}{|x|^{-1} \, u_1^{\frac{1}{2}} \, \ww^{-1}} = \left( |x|^2 \, u_1 \right)^{\frac{1}{4}} \sim \m^{\frac{1}{4}} \gg 1.
					\]
					Then one follows the argument just given for $L_2$. The main difference is that we can write now $\frac{|x|^2}{4} \cDp - \m \ge c \, u_1^{-1} \, |x|^2 \, \ww^2 \, \w^2 + c \, \m^\frac12$  with the additional term $c \, \m^\frac12$,  due to the fact that $u_1^{-1} \, |x|^2 \, \ww^2 \, \w^2 \gtrsim \m^{\frac{1}{2}}$ on $\widetilde{\Xi}_2$. Thus,
					\begin{align*}
						\widetilde{L}_2 \lsim |x|^2 \, \ww^2 \, \m^{-1} \, e^{-\m} \, e^{-c \, \m^\frac12} \int_{\widetilde{\Xi}_2}  e^{- c \, u_1^{-1} \, |x|^2 \, \ww^2 \, \w^2}  \w \, d\w d\gz \lsim |x|^{-2} \, e^{-\m} \, e^{- c \, \m^\frac12} = L_1 \, O(e^{- c \, \m^\frac12}).
					\end{align*}
					Notice that $\widetilde{\Xi}_1\supseteq \Xi_1$, then the above estimates imply that $\widetilde{L}=\widetilde{L}_1(1+o(1))$.
					
					On the other hand, restricting ourselves to the region $\widetilde{\Xi}_1$, some simplifications can be made to the expression of the heat kernel. In fact, using \eqref{con10} it can be checked without difficulties that $\bT_\pl \sim\bA(1)\sim u_1$, $\u(1)\sim\ep^{-2}\gg1$, and $\L_\pl \sim \ww^{-2}\w^{-2} u_1 \gtrsim |x|u_1^{-\frac12}\gg1$. Then by the first estimate in \eqref{asP} we have $\vtz_1-\cZ(\L_\pl)\sim\L_\pl^{-\frac12}\lsim|x|^{-\frac12 }u_1^\frac14\ll1$, which, together with \eqref{q_var}, implies that
					$$\q(\cZ(\L_\pl))=\frac{\sqrt{2\vtz_1}\vtz_1}{-2\sin\vtz_1}(\vtz_1-\cZ(\L_\pl))^\frac52(1+o(1)).
					$$
					And moreover, we can apply \eqref{aPsi} with the fact that $|x| u_1^\frac32\lsim \sqrt{u_1}\ll1$ and obtain that, for all $(\w,\gz)\in\widetilde{\Xi}_1$,
					\begin{align*}
						\frac{|x|^2}{4}\cDp(\w,\gz) = \frac{|x|^2}{4}(\sqrt{\vartheta_1 \bT_\pl(\w,\gz)} - \ww\w)^2 + o(1), \quad
						\m = \frac{|x|^2}{4}(\sqrt{\vartheta_1 \bA(1)} - \ww)^2 + o(1).
					\end{align*}
					
					Thus, combining these estimates with Proposition \ref{lP} (iii) yields that
					\begin{align*}
				|x|^2 \, \ww^2\, \w\,\cP_\pl(\w,\gz) = 16\pi^2 \, \frac{\vartheta_1^2 }{ - \sin{\vartheta_1}  }\, e^{-\m}  \, \widetilde{\cP}_\pl(\w,\gz) \, (1 + o(1)),\quad\forall \,(\w,\gz) \in \widetilde{\Xi}_1,
					\end{align*}
				where $\widetilde{\cP}_\pl(\w,\gz)$ is defined by \eqref{tild_P}.
					
					Now set
					\begin{align*}
						\widetilde{L}' := 16\pi^2 \, \frac{\vartheta_1^2 }{ - \sin{\vartheta_1}  }\, e^{-\m}  \int_{\widetilde{\Xi}_1}  \widetilde{\cP}_\pl \, d\w d\gz.
					\end{align*}
					Then our estimates above imply $\widetilde{L}' = \widetilde{L}_1 (1 + o(1))=\widetilde{L} \, (1+o(1))$, which completes the proof of Theorem \ref{sL1}.
				\end{proof}

				\section{Uniform asymptotics for the remaining cases: points near the abnormal set} \label{s32}
				\setcounter{equation}{0}
				
				Recall that $\m = |x|^2 \, \bH(1)/4  = (d(g)^2 - |x|^2)/4$.  In this section, our goal is to deal with the following two remaining cases:
				
				\medskip
				(C1). $d(g)^2  \to +\infty$, $|\tz| \le 3$, and $\theta_2 |x| \lesssim 1$.
				
				\medskip
				(C2). $d(g)^2 \to +\infty$, $|\tz|\ge1$ and $\m \lesssim 1$.
				
				\medskip
				However, we shall give asymptotics under more general conditions as follows:
				
				\medskip
				(C3). $d(g)^2 \to +\infty$ and $\m \lesssim 1$.
				
				\medskip
				(C4). $d(g)^2 \to +\infty$, $u_1 \lesssim |x|^{-2}$ and $u_2 \lesssim |x|^{-1}$.
				
				\medskip
				
				More precisely, the relations among them are the following:
				\begin{equation}\label{ab_rel}
					\mbox{(C1) or (C2) $\Rightarrow$ (C3) $\Leftrightarrow$ (C4).}
				\end{equation}
				Indeed, (C2) $\Rightarrow$ (C3) trivially holds and  (C1) $\Rightarrow$ (C3) can be explained by the fact that $\m \sim \tz_2^2|x|^2$ (cf. Lemma \ref{asyin} (i)). To show (C3) $\Rightarrow$ (C4), noting now that $d(g)^2 = |x|^2 + 4\,\m \sim |x|^2\gg1$. Then Lemma \ref{asyin} (i)-(ii) imply that $u_1\sim\tz_1 \, \m \, |x|^{-2}\lesssim|x|^{-2}$ and $u_2\sim\ep \, \chrii \, \chri^{-1} \, \m^{\frac12} \, |x|^{-1}\lesssim|x|^{-1}$, which yield the desired results. Finally, suppose (C4) holds. First we claim that $|u| \ll 1$. Suppose the contrary. Then $1\lsim|u|\lsim|x|^{-2}+|x|^{-1}$, whence $|x|\lsim1$ and $|u|\lsim|x|^{-2}$, which, however, implies that $d(g)^2 \sim |x|^2 |u| \lesssim 1$, a contradiction! Now $\m\lesssim1$ follows at once from \eqref{con6} as $|\tz|\ge1$ and from \eqref{con7} as $|\tz|\le3$, respectively. Hence (C4) $\Rightarrow$ (C3).
				
				To conclude, we reduce the remaining cases to (C4), or equivalently, (C3). Roughly speaking, these points are near the ``(shortest) abnormal set'', i.e., the points $g$ such that the Hessian matrices of the reference functions $\He_\theta \, \phi(g;\theta)$ are degenerate at the critical points $\theta$. Hence in such case, the method of stationary phase no longer works even for small-time heat kernel asymptotics. The interested readers may consult \cite{Li20}. More precisely, we shall prove the following theorem.
			
			\begin{theo}\label{mlsim1}
				Let $\nzz>0$. Then there is a constant $C(\nzz)\gg1$ such that, for all $g$ satisfying $\m\le\nzz$ and $d(g)^2\ge C(\nzz)$,
				\begin{align} \label{hk_ab}
					p(g) =  e^{-\frac{|x|^2}{4}} \, \frac{4\pi}{|x|^2} \,\rF\left(\frac12|x|u_2,\,\frac14|x|^2u_1\right) (1 + o_{\nzz}(1)),
				\end{align}
				where
				\begin{align}\label{defrF}
					\rF(v_1,v_2):= \int_{\R} \frac{r^3}{r \cosh{r} - \sinh{r}} \, e^{ - \frac14\frac{r^2}{r \coth{r} - 1} v_1^2+i v_2 r} \, dr, \qquad v_1, v_2 \in\rr.
				\end{align}
			\end{theo}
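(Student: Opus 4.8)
The plan is to start from the integral representation \eqref{ehk4} together with Proposition \ref{nPn1}, i.e. $p(g) = \frac{1}{4\pi}|x|^2\ww^2 e^{-|x|^2/4}\int_{\R^2}\cP(s)\,ds$ with $\cP(s)=\rP\big(s|x|\ww,\tfrac14|x|^2(u+2\ww s_1 e_2 + 2\ww s_2 e_3)\big)$, and to localize the $s$-integral near $s=0$. The guiding heuristic is that, under the hypotheses $\m\lesssim 1$ and $d(g)^2\to+\infty$ (equivalently, by \eqref{ab_rel} and the reductions in Section \ref{s32}, $u_1\lesssim|x|^{-2}$, $u_2\lesssim|x|^{-1}$, $|u|\ll1$, $\ww\sim\ep\sqrt{u_1}$, $|\tz|$ possibly small), the relevant scale for $s$ is $|s|\sim u_1\ww^{-1}$ or smaller; there $\ww|s|\ll 1$, so the ``$2\ww s_1 e_2$'' and ``$2\ww s_2 e_3$'' perturbations of $u$ inside $\rP$ become negligible and the $X$-argument $s|x|\ww$ stays bounded. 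Concretely, I would first record, via Proposition \ref{lP}(ii) and the scaling \eqref{D_scal}, the pointwise bound $\cP(s)\lesssim \m^{-1}e^{-\frac{|x|^2}{4}\cD(u;s)}$ together with $\cD(u;s)\gtrsim \ww^2|s|^2 + \bT(s)$, and use $\cD(u;s)\geq\bH(1)=4\m/|x|^2$ plus Lemma \ref{pcD} to throw away the region $|s|\gtrsim C u_1\ww^{-1}$ and then the region $C'|x|^{-1}u_1^{1/2}\ww^{-1}\le|s|\le Cu_1\ww^{-1}$, exactly as in the proof of Theorem \ref{sL1*} (the estimates \eqref{L3} and \eqref{L2}, resp. \eqref{schrD}); each such piece costs a factor $e^{-c\m}$ relative to the main term, hence is $o(1)\cdot$(main term) once $d(g)^2=|x|^2+4\m\ge C(\nzz)$ is large (so $|x|\to\infty$).

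Next, on the surviving core $\{|s|< C'|x|^{-1}u_1^{1/2}\ww^{-1}\}$, I would pass to the variable $r$-representation of $\rP$, namely \eqref{defP2}--\eqref{MFF2}. Since $\ww|s||x|\lesssim u_1^{1/2}\to 0$ and $\ww|s|\lesssim|x|^{-1}u_1^{1/2}\ll u_1$, in the argument of $\rP$ the center part is $\tfrac14|x|^2 u + O(|x|^2\ww|s|) = \tfrac14|x|^2 u + o(1)$ (using $|x|^2\ww|s|\lesssim|x|u_1^{1/2}\lesssim 1$), and the horizontal part $X=s|x|\ww$ has $|X|=|x|\ww|s|\lesssim u_1^{1/2}\to 0$, so $|X|^2$ contributes $o(1)$ in the exponent $\widetilde\Gamma$ of \eqref{MFF2}, which is controlled because $\widetilde\Upsilon(|\lambda|)|X|^2\ge 0$ and $\widetilde\Upsilon$ is real-analytic with $\widetilde\Upsilon(|\lambda|)|X|^2\lesssim |X|^2(1+|\lambda|)$; the only nontrivial oscillatory content left is $-4i\,T\cdot\lambda$ with $T=\tfrac14|x|^2 u + o(1)$, and since $u=(u_1,u_2,0)$ this $r$-integral collapses, after restoring one scalar variable, exactly to $\rF\big(\tfrac12|x|u_2,\tfrac14|x|^2 u_1\big)$ up to the Gaussian factor from the two flat directions — but those are already accounted for by the passage from \eqref{ehk2'} to \eqref{ehk4} (the application of \eqref{Hor90}). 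Thus I would aim to show $\int_{\text{core}}\cP(s)\,ds = \big(\text{const}\big)\cdot(|x|^2\ww^2)^{-1}\rF\big(\tfrac12|x|u_2,\tfrac14|x|^2u_1\big)(1+o(1))$, and then plug back into \eqref{ehk4}; the factor $|x|^2\ww^2$ cancels, leaving $p(g)=\frac{4\pi}{|x|^2}e^{-|x|^2/4}\rF(\tfrac12|x|u_2,\tfrac14|x|^2u_1)(1+o(1))$ as claimed (the normalization constant $4\pi$ being fixed by the elementary case, cross-checked against Theorem \ref{asyab} in the overlap regime $|\tz|\le 3$, $\theta_2|x|\lesssim1$, where $\rF$ reduces to a Gaussian).

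The main obstacle, I expect, is making the core estimate \emph{uniform} over the whole parameter range covered by (C3)/(C4) — in particular uniformly as $|\tz|$ ranges over $[1,\vtz_1)$ versus $|\tz|\le 3$, and as $\ep$ ranges from $\sim 1$ down to small values (for $|\tz|$ near $\vtz_1$). The delicate point is controlling the error from replacing $\rP(s|x|\ww,\tfrac14|x|^2(u+2\ww s_1 e_2+2\ww s_2 e_3))$ by $\rP(0,\tfrac14|x|^2 u)$: one needs Lipschitz-type bounds on $\rP$ in both arguments that are good enough on the core, and these must be derived from the explicit formula \eqref{defP2} (differentiating under the integral, using $\partial_\lambda\widetilde\Gamma$ and the exponential decay of $\vv$) rather than from Proposition \ref{lP}, which only gives size. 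I would handle this by splitting the $r$-integral defining $\rP$ at $|r|\sim (|X|^2+1)^{-1/2}$ or similar, estimating the tail by the decay of $\vv$ and the head by Taylor expansion in $X$ and in the center variable; the bookkeeping of which quantity ($u_1,u_2,\ep,\ww,|x|$) dominates in each sub-case — done via Lemma \ref{asyin} and the relations \eqref{con6}, \eqref{con7}, \eqref{crieq2} — is exactly the kind of case analysis that already appears in Section \ref{schr}, and I would organize it the same way. A secondary, more cosmetic obstacle is verifying that $\rF(v_1,v_2)$ is itself a well-defined, smooth, positive function and has the right size so that \eqref{hk_ab} is consistent with the Li--Yau bounds; this follows from $(\tfrac{r}{\sinh r})$-type positivity arguments as in \eqref{defV}--\eqref{defcV} and should be stated as a short lemma preceding the proof.
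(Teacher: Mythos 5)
Your plan has a genuine gap, and it sits exactly where the paper itself warns that the Laplace-integral representation breaks down. In the regime of this theorem one has $\m\lesssim 1$, so the factor $e^{-\frac{|x|^2}{4}\cD(u;s)}$ provides essentially no concentration around the minimizer: the discarding estimates you want to borrow from the proof of Theorem \ref{sL1*} (namely \eqref{L3}, \eqref{L2}, \eqref{schrD}) only give bounds of size $|x|^{-2}e^{-c\m}$ relative to a main term of size $|x|^{-2}e^{-\m}$, which is no smallness at all once $\m\lesssim1$. Worse, the mass of the $s$-integral in \eqref{ehk4} does not live on your proposed core. Since $\m\sim\ep^{-2}\ww^2|x|^2$ (Lemma \ref{asyin}(i)), here $|x|\ww\lesssim\m^{1/2}\lesssim1$, so the integrand of $\int\cP(s)\,ds$ only decays on the scale $|s|\sim(|x|\ww)^{-1}\gtrsim1$; your core $\{|s|<C'|x|^{-1}u_1^{1/2}\ww^{-1}\}$ has area $|x|^{-2}u_1\ww^{-2}$, so after multiplying by the prefactor $|x|^2\ww^2$ it can contribute at most $O(u_1)\,e^{-|x|^2/4}$, whereas the asserted answer is $\sim|x|^{-2}e^{-|x|^2/4}$; these disagree whenever $u_1\ll|x|^{-2}$ and the core contribution vanishes outright at $u_1=0$ (the abnormal points $(x,0)$, which are exactly the limit points this theorem must cover, cf.\ Remark \ref{rethm101}). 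Even on the core your replacement of $\cP(s)$ by $\rP(0,\tfrac14|x|^2u)$ is not legitimate: the shift of the center argument is $|x|^2\ww|s|\lesssim|x|u_1^{1/2}\lesssim1$, which is $O(1)$, not $o(1)$ (note $u_1\lesssim|x|^{-2}$ makes $|x|^{-1}u_1^{1/2}\gtrsim u_1$, the reverse of what you wrote), so it changes $\rP$ by a bounded factor rather than $1+o(1)$. Finally, the step where ``the $r$-integral collapses to $\rF$ up to the Gaussian factor from the two flat directions, already accounted for by the passage to \eqref{ehk4}'' is circular: $\rF$ is by definition what remains of the $\lambda$-integral after performing the $\lambda'$-Gaussian integration in \eqref{ehk2'}, so to produce it you must work with the $\lambda$-representation, which is precisely what using \eqref{ehk4} was meant to avoid.

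The paper's proof takes the opposite route: it abandons \eqref{ehk4} in this case (this is the content of the remark in Subsection \ref{ideaS} that the phase $\cD$ is not $C^1$ at $s=0$ and no Laplace-type method applies when $\ww$ is small) and returns to the three-dimensional oscillatory integral \eqref{ehk3}. It splits $\R^3$ into $\Lambda_1=\{|\lambda'|\le|x|^{-3/4},\,|\lambda_1|\le|x|^{1/8}\}$ and two complementary regions; on $\Lambda_1$ it replaces $|\lambda|$ by $\lambda_1$ in amplitude and phase with multiplicative error $1+O(|x|^{-1/4})$, extends to all of $\R^3$, and integrates the resulting $\lambda'$-Gaussian exactly via \eqref{Hor90}, which is where the factor $4\pi/|x|^2$ and the function $\rF\bigl(\tfrac12|x|u_2,\tfrac14|x|^2u_1\bigr)$ appear. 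The outer regions are controlled by the exponential decay of $|\lambda|/\sinh|\lambda|$ and the bound $(r\coth r-1)/r^2\sim(1+|r|)^{-1}$, giving errors $O(|x|^{-9/4})$, $O(e^{-c|x|^{3/8}})$ and $O(|x|^{-2}e^{-|x|^{1/8}/4})$, while the positivity of $\rF$ (Lemma \ref{l1}, which you correctly identified as needed) shows the main term is $\sim_{\zeta_0}|x|^{-2}$ and hence dominates. If you want to salvage your approach you would have to analyze the full $s$-integral on the scale $|s|\sim(|x|\ww)^{-1}$ with genuinely varying $\cP$, which essentially amounts to undoing the transformation back to \eqref{ehk2'}; the direct argument of the paper is both simpler and the one that actually closes.
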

			
			Before showing Theorem \ref{mlsim1}, we state the positivity of the smooth function $\rF$ in the following lemma:
			
			\begin{lem}\label{l1}
				It holds that $\rF(v_1,v_2) > 0$, $\forall \,  v_1, v_2\in\rr$.
			\end{lem}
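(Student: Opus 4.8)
The plan is to deduce Lemma~\ref{l1} from the already-established positivity of $\rP$ (Proposition~\ref{lP}(i)), rather than re-running the reformulation argument of \cite{LZ212}. Indeed $\rF(v_1,v_2)$ is essentially a one–dimensional slice of $\rP$, and since $\rP(X,T)$ is the Fourier transform in $T$ of a \emph{radial} function on $\rr^3$, I would first carry out the standard reduction of such an integral to a one–dimensional sine integral, and then recognise the result as a $v_2$–derivative of $\rF$.

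Concretely: recall from \eqref{defP2}, \eqref{defcV}, \eqref{MFF2} and \eqref{J33} that $\rP(X,T) = \int_{\rr^3}\vv(\lambda)\,e^{-\frac14\widetilde\Upsilon(|\lambda|)|X|^2}\,e^{iT\cdot\lambda}\,d\lambda$ with $\vv(\lambda) = |\lambda|^3/(|\lambda|\cosh|\lambda|-\sinh|\lambda|)$ and $\widetilde\Upsilon(r) = r^2/(r\coth r - 1)$, both radial/even. Setting $G(r) := \frac{r^3}{r\cosh r-\sinh r}\,e^{-\frac14\widetilde\Upsilon(r)|X|^2}$ (so that $G(|\lambda|)$ is the integrand of $\rP$ without the oscillatory factor), which decays exponentially at $\pm\infty$ so that $G$ and all its derivatives lie in $L^1(\rr)$, and using $\int_{\s^2}e^{ir\,\omega\cdot T}\,d\sigma(\omega) = 4\pi\,\sin(r|T|)/(r|T|)$ together with Fubini, I would obtain for $T\neq 0$ that
\[
\rP(X,T) \;=\; \frac{4\pi}{|T|}\int_0^\infty G(r)\,r\sin(r|T|)\,dr \;=\; \frac{2\pi}{|T|}\int_{\rr}G(r)\,r\sin(r|T|)\,dr ,
\]
the last equality since $G(r)\,r\sin(r|T|)$ is even in $r$. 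Specialising $X=(v_1,0)\in\rr^2$ and $T=(v_2,0,0)\in\rr^3$ with $v_2>0$, and comparing with $\partial_{v_2}\rF(v_1,v_2) = -\int_{\rr}\frac{r^3}{r\cosh r-\sinh r}\,e^{-\frac14\widetilde\Upsilon(r)v_1^2}\,r\sin(v_2 r)\,dr$ (differentiation under the integral sign being legitimate by the same exponential decay), this gives the key identity
\[
\partial_{v_2}\rF(v_1,v_2) \;=\; -\,\frac{v_2}{2\pi}\,\rP\bigl((v_1,0),(v_2,0,0)\bigr),\qquad v_1\in\rr,\ v_2>0 .
\]

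From here the conclusion is immediate. By Proposition~\ref{lP}(i) the right-hand side is strictly negative, so $v_2\mapsto\rF(v_1,v_2)$ is strictly decreasing on $[0,+\infty)$; and since $\frac{r^3}{r\cosh r-\sinh r}e^{-\frac14\widetilde\Upsilon(r)v_1^2}\in L^1(\rr)$, the Riemann–Lebesgue lemma yields $\rF(v_1,v_2)\to 0$ as $v_2\to+\infty$. Integrating the identity from $v_2\ge 0$ to $+\infty$ (the improper integral converging because $\rF(v_1,\cdot)$ has a finite limit at $+\infty$) I would get
\[
\rF(v_1,v_2) \;=\; \frac{1}{2\pi}\int_{v_2}^{+\infty} s\,\rP\bigl((v_1,0),(s,0,0)\bigr)\,ds \;>\; 0,\qquad v_1\in\rr,\ v_2\ge 0,
\]
and the case $v_2<0$ follows from the evenness $\rF(v_1,-v_2)=\rF(v_1,v_2)$, which is clear from \eqref{defrF} (change of variable $r\mapsto -r$). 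The case $v_2=0$ is in any case trivial, the integrand in \eqref{defrF} being then strictly positive.

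I do not expect any serious obstacle: all the analytic content — the positivity of $\rP$, proved in Appendix~\ref{secA} by adapting \cite{LZ212} — is imported, and the only points needing routine care are the Fubini interchange in the radial reduction on $\rr^3$, the differentiation under the integral sign, and the convergence of the improper $s$–integral, each controlled by the exponential decay of $r^3/(r\cosh r-\sinh r)$. As a fallback, Lemma~\ref{l1} could also be proved directly, without reference to $\rP$: using $\frac{r^3}{r\cosh r-\sinh r}=3\prod_k(1+r^2/\vtz_k^2)^{-1}$ and $\widetilde\Upsilon(r)=3+2\sum_k r^2/(\vtz_k^2+r^2)$ one checks that $r\mapsto\frac{r^3}{r\cosh r-\sinh r}e^{-\frac14\widetilde\Upsilon(r)v_1^2}$ is positive-definite (a limit of products of characteristic functions of Laplace and compound–Poisson type), whence by Bochner's theorem its Fourier transform — which is exactly $\rF(v_1,\cdot)$ up to a constant — is nonnegative, and a convolution argument upgrades this to strict positivity; but the route through Proposition~\ref{lP}(i) is the shorter one.
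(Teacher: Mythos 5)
Your argument is correct, but it is not the route the paper takes: the paper proves Lemma~\ref{l1} by repeating, in one dimension, the factorization argument of Appendix~\ref{AX1} (write the even amplitude $\frac{r^3}{r\cosh r-\sinh r}\,e^{-\frac14\widetilde\Upsilon(r)v_1^2}$ as a limit of finite products via \eqref{J32}--\eqref{J33}, represent each factor by a Gaussian integral through \eqref{Hor90} with $q=1$, compute the resulting $r$-integral explicitly so the partial Fourier transforms are manifestly positive, and upgrade the limit to strict positivity by the convolution trick used for $Q(1,+\infty;X,T)$). You instead import the already-proved three-dimensional positivity of Proposition~\ref{lP}(i) and convert it into the one-dimensional statement: the radial reduction of $\rP$ on $\rr^3$, the identity $\partial_{v_2}\rF(v_1,v_2)=-\frac{v_2}{2\pi}\,\rP\bigl((v_1,0),(v_2,0,0)\bigr)$ for $v_2>0$, strict monotonicity of $\rF(v_1,\cdot)$, the Riemann--Lebesgue limit at $+\infty$, and evenness in $v_2$; I checked the constant and the parity bookkeeping in the key identity and they are right, the interchanges you flag (Fubini, differentiation under the integral, convergence of $\int_{v_2}^{+\infty}s\,\rP\,ds$) are all covered by the exponential decay of $r^3/(r\cosh r-\sinh r)$ together with $\widetilde\Upsilon\ge 0$, and there is no circularity since the Appendix~\ref{secA} proof of Proposition~\ref{lP}(i) nowhere uses Lemma~\ref{l1}. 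The trade-off: your route is shorter on the page and yields the pleasant representation $\rF(v_1,v_2)=\frac{1}{2\pi}\int_{v_2}^{+\infty}s\,\rP\bigl((v_1,0),(s,0,0)\bigr)\,ds$ as a by-product, but it leans on the full strength of the 3D result plus some extra analytic plumbing; the paper's choice is self-contained in 1D, structurally parallel to the appendix (hence the one-line remark there), and gives positivity directly rather than through a monotonicity-plus-decay argument. Your Bochner-type fallback is, in spirit, exactly the paper's argument, so either way the lemma is secured.
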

			
			Its proof is essentially the same as in Appendix \ref{AX1} below, but simpler in the 1D case.
			We now proceed with the proof of Theorem \ref{mlsim1}.

			\begin{proof}
				Recall we have shown that $|x| \sim d(g) \gg 1$ in the proof of \eqref{ab_rel}, and the heat kernel is given by (cf. \eqref{ehk2'})
				\begin{align}\label{ehk3}
					p(g) &=  e^{-\frac{|x|^2}{4}} \, \int_{\R^3} \frac{|\lambda|}{\sinh{|\lambda|}}
					\, \exp\left\{-\frac{|x|^2}{4}  \left[\frac{|\lambda| \coth{|\lambda|} - 1}{|\lambda|^2} \, (\lambda_2^2 + \lambda_3^2)  - i u \cdot \lambda\right] \right\} \, d\lambda \\[1mm]
					\nonumber
					& =  e^{-\frac{|x|^2}{4}} \left( \int_{\Lambda_1} + \int_{\Lambda_2} + \int_{\Lambda_3}\right) =: e^{-\frac{|x|^2}{4}} \, (\cI_1 + \cI_2 + \cI_3) =:  e^{-\frac{|x|^2}{4}} \, \cI,
				\end{align}
				where
				\begin{gather*}
					\Lambda_1 := \, \{(\lambda_1,\lambda^\prime) \in \R \times \R^2; \, |\lambda^\prime| \le |x|^{-\frac{3}{4}}, \, |\lambda_1| \le |x|^{\frac{1}{8}}\}, \\
					\Lambda_2 :=  \,\{(\lambda_1,\lambda^\prime) \in \R \times \R^2; \, |\lambda^\prime|  > |x|^{-\frac{3}{4}}, \, |\lambda_1| \le |x|^{\frac{1}{8}}\}, \\
					\Lambda_3 :=  \, \{(\lambda_1,\lambda^\prime) \in \R \times \R^2; \,  |\lambda_1| > |x|^{\frac{1}{8}}\}.
				\end{gather*}
				Notice that the functions
				\begin{gather*}
					\left(\ln\left( \frac{r}{\sinh{r}} \right)\right)^\prime = \frac{1}{r} - \coth{r}  = \frac{\sinh{r} - r \, \cosh{r}}{r \, \sinh{r}}, \\[2mm]
					\left( \frac{r \coth{r} - 1}{r^2} \right)^\prime = \frac{1}{r^2} \left( \coth{r} - \frac{r}{(\sinh{r})^2} \right) -2 \left( \frac{r \coth{r} - 1}{r^3} \right)
				\end{gather*}
				are all bounded on $\R$, and $|\,|\lambda| - |\lambda_1| \,| \le |\lambda^\prime|.$ Then on $\Lambda_1$,  using the finite-increment theorem, we can write
				\begin{align}\nonumber
					\frac{|\lambda|}{\sinh{|\lambda|}}
					e^{-\frac{|x|^2}{4} \frac{|\lambda| \coth{|\lambda|} - 1}{|\lambda|^2} \, |\lambda^\prime|^2}
					\nonumber
					&= \, \frac{\lambda_1}{\sinh{\lambda_1}}
					e^{-\frac{|x|^2}{4} \frac{\lambda_1 \coth{\lambda_1} - 1}{\lambda_1^2} \, |\lambda^\prime|^2} e^{O(|\lambda^\prime|)  + O(|\lambda^\prime|^3 \, |x|^2)} \\\label{Lambda1}
					&= \, \frac{\lambda_1}{\sinh{\lambda_1}}
					e^{-\frac{|x|^2}{4} \frac{\lambda_1 \coth{\lambda_1} - 1}{\lambda_1^2} \, |\lambda^\prime|^2} (1 + O(|x|^{-\frac{1}{4}})) \\[1mm]
					&=:  \mathbf{A}_1+ \mathbf{A}_2 \mbox{\,\,(with the obvious meaning)},\nonumber
				\end{align}
				where we have used the fact that $d(g)^2 \sim |x|^2 \to +\infty$.
				
				Put $\cI_{1,k} = \int_{\Lz_1} \mathbf{A}_k \exp\{i|x|^2 u\cdot\lz/4\}\, d\lz$, $k=1,2$. We then write
					$$\begin{aligned}
						\cI_{1,1} &= \int_{\Lz_1} \frac{\lambda_1}{\sinh{\lambda_1}}
						e^{-\frac{|x|^2}{4} \frac{\lambda_1 \coth{\lambda_1} - 1}{\lambda_1^2} \, |\lambda^\prime|^2} e^{\frac{|x|^2}{4}i u \cdot \lambda} \, d\lambda \\[1mm]
						&=\int_{\R^3}\cdots-\int_{\Lambda_2}\cdots- \int_{\Lambda_3}\cdots=:\cI_{1,1,1} - \cI_{1,1,2} - \cI_{1,1,3}.
					\end{aligned}$$
					Next, we shall illustrate that the term $\cI_{1,1,1}$ is principal and the other terms $\cI_{1,2}, \, \cI_{1,1,2},\, \cI_{1,1,3}, \, \cI_{2},\, \cI_{3}$ are negligible.
					
					{\bf Estimate of $\cI_{1,1,1}$.}  Applying \eqref{Hor90} with $q=2$, $A=\frac{|x|^2}{2}\frac{\lz_1\coth\lz_1-1}{\lz_1^2} \, \I_2$ and $Y=\frac{|x|^2}{4}(u_2,0)$ respectively w.r.t. the variable $\lz'$, we see that
					\begin{align*}
						\cI_{1,1,1} = \frac{4\pi}{|x|^2} \, \rF\left(\frac12|x|u_2,\,\frac14|x|^2u_1\right)
					\end{align*}
					from the definition of $\rF$ in \eqref{defrF}.
					
					By Lemma \ref{l1}, $\rF(v_1,v_2)\sim1$ whenever $v_1$ and $v_2$ are both bounded. Then $\cI_{1,1,1} \sim_{\zeta_0}|x|^{-2}$ under our assumptions. It is therefore sufficient to show
						\begin{equation}\label{remain}
							|\cI_{1,2}| + |\cI_{1,1,2}| +|\cI_{1,1,3}| +|\cI_{2}|+ |\cI_{3}|=o(|x|^{-2}).
						\end{equation}
						
						{\bf Bound of $\cI_{1,2}$.} Applying \eqref{Lambda1} and the exponential decay of $r/\sinh r$, we obtain
							$$
							|\cI_{1,2}| \lsim |x|^{-\frac14} \,\int_{\Lz_1}\frac{\lambda_1}{\sinh{\lambda_1}}
							e^{-\frac{|x|^2}{4} \frac{\lambda_1 \coth{\lambda_1} - 1}{\lambda_1^2} \, |\lambda^\prime|^2} d\lz' \, d\lz_1 \lsim |x|^{-\frac94}.
							$$

						{\bf Evaluation of $\cI_{2}$ and $\cI_{1,1,2}$.} On $\Lambda_2$, first observe that $|\lambda^\prime|^2 \, |x|^2 \ge |x|^{- \frac{3}{2}} \, |x|^2 = |x|^{\frac{1}{2}}$. Moreover, the simple estimate
						\begin{equation}\label{e1}
							\frac{r \coth{r} - 1}{r^2} \sim \frac{1}{1 + |r|}, \quad \forall \, r \in\mathbb{R}
						\end{equation}
						implies that
						\begin{align*}
							\frac{\lambda_1 \coth{\lambda_1} - 1}{\lambda_1^2} \sim \frac{1}{1 + |\lambda_1|} \ge \frac{1}{1 + |x|^{\frac{1}{8}}}
							\sim \frac{1}{|x|^{\frac{1}{8}}}.
						\end{align*}
						Then
						\[
						\frac{|x|^2}{4} |\lambda'|^2 \,\frac{\lambda_1 \coth{\lambda_1} - 1}{\lambda_1^2}  \gtrsim |x|^{\frac{3}{8}} + |x|^{\frac{15}{8}} |\lambda'|^2.
						\]
						From this and the trivial estimate $r/\sinh{r} \lesssim 1$, we obtain
						$$ |\cI_{1,1,2}| \le e^{-c \, |x|^{\frac{3}{8}}}  \int_{\Lambda_2} \exp\{- |x|^{\frac{15}{8}} |\lambda'|^2\} \, d\lambda_1 d\lambda'  \lesssim e^{-c \, |x|^{\frac{3}{8}}}. $$
						
						To estimate $\cI_2$,  using \eqref{e1} again for $r=|\lambda|$, we see
						\begin{align}\label{I2est}
							\frac{|\lambda| \coth{|\lambda|} - 1}{|\lambda|^2} \, |\lambda^\prime|^2 \, |x|^2 \gtrsim |x|^{\frac{3}{8}}, \qquad \forall \, \lambda \in \Lambda_2,
						\end{align}
						which implies that
						$$
						|\cI_2| \lesssim e^{-c \, |x|^{\frac{3}{8}}} \int_{\Lambda_2} \frac{|\lambda|}{\sinh|\lambda|} \, d\lambda
						\le  e^{-c \, |x|^{\frac{3}{8}}} \int_{\mathbb{R}^3} \frac{|\lambda|}{\sinh|\lambda|} \, d\lambda\lesssim e^{-c \, |x|^{\frac{3}{8}}}.
						$$
						
						{\bf Estimations of $\cI_{3}$ and $\cI_{1,1,3}$.} It follows from the monotonicity and the exponential decay of $\frac{s}{\sinh{s}}$ on $[0,+\infty)$ that
						\begin{align*}
							\frac{\lambda_1}{\sinh{\lambda_1}} = \left( \sqrt{\frac{\lambda_1}{\sinh{\lambda_1}}} \right)^2 \le \sqrt{\frac{|x|^{\frac{1}{8}}}{\sinh{|x|^{\frac{1}{8}}}}}  \sqrt{\frac{\lambda_1}{\sinh{\lambda_1}}} \lesssim \sqrt{\frac{\lambda_1}{\sinh{\lambda_1}}} \, e^{ - \frac{|x|^\frac18}{4}}, \quad \forall (\lambda_1,\lambda^\prime) \in \Lambda_3.
						\end{align*}
						Inserting this estimate into $\cI_{1,1,3}$, we have
						$$
						|\cI_{1,1,3}| \lesssim  e^{ - \frac{|x|^{\frac{1}{8}}}{4}} \int_{\Lambda_3} \sqrt{\frac{\lambda_1}{\sinh{\lambda_1}}}
						\, \exp\left\{-\frac{1}{4} \frac{\lambda_1 \coth{\lambda_1} - 1}{\lambda_1^2} \, |\lambda^\prime|^2 \, |x|^2\right\}
						\, d\lambda_1 \, d\lambda'  \\
						\lesssim  e^{ - \frac{|x|^{\frac{1}{8}}}{4}}|x|^{-2}.
						$$
						Similarly,
						\[
						|\cI_3| \lesssim e^{ - \frac{|x|^{\frac{1}{8}}}{4}} \int_{\Lambda_3} \sqrt{\frac{|\lambda|}{\sinh{|\lambda|}}} d\lambda  \lesssim e^{ - \frac{|x|^{\frac{1}{8}}}{4}}.
						\]
						
						From these estimates we conclude the proof of \eqref{remain} and hence the theorem.
					\end{proof}
					
					\begin{remark} \label{rethm101}
						The argument above also works for $u_1 = u_2 = 0$.
					\end{remark}
					
					Notice that the above proof and the fact that $d(g)^2= |x|^2 + 4\,\m$ also give the following
					
					\begin{cor} \label{nABnc1}
						Let $\nzz>0$. Then there is a constant $C(\nzz)\gg1$ such that
						\begin{equation}\label{pbnd3}
							p(g) \, \sim_{\zeta_0} \, |x|^{-2} \, e^{-\frac{d(g)^2}{4}}, \quad  \mbox{for $d(g)^2\ge C(\nzz)$ with  $\m\le\nzz$.}
						\end{equation}
					\end{cor}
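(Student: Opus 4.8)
The plan is to read Corollary~\ref{nABnc1} off directly from Theorem~\ref{mlsim1}, so that essentially the only work is tracking the dependence of the implicit constants on $\nzz$. First I would observe that the hypotheses ``$\m \le \nzz$ and $d(g)^2 \ge C(\nzz)$'' are precisely condition (C3), and recall from the discussion around \eqref{ab_rel} that (C3) $\Leftrightarrow$ (C4); in particular one has $u_1 \lesssim |x|^{-2}$ and $u_2 \lesssim |x|^{-1}$ (with implicit constants depending only on $\nzz$), and also $|x| \sim d(g) \to +\infty$. Hence the two arguments of $\rF$ appearing in \eqref{hk_ab}, namely $\tfrac12 |x| u_2 \ge 0$ and $\tfrac14 |x|^2 u_1 \ge 0$, are both bounded above by a constant depending only on $\nzz$.

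Next I would invoke Lemma~\ref{l1}, which gives $\rF > 0$ on $\R^2$, together with the smoothness of $\rF$ --- this is clear from \eqref{defrF}, since $r^3/(r\cosh r - \sinh r)$ is smooth and decays exponentially, so one may differentiate under the integral sign. A positive continuous function attains a positive minimum and a finite maximum on any compact set; applied to the box $[0, C(\nzz)]^2$ this yields $\rF\!\left(\tfrac12 |x| u_2, \tfrac14 |x|^2 u_1\right) \sim_{\nzz} 1$.

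Substituting this into Theorem~\ref{mlsim1}, and enlarging $C(\nzz)$ if necessary so that the factor $1 + o_{\nzz}(1)$ there lies in $[\tfrac12, 2]$, gives $p(g) \sim_{\nzz} |x|^{-2} e^{-|x|^2/4}$. Finally, using $d(g)^2 = |x|^2 + 4\m$ with $0 \le \m \le \nzz$, we have $e^{-d(g)^2/4} = e^{-\m}\, e^{-|x|^2/4} \sim_{\nzz} e^{-|x|^2/4}$, and combining the two estimates gives the claimed $p(g) \sim_{\nzz} |x|^{-2} e^{-d(g)^2/4}$.

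There is no genuine analytic obstacle here: the entire substance is contained in Theorem~\ref{mlsim1} (the delicate case-by-case oscillatory-integral analysis near the abnormal set) and in the positivity Lemma~\ref{l1}. The only thing to watch is that all the implicit constants --- in $u_1 \lesssim |x|^{-2}$ and $u_2 \lesssim |x|^{-1}$, in $\rF \sim 1$ on the box, and in the remainder of Theorem~\ref{mlsim1} --- depend on $\nzz$, so the final threshold $C(\nzz)$ must be chosen to dominate all of them simultaneously.
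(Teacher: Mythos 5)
Your proposal is correct and follows essentially the same route as the paper: the paper derives the corollary from the proof of Theorem \ref{mlsim1} (where it already notes that $\rF(v_1,v_2)\sim 1$ when both arguments are bounded, via Lemma \ref{l1} and continuity, so the main term is $\sim_{\nzz}|x|^{-2}$) together with the identity $d(g)^2=|x|^2+4\m$ and $\m\le\nzz$. Your only cosmetic difference is that you quote the final asymptotic formula of Theorem \ref{mlsim1} rather than its internal estimate, which changes nothing of substance.
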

					
					\section{Summaries of main results} \label{sec11}
					Recall that in previous sections we have established the uniform asymptotic expansions of the heat kernel at infinity in four possible cases, namely Theorems \ref{asyab}, \ref{bigL1}, \ref{sL1} and \ref{mlsim1} with Remarks \ref{Vara1} and \ref{Vara2}. Now we summarize these results and deduce the precise estimates as well as the small-time asymptotics of the heat kernel.
					
					\subsection{Uniform asymptotics}
Recall \eqref{mdef}-\eqref{chr2def}, \eqref{qdef}, \eqref{tild_P} and \eqref{defrF} for pertinent definitions. We have:
					\begin{theo}\label{Mtheo}
						Under the Assumption (A) (cf. \eqref{n78n}), it holds that:
						{\em\begin{compactenum}[(i)]
								\item Let  $\az_0\in [3,\pi) $. Then there exists a constant $C(\az_0) \gg 1$ such that for all $g$ satisfying $|\tz| \le \az_0$ with $\tz_2|x| \ge C(\az_0)$,
								$$p(g) =  (8\pi)^{\frac{3}{2}} \, e^{-\frac{d(g)^2}{4}} \, \frac{|\theta|}{\sin{|\theta|}} \, \frac{1}{\sqrt{\det(- \He_\theta \phi(g;\theta))}} \,  (1 + o_{\az_0}(1)).$$
								
								\item Let $\beta_0 \in (0,1]$. Then there exists a constant $C(\beta_0) \gg 1$ such that for all $g$ satisfying $|\tz| \ge \beta_0$, $\chri, \m \ge C( \beta_0)$,
								$$
								p(g) = 16 \pi^2 \sqrt{\pi\vtz_1} \,  \q(|\tz|) \, \frac{e^{-\chrii} \, I_0(\chrii)}{\sqrt{\ep \, \chri}} \, e^{ -\frac{\vartheta_1|x|^2 \, \ww^2}{2\ep}} \, I_0\left(\frac{\vartheta_1|x|^2 \, \ww^2}{2\ep}\right) \,  e^{-\frac{d(g)^2}{4}} \, (1+o_{\beta_0}(1)).
								$$
								
								\item Let $\zeta_0 > 0$.  Then there is a constant $C(\zeta_0) \gg 1$ such that, for all $g$ satisfying  $\chri \le \zeta_0$ and $\m \ge C(\zeta_0)$,
								$$
								p(g) = 4\pi \, \frac{\vartheta_1^2 }{ - \sin{\vartheta_1}  }\, e^{-\frac{d(g)^2}{4}}\int_0^{\w_0}\int_{-\pi}^\pi  \widetilde{\cP}_\pl(\w,\gz) \, d\w  \, d\gz\,(1 + o_{\nzz}(1)).
								$$
								\item Let $\nzz>0$. Then there is a constant $C(\nzz)\gg1$ such that, for all $g$ satisfying $\m\le\nzz$ and $d(g)^2\ge C(\nzz)$,
								$$	p(g) =  e^{-\frac{d(g)^2}{4}}\, e^{\m} \, \frac{4\pi}{|x|^2} \, \rF\left(\frac12|x|u_2,\,\frac14|x|^2u_1\right) (1 + o_{\nzz}(1)).
								$$
						\end{compactenum}}
					\end{theo}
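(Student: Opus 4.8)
The plan is to recognize that each of the four items is nothing but a reformulation of one of the uniform asymptotic expansions already obtained in the previous sections, so that the proof amounts to careful bookkeeping rather than any new analysis.

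For item (i), I would invoke Theorem \ref{asyab} together with Remark \ref{Vara1}: the remark upgrades the hypothesis ``$|\tz|\le 3$'' to ``$|\tz|\le \az_0$'' for any fixed $\az_0\in[3,\pi)$, at the cost of letting the error term and the threshold constant depend on $\az_0$. It then only remains to rewrite the amplitude: by the product expansion \eqref{defV} and the Euler product for the sine one has $\cV(i\theta)=\prod_{j\ge1}\bigl(1-|\tz|^2/(j^2\pi^2)\bigr)^{-1}=|\tz|/\sin|\tz|$ (this identity is already recorded in \eqref{nAnz1}), which turns the right-hand side of \eqref{hk_as1} into the asserted formula. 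Item (ii) is exactly Theorem \ref{bigL1} after applying Remark \ref{Vara2}, which relaxes ``$|\tz|\ge1$'' to ``$|\tz|\ge\beta_0$'' for any $\beta_0\in(0,1]$, with the implicit constants and the remainder depending additionally on $\beta_0$; no further manipulation is needed. Likewise, item (iii) is the verbatim content of Theorem \ref{sL1}, with $\w_0$ and $\widetilde{\cP}_\pl$ as in \eqref{tild_P} and the statement of that theorem.

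Finally, for item (iv) I would start from Theorem \ref{mlsim1}, valid under precisely the hypotheses ``$\m\le\nzz$ and $d(g)^2\ge C(\nzz)$'', which yields $p(g)=e^{-|x|^2/4}\,\frac{4\pi}{|x|^2}\,\rF\bigl(\frac12|x|u_2,\frac14|x|^2u_1\bigr)(1+o_{\nzz}(1))$. Since $\m=(d(g)^2-|x|^2)/4$ by \eqref{mdef} (equivalently $d(g)^2=|x|^2+4\m$), one has $e^{-|x|^2/4}=e^{\m}\,e^{-d(g)^2/4}$, and substituting this in gives exactly the expression in (iv). The only points requiring any care — and they are minor — are these two normalization identities: the amplitude identity $\cV(i\theta)=|\tz|/\sin|\tz|$ in (i) and the exponent split $e^{-|x|^2/4}=e^{\m}e^{-d(g)^2/4}$ in (iv); once they are in place, all four statements follow immediately from the corresponding theorems and remarks with no additional estimates. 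There is thus no genuine analytic obstacle at this stage: the substance of the result lies entirely in Theorems \ref{asyab}, \ref{bigL1}, \ref{sL1} and \ref{mlsim1}, which have already been established.
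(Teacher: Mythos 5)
Your proposal is correct and follows exactly the paper's treatment: Theorem \ref{Mtheo} is presented there as a pure summary of Theorems \ref{asyab}, \ref{bigL1}, \ref{sL1} and \ref{mlsim1} together with Remarks \ref{Vara1} and \ref{Vara2}, with no further argument. The only translations required are precisely the two you identify, namely $\cV(i\theta)=|\theta|/\sin|\theta|$ for item (i) and the split $e^{-|x|^2/4}=e^{\m}\,e^{-d(g)^2/4}$ coming from $d(g)^2=|x|^2+4\m$ for item (iv).
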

					
						\begin{remark}
							Supposing the requirements of Theorem \ref{Mtheo} (i) and (ii) are fulfilled simultaneously, then the leading terms of the asymptotic formulas for the heat kernel given by them indeed coincide. This fact can be verified via an elementary but somewhat tedious calculation, by means of \eqref{asinI0} (noting now that $\chrii\gg 1$ and $\frac{\vartheta_1|x|^2 \, \ww^2}{2\ep} \gg1$ by Lemma \ref{asyin} and Remark \ref{rem67}), with equalities \eqref{chr1def}, \eqref{chr2def},
							\eqref{qdef}, \eqref{32nK},
							\eqref{detph} and $\ww =\tz_2 \, \psi(|\tz|)$ (cf. \eqref{n78n}).
						
						As a by-product, we get the following important equality under Assumption (A):
						\begin{align} \label{Ede1}
							(8\pi)^{\frac{3}{2}} \, \frac{|\theta|}{\theta_2 \sin{|\theta|}} \, \frac{|x|^{-3}}{\sqrt{\K(\tz_1,\tz_2) \,  \K_3(\tz_1,\tz_2)}} = 8 \pi \sqrt{\pi} \,  \q(|\tz|) \,  \left(\frac{|x|^2 \, \ww^2}{2} \chri \, \chrii \right)^{-\frac{1}{2}}.
						\end{align}

					\end{remark}
					
				An application of Theorem \ref{Mtheo} leads to the following consequence:
				
				\subsection{Sharp upper and lower bounds}

				The statement of this result has already appeared in Section \ref{sec24}. We restate it here for the reader's convenience.
				\begin{theo}
					Under Assumption (A) (cf. \eqref{n78n}), we have
					\begin{equation} \label{pbound2}
						\begin{aligned}
							p(x,t) \sim (1+d(g))^{-2}\frac{1+\ep  \, d(g)}{1+\ep \,  d(g) + \ep  \, t_2^{\frac12} \, |x|^{\frac12} \,  (d(g)^2-|x|^2)^{\frac14}} \, e^{-\frac{d(g)^2}{4}}.
						\end{aligned}
					\end{equation}
				\end{theo}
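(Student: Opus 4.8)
The plan is to derive the sharp two-sided bound \eqref{pbound2} directly from the uniform asymptotic expansions collected in Theorem \ref{Mtheo} together with the precise estimates already proved in the degenerate regimes (Theorems \ref{sL1*} and \ref{mlsim1}, Corollary \ref{nABnc1}). The only genuinely new work is bookkeeping: one must verify that the four leading terms, after being reduced to elementary expressions via Lemma \ref{asyin}, all collapse to the single uniform formula on the right-hand side of \eqref{pbound2}. First I would reduce to large $d(g)$: for $d(g) \lesssim 1$ the classical Li--Yau bounds \eqref{GSB}--\eqref{GLB} already give $p(x,t) \sim e^{-d(g)^2/4}$, and the right-hand side of \eqref{pbound2} is $\sim 1$ in this range, since every factor is comparable to a constant. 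So from now on I assume $d(g)^2 \gg 1$.

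Next I would partition the parameter space $\{d(g)^2 \gg 1\}$ according to which theorem applies, using the dichotomy $\m \lesssim 1$ versus $\m \to +\infty$, and within the latter $\chri \lesssim 1$ versus $\chri \to +\infty$, and within $\{\chri, \m \to +\infty\}$ the subdivision $|\theta| \le 3$ versus $|\theta| \ge 1$ (these overlap, which is fine). Concretely: when $\m \lesssim 1$, Corollary \ref{nABnc1} gives $p(g) \sim |x|^{-2} e^{-d(g)^2/4}$; since $\m \lesssim 1$ forces $d(g) \sim |x|$ and (by Lemma \ref{asyin} (i)--(ii)) $\ep \, d(g) \to \infty$ is \emph{not} forced — rather one checks $t_2^{1/2}|x|^{1/2}(d(g)^2 - |x|^2)^{1/4} \lesssim |x| \cdot \m^{1/2} \lesssim |x|$ and $\ep \, d(g) \lesssim d(g)$, so the displayed right-hand side is $\sim (1 + d(g))^{-2} \cdot \frac{d(g)}{d(g)} \sim |x|^{-2}$, matching. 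When $\chri \lesssim 1$ and $\m \to \infty$, Theorem \ref{sL1*} gives $p(g) \sim |x|^{-2} e^{-d(g)^2/4}$, and here $\ep^2 d(g)^2 \sim \chri \lesssim 1$ so $\ep \, d(g) \lesssim 1$, hence the right-hand side of \eqref{pbound2} is again $\sim (1+d(g))^{-2} \sim |x|^{-2}$ because $d(g) \sim |x|$ when $|u| \ll 1$. The two ``explicit'' cases (i) and (ii) of Theorem \ref{Mtheo} require the estimates of Lemma \ref{asyin}: in case (ii) one plugs in $\q(|\theta|) \sim (\var - |\theta|)^{5/2} = \ep^{5/2}$ from \eqref{qvar2}, $I_0(r) \sim e^r(1+r)^{-1/2}$ from \eqref{aI0}, and $|x|^2 \ww^2/\ep \sim \ep \, \m$ from Lemma \ref{asyin} (i), obtaining after simplification the estimate \eqref{pbnd1}, i.e. $p(g) \sim \ep^2 \chri^{-1/2}(1 + \ep \m)^{-1/2}(1 + \chrii)^{-1/2} e^{-d(g)^2/4}$; then using $\chri \sim \ep^2 d(g)^2$, $\chrii \sim \ep \, u_2 |x|^3 \m^{-1/2} \sim \ep \, t_2^{1/2}|x|^{1/2}(d(g)^2 - |x|^2)^{1/4}$ and $\ep \m \sim \ep \, d(g)^2 - \ep |x|^2 \lesssim \ep \, d(g)^2 \sim \ep \, d(g) \cdot d(g)$ one rewrites this in the claimed form. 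Case (i) is the regime $|\theta| \le 3$, where $\ep \sim 1$, $\chrii \sim |x|^2$, $\m \sim \theta_2^2 |x|^2$, and the right-hand side of \eqref{pbound2} becomes $\sim d(g)^{-2} \cdot \frac{d(g)}{d(g) + t_2^{1/2}|x|^{1/2}(d(g)^2-|x|^2)^{1/4}}$; one checks that $\det(-\He_\theta \phi(g;\theta)) = \theta_2^2 \K \K_3 |x|^6 \sim \theta_2^2 |x|^6$ (via \eqref{detph} and $\K, \K_3 \sim 1$ for $|\theta| \le 3$), so the leading term is $\sim |x|^{-3} \theta_2^{-1} e^{-d(g)^2/4}$, and a short computation with $d(g)^2 - |x|^2 = 4\m \sim \theta_2^2 |x|^2$ confirms the match.

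The remaining case is (iii), $\chri \lesssim 1$ and $\m \to \infty$ — but this was already handled above via Theorem \ref{sL1*}, whose conclusion \eqref{psL1} is the clean estimate $p(g) \sim_{\zeta_0} |x|^{-2} e^{-d(g)^2/4}$; the integral form in Theorem \ref{Mtheo} (iii) is not needed for the \emph{bound}, only the cruder \eqref{psL1}. So the logical structure is: \eqref{pbound2} reduces to comparing, in each of the cases (a) $\m \lesssim 1$, (b) $\chri \lesssim 1$, $\m \to \infty$, (c) $\chri, \m \to \infty$ with $|\theta| \le 3$, (d) $\chri, \m \to \infty$ with $|\theta| \ge 1$, the already-known leading order of $p(g)$ against the right-hand side of \eqref{pbound2}, and each comparison is a finite computation using Lemma \ref{asyin} (i)--(ii), $d(g)^2 = |x|^2 + 4\m$, and the asymptotics \eqref{aI0}, \eqref{asinI0}, \eqref{qvar2} of the special functions.

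The main obstacle I anticipate is purely algebraic: showing that the somewhat baroque combination of $\ep$, $\chri$, $\chrii$, $\m$, $\ww$, $|x|$ appearing in \eqref{pbnd1} really does simplify to the symmetric-looking expression in \eqref{pbound2} in \emph{all} sub-regimes of case (d) simultaneously — in particular keeping track of whether $\ep \, \m$ is large or bounded, and whether $\chrii$ is large or bounded, since $I_0$ behaves differently ($e^r(2\pi r)^{-1/2}$ vs. $\sim 1$) in those two ranges. The cleanest way to organize this is to first prove the \emph{unconditional} comparisons $\chri \sim \ep^2 d(g)^2$, $\chrii \sim \ep \, t_2^{1/2}|x|^{1/2}(d(g)^2-|x|^2)^{1/4}$, $\ep \m \sim \ep(d(g)^2 - |x|^2)$, and $1 + \ep \, d(g) \sim (1 + \ep \m)^{1/2}(1 + \ep d(g))^{1/2} \cdot (\text{something} \lesssim 1)$ — wait, more carefully, $\chri^{1/2} \sim \ep \, d(g)$ and $(1 + \ep \m)^{1/2} \lesssim (1 + \ep \, d(g)^2)^{1/2}$, which can exceed $1 + \ep \, d(g)$; this is exactly why the numerator $1 + \ep \, d(g)$ and the $|x|$-dependent term in the denominator are needed to absorb the discrepancy, and one must check the identity $\ep^2 \cdot \chri^{-1/2}(1+\ep\m)^{-1/2}(1+\chrii)^{-1/2} \sim (1+d(g))^{-2} \cdot \frac{1 + \ep d(g)}{1 + \ep d(g) + \chrii/\ep \cdot \ep}$ by cross-multiplying and using $\ep \lesssim 1$, $d(g) \sim |x|(1 + |u|)^{1/2}$. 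I would present this core identity as a separate displayed computation and then note that cases (a), (b), (c) are its degenerate limits.
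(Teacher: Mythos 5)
Your overall strategy coincides with the paper's: for $d(g)^2\lesssim 1$ both sides of \eqref{pbound2} are $\sim 1$, and for $d(g)^2\gg1$ one first rewrites the right-hand side in terms of $\m,\chri,\chrii,\ep$ via Lemma \ref{asyin}\,(ii) (this is the paper's quantity ${\rm BND}$ in \eqref{checkRHS}) and then verifies it case by case against Corollary \ref{nABnc1}, Theorem \ref{sL1*}, the estimate \eqref{pbnd1}, and Theorem \ref{Mtheo}\,(i) with \eqref{detph}. Your computations in cases (a), (b), (c) agree with the paper's.

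There is, however, a genuine algebraic error in your case (d), exactly at the ``core identity'' you single out. You identify $\ep\, t_2^{1/2}|x|^{1/2}(d(g)^2-|x|^2)^{1/4}$ with $\chrii$. Since $t_2=\tfrac14|x|^2u_2$ and $d(g)^2-|x|^2=4\m$, this term is in fact comparable to $\ep\,u_2^{1/2}|x|^{3/2}\m^{1/4}=\bigl(\ep\m\cdot\ep\,u_2|x|^3\m^{-1/2}\bigr)^{1/2}\sim(\ep\,\m\,\chrii)^{1/2}$, \emph{not} to $\chrii$; the two agree only when $\ep\m\sim\chrii$, which fails in general. A concrete admissible regime: $u_1=|x|^{-3/2}$, $u_2=|x|^{-13/8}$, $|x|\to+\infty$ (so $(u_1,u_2)\in\R^2_{<,+}$ and $|\tz|>3$ by Lemma \ref{asyin}\,(v)); then $\m\sim|x|^{1/2}$, $\ep\sim|x|^{-7/8}$, $\ep\m\sim|x|^{-3/8}\to0$ while $\chrii\sim\chri\sim|x|^{1/4}\to\infty$. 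Here \eqref{pbnd1} gives $p(g)e^{d(g)^2/4}\sim|x|^{-2}$ and the true right-hand side of \eqref{pbound2} is also $\sim|x|^{-2}$ (the denominator is $\sim 1+\ep d(g)\sim|x|^{1/8}$ since $(\ep\m\chrii)^{1/2}\sim|x|^{-1/16}$), but your displayed identity with $\chrii$ in the denominator yields $|x|^{-2-1/8}$, off by the unbounded factor $|x|^{1/8}$. The repair is one line: prove instead $1+\ep\,d(g)+\ep\,t_2^{1/2}|x|^{1/2}(d(g)^2-|x|^2)^{1/4}\sim(1+\chri+\ep\m\chrii)^{1/2}$, using $1+\ep\,d(g)\sim(1+\chri)^{1/2}$ and $\sqrt{1+a+b}\sim1+\sqrt a+\sqrt b$ for $a,b\ge0$; with that normal form your case-(d) verification goes through exactly as in the paper, via $\chri\sim\ep^2d(g)^2\sim\chrii+\ep\m$.
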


				\begin{proof}
					 Recall that $d(g)^2= |x|^2 + 4\,\m$  and $\m = \bH(1) \, |x|^2/4$. Then from Lemma \ref{asyin} (ii), the right hand side of \eqref{pbound2} is bounded below and above by the quantity
						\begin{align} \label{checkRHS}
							{\rm BND} := \frac{1}{1 + d(g)^2 } \, (1 + \chri)^{\frac{1}{2}} \, (1 + \chri + \epsilon \, \m \, \chrii )^{-\frac{1}{2}} \, e^{-\frac{d(g)^2}{4}}.
						\end{align}
						Thus the estimate \eqref{pbound2} just amounts to $p\sim {\rm BND}$.
						
						For the case that $d(g)^2\lesssim1$, using Lemma \ref{asyin} (ii) again we obtain $d(g)+\ep  \, d(g) + \ep \, \m + \chrii\lesssim1$, whence ${\rm BND}\sim1$, while by the positivity of the heat kernel we have $p\sim1$. Thus, $p\sim{\rm BND}$.
						
						The opposite case $d(g)^2\gg1$ can be deduced directly from Theorem \ref{Mtheo}, according to which we split it into four subcases:
						
						If $\atz\le3$ (so $\ep\sim1$) and $\theta_2|x|\to\infty$, then $|x|\gg1$, and $\m\sim|\tz_2|^2|x|^2\gg1$ by Lemma \ref{asyin} (i), which, together with Theorem \ref{Mtheo} (i) and \eqref{Hetau11} show that
						$$p\sim |x|^{-2} (\,\tz_2|x|\,)^{-1} e^{-\frac{d(g)^2}{4}} \sim |x|^{-2}\m^{-\frac12}\,e^{-\frac{d(g)^2}{4}}.
						$$
						Since $d(g)^2 \sim |x|^2 (1+\tz_2^2) \sim |x|^2$, then $\chri\sim |x|^2$ by Lemma \ref{asyin} (ii). From this and Lemma \ref{asyin} (v) we conclude that ${\rm BND}\sim d(g)^{-1}|x|^{-1}\m^{-\frac12}e^{-\frac{d(g)^2}{4}} \sim p$.

						If $|\tz|\ge1$, $\m\to\infty$ and $\chri\to\infty$, then the desired result follows at once from \eqref{pbnd1} and the fact that $\chri \sim \ep^2 d(g)^2 \sim \epsilon \, \m + \chrii$ (cf. Lemma \ref{asyin} (ii)).
						
						If $\m\to\infty$ and $\chri\lesssim1$, then from the proof of Theorem \ref{sL1*} we see that $d(g)\sim|x|$. Thus, by Lemma \ref{asyin} (ii) again we have $\ep \,  d(g) + \ep \, \m + \chrii \lesssim1$, whence ${\rm BND}  \sim|x|^{-2}e^{-\frac{d(g)^2}{4}}$. Consequently, the estimate \eqref{psL1} yields that $p \sim{\rm BND}$.
						
						If $\m\lesssim1$, then by \eqref{pbnd3} we have $p\sim|x|^{-2}e^{-\frac{d(g)^2}{4}}$. And from the relations \eqref{ab_rel} we see that $d(g)\sim|x|\gg1$, $u_1\lesssim|x|^{-2}$ and $u_2\lesssim|x|^{-1}$. Hence ${\rm BND}\sim|x|^{-2} e^{-\frac{d(g)^2}{4}} \sim p$, since $\ep\,\m\,\chrii \, \lsim\ep \, \chrii\lsim\chri$. This completes the proof of the theorem.
					\end{proof}
					
					Thanks to this theorem, we can derive the precise estimates for the heat kernel for all points $g=(x,t)\in N_{3,2}$ at infinity. Notice that the properties \eqref{prpty_p} allow us to consider only specific points as in the following:
					\begin{cor} \label{simHUD}
						Assuming that $t=(t_1,t_2,0)$ with $t_1,t_2\ge0$ and $x=|x|e_1$, we have:
						\medskip
						{\em\begin{compactenum}[(i)]
								\item If $|x|^2 \lsim |t|$, then
								\begin{align} \label{sim21}
									p(x,t) \sim (1+t_2 \, |x| \, |t|^{-\frac12})^{-\frac12} \, |t|^{-1} \, e^{-\frac{d(g)^2}{4}}, \qquad \mbox{as\,\,} |t| \to +\infty.
								\end{align}
								
								\medskip\item If $|x|^2 \gg |t|$, then
								\begin{equation} \label{sim22}
									p(x,t) \sim (1+ |x|^{-1} t_2 + |x|^{-\frac12}\, t_1^{\frac14}\, t_2^{\frac12}    )^{-1} |x|^{-2}\, e^{-\frac{d(g)^2}{4}}, \qquad \mbox{as\,\,} |x| \to +\infty.
								\end{equation}
						\end{compactenum}}
					\end{cor}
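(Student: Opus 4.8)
The plan is to read off both estimates from the sharp two-sided bound \eqref{pbound2}, after re-expressing the quantities $d(g)$, $\ep$ and $\mz$ occurring there in terms of the raw data $(x,t)$. The two standing inputs are $d(g)^2\sim|x|^2+|t|$ (cf. \eqref{ehd}) together with the identity $d(g)^2-|x|^2=4\m$ (recall $\m=\tfrac{|x|^2}{4}\mz$ by \eqref{mdef}, and $\du^2=1+\mz$ by Theorems \ref{RLT1} and \ref{tmm}), and the relations $u_1=4t_1/|x|^2$, $u_2=4t_2/|x|^2$ from Assumption (A). Since \eqref{pbound2} holds under Assumption (A), which after the normalization \eqref{prpty_p} covers all $g$ with $x\ne0$, $t_1,t_2>0$ and $\pi u_2^2\ne4u_1$, I would establish \eqref{sim21}--\eqref{sim22} first on this dense open set, and then extend to the remaining points ($x=0$, $t_1t_2=0$, or $\pi u_2^2=4u_1$) by continuity of $p$ and of the right-hand sides, using that the constants in \eqref{pbound2} are uniform in $g$.

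\emph{Case $|x|^2\lsim|t|$, i.e. $|u|\gsim1$.} Here Lemma \ref{asyin}(iii) gives $\ep\sim1$ and $\mz\sim|u|$, so $\m\sim|x|^2|u|\sim|t|$ and $d(g)^2\sim|t|\to+\infty$. Substituting $\ep\sim1$, $d(g)\sim|t|^{1/2}$ and $(d(g)^2-|x|^2)^{1/4}=(4\m)^{1/4}\sim|t|^{1/4}$ into \eqref{pbound2} gives
\[
p(x,t)\ \sim\ |t|^{-1}\,\bigl(1+t_2^{1/2}|x|^{1/2}|t|^{-1/4}\bigr)^{-1}\,e^{-d(g)^2/4},
\]
and the elementary fact $(1+a)^{-1}\sim(1+a^2)^{-1/2}$ for $a\ge0$, applied with $a=t_2^{1/2}|x|^{1/2}|t|^{-1/4}$, turns this into \eqref{sim21}.

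\emph{Case $|x|^2\gg|t|$, i.e. $|u|\ll1$; here $d(g)^2\sim|x|^2\to+\infty$.} Every $\tz$ has $|\tz|\ge1$ or $|\tz|\le3$, so combining Lemma \ref{asyin}(iv) and (v) (and Remark \ref{rem67} for uniform constants) yields $\mz\sim u_1+u_2^2$ and $\ep\sim u_1+u_2\,\mz^{-1/2}$; equivalently,
\[
\m=\tfrac{|x|^2}{4}\mz\ \sim\ t_1+\frac{t_2^2}{|x|^2},\qquad \ep\ \sim\ \frac{t_1}{|x|^2}+\frac{t_2}{|x|\,\m^{1/2}},\qquad d(g)^2-|x|^2=4\m .
\]
Feeding $d(g)\sim|x|$ and these values into \eqref{pbound2} reduces the assertion \eqref{sim22} to the algebraic equivalence
\[
\frac{1+\ep|x|}{\,1+\ep|x|+\ep\,t_2^{1/2}|x|^{1/2}\m^{1/4}\,}\ \sim\ \bigl(1+|x|^{-1}t_2+|x|^{-1/2}t_1^{1/4}t_2^{1/2}\bigr)^{-1},
\]
which would be checked by elementary estimates.

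I expect this final equivalence to be the only genuinely computational step: one splits into the subcases $t_1\gsim t_2^2/|x|^2$ (so $\m\sim t_1$ and $\ep\sim t_1/|x|^2+t_2|x|^{-1}t_1^{-1/2}$) and $t_1\lsim t_2^2/|x|^2$ (so $\m\sim t_2^2/|x|^2$ and $\ep\sim1$), and within each further according to whether $\ep|x|\gsim1$ — routine, but with enough branches to be somewhat tedious. An equivalent bookkeeping runs through the quantity ${\rm BND}$ of \eqref{checkRHS}, tracking $\chri\sim\ep^2d(g)^2$ and $\chrii\sim\ep u_2|x|^3\m^{-1/2}$ via Lemma \ref{asyin}(ii). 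The only other point needing care is the continuity/density passage to the degenerate points, where one again uses the $g$-uniformity of the bounds in \eqref{pbound2}.
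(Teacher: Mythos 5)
Your proposal is correct and follows essentially the same route as the paper: reduce to Assumption (A) by a limit/continuity argument, read case (i) off \eqref{pbound2} via Lemma \ref{asyin}(iii), and for case (ii) use Lemma \ref{asyin}(iv)--(v) to get $\m\sim t_1+t_2^2/|x|^2$ and $\ep\sim t_1|x|^{-2}+t_2|x|^{-1}\m^{-1/2}$, reducing \eqref{sim22} to exactly the algebraic equivalence the paper verifies in \eqref{sim23}. The only difference is that you leave that last verification as a sketch with a slightly heavier case division; the paper closes it more economically by expanding $\m^{1/4}\sim t_1^{1/4}+t_2^{1/2}|x|^{-1/2}$ and splitting only on $\ep|x|\le 1$ versus $\ep|x|\ge 1$.
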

					
					\begin{proof} Via a limit argument, it is enough to prove these estimates under Assumption (A) (cf. \eqref{n78n}) only. Recall that $d(g)^2- |x|^2 = 4\m =|x|^2\,\bH(1)$, and $d(g)^2 \sim |x|^2 + |t|$.
						
						For item (i),  it suffices to use \eqref{pbound2} and the facts that
						$$ \ep\sim1, \quad \m \sim |t| \sim d(g)^2 \gg1, $$
						deduced from Lemma \ref{asyin} (iii).
						
						For item (ii), using Lemma \ref{asyin} (iv)-(v) we have $\m \sim t_1 (1+ u_2^2\, u_1^{-1})$. In fact, when $|\tz|\ge1$, this follows from the facts that $\m\sim t_1$ and $u_2^2\, u_1^{-1}\lsim \ep^2 \lsim1$ (by \eqref{con6}); while when $|\tz|\le3$, by \eqref{con7} it holds that $\m\sim |x|^{2} u_2^2$ and $u_2^2 \, u_1^{-1}\sim \tz_1^{-1}\gsim1$, yielding the desired estimate as well. Consequently, appealing to \eqref{pbound2} again gives
						\begin{equation} \label{sim23}
							p(x,t) \sim \frac{1 + \epsilon |x|}{1 +\epsilon\, |x| + \epsilon\, t_2 + \epsilon\, |x|^\frac12 \,  t_1^\frac14 \,  t_2^\frac12}\, |x|^{-2}\, e^{-\frac{d(g)^2}{4}}.
						\end{equation}
						If $\ep\,|x|\le1$, then $\ep\ll1$. So together with the first estimate in \eqref{con6}, we get that: $t_1 = |x| u_1 |x|/4 \lsim |x| (\epsilon |x|) \lsim |x|$ and $t_2 \lsim \sqrt{t_1}$. As a result, we obtain $p(x,t)\sim |x|^{-2}\exp\{-d(g)^2/4\}\sim {\rm RHS \,\,of\,\, \eqref{sim22}}$. Conversely, if $\ep\,|x|\ge1$, then $1+\ep\,|x|\sim \ep\,|x|$. From this and \eqref{sim23} the estimate \eqref{sim22} follows immediately.
					\end{proof}
					
					\begin{remark}
						One can replace $|t|$ by $d(g)^2$ in \eqref{sim21}, and $|x|$ by $d(g)$ in \eqref{sim22} respectively.
					\end{remark}
					
					Another consequence of Theorem \ref{Mtheo} is the
					\subsection{Small-time asymptotics}
				To obtain the desired asymptotics, the uniform asymptotics of Theorem \ref{Mtheo} (ii) and Remark \ref{rethm101} are sufficient.  Since $g=o$ is trivial, we may assume that $g\neq o$.  Recall that one can reduce the matters to the case  where $x=|x|e_1$ and $t=(t_1,t_2,0)$ with $t_1,t_2\geq0$.
				
				The first corollary is due to Remark \ref{rethm101}.
			
			\begin{cor} Let $x \neq 0$ and $t=0$. Then
				\begin{equation*}
					p_h(x,t)=\cc \, 4\pi \, \rF(0,0) \,   h^{-\frac72} \, |x|^{-2} \, e^{-\frac{d(g)^2}{4h}} \, (1+o_g(1)),\quad{\rm as}\,\, h\to0^+.
				\end{equation*}
			\end{cor}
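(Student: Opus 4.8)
The plan is to derive this corollary from the uniform asymptotics of Theorem \ref{mlsim1}, extended by Remark \ref{rethm101} to the abnormal set, together with the scaling property \eqref{ehk}. I expect no substantial obstacle: the whole content is a specialization of an already-proved theorem, the only point needing care being that $(x,0)$ is an abnormal point, so Assumption (A) does not apply there and one must use the extension of Theorem \ref{mlsim1} to $u_1=u_2=0$.

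First I would invoke \eqref{ehk} with $t=0$, which gives $p_h(x,0)=\cc\,h^{-9/2}\,p(x/\sqrt h,0)$. Next, recall that $d(y,0)^2=|y|^2$ for every $y\in\R^3$ (cf. the first remark following Theorem \ref{pbnd}); hence the rescaled point $g_h:=(x/\sqrt h,0)$ satisfies $d(g_h)^2=|x/\sqrt h|^2=|x|^2/h$, lies on the abnormal set, so that $u_1=u_2=0$ and $\m(g_h)=\tfrac14\bigl(d(g_h)^2-|x/\sqrt h|^2\bigr)=0$. Thus for any fixed $\zeta_0>0$ the hypotheses $\m(g_h)\le\zeta_0$ and $d(g_h)^2\ge C(\zeta_0)$ of Theorem \ref{mlsim1} hold once $h$ is small enough; by Remark \ref{rethm101} the conclusion \eqref{hk_ab} remains valid with $u_1=u_2=0$, whence
\[
p(g_h)=e^{-\frac{|x|^2}{4h}}\,\frac{4\pi h}{|x|^2}\,\rF(0,0)\,(1+o(1)),\qquad h\to0^+,
\]
since both arguments $\tfrac12|x/\sqrt h|\,u_2$ and $\tfrac14|x/\sqrt h|^2u_1$ of $\rF$ vanish. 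Substituting this into the scaling identity and using $d(x,0)^2=|x|^2$ then yields
\[
p_h(x,0)=\cc\,h^{-9/2}\cdot\frac{4\pi h}{|x|^2}\,\rF(0,0)\,e^{-\frac{d(g)^2}{4h}}\,(1+o_g(1))=\cc\,4\pi\,\rF(0,0)\,h^{-7/2}\,|x|^{-2}\,e^{-\frac{d(g)^2}{4h}}\,(1+o_g(1)),
\]
which is the asserted formula; the remainder $o(1)$ becomes $o_g(1)$ trivially because $x$ is fixed while $h\to0^+$. Finally, Lemma \ref{l1} gives $\rF(0,0)>0$, so the leading coefficient is genuine.

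As indicated, the only delicate point is conceptual rather than technical: at $g=(x,0)$ the reference function $\phi(g;\cdot)$ has a degenerate critical point (the whole line $\{(\tau_1,0,0)\}$), so Assumption (A) — and with it the stated form of Theorem \ref{Mtheo}(iv) — does not literally cover this case, which is precisely why the extension in Remark \ref{rethm101} is invoked. One may also freely reduce to $x=|x|e_1$ via \eqref{orthod}, since for $t=0$ both sides of the claimed asymptotics are invariant under $x\mapsto Ox$ with $O\in\mathrm{O}_3$.
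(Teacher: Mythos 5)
Your proposal is correct and follows essentially the same route as the paper: the paper derives this corollary directly from Remark \ref{rethm101} (the extension of Theorem \ref{mlsim1} to $u_1=u_2=0$) combined with the scaling relation \eqref{ehk} and the fact that $d(x,0)^2=|x|^2$, which is exactly the argument you spell out. Your additional remarks — the reduction to $x=|x|\,e_1$ via \eqref{prpty_p}, the verification that $\m(g_h)=0$ and $d(g_h)^2=|x|^2/h\to+\infty$ so the uniform error $o_{\zeta_0}(1)$ becomes $o_g(1)$, and the positivity $\rF(0,0)>0$ from Lemma \ref{l1} — are precisely the details the paper leaves implicit.
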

			
			Applying Theorem \ref{Mtheo} (ii) and \eqref{Ede1}, one obtains immediately the following Corollary \ref{smhk}, 	which shows in particular that \eqref{rteigen} is still valid for  $g_u \in \R^2_{<, +}$.
			\begin{cor}\label{smhk}
				Let $x\neq0$, $t_1,t_2>0$ and $\pi t_2^2\neq|x|^2 t_1$. Then as $h\to0^+$,
				\begin{equation*}
					\begin{aligned}
						p_h(x,t)  = \cc\, (8 \pi)^\frac32 \, h^{-3} \, \frac{|\tz|}{\sin |\tz|} \, \tz_2^{-1} [\K(\tz_1,\tz_2) \,  \K_3(\tz_1,\tz_2)]^{-\frac12} \, |x|^{-3} \, e^{-\frac{d(g)^2}{4h}} \, (1+o_g(1)),
					\end{aligned} 	
				\end{equation*}
				where $\tz=(\tz_1, \tz_2,0):=(\Lambda^{-1}\left(4\,|x|^{-2}\, t_1, 4\,|x|^{-2} \, t_2\right),0)$.
				
			\end{cor}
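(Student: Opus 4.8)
The plan is to obtain this from the uniform heat‑kernel asymptotics at infinity, namely Theorem \ref{Mtheo}(ii), by rescaling. By the scaling property \eqref{ehk} (equivalently \eqref{scap}) one has $p_h(x,t)=\cc\,h^{-9/2}\,p(g_h)$ with $g_h:=\delta_{1/\sqrt h}(x,t)=(|x|h^{-1/2}\,e_1,(t_1,t_2,0)/h)$, so the task reduces to the behaviour of $p(g_h)$ as $h\to0^+$, i.e.\ as $g_h\to\infty$. First I would note that under the hypotheses $t_1,t_2>0$ and $\pi t_2^2\neq|x|^2 t_1$ the point $(4t_1/|x|^2,4t_2/|x|^2)$ lies in the open set $\R^2_{>,+}\cup\R^2_{<,+}$, so Theorem \ref{mapL} produces $\tz=(\Lambda^{-1}(4t_1/|x|^2,4t_2/|x|^2),0)$ with $(\tz_1,\tz_2)\in\Omega_{+,1}\cup\Omega_{-,4}$; moreover $g_h$ satisfies Assumption (A) with exactly this $\tz$ (hence the same $u,g_u$) but with $|x|$ replaced by $|X|:=|x|h^{-1/2}$. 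Consequently $\ep=\var-|\tz|$, $\ww=\tz_2\psi(|\tz|)>0$, $\bA(1)=\tz_2^2\psi'(|\tz|)>0$ and $\tz_1\neq0$, $\tz_2\neq0$, $0<|\tz|<\var$ are all fixed, depending only on $(x,t)$.

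Next I would check that $g_h$ falls into the regime of Theorem \ref{Mtheo}(ii). Choosing $\beta_0:=\min\{1,|\tz|\}>0$ we have $|\tz|\ge\beta_0$; and by \eqref{mdef}, \eqref{chr1def} together with the positivity facts $\cD(u;\ss)>0$ (read off from \eqref{dEn2}), $-\Upsilon'>0$, $-\Upsilon''>0$, both $\m(g_h)=\tfrac{|x|^2}{4h}\cD(u;\ss)$ and $\chri(g_h)=\bigl(\tfrac{\tz_1^2}{-\Upsilon'(|\tz|)|\tz|}+\tfrac{\tz_2^2}{-\Upsilon''(|\tz|)\sin^2|\tz|}\bigr)\tfrac{|x|^2}{h}$ tend to $+\infty$. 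Then Theorem \ref{Mtheo}(ii) gives $p(g_h)$, up to a factor $(1+o_g(1))$, as the product of $16\pi^2\sqrt{\pi\vtz_1}\,\q(|\tz|)\,(\ep\chri)^{-1/2}\,e^{-d(g_h)^2/4}$ with the two Bessel factors $e^{-\chrii}I_0(\chrii)$ and $e^{-\vtz_1|X|^2\ww^2/(2\ep)}I_0(\vtz_1|X|^2\ww^2/(2\ep))$, where $\chrii=\chrii(g_h)=\tfrac{u_2|X|^2\ww|\tz|}{2\bA(1)}$ and $d(g_h)^2=d(g)^2/h$ by \eqref{scap}.

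Then I would simplify. Since $u_2>0$ with $\ww,\bA(1),|\tz|$ fixed and $|X|^2\to+\infty$, we get $\chrii(g_h)\to+\infty$, and likewise $\vtz_1|X|^2\ww^2/(2\ep)\to+\infty$; hence the large‑argument asymptotics \eqref{asinI0} replaces each Bessel factor $e^{-r}I_0(r)$ by $(2\pi r)^{-1/2}(1+o(1))$. After cancelling $\sqrt{\pi\vtz_1}$ and $\sqrt{\ep}$, the leading constant collapses to $8\sqrt2\,\pi^{3/2}\,\q(|\tz|)\,\bigl(|X|\,\ww\,\sqrt{\chri\chrii}\bigr)^{-1}$, and the identity \eqref{Ede1} (applied with the $|x|$ there taken to be $|X|$) rewrites this exactly as $(8\pi)^{3/2}\,\tfrac{|\tz|}{\tz_2\sin|\tz|}\,|X|^{-3}[\K(\tz_1,\tz_2)\K_3(\tz_1,\tz_2)]^{-1/2}$. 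Substituting $|X|^{-3}=h^{3/2}|x|^{-3}$ and $d(g_h)^2=d(g)^2/h$, and multiplying by $\cc\,h^{-9/2}$, yields the asserted formula.

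In effect the argument is a bookkeeping exercise once Theorem \ref{Mtheo}(ii) and \eqref{Ede1} are in hand, so I do not expect a genuine obstacle; the only steps needing a little care are (i) confirming that, for fixed $(x,t)$, all four rescaled quantities $\m(g_h)$, $\chri(g_h)$, $\chrii(g_h)$ and $\vtz_1|X|^2\ww^2/(2\ep)$ diverge as $h\to0^+$, which is precisely what makes both Theorem \ref{Mtheo}(ii) and the expansion \eqref{asinI0} legitimate, and (ii) keeping track of the signs of $\tz_2$ and of $\psi(|\tz|)$ — the latter negative when $\pi<|\tz|<\var$ — which is where the factor $\tz_2^{-1}[\K(\tz_1,\tz_2)\K_3(\tz_1,\tz_2)]^{-1/2}$ in the final formula receives its correct, real and positive, value.
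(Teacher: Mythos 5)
Your proposal is correct and follows exactly the paper's route: the paper derives Corollary \ref{smhk} by "applying Theorem \ref{Mtheo} (ii) and \eqref{Ede1}", which is precisely your rescaling to $g_h=\delta_{1/\sqrt h}(x,t)$, verification that $\m,\chri,\chrii$ and $\vtz_1|X|^2\ww^2/(2\ep)$ all diverge, use of \eqref{asinI0} on both Bessel factors, and conversion of the constant via \eqref{Ede1}. The bookkeeping (including the sign discussion for $\tz_2$, $\psi(|\tz|)$, $\K$, $\K_3$ on $\Omega_{-,4}$) checks out.
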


To deduce the following four corollaries, we can appeal to Theorem \ref{Mtheo} (ii) with an argument of limit. Indeed, by \eqref{chr1def},  Corollary \ref{RLT2} and the fact that $4\m=d(g)^2-|x|^2$, one can check that the approximating sequence (whose choice is similar as in the proof of Corollary \ref{RLT2}) of the given point $(\frac{x}{\sqrt{h}}, \frac{t}{h})$ satisfies the asymptotic condition uniformly provided $h$ is small enough, then a passage to limit in both sides of the asymptotic formula will give us the asserted results.
			
			\begin{cor}
				Let $x\neq0$, $t_1=0$ and $t_2>0$. Then
				\begin{equation*}
					\begin{aligned}
						p_h(x,t) &= \cc\, 8 \pi^\frac32 \, h^{-3} \, \Upsilon(r)^\frac32\, [-\Upsilon'(r)]^{-\frac12}\, t_2^{-\frac12} |x|^{-2}  e^{-\frac{d(g)^2}{4h}}\, (1+o_g(1)),\quad{\rm as}\,\, h\to0^+,
					\end{aligned}
				\end{equation*}
				where $r$ is the unique solution of $\mu(r)=4\,|x|^{-2}\, t_2$.
			\end{cor}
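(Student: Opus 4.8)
The plan is to combine the scaling relation \eqref{ehk} with the uniform asymptotics of Theorem~\ref{Mtheo}(ii) through a limiting argument, exactly as indicated above for the preceding corollaries. By \eqref{ehk}, $p_h(x,t)=\cc\,h^{-9/2}\,p(x/\sqrt h,\,t/h)$, so everything reduces to the behaviour of $p$ at the point $(Y,S):=(|x|\,h^{-1/2}e_1,\,(0,\,t_2/h,\,0))$ as $h\to0^+$. Writing $(Y,S)=\delta_{|x|/\sqrt h}(g_{u_0})$ with $g_{u_0}=(e_1,4^{-1}u_0)$ and $u_0=(0,\,4|x|^{-2}t_2,\,0)$, we see that this point lies on the boundary of Assumption~(A): since the first coordinate of $\Lambda(\tz_1,\tz_2)$ is $\tfrac{\psi'(|\tz|)}{|\tz|}\tz_1\tz_2^2$, the vanishing of the first component of $u_0$ forces $\tz_1=0$, which is precisely excluded in \eqref{n78n}. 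Hence Theorem~\ref{Mtheo} does not apply directly, and I would instead approach $(Y,S)$ by admissible points. For small $\varepsilon>0$ put $u_\varepsilon:=(\varepsilon,\,4|x|^{-2}t_2,\,0)$; since $4|x|^{-2}t_2>0$ is fixed, $(\varepsilon,\,4|x|^{-2}t_2)\in\R^2_{>,+}$ for $\varepsilon$ small, so $\tz_\varepsilon:=(\Lambda^{-1}(\varepsilon,\,4|x|^{-2}t_2),\,0)$ lies in $\Omega_{+,1}\times\{0\}$ by Theorem~\ref{mapL}, and $g_\varepsilon:=\delta_{|x|/\sqrt h}(g_{u_\varepsilon})$ satisfies Assumption~(A) with distinguished parameter $|x|\,h^{-1/2}$. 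Using $\mu(\rho)=(\rho^2\psi(\rho))'=\rho^2\psi'(\rho)+2\rho\psi(\rho)$, one checks that $\tz_\varepsilon\to(0,r,0)$ as $\varepsilon\to0^+$, where $r\in(0,\pi)$ is the unique root of $\mu(r)=4|x|^{-2}t_2$ (consistently with Corollary~\ref{RLT2}(iv)), and $g_\varepsilon\to(Y,S)$.

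Next I would check that Theorem~\ref{Mtheo}(ii) applies to $g_\varepsilon$ uniformly in the auxiliary parameter. Since $\tz_\varepsilon$ depends only on $u_\varepsilon$ and $|\tz_\varepsilon|\to r>0$, we have $|\tz_\varepsilon|\ge r/2=:\beta_0$ for all small $\varepsilon$; moreover, by \eqref{mdef}, \eqref{chr1def}, Theorem~\ref{RLT1} and Corollary~\ref{RLT2}, the quantities $\chri(g_\varepsilon)=\tfrac{|x|^2}{4h}\,\partial^2_{s_1}\cD(u_\varepsilon;\ss)$ and $\m(g_\varepsilon)=\tfrac{|x|^2}{4h}\,\cD(u_\varepsilon;\ss)$ are both $\ge c\,|x|^2h^{-1}$ for a constant $c>0$ independent of the (small) $\varepsilon$, hence exceed $C(\beta_0)$ once $h$ is small. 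Therefore $p(g_\varepsilon)=\Pi(g_\varepsilon)\,(1+o_{\beta_0}(1))$, where $\Pi(g)$ denotes the amplitude times $e^{-d(g)^2/4}$ on the right-hand side of Theorem~\ref{Mtheo}(ii), the implicit $o_{\beta_0}(1)$ being understood as $h\to0^+$ and uniform in small $\varepsilon$. Letting $\varepsilon\to0^+$ and using the continuity of $p$, of $d(\cdot)^2$, and of all the building blocks $\chri,\chrii,\ww,\ep=\var-|\tz|,\q(|\tz|),I_0$ (all of which extend continuously to $\tz=(0,r,0)$), we obtain $p(Y,S)=\Pi(Y,S)\,(1+o(1))$ as $h\to0^+$, with $\Pi(Y,S)$ the value of $\Pi$ at $\tz=(0,r,0)$ and distinguished parameter $|x|\,h^{-1/2}$.

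It then remains to compute $\Pi(Y,S)$. At $\tz=(0,r,0)$ the quantities $\ep=\var-r$ and $\ww=r\,\psi(r)>0$ are fixed constants, while $\chri$, $\chrii$ and $\tfrac{\var|Y|^2\ww^2}{2\ep}$ are fixed positive multiples of $|x|^2h^{-1}$, hence $\to+\infty$; thus \eqref{asinI0} lets me replace $e^{-\chrii}I_0(\chrii)$ and $e^{-\var|Y|^2\ww^2/(2\ep)}I_0(\var|Y|^2\ww^2/(2\ep))$ by $(2\pi\chrii)^{-1/2}$ and $(\pi\var|Y|^2\ww^2/\ep)^{-1/2}$ up to $1+o(1)$. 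Inserting \eqref{chr1def}, \eqref{chr2def} and \eqref{qdef}, the relations $\Upsilon=1/\psi$ and \eqref{relUp}, the identity $\K_3(0,r)=2\psi(r)+r\psi'(r)=\mu(r)/r$, and the substitution $t_2=\tfrac14|x|^2\mu(r)$, a routine but lengthy simplification reduces $\Pi(Y,S)$ to $8\pi^{3/2}\,\Upsilon(r)^{3/2}\,[-\Upsilon'(r)]^{-1/2}\,t_2^{-1/2}\,|x|^{-2}\,h^{3/2}\,e^{-d(g)^2/(4h)}(1+o(1))$, where $d(g)^2=|x|^2(r/\sin r)^2$ by Corollary~\ref{RLT2}(iv); multiplying by $\cc\,h^{-9/2}$ gives the claimed formula. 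As a consistency check one may instead feed $g_\varepsilon$ into Theorem~\ref{Mtheo}(i) and use \eqref{detph} with \eqref{32nK}: the two routes agree by virtue of \eqref{Ede1}.

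I expect the main obstacle to be bookkeeping rather than conceptual. The first delicate point is making the iterated limit ($\varepsilon\to0^+$ then $h\to0^+$) rigorous: this rests on the error term in Theorem~\ref{Mtheo}(ii) being controlled by $\beta_0$ alone, and on the lower bounds for $\chri(g_\varepsilon)$ and $\m(g_\varepsilon)$ being uniform in the artificial parameter $\varepsilon$, which is why the explicitness and continuity of $\chri$, $\m$ and $d(\cdot)^2$ (via \eqref{chr1def}, Theorem~\ref{RLT1} and Corollary~\ref{RLT2}) matter. The second is the explicit collapse of $\Pi(Y,S)$ to the compact form above, which requires carefully tracking several $\Upsilon$-, $\psi$- and $\mu$-identities and the substitution $t_2=\tfrac14|x|^2\mu(r)$.
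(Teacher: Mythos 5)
Your proposal is correct and follows essentially the same route as the paper, which likewise deduces this corollary from Theorem \ref{Mtheo}(ii) by a limiting argument with an approximating sequence (chosen as in the proof of Corollary \ref{RLT2}) together with the uniformity of the error in $\chri,\m$ and the continuity of $p$, $d^2$ and the amplitude. Your explicit choice $u_\varepsilon=(\varepsilon,4|x|^{-2}t_2,0)$, the uniform lower bounds $\chri,\m\gtrsim |x|^2h^{-1}$, and the final simplification via $\K_3(0,r)=\mu(r)/r$, $\Upsilon=1/\psi$ and \eqref{asinI0} all check out and reproduce the stated constant $8\pi^{3/2}$.
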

			
			\begin{cor} Let $x\neq0$, $t_1,t_2>0$ and $\pi t_2^2=|x|^2 t_1$. Then
				\begin{equation*}
					p_h(x,t)=\cc\, 2 \,\sqrt{2\pi} \, \pi^3 \, h^{-3}  \, t_2^{-1} \, |x| \, (|x|^4 + \pi^2 t_2^2)^{-\frac12}  \, e^{-\frac{d(g)^2}{4h}} \, (1+o_g(1)),\quad{\rm as}\,\, h\to0^+.
				\end{equation*}
			\end{cor}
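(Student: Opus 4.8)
The plan is to deduce this corollary from the uniform asymptotics of Theorem~\ref{Mtheo}~(ii) by a limiting argument, in the spirit of the proof of Corollary~\ref{RLT2}~(ii). By the scaling identity \eqref{ehk} one has $p_h(x,t) = \cc\, h^{-9/2}\, p\bigl(\tfrac{x}{\sqrt h},\tfrac{t}{h}\bigr)$, and by \eqref{scap} $d\bigl(\tfrac{x}{\sqrt h},\tfrac{t}{h}\bigr)^2 = h^{-1}\, d(x,t)^2$, so everything reduces to the behaviour of $p\bigl(\tfrac{x}{\sqrt h},\tfrac{t}{h}\bigr)$ as $h\to0^+$. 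The datum $u := 4\bigl(\tfrac{t}{h}\bigr)\bigm/\bigl|\tfrac{x}{\sqrt h}\bigr|^2 = 4\,t/|x|^2$ of this rescaled point is independent of $h$, and the hypothesis $\pi\,t_2^2 = |x|^2 t_1$ reads $\pi\,u_2^2 = 4\,u_1$, i.e.\ $(u_1,u_2)$ lies on the common boundary of $\R^2_{>,+}$ and $\R^2_{<,+}$. Setting $\alpha := \tfrac\pi2\,u_2 = 2\pi\,t_2/|x|^2 > 0$, so that $(u_1,u_2) = \bigl(\tfrac{\alpha^2}{\pi},\tfrac{2\alpha}{\pi}\bigr)$, Corollary~\ref{RLT2}~(ii) and the scaling of $d$ give $d(x,t)^2 = |x|^2(1+\alpha^2) = |x|^2 + 4\pi\,t_1$, which is exactly the exponent in the claim.

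Since $(u_1,u_2)$ lies on the cut locus, no critical point is attached to the rescaled point and Theorem~\ref{Mtheo}~(ii) cannot be applied directly. Following the proof of Corollary~\ref{RLT2}~(ii), I would pick $v^{(j)} = (v_1^{(j)},v_2^{(j)}) \in \Omega_{+,1}$ with $v^{(j)} \to (\pi,0)$ and $v_2^{(j)}/(\pi - |v^{(j)}|) \to \alpha$, so that $u^{(j)} := \Lambda(v^{(j)}) \to (u_1,u_2)$; for each $h$ set $g_j^{(h)} := \delta_{|x|/\sqrt h}\bigl((e_1,u^{(j)}/4)\bigr)$, a point satisfying Assumption (A) with $|x|$ replaced by $|x|/\sqrt h$, $u$ by $u^{(j)}$, and $\theta = (v^{(j)},0)$. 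By continuity of $p$ and of $d^2$, $p(g_j^{(h)}) \to p\bigl(\tfrac{x}{\sqrt h},\tfrac{t}{h}\bigr)$ and $d(g_j^{(h)})^2 \to h^{-1}d(x,t)^2$ as $j\to+\infty$ with $h$ fixed. Using \eqref{nnp}, namely $\psi(r)\sim\tfrac1{\pi(\pi-r)}$ and $\psi'(r)\sim\tfrac1{\pi(\pi-r)^2}$ as $r\to\pi^-$, together with $-\Upsilon'(\pi)=\pi$, $-\Upsilon''(\pi)=4$ and $\q(\pi)=\pi^2/2$ (all obtained from the Taylor expansion $\Upsilon(\pi+s) = -\pi s - 2s^2 + O(s^3)$, a consequence of \eqref{IS}, together with \eqref{p_cal} and Lemma~\ref{q_lem}), the invariants \eqref{mdef}--\eqref{chr2def} of $g_j^{(h)}$ satisfy, as $j\to+\infty$ with $h$ fixed,
\begin{gather*}
	|\theta| = |v^{(j)}| \to \pi, \qquad \ep \to \var - \pi, \qquad \ww \to \frac\alpha\pi, \qquad d(g_j^{(h)})^2 \to \frac{d(x,t)^2}{h}, \\[1mm]
	\frac{h}{|x|^2}\,\m(g_j^{(h)}) \to \frac{\alpha^2}{4}, \qquad \frac{h}{|x|^2}\,\chri(g_j^{(h)}) \to 1 + \frac{\alpha^2}{4}, \qquad \frac{h}{|x|^2}\,\chrii(g_j^{(h)}) \to 1.
\end{gather*}
In particular $\m(g_j^{(h)}),\chri(g_j^{(h)}),\chrii(g_j^{(h)})$ exceed any prescribed constant once $h$ is small, \emph{uniformly} over all large $j$, which is exactly what makes Theorem~\ref{Mtheo}~(ii) (with, say, $\beta_0 = \pi/2$, admissible since $|v^{(j)}|\to\pi$) applicable to $g_j^{(h)}$ for all large $j$ and all small $h$.

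With this I would run a diagonal argument: for each $h$ choose $j = j(h)$ so large that $g_{j(h)}^{(h)}$ fulfils the hypotheses of Theorem~\ref{Mtheo}~(ii), that all its invariants coincide with the above limits up to a factor $1+o(1)$ as $h\to0^+$, and that $p\bigl(\tfrac{x}{\sqrt h},\tfrac{t}{h}\bigr) = p(g_{j(h)}^{(h)})(1+o(1))$ and $d(g_{j(h)}^{(h)})^2 = h^{-1}d(x,t)^2 + o(1)$. Feeding the limits into the formula of Theorem~\ref{Mtheo}~(ii), and handling the two divergent Bessel factors via the continuity of $e^{-r}I_0(r)$ on $\R$ followed by \eqref{asinI0} (applicable since $\chrii \sim |x|^2/h \to +\infty$ and $\tfrac{\var|x|^2\ww^2}{2h\ep} \sim \tfrac{\var\alpha^2}{2\pi^2(\var-\pi)}\cdot\tfrac{|x|^2}{h} \to +\infty$), the $\var$-dependent factors cancel and one obtains
\[
	p\Bigl(\tfrac{x}{\sqrt h},\tfrac{t}{h}\Bigr) = \frac{8\pi^5}{\sqrt{2\pi}\;\alpha\,\sqrt{1+\tfrac{\alpha^2}{4}}}\cdot\Bigl(\frac{|x|^2}{h}\Bigr)^{-3/2}\,e^{-\frac{d(x,t)^2}{4h}}\,(1+o(1)).
\]
Substituting $\alpha = 2\pi\,t_2/|x|^2$ (so that $1+\tfrac{\alpha^2}{4} = (|x|^4+\pi^2t_2^2)/|x|^4$ and $\alpha\sqrt{1+\tfrac{\alpha^2}{4}} = 2\pi\,t_2\,(|x|^4+\pi^2t_2^2)^{1/2}/|x|^4$) and multiplying by $\cc\,h^{-9/2}$ gives, after the elementary identity $\tfrac{4\pi^4}{\sqrt{2\pi}} = 2\sqrt{2\pi}\,\pi^3$, precisely
\[
	p_h(x,t) = \cc\,2\sqrt{2\pi}\,\pi^3\,h^{-3}\,t_2^{-1}\,|x|\,(|x|^4+\pi^2t_2^2)^{-1/2}\,e^{-\frac{d(x,t)^2}{4h}}\,(1+o_g(1)).
\]
The only genuinely delicate point is the bookkeeping of the double limit $j\to+\infty$, $h\to0^+$: the uniform (in large $j$) lower bounds $\m,\chri,\chrii \gtrsim |x|^2/h$ on $g_j^{(h)}$ are the essential ingredient permitting the interchange, and everything else is continuity together with the above arithmetic.
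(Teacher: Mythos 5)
Your proposal is correct and follows exactly the route the paper sketches for this corollary: an approximating sequence in $\Omega_{+,1}$ chosen as in the proof of Corollary \ref{RLT2} (ii), uniform applicability of Theorem \ref{Mtheo} (ii) to the approximants once $h$ is small, and a passage to the limit in both sides of the asymptotic formula. All the limiting values you compute ($-\Upsilon'(\pi)=\pi$, $-\Upsilon''(\pi)=4$, $\q(\pi)=\pi^2/2$, $\ww\to\alpha/\pi$, $h|x|^{-2}\m\to\alpha^2/4$, $h|x|^{-2}\chri\to1+\alpha^2/4$, $h|x|^{-2}\chrii\to1$) and the final arithmetic check out, so no further comment is needed.
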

			
			\begin{cor}\label{cutL}
				Let $x\neq0$, $t_1>0$ and $t_2=0$. Then as $h\to0^+$ it holds that
				\begin{equation*}
					\begin{aligned}
						p_h(x,t) =\cc\, 16\pi^2 \, h^{-\frac72}\left[\frac{-\Upsilon'(r) \, r^4}{\Upsilon''(r)\psi(r) \, [2r-\sin(2r)]+4r^2 \, \psi'(r)}\right]^\frac12  |x|^{-2 } \, e^{-\frac{d(g)^2}{4h}} \, (1+o_g(1)),
					\end{aligned}
				\end{equation*}
				where $r$ is the unique solution of the equation \eqref{DCUTP} with $\beta=4\,|x|^{-2}\, t_1$.
			\end{cor}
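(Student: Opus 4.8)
The plan is to derive Corollary~\ref{cutL} as the boundary degeneration ($t_2\to0^+$) of Theorem~\ref{Mtheo}(ii), combined with the scaling law \eqref{ehk}. By the orthogonal invariance and the $t$-symmetry in \eqref{prpty_p} we may assume $x=|x|e_1$, $t=(t_1,0,0)$ with $t_1>0$, and put $\beta:=4t_1/|x|^2$. Since $p_h(x,t)=\mathbf{C}\,h^{-9/2}\,p(x/\sqrt h,\,t/h)$ and $p\in C^\infty$, I would write $p_h(x,t)=\lim_{\kappa\to0^+}\mathbf{C}\,h^{-9/2}\,p\bigl(g_h^{(\kappa)}\bigr)$ with $g_h^{(\kappa)}:=\bigl(x/\sqrt h,\,(t_1,\kappa,0)/h\bigr)$. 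Each $g_h^{(\kappa)}$ fits Assumption (A): its parameter is $u^{(\kappa)}=(\beta,\,4\kappa/|x|^2,\,0)\in\R^2_{<,+}$ for $\kappa$ small, so $\theta^{(\kappa)}:=(\Lambda^{-1}(\beta,4\kappa/|x|^2),0)\in\Omega_{-,4}$; and exactly as in the proof of Corollary~\ref{RLT2}(iii), $\theta^{(\kappa)}\to(v_1(0),v_2(0),0)$ as $\kappa\to0^+$, where $v_2(0)<0$, $\pi<|v(0)|=:r<\vartheta_1$ is the unique root of \eqref{DCUTP}, and $v_2(0)^2=-2r\,\psi(r)/\psi'(r)$, $v_1(0)^2=r^2-v_2(0)^2$ by \eqref{NICN}.

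Next I would verify that Theorem~\ref{Mtheo}(ii) applies to $g_h^{(\kappa)}$ uniformly for $\kappa$ near $0$ once $h$ is small. Since $|\theta^{(\kappa)}|\to r>1$, we may take $\beta_0=1$; by \eqref{mdef} with Theorem~\ref{RLT1}, $\m(g_h^{(\kappa)})=\tfrac{|x|^2}{4h}\,\cD(u^{(\kappa)};\ss)\to\tfrac{|x|^2}{4h}\,\cD(u^{(0)};\ss)>0$, while by \eqref{chr1def}, $\chri(g_h^{(\kappa)})$ tends to $\tfrac{|x|^2}{h}D$ with $D:=\tfrac{v_1(0)^2}{-\Upsilon'(r)r}+\tfrac{v_2(0)^2}{-\Upsilon''(r)\sin^2 r}>0$; both exceed $C(\beta_0)$ for $h$ small, uniformly in $\kappa$. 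Feeding $g_h^{(\kappa)}$ into Theorem~\ref{Mtheo}(ii) and letting $\kappa\to0^+$ — all ingredients are continuous in $\kappa$, and the error $o_{\beta_0}(1)$ is uniformly small since $\min\{\chri,\m\}\to\infty$; moreover $\chrii(g_h^{(\kappa)})\to0$ as it is a positive multiple of $u_2\to0$ (cf.~\eqref{chr2def}), so $e^{-\chrii}I_0(\chrii)\to I_0(0)=1$ — I obtain, for $h$ small and with $\ww_0:=v_2(0)\psi(r)>0$ (note $\psi(r)<0$),
\[
p_h(x,t)=\mathbf{C}\,h^{-9/2}\,16\pi^2\sqrt{\pi\vartheta_1}\,\q(r)\,\frac{1}{\sqrt{(\vartheta_1-r)\,\tfrac{|x|^2}{h}\,D}}\;e^{-\frac{\vartheta_1|x|^2\ww_0^2}{2h(\vartheta_1-r)}}\,I_0\!\left(\tfrac{\vartheta_1|x|^2\ww_0^2}{2h(\vartheta_1-r)}\right)\,e^{-\frac{d(g)^2}{4h}}\,(1+o_h(1)).
\]

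Finally I would send $h\to0^+$. The argument of the $I_0$ above tends to $+\infty$, so \eqref{asinI0} gives $e^{-R}I_0(R)=(2\pi R)^{-1/2}(1+o(1))$, which produces a factor of order $h^{1/2}$; combined with the $h^{1/2}$ from the $\chri$-term this turns $h^{-9/2}$ into $h^{-7/2}$, and all the $\pi$- and $\vartheta_1$-constants combine into $16\pi^2$, leaving an $h$-independent factor times $|x|^{-2}e^{-d(g)^2/(4h)}$. Everything then comes down to proving
\[
\frac{\q(r)^2}{D\,\ww_0^2}=\frac{-\Upsilon'(r)\,r^4}{\Upsilon''(r)\,\psi(r)\,[2r-\sin 2r]+4r^2\,\psi'(r)}.
\]
Using $\psi=1/\Upsilon$, $\psi'=-\Upsilon'/\Upsilon^2$ (cf.~\eqref{relUp}), the relations $v_2(0)^2=2r\Upsilon/\Upsilon'$, $\ww_0^2=2r/(\Upsilon\Upsilon')$, $v_1(0)^2=r^2-2r\Upsilon/\Upsilon'$, and the definition \eqref{qdef} of $\q$, this equality reduces, after clearing denominators, to the elementary identity
\[
\Upsilon(r)^2\,(r-\sin r\cos r)=2r\sin^2 r\,\Upsilon(r)-r^2\sin^2 r\,\Upsilon'(r),
\]
which follows at once from $\Upsilon(r)=r^2\sin r/(\sin r-r\cos r)$ by a one-line differentiation. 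I expect this algebraic collapse — confirming that all the $\Upsilon,\Upsilon',\Upsilon''$ and $\sin r$ factors cancel to leave precisely the stated fraction — to be the only non-routine step; the double-limit bookkeeping is straightforward given Theorem~\ref{Mtheo}(ii), Corollary~\ref{RLT2}(iii), and \eqref{asinI0}.
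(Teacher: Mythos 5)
Your proposal is correct and follows essentially the same route as the paper: the paper likewise deduces Corollary \ref{cutL} from Theorem \ref{Mtheo}(ii) by a limit argument with an approximating sequence chosen as in the proof of Corollary \ref{RLT2}(iii), followed by the $I_0$-asymptotic \eqref{asinI0} and the scaling \eqref{ehk}. Your closing identity $\Upsilon(r)^2(r-\sin r\cos r)=2r\sin^2 r\,\Upsilon(r)-r^2\sin^2 r\,\Upsilon'(r)$ does hold and indeed collapses $\q(r)^2/(D\,\ww_0^2)$ to the stated bracket, so the constant bookkeeping ($h^{-9/2}\cdot h \to h^{-7/2}$, cancellation of $\sqrt{\pi\vartheta_1}$ and $\sqrt{\vartheta_1-r}$) is exactly as in the paper.
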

			
			\begin{cor} Let $x=0$ and $t\neq0$. Then
				\begin{equation*}
					p_h(x,t)=\cc\, 4\pi^4 \, h^{-\frac72} \, |t|^{-1}e^{-\frac{d(g)^2}{4h}} \, (1+o_g(1)),\quad{\rm as}\,\, h\to0^+.
				\end{equation*}
			\end{cor}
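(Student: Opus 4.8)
The plan is to sidestep the geometry of $N_{3,2}$ entirely and read the asymptotics off the integral formula, since the case $x=0$ is especially clean. By the scaling identity \eqref{ehk}, $p_h(0,t)=\mathbf{C}\,h^{-9/2}\,p\!\left(0,t/h\right)$, so it suffices to understand $p(0,T)$ as $|T|\to+\infty$ (here $T=t/h$, and $|T|=|t|/h\to+\infty$ as $h\to 0^+$ for fixed $t\neq 0$). Now \eqref{ehk2}--\eqref{MFF} with $x=0$ give $\widetilde{\phi}((0,T);\lambda)=-4i\,T\cdot\lambda$, hence
\[
p(0,T)=\int_{\R^3}\frac{|\lambda|}{\sinh|\lambda|}\,e^{i\,T\cdot\lambda}\,d\lambda,
\]
an absolutely convergent integral of a fixed radial function. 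Passing to spherical coordinates with polar axis along $T$ and writing $\rho:=|T|$ reduces this to the one-dimensional oscillatory integral
\[
p(0,T)=\frac{4\pi}{\rho}\int_0^{+\infty}\frac{r^2}{\sinh r}\,\sin(r\rho)\,dr.
\]

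Next I would evaluate this integral asymptotically as $\rho\to+\infty$ by the residue theorem. The function $r\mapsto r^2/\sinh r$ is meromorphic on $\C$ with only simple poles, located at $r=ik\pi$, $k\in\Z\setminus\{0\}$; from $\sinh r=(-1)^k\bigl((r-ik\pi)+O((r-ik\pi)^3)\bigr)$ near $ik\pi$ one reads off the residue $(-1)^{k+1}k^2\pi^2$ there. Since the integrand $r^2\sin(r\rho)/\sinh r$ is even, $\int_0^{\infty}=\frac12\int_{-\infty}^{\infty}$; writing $\sin(r\rho)$ via $e^{\pm ir\rho}$ and shifting the contour of $\int_{\R}(r^2/\sinh r)e^{ir\rho}\,dr$ upward to $\R+i(N+\frac12)\pi$ --- on which $|\sinh r|=\cosh(\Re r)\ge 1$, so the horizontal part is bounded by a polynomial in $N$ times $e^{-(N+1/2)\pi\rho}$ and vanishes as $N\to\infty$ --- one collects
\[
\int_0^{+\infty}\frac{r^2}{\sinh r}\,\sin(r\rho)\,dr=\pi^3\sum_{k=1}^{+\infty}(-1)^{k+1}k^2 e^{-k\pi\rho}=\pi^3\,e^{-\pi\rho}\bigl(1+O(e^{-\pi\rho})\bigr),\qquad \rho\to+\infty.
\]
Consequently $p(0,T)=\dfrac{4\pi^4}{|T|}\,e^{-\pi|T|}\,(1+o(1))$ as $|T|\to+\infty$.

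Finally, insert $T=t/h$ into the scaling identity; since $h^{-9/2}\cdot(h/|t|)=h^{-7/2}/|t|$ and the $o(1)$ above, being a function of $|T|=|t|/h\to+\infty$, becomes $o_g(1)$, this gives
\[
p_h(0,t)=\mathbf{C}\cdot 4\pi^4\,h^{-7/2}\,|t|^{-1}\,e^{-\pi|t|/h}\,(1+o_g(1)),\qquad h\to 0^+.
\]
It remains only to identify the exponent: by Corollary \ref{RLT2}(i), $d(0,e_1)^2=4\pi$, and combining this with the scaling and orthogonal invariance of $d$ (cf. \eqref{scap}, \eqref{orthod}) yields $d(0,t)^2=4\pi|t|$; thus $\pi|t|=d(g)^2/4$ and the asserted formula follows. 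As an internal check --- and in keeping with the scheme used for the preceding corollaries --- one could instead approximate $(0,t/h)$ by points satisfying Assumption (A) and pass to the limit in Theorem \ref{Mtheo}(ii), but the direct computation above is shorter and self-contained.

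The only genuinely delicate point is the justification of the contour shift for the oscillatory integral, i.e.\ that the horizontal segments at height $(N+\frac12)\pi$ contribute nothing in the limit and that the resulting residue series is exactly the integral. Once that is in place, the $O(e^{-\pi\rho})$ term is a bona fide remainder, being exponentially smaller than the leading term --- consistent with Varadhan's formula \eqref{VF} since $d(0,t)^2=4\pi|t|$ --- and everything else (the spherical reduction, the residue bookkeeping, and the final scaling) is routine.
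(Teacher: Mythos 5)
Your computation is correct, and it reaches the stated formula by a genuinely different route from the paper. The paper proves all four limiting corollaries at once (including this one) by an argument of limit in the uniform asymptotics of Theorem \ref{Mtheo} (ii): one approximates $(\frac{x}{\sqrt h},\frac{t}{h})$ by points satisfying Assumption (A), checks via \eqref{chr1def}, Corollary \ref{RLT2} and $4\m=d(g)^2-|x|^2$ that the hypotheses of the asymptotic hold uniformly for $h$ small, and passes to the limit on both sides. You instead exploit that for $x=0$ the phase in \eqref{ehk2}--\eqref{MFF} is purely linear in $\lambda$, reduce to the one-dimensional Fourier integral of $r^2/\sinh r$, and evaluate it exactly by residues, obtaining $p(0,T)=\frac{4\pi^4}{|T|}e^{-\pi|T|}(1+O(e^{-\pi|T|}))$, after which the scaling \eqref{ehk} and the identity $d(0,t)^2=4\pi|t|$ (Corollary \ref{RLT2}(i) plus \eqref{scap}, \eqref{orthod}) finish the proof. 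What your route buys: it is self-contained (no reliance on the heavy machinery behind Theorem \ref{Mtheo}), it yields an exact convergent series with an exponentially small relative remainder rather than a bare $o_g(1)$, and it independently confirms the constant $4\pi^4$ and the exponent, consistent with Varadhan's formula. What the paper's route buys: a single uniform argument covering all degenerate boundary cases ($t_1=0$, $t_2=0$, the parabola $\pi t_2^2=|x|^2t_1$, and $x=0$) without case-specific contour work. Your residue step is sound: the poles of $r^2/\sinh r$ at $ik\pi$ are simple with residues $(-1)^{k+1}k^2\pi^2$, the line $\Im r=(N+\tfrac12)\pi$ gives $|\sinh r|=\cosh(\Re r)$ so the shifted horizontal integral is $O(N^2 e^{-(N+1/2)\pi\rho})$; the only detail worth making explicit is the vanishing of the vertical sides of the rectangle (where $|\sinh r|\ge\sinh R\to\infty$), which is routine.
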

			
\section{Sharp bounds for derivatives of the heat kernel} \label{sdbpd}
 Recall that we use $g = (x,t)$ to denote an element in $N_{3,2}$ and $\nabla=(\X_1,\,\X_2,\,\X_3)$ to denote the horizontal gradient on this group, with $\X_i$ given by \eqref{LVF}. Let $\widehat{\X}_i$ ($1 \le i \le3$) represent the corresponding right-invariant vector field, that is,
\begin{equation*}
	\begin{gathered}
		\widehat{\X}_1 = \frac{\partial}{\partial x_1} + \frac12 x_3 \frac{\partial}{\partial t_2} -\frac12 x_2 \frac{\partial}{\partial t_3}, \quad
		\widehat{\X}_2 = \frac{\partial}{\partial x_2} - \frac12 x_3 \frac{\partial}{\partial t_1} +\frac12 x_1 \frac{\partial}{\partial t_3}, \\
		\widehat{\X}_3 = \frac{\partial}{\partial x_3} +\frac12 x_2 \frac{\partial}{\partial t_1} -\frac12 x_1 \frac{\partial}{\partial t_2}.
	\end{gathered}
\end{equation*}

    Now set $\widehat{\nabla} = (\widehat{\X}_1, \widehat{\X}_2, \widehat{\X}_3)$ and $\nabla^c = (\nabla, \widehat{\nabla})$. Then we have the following theorem.  We mention that  B. Qian has established $|\nabla \ln{p(g)}| \lsim d(g)$ for any free step-two Carnot group with $k$ generators (cf. \cite[Proposition 5.5]{Q13}), and its  proof is based on the Harnack inequality and Bakry--\'Emery criterion (cf. \cite{BB09}  for more details on this approach).

			\begin{theo} \label{hkDb}
				It holds for any $g \in N_{3, 2}$ that
				\begin{equation*}
					|\nabla^c\, p(g)|\lsim d(g)\,p(g), \qquad |(\nabla^c)^l\, p(g)| \lesssim_l (1 + d(g))^l \, p(g) \quad \mbox{for $l = 2, 3, \ldots$.}
				\end{equation*}
			\end{theo}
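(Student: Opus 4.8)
\textbf{Proof plan for Theorem \ref{hkDb}.}

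The strategy is to differentiate the integral representation of the heat kernel directly and then control the resulting integrals by the sharp two-sided bounds already established in Theorem \ref{pbnd} (equivalently \eqref{pbound}). First I would record how the left- and right-invariant horizontal vector fields act on the explicit formula. Starting from \eqref{ehk2} (or its reduced form \eqref{ehk2'}), one sees that each $\X_j$ applied to $e^{-\frac14\widetilde\phi((x,t);\lambda)}$ brings down a factor that is linear in $(x,\lambda)$, since $\widetilde\phi$ is quadratic in $x$ and the group-law terms $x_3\partial_{t_2}$, $x_2\partial_{t_3}$, etc., act on the $e^{-it\cdot\lambda}$ part producing factors of the type $x_k\lambda_m$. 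Hence $\nabla^c p(g)$ is a finite sum of integrals of the same shape as $p(g)$ but with an extra polynomial factor of degree one in $(|x|,|\lambda|)$ inside the integrand; iterating, $(\nabla^c)^l p(g)$ is a sum of such integrals with an extra polynomial of degree $l$. The first-order gain in $|x|$ is harmless (it is absorbed by $d(g)\gtrsim|x|$), so the crux is to bound integrals of the form $\int \cV(\lambda)\,|\lambda|^l\, e^{-\frac14\Re\widetilde\phi}\,\cdots\, d\lambda$ by $(1+d(g))^l p(g)$, and for $l=1$ by $d(g)\,p(g)$.

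For the first inequality $|\nabla^c p(g)|\lesssim d(g)\,p(g)$ I would argue as follows. By the reductions \eqref{prpty_p} and left/right covariance of $\nabla^c$ under the group action and dilations (using \eqref{scap}), it suffices to prove the bound under Assumption (A), or even just for $g=(x,t)$ with $x=|x|e_1$, $t=(t_1,t_2,0)$, $t_1,t_2\ge0$. Then one splits according to the same regimes used in Section \ref{sec11}: (a) $d(g)^2\lesssim 1$, where everything is bounded and the estimate is trivial from smoothness and positivity of $p$; (b) the ``near-abnormal'' regime $\m\lesssim 1$, $d(g)^2\to\infty$, where one differentiates \eqref{ehk3} directly — the extra factor of $|x|$ from differentiating $e^{-|x|^2/4}$ gives the $d(g)$, and the factors $\lambda_j$ are integrable against the Gaussian-type weight exactly as in the proof of Theorem \ref{mlsim1}; (c) the generic regime, where one returns to \eqref{ehk4} and differentiates under the integral, noting that each derivative produces at worst a factor $|x|(1+\ww|s|)$ or $|x|\,\ww$ and that $\int\cP(s)(1+\ww|s|)\,ds\lesssim\int\cP(s)\,ds$ because $\cP$ decays like $e^{-c\ww^2|s|^2}$ away from $\ss$ by Proposition \ref{lP}(ii) and Theorem \ref{tmm}. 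In every case the worst factor is a single power of $|x|\lesssim d(g)$, plus possibly a factor coming from $|\lambda|$ or $|\tau|$ near the critical point, which is $O(1+d(g))$ by the scaling $d(g)\gtrsim 1$ on the relevant set.

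For the higher-order bound $|(\nabla^c)^l p(g)|\lesssim_l (1+d(g))^l p(g)$, $l\ge 2$, the cleanest route is to proceed by the same case analysis and to show that each application of a vector field costs a factor $O(1+d(g))$ relative to the current integral, uniformly. The key analytic input is that after deforming the contour (as in \eqref{ehk1}) or after passing to the Laplace-type representation \eqref{ehk4}, the integrand is concentrated, and multiplying it by a polynomial of degree $l$ in the integration variables and in $(|x|,\ww,|s|)$ changes the value of the integral by at most a factor $C_l(1+d(g))^l$: indeed $|x|\lesssim d(g)$, $|\tau|\le\vartheta_1$, and the moments $\int\cP(s)\,(\ww|s|)^{2k}\,ds$ are comparable to $\int\cP(s)\,ds$ up to constants depending on $k$. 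Alternatively — and this may be the slickest packaging — one can invoke the general a priori derivative estimate \eqref{GDB'}/\eqref{SDE}: the estimate \eqref{SDE} is stated in the introduction as one of the targets, and once \eqref{AF} (our uniform asymptotics, Theorem \ref{Mtheo}) and the precise bounds \eqref{pbound} are in hand, \eqref{SDE} follows by the standard parabolic-rescaling plus interior-estimate argument (differentiate $p$ on a unit parabolic cylinder after rescaling by $h=1$, use $\partial_h p=\Delta p$ and \eqref{GDB}); then Theorem \ref{hkDb} is just \eqref{SGE}–\eqref{SDE} specialized to $h=1$ together with the remark that right-invariant fields are handled identically by symmetry \eqref{prpty_p}.

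\textbf{Main obstacle.} The delicate point is the uniformity of the constants across the transition regions between the four asymptotic regimes of Section \ref{sec11}, particularly near the abnormal set where $p$ itself is not governed by a stationary-phase formula and where $\cD(u;\cdot)$ fails to be $C^1$ at $s=0$ (Remark \ref{smoD}); there one cannot naively differentiate an asymptotic expansion and must instead differentiate the honest integral \eqref{ehk3} or \eqref{ehk4} and re-run the estimates of Sections \ref{schr}–\ref{s32} with the extra polynomial weight, checking that the weight never destroys the exponential separation between the main term and the error terms. I expect this bookkeeping — rather than any single inequality — to be the bulk of the work, exactly as the authors forewarn in Subsection \ref{ideaS} that ``new techniques need to be introduced'' for the derivative bounds.
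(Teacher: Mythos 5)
Your primary strategy (differentiate the integral representations directly, reduce to weighted integrals of the same shape, and re-run the case analysis of the asymptotic sections) is indeed the paper's approach, but two points in your plan are genuine gaps. First, the ``slickest packaging'' via \eqref{GDB'}/\eqref{SDE} and a parabolic rescaling argument is circular: \eqref{SDE} is one of the estimates this paper sets out to prove, not an available input, and the classical bounds \eqref{GDB}--\eqref{GDB'} only carry the lossy Gaussian $e^{-d(g)^2/(4(1+\varpi)h)}$, which for $d(g)/\sqrt{h}\to\infty$ is exponentially larger than $p_h(g)\sim \Theta\, e^{-d(g)^2/(4h)}$; no standard interior estimate upgrades the former to the latter, which is precisely why Section \ref{sdbpd} exists.

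Second, your direct route, as described, misses the two devices the paper's proof actually hinges on. (i) Your regime (c) proposes differentiating \eqref{ehk4} under the integral, but that formula is only valid after a $g$-dependent rotation; the derivatives also hit the orthogonal matrix $O_0(x)$ and the rotated center variable $\tilde t$, whose $x$-derivatives cost factors of $|x|^{-1}|t|$, $|x|^{-1}$, etc. This is why the paper's case split is governed by $|x|\,d(g)\gtrless 1$ (its Cases (3)--(4)) rather than your (a)--(c): the lower bound $|x|\ge d(g)^{-1}$, together with the homogeneity of $\tilde t$ in $(x,t)$, is exactly what makes those extra factors $\lesssim d(g)$ per derivative. (ii) In the complementary regime $|x|\,d(g)\le 1$ (near the abnormal set with tiny $|x|$), repeated $x$-derivatives of \eqref{ehk2'} bring down powers of the amplitude factor $\bigl(|\lambda|\coth|\lambda|-1\bigr)/|\lambda|^2$; here one can neither quote known singular-amplitude estimates nor use the rotated formula, and ``re-running the estimates with a polynomial weight'' is not enough. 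The paper instead applies the Gaussian identity \eqref{Hor90} in dimension $2+2n_0$, turning the problem into a higher-dimensional Laplace-type integral with the bound \eqref{PnaBD}, and proves the key estimate \eqref{bdg3} by a Laplace argument around the minimizer of $\bH$ — a genuinely new step absent from your outline (your target there must also be relaxed to $d(g)^{2n_0}p(g)$ as in \eqref{bdgtar2}, which is affordable only because $|x|\,d(g)\le1$). You correctly flag the abnormal-set region as the main obstacle, but the resolution requires these specific constructions, not just careful bookkeeping.
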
			
			
			\begin{proof}
	Recalling also that $d(g)^2 \sim |x|^2 + |t|$, we start with the reduction of the problem. When $d(g)$ is bounded, since $p$  is smooth and positive, we see the second estimate is trivial, and the first one follows at once from the the exponential decay of $\mathbf{V}(\lz)$ as well as the facts that $|x|\lsim d(g)$ and $ 0< r^{-2}\,(r\coth r-1)\lsim1 $ for all $r>0$. So only the case where $d(g)$ is large  needs handling with greater care,  then it is enough to show the following
				\begin{equation}\label{target}
					|\partial^\alpha_x \partial^\beta_t p(g)| \lesssim_{\alpha, \beta} d(g)^{|\alpha|} \, p(g), \qquad \forall \, g \in N_{3,2}, \, \alpha, \beta \in \N^3.
				\end{equation}
				
				By an argument of limit and the second equation of \eqref{prpty_p}, we only need to prove \eqref{target} under the following assumption:
					\begin{align} \label{ncg}
					x \ne 0, \qquad   x \cdot t > 0, \qquad  t - \frac{t \cdot x}{|x|} \frac{x}{|x|} \ne 0, \qquad \pi \left|  t - \frac{t \cdot x}{|x|} \frac{x}{|x|}\right|^2 \ne (x \cdot t) |x|.
					\end{align}
					Note that this assumption is equivalent to that we can find a suitable  orthogonal matrix $O_g$ such that  $\ggg = (O_g \, x, O_g \, t)$ satisfies Assumption (A) (cf. \eqref{n78n}).  Notice that there exists only one such $\ggg$, we have also $O_g \, x = |x| \, e_1$, $d(g) = d(\ggg)$, and $\m(\ggg) = (d(g)^2 - |x|^2)/4 = \m(g)$. In principle, the methods in establishing the asymptotics of the heat kernel should be enough to estimate the sharp bounds of its derivatives. As before, we want to reduce the point to Assumption (A) naturally. However, it turns out that the orthogonal matrix $O_g$ may also depend on $g$ and we should be very careful. In fact, we prove \eqref{target} in the following four cases (conditions are stated for the point  $\ggg$):
				
				\medskip
				{\bf Case (1)}:  $|\tz(\ggg)| \le 3$ and $\tz_2(\ggg) |x|\to+\infty$.
				
				\medskip
				{\bf Case (2)}: $\m(\ggg) \lsim 1$  and $d(g) \to+\infty$.
				
				\medskip
				{\bf Case (3)}: $|x|\, d(g)\le 1$ and $d(g) \to+\infty$.
				
				\medskip
				{\bf Case (4)}: $|x|\, d(g)\ge 1$,  $|\tz(\ggg)| \ge1$ and $\m(\ggg) \to+\infty$.
				
				\medskip
					
					For {\bf Cases (1)-(3)}, it transpires that one can first directly taking derivatives in the original expression of the heat kernel (cf. \eqref{ehk2}), and  can reduce the proof of \eqref{target} to the one of \eqref{bdgtar} or \eqref{bdgtar2} below. Then performing the orthogonal transform as before solves the problem. However, we should pay attention to  {\bf Case (3)} since there emerges singularity of higher order
in the amplitude owing to derivation.  Unlike the known cases (Heisenberg groups, non-isotropic Heisenberg groups, and GM-M\'etivier groups), in {\bf Case (3)} we neither have the estimates for higher order singularities in the amplitude on $N_{3,2}$, nor can apply the technique in {\bf Case (4)} to deal with it. So we have to establish the upper bound estimates for higher order singularities. Fortunately, the method to obtain the uniform heat kernel can be modified to overcome this difficulty in an elegant manner, though the integral in this case is converted to a higher dimensional one.
					
					For the remaining {\bf Case (4)}, we first choose an orthogonal matrix $O=O_g$ in \eqref{prpty_p} to enable us to use the new formula \eqref{ehk4}. However, in this way we should be careful since we need also  take derivatives on such orthogonal matrix and thus we require a lower bound of $|x|$ (namely $d(g)^{-1}$, which indeed can be observed from \eqref{target}) to treat it.
 	 		
				\subsection{Proof of {\eqref{target}} in {\bf Cases (1)}-{\bf(3)}} \label{der_rot}
				
				In both {\bf Cases (1)-(2)},  by the fact that $|x| \lesssim d(g)$, it suffices to establish the following estimate
				\begin{align} \label{bdgtar}
					|p^{n_0,\alpha_0} (x,t)| \lesssim_{n_0,\alpha_0} p(x,t), \qquad  \forall\, n_0 \in \N, \, \alpha_0 \in \N^3
				\end{align}
				where  $g = (x, t)$ as in \eqref{ncg}, and
				\begin{align} \label{defpna}
					p^{n_0,\alpha_0} (x,t) := \int_{\R^3} \cV(\lambda) \left( \frac{|\lambda| \coth{ |\lambda| } - 1 }{|\lambda|^2}\right)^{n_0} \lambda^{\alpha_0} \, e^{-\frac{1}{4} \widetilde{\phi}((x,t); \lambda)} \, d\lambda.
				\end{align}
				Notice that from the definition above we have   $|p^{n_0,\alpha_0} (g)| \lesssim_{\alpha_0}  \sum_{|\alpha| = |\alpha_0|} |p^{n_0,\alpha}(\ggg)|$ by means of an orthogonal transform. Combining this with the fact that $p(g) = p(\ggg)$. It remains to prove \eqref{bdgtar} for $g$ as in Assumption (A) (cf. \eqref{n78n}). Note that we are only concerned with the upper bound, the desired estimate can be deduced easily by using an argument
similar to (and simpler than) that of Theorem \ref{asyab} for  {\bf Case (1)}, and that of Theorem \ref{mlsim1} for  {\bf Case (2)}.

				 For {\bf Case (3)}, some modification is now required. Firstly, by the assumption that $|x|\le d(g)^{-1}$, it remains to prove
				\begin{align} \label{bdgtar2}
					|p^{n_0,\alpha_0} (x,t)| \lesssim_{n_0,\alpha_0} d(x,t)^{2n_0} \, p(x,t), \qquad \forall\, n_0 \in \N, \, \alpha_0 \in \N^3.
				\end{align}
				As before we can reduce the proof to the points satisfying \eqref{n78n} or equivalently Assumption (A). Secondly, it follows from $|x|\le d(g)^{-1}$ that $d(g)\sim \sqrt{|t|} \gg1$ and $|u|\gg1$, which occurs only in the case of Lemma \ref{asyin} (iii). As a result, we obtain
					\begin{equation} \label{obes1}
						\ww \sim \sqrt{|t|}/|x| \sim \sqrt{|u|}, \quad |x|^2 \, \ww^2 \sim \m \sim d(g)^2, \quad  |\pi-|\tz| \, |\lsim |u|^{-\frac12} \ll1.
					\end{equation}
					Moreover, from Corollary \ref{simHUD} (i) it follows that
				\begin{equation*}
					p(x,t) \sim |t|^{-1} e^{-\frac{d(g)^2}{4}} \sim d(g)^{-2} \, e^{-\frac{d(g)^2}{4}}.
				\end{equation*}
				
				Thirdly,  we need a more suitable integral expression for $p^{n_0, \alpha_0}$, which will play an important role. To be more precise, applying  \eqref{Hor90} with $q=2 + 2n_0$, $Y = \frac{|x|^2 \ww}{2} (\lz_2, \lz_3, 0 , \ldots, 0) \in \R \times \R \times \R^{2n_0}$ and $A = \frac{1}{2} \frac{|\lz|^2}{|\lz|\coth|\lz|-1} \, |x|^2 \, \ww^2 \, \I_{2 + 2n_0}$,  respectively, we find that
				\[
				p^{n_0,\alpha_0} (x,t) = \frac{(|x|^2 \, \ww^2)^{1 + n_0}}{(4\pi)^{1 + n_0}}  e^{-\frac{|x|^2}{4}} \, \int_{\R^{2 + 2n_0}}  \rP^{n_0,\alpha_0} \left( s|x|\,\ww,\frac14|x|^2(u + 2\ww s_1 e_2 + 2\ww s_2  e_3)\right) \,  ds
				\]
				where
				\[
				\rP^{n_0,\alpha_0} (X,T) := \int_{\R^3} \vv(\lambda) \, \lambda^{\alpha_0} \, e^{-\frac{1}{4} \Ga((X,T);\lambda)} \, d\lambda, \qquad (X,T) \in \R^{2 + 2n_0} \times \R^3.
				\]
				A similar and simpler argument used in Appendix \ref{secA} yields the following upper bound for $\rP^{n_0,\alpha_0}$:
				\begin{equation} \label{PnaBD}
					|\rP^{n_0,\alpha_0}(X,T)| \lesssim_{\alpha_0} \frac{\exp\left\{-\frac{\rD(X,T)^2}{4}\right\}}{(1 + \rD(X,T))^2 (1 + |X| \rD(X,T))^{\frac{1}{2}}},
				\end{equation}
				where $\rD$ is  now is defined by \eqref{eD3} with $X \in \R^{2 + 2n_0}$.
				
				Now adopting similar notation as in Subsection \ref{afH}, we write
				\begin{gather*}
					\cD_{n_0}(s):=\rD\left(s\,\ww,\frac14(u + 2\ww s_1 e_2 + 2\ww s_2  e_3)\right)^2, \quad s=(s_1,s_2,s') \in \R \times \R \times \rr^{2n_0}, \\
					\bH_0(\w):=\ww^2 |\w|^2\,\Phi(\u_0(\w)) \,\,{\rm with}\,\, \u_0(\w):=\frac{\bA(\w_1)}{\ww^2 |\w|^2}, \quad \w=(\w_1,\w_2)\in(0,\infty)\times(0,\infty).
				\end{gather*}
				Then $\cD_{n_0}(s)\ge \bH_0(|(s_1,s_2)|,|s'|)$.  From the above estimates and using polar coordinates \eqref{pol_cor} for $(s_1,s_2)$ and $2n_0$-dimensional ones for $s'$ respectively, together with the fact that $d(g)^2 = |x|^2 + 4\m$, we deduce that \eqref{bdgtar2} will hold provided that
				\begin{equation}\label{bdg3}
					\int_0^\infty \int_0^\infty \frac{\exp\left\{-\frac{|x|^2}{4}\bH_0(\w) \right\} \, \w_1\w_2^{2n_0-1}}{\left(1 + |x|^2 \bH_0(\w) \right) \sqrt{1 + |x|^2 \,\ww |\w| \sqrt{\bH_0(\w)}}} \, d\w_1 d\w_2 \lsim \m^{-2} \, e^{-\m}.
				\end{equation}
				
				To prove this we write LHS of \eqref{bdg3} as $\int_{\mathcal{O}_1} + \int_{\mathcal{O}_2} + \int_{\mathcal{O}_3}$ with
				\begin{equation*}
					\mathcal{O}_1:=\left\{\w; |\w|\le \dz_0 \right\},\quad \mathcal{O}_2:=\left\{ \w; \dz_0<|\w| < C_0 \right\}, \quad
					\mathcal{O}_3:=\left\{\w; |\w|\ge C_0 \right\},
				\end{equation*}
where $\dz_0\ll1$ and $C_0\gg1$ to be determined later. The estimate $\int_{\mathcal{O}_3} \lsim \m^{-2}\,e^{-\m}$ follows from a similar argument as in the estimate of ${\mathcal Q}_3$ in Subsection \ref{ss71}  with $C_0$ large enough. The estimate for $\int_{\mathcal{O}_1}$ is also similar to that by choosing $\dz_0$ small enough and using the following assertion:  $0.8\cdot 4^{-1}|x|^2 \, \bH_0(\w) \ge \m$ for $\w \in \mathcal{O}_1$. In fact, from \eqref{defu} and \eqref{obes1} we can check that
				\begin{equation}\label{simA}
					\bA(\w_1)=|u| (1+  O_{C_0} (|u|^{-\frac12})) \sim \bA (|\w|) \sim \ww^2 \sim |u|, \quad \forall\, |\w| < C_0.
				\end{equation}
				Therefore, $\u_0(\w)\sim|\w|^{-2}\gsim \dz_0^{-2}$ on $\mathcal{O}_1$. By this and \eqref{aPsi}, we get that:
				\begin{align*}
					\bH_0(\w) &=\var \ww^2 \, |\w|^2 \, \u_0(\w) \,  \Big( 1 + O(\u_0(\w)^{-\frac{1}{2}}) \Big) \\
					&= \var |u|\, \left[1+ O_{C_0} (|u|^{-\frac12}) + O(\dz_0)\right] = \var |u|\, (1+o(1)).
				\end{align*}
				 However, recalling $\varphi_1(\pi) = \pi$ (cf. \eqref{defs}), the third equality in \eqref{dEn2} implies that $\bH(1) \le \varphi_1(|\tz|) |u| = \pi |u|(1+o(1))$ since $|\tz|\to\pi$ by the third estimate in \eqref{obes1}, which readily yields the assertion.
				
				To estimate $\int_{\mathcal{O}_2}$,  first recall that $|u|\gg1$ and $|x|\, d(g) \le1$. Fixing selected $\dz_0$ and $C_0$, then one can conclude similarly as in the proof of \eqref{prep3}  (a rougher argument is enough) that
				\begin{equation} \label{HHc}
					\left| \,  |x|^2 \bH_0(\w)- |x|^2 \bH(|\w|) \, \right| \lsim 1, \quad \forall\, \w\in \mathcal{O}_2.
				\end{equation}
				In particular, $|x|^2 \bH_0(\w) \sim |x|^2 \bH(|\w|) \ge 4 \m \gg1$. Applying these estimates  with the  $2$-dimensional polar coordinates $\w=\rho \,(\cos\gz, \sin\gz)$ to $\int_{\mathcal{O}_2}$, we infer it is bounded by
				\begin{equation*}
					\m^{-\frac32}\int_{\dz_0}^{ C_0} \rho^{2n_0}  \exp\left\{-\frac{|x|^2}{4}\bH(\rho) \right\} \, d \rho.
				\end{equation*}
    Recalling that $1$ is the only minimal point of $\bH$ with $\chri = |x|^2 \bH''(1)/4 \sim d(g)^2 \sim \m$, so it is exactly majorized by $\m^{-2} e^{-\m}$ via the standard Laplace's method. This finishes the proof of {\bf Case (3)}.
				
				\subsection{Proof of {\eqref{target}} in {\bf Case (4)}} \label{rot_der}
				To prove \eqref{target} in this case,
				the following estimate is crucial:
				\begin{equation}\label{hkb2}
					\int_{\rr^2} \left| s^\iota\, \rP_\az(s) \right| \,  d s \lsim_{\iota,\az} d(g)^{|\iota|}\, p(g), \quad \forall \, \iota\in\nn^2,\, \az\in\nn^3,
				\end{equation}
				where  $|\tz(\ggg)|\ge1$, $\m(\ggg) \to\infty$, and
				\begin{equation*}
					\rP_\az(s):= \frac{1}{\pi}\, e^{-\frac{|x|^2}{4}}\, \rP_\az\left(2 s, t + |x| s_1  \, e_2 + |x| s_2   \, e_3\right), \quad s\in \rr^2,
				\end{equation*}
				with
				\begin{equation*}
					\rP_\az(X,T)=  \rP^{0, \az}(X, T) = \int_{\R^3}  \, \vv(\lambda) \, \lz^\az \, e^{-\frac{1}{4} \Ga((X,T);\lambda)} \, d\lambda, \quad (X,T) \in\rr^2 \times\rr^3.
				\end{equation*}
Remark that the upper bound \eqref{PnaBD} is valid for $|\rP_\az(X,T)|$. Then following a similar line of reasoning as in Sections \ref{bigchr} and \ref{schr}, we can obtain \eqref{hkb2} under the condition that $|\tz(\ggg)|\ge1$ and $\m(\ggg) \to\infty$.
				
				The rest proof of \eqref{target} is then simple based on \eqref{hkb2} and we shall go on in the following three steps. Firstly, without loss of generality we may assume that $|x|^2\ge x_2^2+x_3^2\ge |x|^2/3 > 0$, then taking the orthogonal transform
				\begin{equation} \label{O0x}
					O_0(x):=\begin{pmatrix}
						|x|^{-1}x_1 & |x|^{-1}x_2 & |x|^{-1}x_3 \\
						0& \frac{x_3}{\sqrt{x_2^2+x_3^2}} &\frac{-x_2}{\sqrt{x_2^2+x_3^2}} \\
						-\frac{\sqrt{x_2^2+x_3^2}}{|x|} & \frac{x_1 x_2}{|x|\sqrt{x_2^2+x_3^2}} & \frac{x_1 x_3}{|x|\sqrt{x_2^2+x_3^2}}
					\end{pmatrix}
				\end{equation}
				in \eqref{prpty_p} and applying \eqref{Hor90} to \eqref{ehk2} we arrive at that
				\begin{equation}\label{rot1}
					p(g)= \frac{1}{\pi}\, e^{- \frac{|x|^2}{4}} \, \int_{\R^2}\rP\left(2 s, \tilde{t} + |x| s_1  \, e_2 + |x| s_2  \,  e_3\right)  ds,
				\end{equation}
				where
				\begin{equation*}
					\tilde{t}:=O_0(x) \, t=\left(\frac{x\cdot t}{|x|},\, \frac{t_2 x_3 - t_3 x_2 }{\sqrt{x_2^2+x_3^2}}, \, \frac{-t_1 (x_2^2+x_3^2) + t_2 \, x_1 x_2 + t_3 \, x_1 x_3}{|x|\sqrt{x_2^2+x_3^2}} \right).
				\end{equation*}
Note that to reduce such points to ones obeying \eqref{ass}, one only needs to use some orthogonal transform (in $\rP$ or $\rP_\az$) again. This will be used implicitly in what follows. Moreover, the parameter $\tilde{t}$, viewed as a function of $(x,t)$, is homogeneous on $x$ of degree $0$ and on $t$ of degree $1$ respectively.
				
Secondly, suppose for the moment that $|x|^2\ge|t|$. Take derivatives in \eqref{rot1}. By Leibniz's rule, the chain rule, \eqref{hkb2} and the homogeneous property of $\tilde{t}$, we see that to show \eqref{target} it is enough to observe that, for instance, for any $1\le i,j\le3$,
					\begin{gather*}
						|\partial_{x_i} (\tilde{t})| \lsim |x|^{-1} |t| \lsim d(g), \quad |\partial_{t_i} (\tilde{t})|\lsim 1;\\[1mm]
						|\partial_{x_i}\partial_{x_j} (\tilde{t})| \lsim |x|^{-2}|t| \lsim d(g)^2, \quad |\partial_{x_i}\partial_{t_j} (\tilde{t})| \lsim |x|^{-1} \lsim d(g),\quad |\partial_{t_i}\partial_{t_j} (\tilde{t})|=0.
					\end{gather*}
					Other possible situations can also be verified easily.
				
Thirdly, let us improve the above result to the desired case $|x|\, d(g)\ge1$, for which we only need to cope with the remaining case $|x|^2\le|t|$. In particular one has $|t|\sim d(g)^2 \gg1$. We may assume $\tilde{t}_2^2 + \tilde{t}_3^2\ge |t|^2/3$ as well, since only homogeneous property plays a role when estimating the derivatives of $\tilde{t}$. Notice also that a change of variables $\lambda \mapsto O \lambda$ in the definition of $\rP$ (cf. \eqref{defP2}) gives the following
\[
\rP(X,T) = \rP(X,O \, T), \quad \forall \, O \in \mathrm{O}_3.
\]
Then applying the previous formula to \eqref{rot1} with $O = O_0(\tilde{t})$ (cf. \eqref{O0x}), we get:
				\begin{equation}\label{rot2}
					p(g)= \frac{1}{\pi}\, e^{- \frac{|x|^2}{4}}\, \int_{\R^2}\rP\left(2 s, \, |t| + |x|\, \frac{\tilde{t}_2 \,  s_1 + \tilde{t}_3  \, s_2}{|t|}, \, |x| \, \frac{\tilde{t}_3 \,  s_1 - \tilde{t}_2  \, s_2}{\sqrt{\tilde{t}_2^2 + \tilde{t}_3^2}}, \, |x| \, \frac{\tilde{t}_1\tilde{t}_2 \,  s_1 + \tilde{t}_1\tilde{t}_3  \, s_2}{|t|\sqrt{\tilde{t}_2^2 + \tilde{t}_3^2}} \right)  ds.
				\end{equation}
				Observe that all coefficients of $s$ in the $T$-variable of \eqref{rot2} is homogeneous on $x$ of degree $1$ and on $t$ of degree $0$. Then using a similar argument as in the case $|x|^2\ge|t|$, one can easily check that the condition $|x| \, d(g)\ge1$ and the estimate \eqref{hkb2} are sufficient for \eqref{target}. This proves {\bf Case (4)}, and hence Theorem \ref{hkDb}.
			\end{proof}

			\begin{appendices}
				
				\section{Proof of Proposition \ref{lP}} \label{secA}
				\setcounter{equation}{0}
				
					Recall the functions $\rP(X,T)$, $\rD(X,T)^2$ and $\tau^*(X,T)$ introduced in Subsections \ref{ideaS} and \ref{sec51}-\ref{sec52}, and set $\ep_*:=\var - |\tau^*|$. Then \eqref{Phi_est} implies that
					\begin{align}\label{eD1}
						\rD(X,T)^2 \sim \frac{|X|^2}{(\vartheta_1 - |\tau^*|)^2} = \ep_*^{-2}\, |X|^2, \quad \mbox{if} \ |X| \ne 0.
					\end{align}
					
				Let us now begin with the positivity of $\rP$. Indeed, the argument below also allows us to establish \cite[Proposition~2.2]{LZ212}. The fact is implicitly indicated therein.

				\subsection{Proof of Proposition \ref{lP} (i)} \label{AX1}
				
				\begin{proof}
					It follows from \eqref{defP2}-\eqref{MFF2} that
					\begin{align}\label{limPk}
						\rP(X,T) =3 \, e^{-\frac34|X|^2}  \,
						\lim_{l_0 \to +\infty} \int_{\R^3} e^{iT \cdot \lambda} \, \prod_{l = 1}^{l_0} \rP_{l}(X;\lambda)  \, d\lambda,
					\end{align}
					where
					\begin{align*}
						\rP_{l}(X;\lambda):= \left( 1 + \frac{|\lambda|^2}{\vartheta_l^2}\right)^{-1} \, \exp\left\{ \frac12 \left( 1 + \frac{|\lambda|^2}{\vartheta_l^2}\right)^{- 1} |X|^2 - \frac12 \, |X|^2 \right\}.
					\end{align*}
					We apply \eqref{Hor90} with $q = 2$, $A = \left(1 + \frac{|\lambda|^2}{\vartheta_l^2}\right) \I_2$ and $Y = - i X$, respectively, to get that
					\begin{align*}
						\rP_{l}(X;\lambda) = \frac{1}{2\pi} \, \int_{\R^2} e^{-\frac{1}{2}|s_l - X|^2} e^{- \frac{|\lambda|^2}{2 \vartheta_l^2} |s_l|^2}  ds_l.
					\end{align*}
					As a result, for every  $j_0, l_0 \in \N^*$ with $l_0 - j_0 \ge 1$,  $X \in \R^2$, and $T \in \R^3$, we obtain
					\begin{align*}
						Q(j_0, l_0; X, T)  :=& \int_{\R^3} e^{iT \cdot \lambda} \, \prod_{l = j_0}^{l_0} \rP_{l}(X;\lambda)  \, d\lambda  \\
						=& \frac{1}{(2 \pi)^{l_0 - j_0 + 1}} \, \int_{(\R^2)^{l_0 - j_0 + 1}} \prod_{l = j_0}^{l_0} e^{-\frac{1}{2} |s_l -  X|^2} ds \int_{\R^3} \exp\left\{ - \frac{|\lambda|^2}{2} \sum_{l = j_0}^{l_0} \frac{|s_l|^2}{ \vartheta_l^2}  + i T \cdot \lambda  \right\}  d\lambda  \\
						=& \frac{1}{(2\pi)^{l_0 - j_0 - \frac{1}{2}}} \, \int_{(\R^2)^{l_0 - j_0 + 1}} \frac{\prod\limits_{l = j_0}^{l_0} e^{-\frac{1}{2}|s_l - X|^2} \, }{ \left( \sum\limits_{l = j_0}^{l_0} \frac{|s_l|^2}{ \vartheta_l^2}  \right)^{\frac32} }  \, \exp\left\{ - \frac{|T|^2}{2 \sum\limits_{l = j_0}^{l_0} \frac{|s_l|^2}{ \vartheta_l^2}}\right\} \, ds > 0,
					\end{align*}
					where $s = (s_{j_0}, \ldots, s_{l_0}) \in (\R^2)^{l_0 - j_0 + 1}$ and we have used \eqref{Hor90} again in the last ``$=$''. Hence we get that $Q(1, 2; X, T) > 0$, and by induction $Q(3, +\infty; X, T) \ge 0$.
					
					It remains to show that $Q(1, +\infty; X, T) > 0$.
					In fact, the basic properties of the Fourier transform and convolution give that
					\begin{align}\label{Plan}
						Q(1, +\infty; X, T) = (2 \pi)^{-3} \, \int_{\R^3} Q(1, 2; X, \xi) \, Q(3, +\infty; X, T - \xi) \, d\xi.
					\end{align}
					And it suffices to recall the continuous function $Q(1, 2; X, \cdot) > 0$ and notice that the nonnegative continuous function  $Q(3, +\infty; X, \cdot)$ satisfies $Q(3, +\infty; X, 0) > 0$.
				\end{proof}
				
				\subsection{Proof of Proposition \ref{lP} (ii) and (iii)}
				
				It is enough to prove (iii) in detail, since (ii) will follow easily from it and the fact that $\rP$ is positive and continuous we have just obtained, with estimates \eqref{aI0}, \eqref{qvar2} and \eqref{eD1}.
					
					To show (iii), we consider two cases as in the proposition below:
				
\begin{prop}\label{lP2}
					The following  uniform asymptotic estimates  hold:
					{\em\begin{compactenum}[(I)]
							\item If $X \ne 0$, $\rD(X,T) \to +\infty$, and $\vartheta_1 - |\tau^*| \gtrsim 1$, then we have
							\begin{align} \label{nax1}
								\rP(X,T) =  \frac{(8\pi)^{\frac{3}{2}} \, e^{-\frac{\rD(X,T)^2}{4}}}{\det(- \He_{\tau^*} \, \Gamma((X,T); \tau^*))^{\frac{1}{2}}}  \,
								\vv(i\tau^*)  \, (1 + o(1)).
							\end{align}
							\item If $X \ne 0$, $\rD(X,T) \to +\infty$ and $|\tau^*| \to  \vartheta_1^-$, then we have
							\begin{align} \label{nax2}
								\rP(X,T) &=  (2\pi)^2 \, \frac{4 \vartheta_1^2 }{ - \sin{\vartheta_1}  }  \,e^{-\frac{\rD(X,T)^2}{4}} \, e^{- \frac{\vartheta_1 |X|^2}{2(\vartheta_1 - |\tau^*|)}}
								I_0 \left( \frac{ \vartheta_1 |X|^2}{2(\vartheta_1 - |\tau^*|)} \right)\frac{ (\vartheta_1 - |\tau^*|)^2}{ |X|^2  }  \, (1 + o(1)).
							\end{align}
					\end{compactenum}}
				\end{prop}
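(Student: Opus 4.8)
The plan is to deduce both asymptotics from one contour‑shift representation of $\rP$ and then to separate the generic saddle (case (I)) from the one that collapses onto the singular sphere $\{|\tau|=\vtz_1\}$ (case (II)). First I would normalize: by the scaling property \eqref{D_scal} and the rotational invariance $\rP(O\,X,O\,T)=\rP(X,T)$ (change of variables $\lambda\mapsto O\lambda$ in \eqref{defP2}) one may assume $T=|T|e_1$, so by \eqref{Dts} the critical point is $\tau^*=\tau^* e_1$ with $\tau^*=\cZ(4|T|/|X|^2)\in[0,\vtz_1)$. Recall from Proposition \ref{lD}(i) that $\Gamma((X,T);\cdot)$ is concave on $B_{\R^3}(0,\vtz_1)$ and $\nabla_\tau\Gamma(\tau^*e_1)=0$; a direct computation gives
$\He_{\tau^*}\,\Gamma((X,T);\tau^*e_1)=|X|^2\big[\tfrac{\Upsilon'(\tau^*)}{\tau^*}(\I_3-e_1e_1^\T)+\Upsilon''(\tau^*)\,e_1e_1^\T\big]$,
whose eigenvalues are $\Upsilon''(\tau^*)|X|^2$ (simple, eigenvector $e_1$) and $\tfrac{\Upsilon'(\tau^*)}{\tau^*}|X|^2$ (multiplicity $2$), both negative by \eqref{Ups_ser}; hence $\tau^*e_1$ is a nondegenerate maximum and $\det(-\He_{\tau^*}\Gamma)=(-\Upsilon''(\tau^*))\big(\tfrac{-\Upsilon'(\tau^*)}{\tau^*}\big)^2|X|^6$.

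The common step is the contour shift. The amplitude $\vv$ extends holomorphically to the tube $\{\lambda+i\sigma:\lambda\in\R^3,\ \sigma\in B_{\R^3}(0,\vtz_1)\}$ (its zeros force $\sigma\cdot\sigma\ge\vtz_1^2+\lambda\cdot\lambda$), and, arguing as for \eqref{nAnz1} from the product expansion \eqref{defcV}, one has $|\vv(\lambda+i\sigma)|\le\tfrac13\,\vv(i\sigma)\,\vv(\lambda)$ for $|\sigma|<\vtz_1$, so the integrand decays exponentially in the horizontal directions uniformly for $|\sigma|\le\tau^*$ (together with a Lemma \ref{rephi}‑type lower bound for $\Re\,\Ga$). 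Cauchy's theorem then moves the contour from $\R^3$ to $\R^3+i\tau^*e_1$; since $\Ga((X,T);i\tau^*e_1)=\Gamma((X,T);\tau^*e_1)=\rD(X,T)^2$, $\nabla_\mu\Ga(i\tau^*e_1)=0$ and $\partial_\mu^2\Ga(i\tau^*e_1)=-\He_{\tau^*}\Gamma$, this yields
\begin{equation*}
\rP(X,T)=e^{-\rD(X,T)^2/4}\int_{\R^3}\vv(\lambda+i\tau^*e_1)\,\exp\!\Big(-\tfrac14\big[\Ga((X,T);\lambda+i\tau^*e_1)-\rD(X,T)^2\big]\Big)\,d\lambda .
\end{equation*}
(Equivalently one may first average over $S^2$ to reduce $\rP$ to the one‑dimensional integral $\tfrac{2\pi}{i|T|}\int_\R\rho\,\vv(\rho)\,e^{i\rho|T|-\frac14\widetilde\Upsilon(\rho)|X|^2}d\rho$ and shift $\rho\mapsto\rho+i\tau^*$; either route works.)

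For case (I), where $\vtz_1-\tau^*\gtrsim1$, one has $\He_{\tau^*}\Gamma\sim-|X|^2\I_3$ and $\rD(X,T)\sim|X|$, and this is a routine three‑dimensional Laplace asymptotic: split $\R^3$ into $\{|\lambda|\le\rD(X,T)^{-\kappa}\}$ (a small fixed $\kappa>0$) and its complement; on the first region Taylor‑expand $\vv(\lambda+i\tau^*e_1)=\vv(i\tau^*e_1)(1+o(1))$ and $\Ga((X,T);\lambda+i\tau^*e_1)-\rD^2=-\tfrac12\lambda^\T\He_{\tau^*}\Gamma\,\lambda+O(|X|^2|\lambda|^3)$, evaluate the Gaussian integral to obtain $(8\pi)^{3/2}\vv(i\tau^*e_1)/\det(-\He_{\tau^*}\Gamma)^{1/2}$, and on the complement use the $\Re\,\Ga$ bound and the amplitude bound to see the contribution is $o$ of the main term. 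This gives \eqref{nax1}.

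Case (II), $\tau^*\to\vtz_1^-$, is the crux and the main obstacle. With $\ep_*=\vtz_1-\tau^*$, the $k=1$ factor $\vv_1(\mu):=3(1+\mu\cdot\mu/\vtz_1^2)^{-1}$ and the corresponding piece $\exp\!\big(\tfrac{\vtz_1^2|X|^2}{2(\vtz_1^2+\mu\cdot\mu)}\big)$ of the integrand, along $\mu=\lambda+i\tau^*e_1$, stop being slowly varying: $\vtz_1^2+\mu\cdot\mu=(\vtz_1^2-\tau^{*2})+\lambda\cdot\lambda+2i\tau^*\lambda_1\approx2\vtz_1(\ep_*+i\lambda_1)+|\lambda'|^2$ is comparable to $\ep_*$ only on the scale $\lambda_1\sim\ep_*$, $|\lambda'|^2\sim\ep_*$, so no naive Taylor expansion works there. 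The plan, in the spirit of \cite{Li07,Li10} and following \cite{LZ22}, is to factor $\vv=\vv_1\cdot\vv_{\ge2}$ and split the exponent into its $k=1$ part and the remainder: the non‑singular data ($\vv_{\ge2}$, the $k\ge2$ part of the exponent, the Jacobian factors) are slowly varying near $i\tau^*e_1$ and can be frozen at $\tau^*e_1\approx\vtz_1e_1$ up to $(1+o(1))$, assembling the constant $\tfrac{4\vtz_1^2}{-\sin\vtz_1}$, the power $(2\pi)^2$ and the surviving Gaussian/determinant factors; the $k=1$ part is then integrated essentially exactly. Setting $w:=\vtz_1^2+(\lambda+i\tau^*e_1)\cdot(\lambda+i\tau^*e_1)$ and carrying out the $\lambda_1$‑ (and transverse $\lambda'$‑) integrations — where, under $w\mapsto1/w$, the vertical contour becomes a circle and the integral takes the form $\int_{-\pi}^{\pi}e^{(\vtz_1|X|^2/2\ep_*)\cos\gamma}(\cdots)\,d\gamma$ — reproduces exactly $e^{-\vtz_1|X|^2/(2\ep_*)}I_0\!\big(\vtz_1|X|^2/(2\ep_*)\big)$ together with the Jacobian factor $(\vtz_1-\tau^*)^2/|X|^2$. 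The hard part will be the uniform bookkeeping of the competing scales ($\lambda_1\sim\ep_*$, the radial and transverse Gaussian widths $\sim\ep_*^{3/2}/|X|$ and $\sim\ep_*/|X|$, and $|\lambda'|^2\sim\ep_*$) and the precise identification of the Bessel integral, all uniformly in $(|X|,\ep_*)$ over the whole range $\rD(X,T)\to+\infty$ — one cannot first send $\ep_*\to0$. The tail $|\lambda|$ bounded away from $0$ is again handled by the concavity of $\Gamma$ (Proposition \ref{lD}(i)) and the $\Re\,\Ga$ estimate. This yields \eqref{nax2}; and with \eqref{asinI0} one checks that in case (I) formula \eqref{nax1} is exactly the $\ep_*\gtrsim1$ instance of \eqref{nax2}, so that the two cases together give the uniform statement \eqref{lP}(iii), whence \eqref{lP}(ii) follows from positivity, continuity, and \eqref{aI0}, \eqref{qvar2}, \eqref{eD1}.
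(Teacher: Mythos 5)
Your case (I) is correct and coincides with the paper's own argument: shift the contour to $\R^3+i\tau^*$, use the lower bound of Lemma \ref{reGa} together with $-\He_{\tau^*}\,\Gamma((X,T);\tau^*)\sim|X|^2\,\I_3$ (valid since $\vartheta_1-|\tau^*|\gtrsim 1$), and run a standard three-dimensional Laplace argument split at $|\lambda|\le|X|^{-3/4}$.

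The genuine gap is the central step of case (II). You assert that, after freezing the nonsingular data, the $k=1$ factor is ``integrated essentially exactly'' because under $w\mapsto 1/w$ the approximately vertical contour $\{\Re w\approx 2\vartheta_1\ep_*+|\lambda'|^2\}$ becomes a circle, producing $\int_{-\pi}^{\pi}e^{(\vartheta_1|X|^2/2\ep_*)\cos\gamma}(\cdots)\,d\gamma$ and hence the stated Bessel and Jacobian factors. But this is precisely the uniform coupled-scales computation that must be carried out, and the one concrete mechanism you offer does not check out: the image of the line $\{\Re w=a\}$ under inversion is the circle $|z-\tfrac{1}{2a}|=\tfrac{1}{2a}$, on which $\Re(1/w)=\tfrac{1+\cos\gamma}{2a}$, so that $\bigl|\exp\bigl(\tfrac{\vartheta_1^2|X|^2}{2w}\bigr)\bigr|=\exp\bigl(\tfrac{\vartheta_1^2|X|^2}{4a}(1+\cos\gamma)\bigr)$; with $a\approx 2\vartheta_1\ep_*$ this yields the exponent $\vartheta_1|X|^2/(8\ep_*)$, not $\vartheta_1|X|^2/(2\ep_*)$. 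Moreover you never explain how the oscillation $e^{iT\cdot\lambda}$ (with $|T|\sim|X|^2/\ep_*^2$, so $|T|\lambda_1\gg1$ already on the singular scale $\lambda_1\sim\ep_*$) and the non-quadratic remainder of the shifted phase interact with the singular factor, yet it is exactly this interaction that produces $I_0(|\bY|^2)$ with $|\bY|^2=\vartheta_1^2|X|^2/(\vartheta_1^2-|\tau^*|^2)$. As written, \eqref{nax2} is not established.

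For comparison, the paper removes the singularity \emph{before} any asymptotic analysis: by \eqref{Hor90},
\begin{align*}
\Bigl( 1 +  \tfrac{|\lambda|^2}{\vartheta_1^2}\Bigr)^{-1} e^{\frac{1}{2} \frac{\vartheta_1^2}{\vartheta_1^2 + |\lambda|^2} |X|^2}
= \frac{1}{2\pi} \int_{\R^2} e^{-\frac{1}{2} \left( 1 +  \frac{|\lambda|^2}{\vartheta_1^2}\right) |\eta|^2 + \eta \cdot X}\, d\eta,
\end{align*}
which leads to \eqref{expP2}. For each fixed auxiliary variable the inner $\lambda$-integral is then a nondegenerate stationary-phase integral (the effective quadratic form $\cnA(X,T;\xi)\sim\rD(X,T)^2\,\I_3$ on the relevant annulus, cf.\ Proposition \ref{estK3}), and the Bessel function emerges from the elementary real identity $\int_{-\pi}^{\pi}e^{\rho|\bY|\cos\gamma}\,d\gamma=2\pi I_0(\rho|\bY|)$ applied to the outer integral over the annulus $\blacklozenge_3$, the other four regions being killed by the global upper bound on $\Ft$ of Proposition \ref{uppP2}. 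To salvage your route you would need either this linearization or a fully worked substitute for it; the inversion heuristic alone will not do.
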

				
				Now we show how (iii) of Proposition \ref{lP} follows from Proposition \ref{lP2}. First we consider the case where $\rD(X,T) \to +\infty$ with $X \ne 0$ and $\vartheta_1 - |\tau^*| \gtrsim 1$. In such case, a direct computation shows that
				\begin{equation}\label{HeGam}
					-\He_{\tau^*} \,  \Gamma((X,T);\tau^*) =|X|^2 \left[- \frac{\Upsilon'(|\tau^*|)}{|\tau^*|} \I_3 + \left(-\Upsilon''(|\tau^*|) + \frac{\Upsilon'(|\tau^*|)}{|\tau^*|} \right) \frac{\tau^*}{|\tau^*|} \left(\frac{\tau^*}{|\tau^*|}\right)^\T \right],
				\end{equation}
				and thus,  by Schur's Lemma,
				\[
				\det(- \He_{\tau^*} \, \Gamma((X,T); \tau^*)) = -\Upsilon^{\prime\prime}(|\tau^*|) \left( - \frac{\Upsilon^\prime(|\tau^*|)}{|\tau^*|}\right)^2 \, |X|^6.
				\]
				Combining this,  \eqref{nax1}  with \eqref{asinI0} and the following observation:
				\begin{align}\label{relvU}
					\vv(i \tau^*) = \Upsilon(|\tau^*|) \frac{|\tau^*|}{\sin{|\tau^*|}},
				\end{align}
				we obtain the asserted asymptotic in Proposition \ref{lP} without difficulties.
				
				For the case where $\rD(X,T) \to +\infty$ with $X \ne 0$  and $0 < \var - |\tau^*| \ll 1$,  one can appeal to \eqref{nax2} and \eqref{q_var}.
				
				\subsection{Proof of {\eqref{nax1}} }\label{SA3}
				
				 It suffices to use the argument introduced in \cite{Li20}, namely, the conjunction of the method of stationary phase and the operator convexity.
				To this end, using \eqref{MFF3} and \eqref{deD}, the argument in
				the proof of Lemma \ref{rephi} implies its counterpart:
				\begin{lem} \label{reGa}
					There exists a constant $c > 0$ such that for any $(X, T) \in \R^2 \times \R^3$, we have
					\[
					\Re [\Gamma((X,T); \tau^* - i \lambda) - \Gamma((X,T); \tau^* )] \ge c \,
					\frac{|\lambda|^2}{1 + |\lambda|^2} |X|^2, \quad \forall \, \lambda \in \R^3.
					\]
				\end{lem}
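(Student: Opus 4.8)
The plan is to mirror, essentially verbatim, the proof of Lemma~\ref{rephi}, with the sequence $\{j\pi\}_{j\ge1}$ and the function $\psi$ replaced throughout by the sequence $\{\vartheta_k\}_{k\ge1}$ and the function $\Upsilon$. We may assume $X\neq0$, since for $X=0$ the function $\Gamma((0,T);\cdot)$ is affine and the left-hand side is purely imaginary, so both sides vanish. Recall $\Gamma((X,T);\tau)=\Upsilon(|\tau|)|X|^2+4T\cdot\tau$ (cf. \eqref{MFF3}) together with the partial-fraction expansion \eqref{deD}. Substituting $\tau\mapsto\tau^*-i\lambda$ --- so that $\tau\cdot\tau$ becomes $|\tau^*|^2-|\lambda|^2-2i\,\tau^*\cdot\lambda$ --- the linear term contributes only $-4i\,T\cdot\lambda$, which is purely imaginary and hence invisible to the real part. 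Simplifying the $k$-th summand exactly as in the displayed computation of Lemma~\ref{rephi}, and using that no pole is crossed (i.e. $\vartheta_k^2-|\tau^*|^2>0$ for every $k$, because $|\tau^*|<\vartheta_1\le\vartheta_k$), one obtains, with $A_k:=\vartheta_k^2-|\tau^*|^2+|\lambda|^2>0$,
\[
\Re\bigl[\Gamma((X,T);\tau^*-i\lambda)-\Gamma((X,T);\tau^*)\bigr]=|X|^2\sum_{k=1}^{+\infty}\frac{2\vartheta_k^2}{\vartheta_k^2-|\tau^*|^2}\cdot\frac{|\lambda|^2 A_k+4(\tau^*\cdot\lambda)^2}{A_k^2+4(\tau^*\cdot\lambda)^2}.
\]
In particular every summand is non-negative, which re-proves $\rD(X,T)^2=\Gamma((X,T);\tau^*)\ge\Re\Gamma((X,T);\tau^*-i\lambda)$.

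For the quantitative bound I would discard all terms except the one with $k=2$. This is the exact analogue of choosing $j_0=2$ rather than $j_0=1$ in Lemma~\ref{rephi}: while the $k=1$ coefficient $2\vartheta_1^2/(\vartheta_1^2-|\tau^*|^2)$ degenerates as $|\tau^*|\to\vartheta_1^-$, the quantity $\vartheta_2^2-|\tau^*|^2$ is bounded above and below by positive absolute constants, uniformly for $|\tau^*|\in[0,\vartheta_1)$, since $\vartheta_1<\vartheta_2$. Consequently $A_2\sim1+|\lambda|^2$, $4(\tau^*\cdot\lambda)^2\le4\vartheta_1^2|\lambda|^2\lesssim|\lambda|^2$ and $2\vartheta_2^2/(\vartheta_2^2-|\tau^*|^2)\sim1$, whence
\[
\frac{2\vartheta_2^2}{\vartheta_2^2-|\tau^*|^2}\cdot\frac{|\lambda|^2 A_2+4(\tau^*\cdot\lambda)^2}{A_2^2+4(\tau^*\cdot\lambda)^2}\ \gtrsim\ \frac{|\lambda|^2 A_2}{A_2^2+|\lambda|^2}\ \gtrsim\ \frac{|\lambda|^2(1+|\lambda|^2)}{(1+|\lambda|^2)^2}=\frac{|\lambda|^2}{1+|\lambda|^2},
\]
with absolute implied constants. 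This gives the claim.

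There is no serious obstacle here; the only points needing care are the bookkeeping of the complexification (tracking $\tau\cdot\tau\mapsto|\tau^*|^2-|\lambda|^2-2i\,\tau^*\cdot\lambda$ and checking that $\vartheta_k^2-|\tau^*|^2>0$, so the series stays regular and the term-by-term real part makes sense) and the observation, imported from Lemma~\ref{rephi}, that isolating the $k=2$ summand rather than the $k=1$ one yields a bound uniform up to the boundary value $|\tau^*|=\vartheta_1$. As a consistency check, note that \eqref{HeGam} together with \eqref{Ups_ser} already shows $-\He_{\tau^*}\Gamma((X,T);\tau^*)\gtrsim|X|^2\,\I_3$ --- its least eigenvalue being $-\Upsilon'(|\tau^*|)/|\tau^*|=4\sum_{k}\vartheta_k^2(\vartheta_k^2-|\tau^*|^2)^{-2}$, which is bounded below --- so the right-hand side of Lemma~\ref{reGa} is, up to constants, the same quantity $-\lambda^{\T}\He_{\tau^*}\Gamma((X,T);\tau^*)\,\lambda/(1+|\lambda|^2)$ that appears in Lemma~\ref{rephi}; here the Hessian never degenerates, which is precisely why the present argument is even shorter than that of Lemma~\ref{rephi}.
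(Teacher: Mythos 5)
Your proof is correct and is exactly the route the paper takes: the paper states Lemma \ref{reGa} as the counterpart of Lemma \ref{rephi} obtained from \eqref{MFF3} and \eqref{deD}, and you have carried out that adaptation faithfully, including the essential point that one isolates the $k=2$ summand (for which $\vartheta_2^2-|\tau^*|^2\sim 1$ uniformly in $|\tau^*|<\vartheta_1$) so the lower bound does not degenerate as $|\tau^*|\to\vartheta_1^-$.
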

			
			For $0 < \var - |r| \le \zeta_0 \le 1$,  it follows from \eqref{Ups_ser} that
			\begin{equation*}
				-\frac{\Upsilon'(r)}{r} \sim_{\zeta_0} 1, \qquad
				0\le -\Upsilon''(r) + \frac{\Upsilon'(r)}{r} =
				16 \sum_{k=1}^{+\infty} \frac{\vtz_k^2 \, r^2}{(\vtz_k^2-r^2)^3} \lsim_{\zeta_0} 1.
			\end{equation*}
			 By this and \eqref{HeGam}, we get that
			\begin{align}\label{Hes_sim}
				-\He_{\tau^*} \,  \Gamma((X,T);\tau^*) \sim_{\zeta_0} |X|^2 \,  \I_3.
			\end{align}
			Then we split $\R^3$ into disjoint sets $\{ \lambda; \, |\lambda| \le |X|^{-\frac{3}{4}} \}$ and $\{ \lambda; \, |\lambda| > |X|^{-\frac{3}{4}} \}$. Using a similar argument as in the proof of Theorem \ref{asyab}, one obtains immediately the desired result.
			
			\subsection{Proof of {\eqref{nax2} }}
			Our assumptions are now $X \ne 0$, $\rD(X,T) \to +\infty$ and $|\tau^*| \to \vartheta_1$. In principle we use the same argument as in \cite[\S5 -- \S6]{LZ22}. The main difference is that in this case $q = 2 < m = 3$. Fortunately, the functions $\vv$ and $\Ga$ in the definition of $\rP$ (cf. \eqref{defP2})  are explicit enough for us to get a better leading term than the ones in \cite{LZ22}. In fact, the function $\rP$ is more or less a	 heat kernel at time $1$ on $H$-type groups, which has a more concrete leading term. See \cite{Li10} for more details.
			
			Let us start with the
			
			\subsubsection{Preliminaries}
		Recalling \eqref{eD1}, now we have $\rD(X,T)^2 \gg |X|^2$. As in Subsection \ref{ideaS},
		we need a new formula of $\rP$ to proceed our estimates. In fact, set in the sequel for $r \in \R$ and $\lambda \in \R^3$
		\begin{gather} \label{defcV2}
			\vv_2(\lambda) := 3 \, \prod_{k = 2}^{+\infty} \left( 1 + \frac{\lambda \cdot \lambda}{\vartheta_k^2} \right)^{-1}, \quad   \widetilde{\Upsilon}_2(r) := 3 + 2 \sum_{k = 2}^{+\infty} \frac{r^2}{\vartheta_k^2 + r^2}, \\
			\label{MFF22}
			\Ga_2((X,T);\lambda) :=  \widetilde{\Upsilon}_2(|\lambda|) \, |X|^2  - 4i\, T \cdot \lambda.
		\end{gather}
		Then from the definition we find that
		\begin{align*}
			\vv(\lambda) \, e^{-\frac{\Ga((X,T);\lambda)}{4}} &=\left( 1 +  \frac{|\lambda|^2}{\vartheta_1^2}\right)^{-1} e^{\frac{1}{2} \frac{\vartheta_1^2}{\vartheta_1^2 + |\lambda|^2} |X|^2} \,  e^{-\frac{|X|^2}{2}} \, \vv_2(\lambda) \, e^{-\frac{\Ga_2((X,T);\lambda)}{4}} \\
			&=  \,  e^{-\frac{|X|^2}{2}} \, \vv_2(\lambda) \, e^{-\frac{\Ga_2((X,T);\lambda)}{4}} \frac{1}{2\pi} \int_{\R^2} e^{-\frac{1}{2} \left( 1 +  \frac{|\lambda|^2}{\vartheta_1^2}\right) |\eta|^2 + \eta \cdot X} d\eta,
		\end{align*}
		where we have used in the last equality \eqref{Hor90} with $q = 2$, $A = \left( 1 +  \frac{|\lambda|^2}{\vartheta_1^2}\right) \I_2$, and $Y = - i X$, respectively. Hence
		\begin{align}\label{expP2}
			\rP(X,T) = \frac{1}{2\pi} \int_{\R^2} e^{-\frac{1}{2} |\eta - X|^2} d\eta \int_{\R^3} \, \vv_2(\lambda) e^{-\frac{1}{4} \Ga_2((X,T);\lambda) - \frac{|\eta|^2}{2\vartheta_1^2 } |\lambda|^2} \, d\lambda.
		\end{align}
		
		As in \eqref{MFF3}, we define
			\begin{align}\label{MFFi22}
				\Gamma_2((X,T);\tau) :=  \Ga_2((X,T);i \tau).
			\end{align}
			
		Here we state a counterpart of Lemma \ref{reGa}, which follows from the same argument.
		
		\begin{lem} \label{reGa2}
			There exists a constant $c > 0$ such that
			\[
			\Re [\Gamma_2((X,T);   \tau^* - i\lambda) - \Gamma_2((X,T); \tau^* )] \ge c \,
			\frac{|\lambda|^2}{1 + |\lambda|^2} |X|^2, \quad \forall \, \lambda \in \R^3.
			\]
		\end{lem}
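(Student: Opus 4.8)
The plan is to repeat, almost verbatim, the argument behind Lemma~\ref{reGa} — itself a transcription of the proof of Lemma~\ref{rephi} — using that $\Gamma_2((X,T);\cdot)$ arises from $\Gamma((X,T);\cdot)$ simply by deleting the $k=1$ term in the defining series \eqref{defcV2}--\eqref{MFF22}. This deletion only makes the estimate easier, since the $k=1$ term is precisely the one that degenerates as $|\tau^*|\to\vtz_1^-$; consequently the lower bound will be uniform with no separate treatment of that regime.

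First I would write down the analytic continuation explicitly. Because $\Ga_2((X,T);\lambda)$ depends on $\lambda$ only through $\lambda\cdot\lambda$ together with the linear term $-4i\,T\cdot\lambda$, one has for every $\lambda\in\R^3$
\[
\Gamma_2((X,T);\tau^*-i\lambda)=\Ga_2((X,T);\lambda+i\tau^*)=3|X|^2+2|X|^2\sum_{k=2}^{+\infty}\frac{w}{\vtz_k^2+w}-4i\,T\cdot\lambda+4\,T\cdot\tau^*,
\]
where $w:=(\lambda+i\tau^*)\cdot(\lambda+i\tau^*)=|\lambda|^2-|\tau^*|^2+2i\,\lambda\cdot\tau^*$; the series converges since $\vtz_k\to+\infty$ and, by Proposition~\ref{lP}(iii), $|\tau^*|<\vtz_1\le\vtz_k$. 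Specialising $\lambda=0$ gives $\Gamma_2((X,T);\tau^*)$; subtracting, the constant $3|X|^2$ and the term $4\,T\cdot\tau^*$ cancel while $-4i\,T\cdot\lambda$ is purely imaginary, so
\[
\Re\big[\Gamma_2((X,T);\tau^*-i\lambda)-\Gamma_2((X,T);\tau^*)\big]=2|X|^2\sum_{k=2}^{+\infty}\left(\Re\frac{w}{\vtz_k^2+w}+\frac{|\tau^*|^2}{\vtz_k^2-|\tau^*|^2}\right).
\]

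Next, exactly as in the proof of Lemma~\ref{rephi}, I would put each summand over the common denominator $\big[(\vtz_k^2-|\tau^*|^2+|\lambda|^2)^2+4(\lambda\cdot\tau^*)^2\big](\vtz_k^2-|\tau^*|^2)$ and check that, since $\vtz_k^2-|\tau^*|^2>0$, its numerator is a sum of manifestly non-negative quantities; in particular every summand is non-negative. Keeping only the $k=2$ summand — legitimate precisely because all of them are non-negative — and using that $\vtz_2^2-|\tau^*|^2>\vtz_2^2-\vtz_1^2$ is bounded below by a positive absolute constant (so $\vtz_2^2-|\tau^*|^2\sim 1$ uniformly), one obtains, after the elementary inequality $(a+b)^2\ge\tfrac{a^2}{1+c}-\tfrac{b^2}{c}$ ($a,b\in\R$, $c>0$) used exactly as in Lemma~\ref{rephi}, that this single summand is $\gtrsim(1+|\lambda|^2)^{-1}|\lambda|^2$. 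This yields the claimed bound for all $\lambda\in\R^3$, the case $X=0$ being trivial.

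There is no genuine obstacle; the one step asking for a little care is the bookkeeping of the complexification $\lambda\mapsto\lambda+i\tau^*$ together with the term-by-term non-negativity check, i.e.\ reorganising the numerator — after clearing denominators — into a sum of squares and non-negative products. This is the only computational point, but it is identical to the corresponding steps already carried out for Lemmas~\ref{rephi} and~\ref{reGa}, and, the functions $\vv_2$ and $\Ga_2$ being as explicit as they are, nothing new is caused by the fact that here $q=2<m=3$.
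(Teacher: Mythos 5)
Your proposal is correct and takes essentially the paper's own route: the paper proves Lemma \ref{reGa2} by just repeating the series-expansion real-part computation of Lemmas \ref{rephi}/\ref{reGa}, now with the $\vartheta_k$-series starting at $k=2$, each summand being non-negative and the single $k=2$ term (for which $\vartheta_2^2-|\tau^*|^2\sim 1$ uniformly, since $|\tau^*|<\vartheta_1$) already yielding the bound $\gtrsim \frac{|\lambda|^2}{1+|\lambda|^2}|X|^2$. One minor remark: the elementary inequality $(a+b)^2\ge \frac{a^2}{1+c}-\frac{b^2}{c}$ is superfluous here, because after clearing denominators the numerator is exactly $(\vartheta_k^2-|\tau^*|^2+|\lambda|^2)\,|\lambda|^2+4(\lambda\cdot\tau^*)^2$, with no cross term to absorb.
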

		
		It follows from the definition that
		\begin{gather*}
			\rD(X,T)^2 = \Gamma((X,T); \tau^*) = \Gamma_2((X,T); \tau^*) -  \frac{2 |\tau^*|^2}{\vartheta_1^2 - |\tau^*|^2} |X|^2,  \\
			0 = \nabla_{\tau^*}\,\Gamma((X,T); \tau^*) =  \nabla_{\tau^*} \, \Gamma_2((X,T); \tau^*) - \frac{4\tau^* \vartheta_1^2}{(\vartheta_1^2 - |\tau^*|^2)^2} |X|^2.
		\end{gather*}
		
		Then deforming the contour from $\R^3$ to $\R^3 + i \tau^*$ in the inner integral of \eqref{expP2}, and applying the change of variables $\eta = \left( 1 - \frac{|\tau^*|^2}{\vartheta_1^2} \right)^{-\frac{1}{2}} \xi$ for the outer integral, we yield that
		\begin{align}\label{ndefP}
			\rP(X,T) = \frac{1}{2\pi} \left( 1 - \frac{|\tau^*|^2}{\vartheta_1^2} \right)^{-1} e^{-\frac{\rD(X,T)^2}{4}} \, \int_{\R^2} e^{-\frac{1}{2} |\xi - \bY|^2} \Ft(X,T;\xi)  \, d\xi
		\end{align}
		where
		\begin{align}\label{defPP2}
			\Ft(X,T;\xi)&:= \int_{\R^3}  \, \vv_2(\lambda + i\tau^*)  e^{-\frac{1}{4} \bS(X,T;\lambda) + i \lambda \cdot \bW(X,T;\xi) - \frac{1}{2} \frac{|\xi|^2 |\lambda|^2}{\vartheta_1^2 - |\tau^*|^2}} \, d\lambda,
		\end{align}
		with
		\begin{gather}
			\bS(X,T;\lambda) := \Gamma_2((X,T);\tau^* - i\lambda) - \Gamma_2((X,T); \tau^*) +i \nabla_{\tau^*} \, \Gamma_2((X,T); \tau^*) \cdot \lambda, \nonumber \\
			\bW(X,T;\xi) := \frac{|\bY|^2 - |\xi|^2}{\vartheta_1^2 - |\tau^*|^2} \tau^*, \quad \mbox{with } \quad \bY := \left( 1 - \frac{|\tau^*|^2}{\vartheta_1^2} \right)^{-\frac{1}{2}} X. \label{nAna2}
		\end{gather}

		Finally, we split up the integral in \eqref{ndefP} as
		\[
		\int_{\R^2} := \sum_{i = 1}^5 \int_{\blacklozenge_i} := \sum_{i = 1}^5 \cK_i,
		\]
		where
		\begin{gather*}
			\blacklozenge_1 := B_{\R^2}(0, \frac{1}{2} \, |\bY|), \quad  \blacklozenge_2 := B_{\R^2}\left(0, |\bY| -  \sqrt{\epsilon_*} \,  \rD(X,T)^{\frac{1}{4}}\right) \setminus  B_{\R^2}(0, \frac{1}{2} \, |\bY|), \\[1mm]
			\blacklozenge_3 := B_{\R^2}\left(0, |\bY| +  \sqrt{\epsilon_*} \,  \rD(X,T)^{\frac{1}{4}}\right)  \setminus B_{\R^2}\left(0, |\bY| -  \sqrt{\epsilon_*} \,  \rD(X,T)^{\frac{1}{4}}\right) , \\[1mm]
			\blacklozenge_4 := B_{\R^2}(0, 2 \, |\bY|) \setminus B_{\R^2}\left(0, |\bY| +  \sqrt{\epsilon_*} \,  \rD(X,T)^{\frac{1}{4}}\right) , \quad \blacklozenge_5 := B_{\R^2}(0, 2 \, |\bY|)^c.
		\end{gather*}
		 Notice that from the definitions of $\bY$ and $\epsilon_*$, \eqref{eD1} implies that
		\begin{align} \label{nAna1}
			|\bY| \sim  \frac{|X|}{\sqrt{\epsilon_*}} \sim \sqrt{\ep_*} \, \rD(X,T) \gg \sqrt{\epsilon_*} \, \rD(X,T)^{\frac{1}{4}}.
		\end{align}
		
		\subsubsection{Properties of $\Ft$}
		
		We will see that the main contribution of the integral in \eqref{ndefP} comes from the part of the integral taken only over the domain $\blacklozenge_3$.  To this end, we would like to study the asymptotic behavior of $\Ft$ on $\blacklozenge_3$ and the upper bound of $\Ft$ for other regions.
		
		For this purpose, let us first introduce
		\begin{gather}
			\cnA(X,T;\xi) :=  - \frac{1}{4} \He_{\tau^*} \, \Gamma_2((X,T); \tau^*) + \frac{|\xi|^2 }{\vartheta_1^2 - |\tau^*|^2} \I_3, \label{defcA} \\
			\bS_*(X,T;\lambda) := \bS(X,T;\lambda) + \frac{1}{2} \lambda^\T \, \He_{\tau^*} \, \Gamma_2((X,T); \tau^*) \, \lambda.\nonumber
		\end{gather}
		Then $\bS_*
		(X,T;\cdot)$ vanishes to order 2 at the origin, and the phase function of the integral in \eqref{defPP2} becomes
		\[
		- \frac{1}{4} \bS_*(X,T;\lambda) - \frac{1}{2} \lambda^\T \, \cnA(X,T;\xi) \, \lambda + i \lambda \cdot \bW(X,T;\xi).
		\]
		
		Now we can establish:
		
		\begin{prop} \label{estK3}
			 We have uniformly for all $\xi \in \blacklozenge_3$ that:
			\[
			\Ft(X,T;\xi) = (2\pi)^{\frac{3}{2}} \, \vv_2(i\tau^*) \,
			\frac{\exp\left\{- \frac{1}{2} \bW(X,T;\xi)^\T \, \cnA(X,T;\xi)^{-1} \, \bW(X,T;\xi)\right\}}{\sqrt{\det{\cnA(X,T;\xi)}}}\,(1 + o(1)).
			\]
			 \end{prop}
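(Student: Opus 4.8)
\textbf{Proof plan for Proposition \ref{estK3}.}
The plan is to apply the method of stationary phase to the oscillatory integral \eqref{defPP2} defining $\Ft(X,T;\xi)$, treating $\lambda$ as the integration variable and $(X,T;\xi)$ with $\xi\in\blacklozenge_3$ as parameters. The key structural fact is that the phase function splits into a genuinely quadratic part $-\tfrac12\lambda^\T\cnA(X,T;\xi)\lambda+i\lambda\cdot\bW(X,T;\xi)$ (coming from the Hessian of $\Gamma_2$ at $\tau^*$ together with the Gaussian damping term $|\xi|^2|\lambda|^2/(\vartheta_1^2-|\tau^*|^2)$) plus the higher-order remainder $-\tfrac14\bS_*(X,T;\lambda)$ which vanishes to order $2$ at $\lambda=0$. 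First I would record the relevant size estimates on $\blacklozenge_3$: since $\xi\in\blacklozenge_3$ means $|\,|\xi|-|\bY|\,|\le\sqrt{\epsilon_*}\,\rD(X,T)^{1/4}$, and using \eqref{nAna1} one gets $|\xi|^2\sim|\bY|^2\sim|X|^2/\epsilon_*$, hence $|\xi|^2/(\vartheta_1^2-|\tau^*|^2)\sim|X|^2/\epsilon_*^2\sim\rD(X,T)^2$. Combined with $-\He_{\tau^*}\Gamma_2((X,T);\tau^*)\sim|X|^2\,\I_3$ (an analogue of \eqref{Hes_sim}, valid because the singular $\vartheta_1$-factor has been removed in passing to $\Gamma_2$), this shows $\cnA(X,T;\xi)\sim\rD(X,T)^2\,\I_3$ is large and well-conditioned. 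Likewise $\bW(X,T;\xi)=\frac{|\bY|^2-|\xi|^2}{\vartheta_1^2-|\tau^*|^2}\tau^*$ has modulus $\lesssim\frac{|\bY|\sqrt{\epsilon_*}\rD(X,T)^{1/4}}{\epsilon_*}\sim\rD(X,T)^{5/4}$, so $\bW^\T\cnA^{-1}\bW\lesssim\rD(X,T)^{5/2}/\rD(X,T)^2=\rD(X,T)^{1/2}$ stays controlled relative to the Gaussian width.

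Next I would carry out the stationary-phase/Laplace expansion itself. The critical point of $-\tfrac12\lambda^\T\cnA\lambda+i\lambda\cdot\bW$ is $\lambda_c=i\cnA^{-1}\bW$, which is purely imaginary and of size $|\cnA^{-1}\bW|\lesssim\rD(X,T)^{-3/4}\to0$; I would shift the contour of integration in $\lambda$ by $\lambda_c$ (justified by holomorphy of $\vv_2(\cdot+i\tau^*)$ and $\bS$ in a fixed complex tube around $\R^3$, together with the exponential decay furnished by Lemma \ref{reGa2} and an estimate $|\vv_2(\lambda+i\tau^*)|\lesssim e^{-c|\lambda|}$), so that completing the square produces the factor $\exp\{-\tfrac12\bW^\T\cnA^{-1}\bW\}$ out front. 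After the shift, one is left with a Gaussian integral with covariance $\cnA^{-1}$ (of size $\rD(X,T)^{-2}$), against the amplitude $\vv_2(\lambda+i\tau^*)e^{-\frac14\bS_*(X,T;\lambda)}$. Splitting $\R^3$ into $\{|\lambda|\le\rD(X,T)^{-3/4}\}$ and its complement exactly as in the proof of Theorem \ref{asyab} and of \eqref{nax1} (Subsection \ref{SA3}): on the inner region one Taylor-expands $\vv_2(\lambda+i\tau^*)=\vv_2(i\tau^*)(1+O(\rD(X,T)^{-3/4}))$ and $\bS_*(X,T;\lambda)=O(|\lambda|^3\,|X|^2)=O(\rD(X,T)^{-1/4})$, then evaluates the Gaussian to get $(2\pi)^{3/2}\vv_2(i\tau^*)(\det\cnA)^{-1/2}(1+o(1))$; on the outer region, Lemma \ref{reGa2} together with $\cnA\gtrsim\rD(X,T)^2\I_3$ gives exponential smallness $e^{-c\,\rD(X,T)^{1/2}}$, negligible against the main term. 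I would also need to check that the contour shift does not move $\lambda$ outside the holomorphy tube: since $|\lambda_c|\to0$ this is automatic for $\rD(X,T)$ large.

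The main obstacle I expect is the uniformity over $\xi\in\blacklozenge_3$ of all error terms, since $\cnA$, $\bW$, and the effective Gaussian width all depend on $\xi$ and on the parameters $(X,T)$ simultaneously, and the region $\blacklozenge_3$ itself shrinks with $\rD(X,T)$. One must verify that the quantities $\cnA(X,T;\xi)\sim\rD(X,T)^2$, $\bW^\T\cnA^{-1}\bW\lesssim\rD(X,T)^{1/2}$, and the Hessian comparison $-\He_{\tau^*}\Gamma_2\sim|X|^2\I_3$ hold with implicit constants independent of $\xi$ throughout $\blacklozenge_3$ — this is where the precise choice of the annulus width $\sqrt{\epsilon_*}\,\rD(X,T)^{1/4}$ matters. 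A second, more technical point is to confirm that the damping term $\frac12\frac{|\xi|^2|\lambda|^2}{\vartheta_1^2-|\tau^*|^2}$ genuinely dominates any spurious growth in $\Re\bS_*$ (not just in $\Re\bS$), i.e.\ that absorbing the $\tfrac12\lambda^\T\He_{\tau^*}\Gamma_2\lambda$ term into $\bS_*$ does not destroy the lower bound of Lemma \ref{reGa2}; this should follow by writing $\bS_* = \bS + \tfrac12\lambda^\T\He_{\tau^*}\Gamma_2\lambda$ and noting $-\He_{\tau^*}\Gamma_2\ge 0$, so that $\Re\bS_*\ge\Re\bS$ up to the sign of the Hessian, combined with the positivity of $\cnA$. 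Once these uniform bounds are in hand, the stationary-phase computation is routine and parallels Subsection \ref{SA3} closely.
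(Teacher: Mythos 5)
Your proposal is correct and follows essentially the same route as the paper: the same size estimates on $\blacklozenge_3$ giving $\cnA(X,T;\xi)\sim\rD(X,T)^2\,\I_3$ and $|\bW|\lesssim\rD(X,T)^{5/4}$, the same contour shift by $i\,\cnA(X,T;\xi)^{-1}\bW(X,T;\xi)$ (the paper's $i\rho_*$, of size $O(\rD(X,T)^{-3/4})$), and the same splitting at $|\lambda|=\rD(X,T)^{-3/4}$ with $\bS_*=O(\rD(X,T)^{-1/4})$ on the inner region and exponential smallness outside via Lemma \ref{reGa2}. No substantive differences.
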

		
		\begin{proof}
			Recall $\epsilon_* := \vartheta_1 - |\tau^*|$. By \eqref{nAna1} and the first equality of \eqref{nAna2},  we have for all $\xi \in \blacklozenge_3$ that:
			\begin{gather}\label{estA}
				\frac{|\xi|^2 }{\vartheta_1^2 - |\tau^*|^2} \sim \frac{|\bY|^2}{\epsilon_*} \sim \rD(X,T)^2, \\
				\label{estW}
				|\bW(X,T;\xi)| \lesssim \frac{(|\xi| + |\bY|) \, | \, |\xi| - |\bY| \, |}{\epsilon_*} \lesssim \rD(X,T)^{\frac{5}{4}}.
			\end{gather}
			Then from the definition of $\Gamma_2((X,T);\cdot)$ (cf. \eqref{MFFi22}), a simple computation (compared with \eqref{Hes_sim}) shows that
			\begin{align*}
				-\He_{\tau^*} \, \Gamma_2((X,T);\tau^*) \sim |X|^2 \, \I_3 \ll \rD(X,T)^2 \, \I_3 \sim \frac{|\xi|^2 }{\vartheta_1^2 - |\tau^*|^2} \, \I_3, \quad \forall \, \xi \in \blacklozenge_3,
			\end{align*}
			which implies that, for all  $\xi \in \blacklozenge_3$,
			\begin{align} \label{AXT}
				\cnA(X,T;\xi) = \frac{|\xi|^2 }{\vartheta_1^2 - |\tau^*|^2}  \, \I_3 \,  (1 + o(1)) = \frac{|\bY|^2 }{\vartheta_1^2 - |\tau^*|^2}  \, \I_3  \, (1 + o(1)) \sim \rD(X,T)^2 \, \I_3.
			\end{align}
			
			Now setting
			\[
			\rho_*(X,T;\xi) :=  \cnA(X,T;\xi)^{-1} \,  \bW(X,T;\xi),
			\]
			we have $|\rho_*(X,T;\xi)| = O(\rD(X,T)^{-\frac{3}{4}})$ by \eqref{AXT} and \eqref{estW}. Then lifting the contour again by $i\rho_*(X,T;\xi)$ in \eqref{defPP2}, and splitting the integral into the ones over $\{\lambda; \, |\lambda| \le \rD(X,T)^{-\frac{3}{4}}\}$ and $\{\lambda; \, |\lambda| > \rD(X,T)^{-\frac{3}{4}}\}$. It follows from the estimate of $\rho_*(X,T;\xi)$ above that
			\[
			\bS_*(X,T;\lambda + i\rho_*(X,T;\xi)) =  O(\rD(X,T)^{-\frac14})
			\]
			in the first region and thus using a similar  argument in the proof of Theorem \ref{asyab}, we obtain the desired proposition.
		\end{proof}
		
		Next we give an upper bound of $\Ft$ for all $\xi \in \R^2$.
		
		\begin{prop} \label{uppP2}
			There exists a constant $c > 0$ such that
			\[
			|\Ft(X,T;\xi)| \lesssim \exp\left\{ - c \, \frac{|\bW(X,T;\xi)|}{\sqrt{|X|^2 + |\xi|^2/\epsilon_* + 1}}\right\}.
			\]
		\end{prop}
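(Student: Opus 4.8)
The plan is to prove the bound by interpolating between the trivial estimate $|\Ft|\lesssim 1$ and a gain obtained by moving the $\lambda$-contour in \eqref{defPP2} into the complex domain in the direction of $\bW$, crucially using that $\bW(X,T;\xi)$ is proportional to $\tau^*$ (cf. the first identity in \eqref{nAna2}) so that the shifted contour stays away from the poles of $\vv_2$.

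\emph{Step 1 (the trivial bound).} Since $\Gamma_2((X,T);\cdot)$ is real-valued on $\R^3$, the vector $\nabla_{\tau^*}\Gamma_2((X,T);\tau^*)$ is real, hence for $\lambda\in\R^3$ one has $\Re\bS(X,T;\lambda)=\Re[\Gamma_2((X,T);\tau^*-i\lambda)-\Gamma_2((X,T);\tau^*)]\ge 0$ by Lemma \ref{reGa2}. Moreover $|\vv_2(\lambda+i\tau^*)|\lesssim(1+|\lambda|)^{N}e^{-|\lambda|}$ uniformly for $|\tau^*|<\vartheta_1$: indeed $|1+(\lambda+i\tau^*)\cdot(\lambda+i\tau^*)/\vartheta_k^2|\ge(1-|\tau^*|^2/\vartheta_k^2)(1+|\lambda|^2/\vartheta_k^2)$ for $k\ge 2$, and $\prod_{k\ge 2}(1-|\tau^*|^2/\vartheta_k^2)$ is bounded below by a positive constant because $|\tau^*|<\vartheta_1<\vartheta_2$. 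Since the Gaussian factor $e^{-\frac12|\xi|^2|\lambda|^2/(\vartheta_1^2-|\tau^*|^2)}$ is $\le 1$, we obtain $|\Ft(X,T;\xi)|\lesssim\int_{\R^3}|\vv_2(\lambda+i\tau^*)|\,d\lambda\lesssim 1$. Writing $A:=|X|^2+|\xi|^2/\epsilon_*+1\ge 1$, this already proves the claim whenever $|\bW(X,T;\xi)|\le\sqrt{A}$, so henceforth I may assume $|\bW|>\sqrt{A}$.

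\emph{Step 2 (the contour shift).} Put $\widehat{\bW}=\bW/|\bW|$; since $\bW$ is parallel to $\tau^*$ we have $\widehat{\bW}=\pm\,\tau^*/|\tau^*|$. For $0<\sigma\le\sigma_0:=(\vartheta_2-\vartheta_1)/2$, the deformation $\lambda\mapsto\lambda+i\sigma\widehat{\bW}$ in \eqref{defPP2} replaces $\tau^*$ by the \emph{real} vector $\mu:=\tau^*+\sigma\widehat{\bW}$ with $|\mu|<\vartheta_2$; since then $(\lambda+i\mu)\cdot(\lambda+i\mu)$ and $(\mu-i\lambda)\cdot(\mu-i\lambda)$ have real parts $\le|\mu|^2<\vartheta_2^2$, no pole of $\vv_2$ (resp. of $\widetilde{\Upsilon}_2$ inside $\bS$) is met along the homotopy, and the exponential decay in $\Re\lambda$ justifies the deformation. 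It produces the prefactor $e^{-\sigma|\bW|}$, at the cost of the following, all uniform in $\sigma\le\sigma_0$: (a) $|\vv_2(\lambda+i\mu)|\lesssim(1+|\lambda|)^{N}e^{-|\lambda|}$ as in Step 1, since $|\mu|\le(\vartheta_1+\vartheta_2)/2<\vartheta_2$; (b) the Gaussian factor picks up at most $e^{\frac12\sigma^2|\xi|^2/(\vartheta_1^2-|\tau^*|^2)}\lesssim e^{C\sigma^2|\xi|^2/\epsilon_*}$, using $\vartheta_1^2-|\tau^*|^2\sim\epsilon_*$; (c) since $\bS(X,T;\lambda)$ equals $|X|^2$ times a $T$-independent function $G(\lambda)$ which is holomorphic in the tube $\{|\Im\lambda|<\vartheta_2-\vartheta_1\}$ and vanishes to second order at $\lambda=0$, one splits the integral into $|\lambda|\le\delta$ and $|\lambda|>\delta$ for a fixed small $\delta$: on $|\lambda|\le\delta$ one uses $|G(\lambda+i\sigma\widehat{\bW})-G(\lambda)|\lesssim\sigma(|\lambda|+\sigma)$, $\Re G(\lambda)\gtrsim|\lambda|^2$ and Young's inequality to get $\Re\bS(X,T;\lambda+i\sigma\widehat{\bW})\ge\tfrac{c}{2}|\lambda|^2|X|^2-C\sigma^2|X|^2$, while on $|\lambda|>\delta$ one uses $\Re\bS(X,T;\lambda)\gtrsim|X|^2$ (Lemma \ref{reGa2}) together with $|G(\lambda+i\sigma\widehat{\bW})-G(\lambda)|\lesssim\sigma$ to absorb the perturbation. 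Integrating, $|\Ft(X,T;\xi)|\lesssim\exp\{-\sigma|\bW|+C\sigma^2(|X|^2+|\xi|^2/\epsilon_*)\}$, and choosing $\sigma=\min\{\sigma_0,\,c_1|\bW|/A\}$ with $c_1$ small enough, distinguishing the two resulting cases and using $|\bW|>\sqrt{A}\ge 1$, yields $|\Ft(X,T;\xi)|\lesssim\exp\{-c|\bW|/\sqrt{A}\}$, which is the desired estimate.

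The step I expect to be the most delicate is (c): controlling $\bS(X,T;\lambda+i\sigma\widehat{\bW})-\bS(X,T;\lambda)$ uniformly in $(X,T)$ and throughout the tube, i.e. checking that the holomorphic extension of $G$ and of its gradient grows at most polynomially in $|\lambda|$ (so as to be swallowed by the $e^{-|\lambda|}$ decay of $\vv_2$) while genuinely retaining its second-order zero at the origin, so that near $\lambda=0$ the loss is $O(\sigma^2|X|^2)$ and not merely $O(\sigma|X|^2)$. This is precisely the point where the present argument runs parallel to, but — thanks to the explicit product formulas \eqref{defcV2}--\eqref{MFF22} for $\vv_2$ and $\widetilde{\Upsilon}_2$ — is more transparent than, the corresponding estimates in \cite[\S5--\S6]{LZ22} and in \cite{Li10}.
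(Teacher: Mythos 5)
Your proposal is correct and follows essentially the same route as the paper: a purely imaginary shift of the $\lambda$-contour in the direction $\widehat{\bW}=\pm\,\tau^*/|\tau^*|$, exploiting the analyticity of $\vv_2$ and $\widetilde{\Upsilon}_2$ up to $|\Im|<\vartheta_2$ together with the bound $\cnA(X,T;\xi)\sim\left(|X|^2+|\xi|^2/\epsilon_*\right)\I_3$ to control the quadratic loss. The only difference is cosmetic: the paper fixes the shift length as $c\,(|X|^2+|\xi|^2/\epsilon_*+1)^{-1/2}$ from the outset, so that all correction terms are $O(1)$ and neither the optimization over $\sigma$ nor your separate trivial-bound case is needed.
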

		
		\begin{proof}
			We deform the contour in \eqref{defPP2} from $\R^3$ to $\R^3 + i \rho^*(X,T;\xi)$, where we choose
			\[
			\rho^*(X,T;\xi) := \frac{c}{\sqrt{|X|^2 + |\xi|^2/\epsilon_* + 1}} \frac{\bW(X,T;\xi)}{|\bW(X,T;\xi)|}
			\]
			for some small $c > 0$. From our choice and the analyticity of $\widetilde{\Upsilon}_2$ on $\{ z \in \C; \  -\vartheta_2 < \Im(z) < \vartheta_2 \}$,   we have
			\[
			\widetilde{\bS}(X,T;\xi) :=	-\frac{1}{4}\bS_*(X,T;i\rho^*(X,T;\xi)) + \frac{1}{2 }  \rho^*(X,T;\xi)^\T \cnA(X,T;\xi) \rho^*(X,T;\xi) = O(1)
			\]
			since $\cnA(X,T;\xi) \sim \left( |X|^2 + |\xi|^2 / \epsilon_* \right) \I_3$. Finally it follows from Lemma \ref{reGa2} and the inequality $|\vv_2(i \tau + \xi)| \le \vv_2(i\tau) \vv_2(\xi)$  for any $|\tau| < \vartheta_2$ and any $\xi \in \R^3$ (compared with \eqref{nAnz1}) that
			\begin{align*}
				|\Ft(X,T;\xi)| &\lesssim \exp\left\{ \widetilde{\bS}(X,T;\xi) -\bW(X,T;\xi) \cdot \rho^*(X,T;\xi)\right\}\\
				&\lesssim \, \exp \left\{ -\bW(X,T;\xi) \cdot \rho^*(X,T;\xi)\right\}  = \exp\left\{ - c \, \frac{|\bW(X,T;\xi)|}{\sqrt{|X|^2 + |\xi|^2/\epsilon_* + 1}}\right\}.
			\end{align*}
			This proves the proposition as required.
		\end{proof}
		
		\subsubsection{Estimation of $\cK_3$}
		To treat $\cK_3$, we need the following lemma, which is an analogue of \cite[Proposition 4.2]{Li12}.
		
		\begin{lem}
			Let $Y \in \R^2 \setminus \{0\}$, $r > 0$, $\delta \in (0, \ |Y|)$ and set
			\begin{align*}
				\widetilde{\cI}_\delta(Y, r) := \int_{\{w \in \R^2; \, |Y|- \delta \le |w| < |Y|+ \delta\}}  e^{-\frac{1}{2} |w - Y|^2} e^{-\frac{(|w|^2 - |Y|^2)^2}{r}} \, dw.
			\end{align*}
			Then we have uniformly
			\begin{align*}
				\widetilde{\cI}_\delta(Y, r) =
				\pi^{\frac{3}{2}} \, \sqrt{r} \, e^{-|Y|^2}  I_0(|Y|^2) \, (1 + o(1)),
			\end{align*}
			provided
			\begin{align} \label{asyass}
				\frac{\delta}{|Y|} \le \frac{1}{2}, \quad \frac{\delta^2 |Y|^2}{r} \to + \infty , \quad \mbox{and} \quad \frac{|Y|^2}{r}  \to +\infty.
			\end{align}
		\end{lem}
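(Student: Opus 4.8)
The integral $\widetilde{\cI}_\dz(Y,r)$ is a genuine two–dimensional Laplace–type integral whose integrand concentrates on the annulus $\big||w|-|Y|\big|\le\dz$, and the plan is to use polar coordinates centred at the origin, pull out the dominant Gaussian factor in the radial variable, and recognise the angular integral as a modified Bessel function. First I would write $w=\rho\,\gamma$ with $\rho=|w|>0$ and $\gamma\in\mathbb S^1$, so that $|w-Y|^2=\rho^2+|Y|^2-2\rho\,Y\cdot\gamma$ and hence $e^{-\frac12|w-Y|^2}=e^{-\frac12(\rho^2+|Y|^2)}e^{\rho\,Y\cdot\gamma}$. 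Integrating over $\gamma$ gives, by the very definition \eqref{defI0} of $I_0$ (after writing $Y\cdot\gamma=|Y|\cos\vartheta$ and using rotational invariance),
\[
\widetilde{\cI}_\dz(Y,r)=2\pi\int_{|Y|-\dz}^{|Y|+\dz} e^{-\frac12(\rho^2+|Y|^2)}\,I_0(\rho|Y|)\,e^{-\frac{(\rho^2-|Y|^2)^2}{r}}\,\rho\,d\rho.
\]
So the problem is reduced to a one–dimensional integral over $\rho$ near $\rho=|Y|$.

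\textbf{Key steps.} The second step is the change of variables $\rho^2-|Y|^2=v$, i.e. $\rho\,d\rho=\tfrac12\,dv$, which turns the radial integral into
\[
\pi\int_{-\dz(2|Y|-\dz)}^{\dz(2|Y|+\dz)} e^{-\frac12(v+2|Y|^2)}\,I_0\!\big(|Y|\sqrt{|Y|^2+v}\big)\,e^{-\frac{v^2}{r}}\,dv,
\]
where the limits of integration both have absolute value comparable to $\dz|Y|$ (using $\dz/|Y|\le\tfrac12$). Over this range $|v|\lesssim\dz|Y|$, and one checks that $e^{-v/2}=1+O(\dz|Y|)$ and $I_0(|Y|\sqrt{|Y|^2+v})=I_0(|Y|^2)\,(1+o(1))$; here the asymptotic \eqref{asinI0} for $I_0$ together with $|Y|^2/r\to+\infty$ (which forces $|Y|\to+\infty$, so $|Y|^2\gg\dz|Y|$ is equivalent to $\dz\ll|Y|$, already granted) is used to control the relative error $\exp\{O(\dz|Y|\cdot|Y|/\sqrt{|Y|^2})\}=\exp\{O(\dz|Y|)\}$, which is $1+o(1)$ precisely because $\dz^2|Y|^2/r\to+\infty$ forces $\dz|Y|\to+\infty$ only if... — here one must be slightly careful and instead bound the error uniformly; see below. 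The third step is the genuine Laplace computation: having replaced the slowly varying factors by their value at $v=0$, one is left with
\[
\pi\,e^{-|Y|^2}\,I_0(|Y|^2)\int_{-\dz(2|Y|-\dz)}^{\dz(2|Y|+\dz)} e^{-\frac{v^2}{r}}\,dv
=\pi\,e^{-|Y|^2}\,I_0(|Y|^2)\,\sqrt r\int_{-c_1\dz|Y|/\sqrt r}^{c_2\dz|Y|/\sqrt r} e^{-\sigma^2}\,d\sigma,
\]
and since $\dz^2|Y|^2/r\to+\infty$ the last integral tends to $\int_{\mathbb R}e^{-\sigma^2}d\sigma=\sqrt\pi$, giving the claimed $\pi^{3/2}\sqrt r\,e^{-|Y|^2}I_0(|Y|^2)(1+o(1))$.

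\textbf{Main obstacle.} The delicate point — and the one I expect to require the most care — is making the replacement ``$I_0(|Y|\sqrt{|Y|^2+v})=I_0(|Y|^2)(1+o(1))$ uniformly for $|v|\lesssim\dz|Y|$'' rigorous, because a priori $\dz|Y|$ need not be small (only $\dz\ll|Y|$, i.e. $\dz|Y|\ll|Y|^2$, is given). The resolution is to use \eqref{asinI0} in the sharp form $I_0(s)=e^s(2\pi s)^{-1/2}(1+O(s^{-1}))$ and compute the ratio $I_0(|Y|\sqrt{|Y|^2+v})/I_0(|Y|^2)$ directly: the exponent difference is $|Y|\sqrt{|Y|^2+v}-|Y|^2=\tfrac{v}{2}+O(v^2/|Y|^2)$, and the prefactor ratio is $1+O(|Y|^{-2})$, so the ratio equals $e^{v/2}(1+O(v^2/|Y|^2)+O(|Y|^{-2}))$. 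The stray factor $e^{v/2}$ then exactly cancels against the $e^{-v/2}$ already present from $e^{-\frac12(v+2|Y|^2)}$, leaving only genuinely small errors $O(v^2/|Y|^2)+O(|Y|^{-2})\lesssim\dz^2+|Y|^{-2}=o(1)$ on the integration range. With this cancellation observed, the rest is a routine Laplace estimate exactly as above, and one must only double–check the two one-sided Gaussian tails extend to $\pm\infty$, which is guaranteed by the hypothesis $\dz^2|Y|^2/r\to+\infty$. I would also record for later use that the same argument, run with $\dz$ replaced by the full radius, yields the cruder bound $\widetilde{\cI}_\dz(Y,r)\lesssim\sqrt r\,e^{-|Y|^2}I_0(|Y|^2)$ without any asymptotic hypothesis, which is what is actually needed when this lemma is applied to estimate $\cK_3$ and the neighbouring pieces $\cK_2,\cK_4$.
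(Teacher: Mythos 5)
Your reduction to a one--dimensional radial integral (polar coordinates, then $v=\rho^2-|Y|^2$) is sound and parallels the paper's first step, but the way you then dispose of the Bessel factor has a genuine gap. You assert that $|Y|^2/r\to+\infty$ ``forces $|Y|\to+\infty$''; it does not. The hypotheses \eqref{asyass} are purely about the ratios: one can have $|Y|$ bounded (even $|Y|=1$, $\delta=1/2$) with $r\to0^+$, and then all three conditions hold while the large--argument expansion $I_0(s)=e^s(2\pi s)^{-1/2}(1+O(s^{-1}))$, on which your entire ``main obstacle'' resolution rests, is unusable and your error term $O(|Y|^{-2})$ is not $o(1)$. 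This bounded-$|Y|$ regime genuinely occurs in the application (it is why the answer is stated with $e^{-|Y|^2}I_0(|Y|^2)$ rather than its large-$|Y|$ simplification), and the paper treats it as a separate case: there the slowly varying factors are frozen not because $I_0$ is expanded, but because the localization scale of $e^{-v^2/r}$ is $\sqrt r\to0$, so a plain Laplace argument in the radial variable suffices.

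Even in the regime $|Y|\to+\infty$ your error control is not correct as written: you bound the exponent correction by $O(v^2/|Y|^2)\lesssim\delta^2$ ``$=o(1)$'', but the hypotheses only give $\delta\le|Y|/2$, so $\delta$ may be of size $|Y|\to\infty$ and the factor $\exp\{-v^2/(8|Y|^2)+\cdots\}$ coming from $|Y|\sqrt{|Y|^2+v}-|Y|^2=\tfrac v2-\tfrac{v^2}{8|Y|^2}+\cdots$ is nowhere near $1+o(1)$ over the full range $|v|\lesssim\delta|Y|$. The fix is not to discard this quadratic term but to keep it as a second Gaussian (this is exactly the factor the paper retains as $e^{-\frac{|Y|^2}{2}(u-1)^2}$ after the substitution $\rho=|Y|u$) and observe that it is dominated by $e^{-v^2/r}$ because $r\ll|Y|^2$, so that the $v$-integral still produces $\sqrt{\pi r}\,(1+O(r/|Y|^2))$; alternatively, first restrict to the effective range $|v|\lesssim\sqrt r\,\rD$-type window before freezing anything. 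With these two repairs (a separate bounded-$|Y|$ case, and retention of the quadratic Gaussian when $|Y|\to\infty$) your route becomes essentially the paper's two-case Laplace argument. One further caution: your closing remark that only a crude upper bound is needed for $\cK_3$ is not accurate — the paper needs the two-sided asymptotics from this lemma, applied once with $\mathfrak{r}_*$ and once with $\mathfrak{r}^*$, to pin down the leading term of $\cK_3$.
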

		
		\begin{proof}
			First notice that the change of variables $w \mapsto O w$ gives $\widetilde{\cI}_\delta(Y, r) = \widetilde{\cI}_\delta(O Y, r)$ for any $O\in{\rm O}_2$. Then without loss of generality we can assume $Y = |Y| (1,0)$.
			Using polar coordinates $w=  \rho \, (\cos{\gz}, \sin{\gz})$ and the integral representation for $I_{0}$ (cf. \eqref{defI0}), we get
			\begin{align*}
				\widetilde{\cI}_\delta(Y, r) &= e^{- \frac{1}{2} |Y|^2} \int_{\{w \in \R^2; \, |Y|- \delta \le |w| < |Y|+ \delta\}}  e^{-\frac{1}{2} |w|^2 + |Y| w_1 } e^{-\frac{(|w|^2 - |Y|^2)^2}{r}} \, dw \\
				&= e^{- \frac{1}{2}|Y|^2}
				\int_{|Y| - \delta}^{|Y| + \delta} \rho \, e^{-\frac{1}{2} \rho^2 - \frac{(\rho^2 - |Y|^2)^2}{r}} \, d\rho
				\int_{-\pi}^\pi e^{\rho \, |Y| \cos{\gz}}  \, d\gz \\
				&= 2\pi \, e^{-\frac{1}{2}|Y|^2} \int_{|Y| - \delta}^{|Y| + \delta} \rho
				I_{0}(|Y| \rho) \, e^{-\frac{1}{2} \rho^2 - \frac{(\rho^2 - |Y|^2)^2}{r}} \, d\rho \\
				&= 2\pi \, e^{-\frac{1}{2}|Y|^2} |Y|^2 \int_{1 - \frac{\delta}{|Y|}}^{1 + \frac{\delta}{|Y|}} u I_0(|Y|^2 u) e^{-\frac{|Y|^2}{2} u^2} e^{-\frac{|Y|^4}{r} (u + 1)^2(u - 1)^2} \, du,
			\end{align*}
			where in the last ``$=$'' we have used the change of variables $\rho = |Y| u$ additionally. Then we split the proof into two cases:
			
			\paragraph{Case 1: $|Y| \lesssim 1$.} Noting $|Y|^4/r \ge 4 \delta^2 |Y|^2/r \to +\infty$,  the standard Laplace's method gives
			\[
			\widetilde{\cI}_\delta(Y, r) = \pi^{\frac{3}{2}} \sqrt{r} \, e^{-|Y|^2}  I_0(|Y|^2) \, (1 + o(1)),
			\]
			which ends the proof.
			
			\paragraph{Case 2: $|Y| \to +\infty$.} In such case, from \eqref{asinI0} we get
			\[
			\widetilde{\cI}_\delta(Y, r)  = \sqrt{2\pi} \,  |Y| \int_{1 - \frac{\delta}{|Y|}}^{1 + \frac{\delta}{|Y|}} \sqrt{u} \, e^{-\frac{|Y|^2}{2} (u - 1)^2 \left[ 1 + \frac{2|Y|^2}{r} (u + 1)^2\right]} du \, (1 + o(1)).
			\]
			Then the usage of the Laplace's method yields
			\[
			\widetilde{\cI}_\delta(Y, r) = \frac{\pi}{\sqrt{2}} \, \frac{\sqrt{r}}{|Y|} \, (1 + o(1)) = \pi^{\frac{3}{2}} \sqrt{r}  e^{-|Y|^2}  I_0(|Y|^2) \, (1 + o(1)),
			\]
			where in the last ``$=$'' we have used \eqref{asinI0} again.
		\end{proof}
		
	Recall the estimate \eqref{AXT}. The result of Proposition \ref{estK3} can be further simplified. To be more precise, it follows from \eqref{AXT} and \eqref{nAna2} that:
	\begin{gather*}
		\sqrt{\det \cnA(X,T;\xi)} = \frac{|X|^3}{8  \, \epsilon_*^3} (1 + o(1)), \\[1mm]
		\bW(X,T;\xi)^\T \cnA(X,T;\xi)^{-1} \bW(X,T;\xi) = \frac{(|\bY|^2 - |\xi|^2)^2}{|X|^2} (1 + o(1)).
	\end{gather*}
	Consequently, if we set
	\begin{align} \label{defsr}
		\mathfrak{r}_*(X,T) := \inf_{\xi \in \blacklozenge_3} \left[ \frac{\bW(X,T;\xi)^\T \cnA(X,T;\xi)^{-1} \bW(X,T;\xi) - \frac{(|\bY|^2 - |\xi|^2)^2}{|X|^2}}{\frac{(|\bY|^2 - |\xi|^2)^2}{|X|^2}} \right], \\[2mm]
		\label{defir}
		\mathfrak{r}^*(X,T) := \sup_{\xi \in \blacklozenge_3} \left[ \frac{\bW(X,T;\xi)^\T \cnA(X,T;\xi)^{-1} \bW(X,T;\xi) - \frac{(|\bY|^2 - |\xi|^2)^2}{|X|^2}}{\frac{(|\bY|^2 - |\xi|^2)^2}{|X|^2}} \right],
	\end{align}
	then $\mathfrak{r}_*(X,T) = o(1)$ and $\mathfrak{r}^*(X,T) = o(1)$. Hence it follows from Proposition \ref{estK3} that
	\[
	\Ft(X,T;\xi) \le (2\pi)^{\frac{3}{2}} \frac{8 \, \epsilon_*^{3}}{|X|^3} \, \vv_2(i\tau^*) \, e^{-\frac{(|\xi|^2 - |\bY|^2)^2}{2 |X|^2} \, (1 + \mathfrak{r}_*(X,T))} \, (1 + o(1)), \quad \forall \, \xi \in \blacklozenge_3.
	\]
	which implies that
	\begin{align*}
		\cK_3 \le  (2\pi)^{\frac{3}{2}} \,   \frac{8 \, \epsilon_*^{3}}{|X|^3} \, \vv_2(i\tau^*) \, \rQ_*(X,T) \, (1 + o(1)),
	\end{align*}
	where
	\begin{align*}
		\rQ_*(X,T) := \int_{\blacklozenge_3} e^{-\frac{1}{2} |\xi - \bY|^2} e^{-\frac{(|\xi|^2 - |\bY|^2)^2}{2 |X|^2} (1 + \mathfrak{r}_*(X,T))} d\xi.
	\end{align*}
	
	Now taking $Y = \bY$, $r = 2 \, |X|^2 /(1 + \mathfrak{r}_*(X,T)) = 2 \, |X|^2 \, (1 + o(1))$, and $\delta = \sqrt{\epsilon_*} \, \rD(X,T)^{\frac{1}{4}}$, in the above lemma, by \eqref{nAna1} we can check that the condition \eqref{asyass} is fulfilled, and hence
	\begin{align*}
		\cK_3 \le 32 \, \pi^3 \,   \frac{  \epsilon_*^{3}}{|X|^2} \, \vv_2(i\tau^*) \, e^{-|\bY|^2} I_0(|\bY|^2) \, (1 + o(1)).
	\end{align*}
	On the other hand, using the same argument
	with $\mathfrak{r}_*$ replaced by $\mathfrak{r}^*$, we can obtain the same lower bound for $\cK_3$. So it is actually an equality. In conclusion, under the assumption in Proposition \ref{lP2} (II), it holds that
	\begin{align}
		\cK_3  &= 32 \pi^3 \,   \frac{  \epsilon_*^{3}}{|X|^2} \, \vv_2(i\tau^*) \, e^{-|\bY|^2} I_0(|\bY|^2) \, (1 + o(1))  \nonumber \\[1mm]
		&\sim \epsilon_* \rD(X,T)^{-2} \frac{1}{\sqrt{1 + \epsilon_* \rD(X,T)^2}} \gsim \ep_* \, \rD(X,T)^{-3},
	\end{align}
	where we have used in ``$\sim$'' \eqref{aI0}, \eqref{eD1} and \eqref{nAna1}.
	
	\subsubsection{Bounds for the remaining terms}

We begin with the estimate of $\cK_2$. In fact, on $\blacklozenge_2$, we have that $|\xi| \sim |\bY|$, $|\bW(X,T;\xi)| \gtrsim \rD(X,T)^{\frac{5}{4}}$ and $|X|^2 + \frac{|\xi|^2}{\epsilon_*} + 1 \sim \rD(X,T)^2$ by the first equality in \eqref{nAna2} and \eqref{nAna1}. Then it follows from Proposition \ref{uppP2} that
\[
|\cK_2| \lesssim \int_{\blacklozenge_2} e^{-  c \, \rD(X,T)^{\frac{1}{4}}} d\xi \sim \epsilon_* \, \rD(X,T)^2\, e^{- c \,\rD(X,T)^{\frac{1}{4}}} = o(\cK_3),
\]
where we have used in ``$\sim$'' \eqref{nAna1} again.

Similarly, we can prove $|\cK_4| = o(\cK_3)$ as well.

 Consider now $\cK_1$.  On $\blacklozenge_1$, notice that $| \, |\xi| - |\bY| \, | \sim |\bY|$,  $|\bW(X,T;\xi)| \sim \rD(X,T)^2$, and $|X|^2 + \frac{|\xi|^2}{\epsilon_*} + 1 \lesssim \rD(X,T)^2$. As a result, Proposition \ref{uppP2} yields
\[
|\cK_1| \lesssim   \int_{\blacklozenge_1 } e^{-  c \, \rD(X,T)} d\xi  = o(\cK_3).
\]

We are now left with the estimation of  $\cK_5$.  Remark that we have on $\blacklozenge_5$ that  $|\bW(X,T,\xi)| \sim \frac{|\xi|^2}{\epsilon_*} \gtrsim \rD(X,T)^2$ and $|X|^2 + \frac{|\xi|^2}{\epsilon_*} + 1 \sim  \frac{|\xi|^2}{\epsilon_*}$. Then Proposition \ref{uppP2} gives
\[
|\cK_5| \lesssim \int_{\blacklozenge_5 } e^{-c  \, \frac{|\xi|}{\sqrt{\epsilon_*}}} d\xi \lesssim \epsilon_* \int_{\{\eta; \, |\eta| \ge c \rD(X,T)\}} e^{- c  \, |\eta|} d\eta \lesssim  \epsilon_* e^{- \frac{c}{2} \, \rD(X,T)} =  o(\cK_3),
\]
where we have used in the second ``$\lesssim$'' the change of variables $\xi = \sqrt{\epsilon_*} \, \eta$.

\subsubsection{Summary}
From all the estimates obtained above, we conclude that
\begin{align*}
	\rP(X,T) &=  \frac{1}{2\pi} \left( 1 - \frac{|\tau^*|^2}{\vartheta_1^2} \right)^{-1} e^{-\frac{\rD(X,T)^2}{4}} \, \sum_{i = 1}^5 \cK_i \\
	&= \frac{1}{2\pi} \left( 1 - \frac{|\tau^*|^2}{\vartheta_1^2} \right)^{-1} e^{-\frac{\rD(X,T)^2}{4}} \, \cK_3 \, (1 + o(1)) \\
	&= 16 \pi^2 \,   \frac{  \epsilon_*^{3}}{|X|^2} \, \vv(i\tau^*) \, e^{-\frac{\rD(X,T)^2}{4}} \, e^{-|\bY|^2} I_0(|\bY|^2) \, (1 + o(1)).
\end{align*}
Then to show \eqref{nax2}, it suffices to use
\eqref{relvU}, \eqref{aDp3} and a similar argument as in the proof of \eqref{Ds1}.

\end{appendices}

\section*{Acknowledgement}
\setcounter{equation}{0}
This work is partially supported by
NSF of China (Grants No. 12271102 and No. 11625102).  The first author would like to thank D. Bakry for bringing the free step-two Carnot group with $3$ generators to our attention in 2008.

\phantomsection\addcontentsline{toc}{section}{References}
\bibliographystyle{abbrv}
\bibliography{LMZnhk}

\mbox{}\\
Hong-Quan Li, Sheng-Chen Mao \\
School of Mathematical Sciences/Shanghai Center for Mathematical Sciences  \\
Fudan University \\
220 Handan Road  \\
Shanghai 200433  \\
People's Republic of China \\
E-Mail: hongquan\_li@fudan.edu.cn  \\
scmao22@m.fudan.edu.cn \quad or \quad maosci@163.com \\

\mbox{}\\
Ye Zhang\\
Analysis on Metric Spaces Unit  \\
Okinawa Institute of Science and Technology Graduate University \\
1919-1 Tancha, Onna-son, Kunigami-gun \\
Okinawa, 904-0495, Japan \\
E-Mail: zhangye0217@gmail.com \quad or \quad Ye.Zhang2@oist.jp \mbox{}\\

\end{document}